\numberwithin{equation}{section}
\newtheorem{thm}{Theorem}[section]
\newtheorem{cor}[thm]{Corollary}
\newtheorem{lem}[thm]{Lemma}
\newtheorem{prop}[thm]{Proposition}
\newtheorem{defn}[thm]{Definition}
\newtheorem{rem}[thm]{Remark}
\newtheorem{nota}[thm]{Notation}
\numberwithin{equation}{section}
\newcommand{\dx}{\,{\rm d}x}
\newcommand{\dy}{\,{\rm d}y}
\newcommand{\dt}{\,{\rm d}t}
\newcommand{\dr}{\,{\rm d}r}
\def\LL{\mathrm{L}} 
\newcommand{\dtau}{\,{\rm d}\tau}
\newcommand{\A}{\mathcal{L}}
\newcommand{\AM}{\mathcal{L}^{\frac{1}{2}}}
\newcommand{\AI}{\mathcal{L}^{-1}}
\newcommand{\n}{F}
\newcommand{\nl}{{F^*}}
\newcommand{\ph}{{\Phi_1}}
\newcommand{\ud}{{u_\delta}}
\renewcommand{\k}{\kappa}
\newcommand{\K}{{\mathbb G}}
\newcommand{\solution}{{minimal weak dual solution}}
\newcommand{\solutions}{{minimal weak dual solutions}}
\newcommand{\RR}{\mathbb{R}}
\newcommand{\RN}{\mathbb{R}^N}
\newcommand{\B}{\mathcal{B}}
\def\dist{\mathrm{dist}} 
\def\dom{\mathrm{dom}} 
\def\qed{\,\unskip\kern 6pt \penalty 500
	\raise -2pt\hbox{\vrule \vbox to8pt{\hrule width 6pt
			\vfill\hrule}\vrule}\par}
\def\quotient#1#2{\raise1ex\hbox{$#1$}\Big/\lower1ex\hbox{$#2$}}
\definecolor{darkblue}{rgb}{0.05, .05, .65}
\definecolor{darkgreen}{rgb}{0.1, .65, .1}
\definecolor{darkred}{rgb}{0.8,0,0}
\begin{document}
	\title{\textbf{Sharp Boundary Estimates and Harnack Inequalities for Fractional Porous Medium type Equations}}
	
	\author{\Large Matteo Bonforte$^{a,b}$
		~and~ Carlos Fuertes Mor\'{a}n$^{a,c}$
	}
	\date{} 
	
	\maketitle
	
	
	\begin{abstract}
		This paper provides sharp quantitative and constructive estimates of nonnegative solutions $u(t,x)\geq 0$  to the nonlinear fractional diffusion equation, $$\partial_t u +\A F(u)=0,$$ also known as filtration equation, posed in a smooth bounded domain $x\in \Omega \subset \RN$ with suitable homogeneous Dirichlet boundary conditions. Both the operator $\A$ and the nonlinearity $F$ belong to a general class. The assumption on the operator $\A$ are set in terms of the kernel of $\A$ and $\AI$, depending on the result, and allow for operators with degenerate kernel at the boundary of $\Omega$. The main examples of $\A$ are the three different Dirichlet Fractional Laplacians on bounded domains, and the nonlinearity can be non-homogeneous, for instance, $F(u)=u^2+u^{10}$. Previous result were known in the porous medium case, i.e. $F(u)=|u|^{m-1} u$ with $m>1$, whose homogeneity allows to considerably simplify the proofs. We take the opportunity of exposing a complete basic theory of existence, uniqueness and boundedness for a quite general class of weak (dual) solutions, scattered in previous papers by Vazquez and the first author. Our aim here is to perform the next step: a delicate analysis of regularity through quantitative, constructive (all the constants can be explicitly computed) and sharp a priori estimates. Our first main results are global Harnack type inequalities of the form
		$$	H_0(t,u_0)\, {\rm dist}(x, \partial \Omega)^a\leq F(u(t,x))\leq H_1(t)\, {\rm dist}(x, \partial \Omega)^b\qquad\forall (t,x)\in (0,\infty)\times \overline{\Omega},$$
where the expressions of $H_0, H_1$ and $a,b$ are explicit and change according to the choice of operator $\A$ and nonlinearity $F$. The sharpness of such estimates is proven by means of examples and counterexamples: on the one hand, we can match the powers (i.e. $a=b$) when the operator has a non degenerate kernel. On the other hand,  when the operator $\A$ has a kernel that degenerates at the boundary $\partial\Omega$, there appear an intriguing anomalous boundary behaviour: the size of the initial data determines the sharp boundary behaviour of the solution, which can be different for ``small'' and ``large'' initial data. We conclude the paper with higher regularity results: solutions are always H\"older continuous in the interior, and even classical when the operator allows it.
	\end{abstract}
	
	
\small
	
	\noindent {\sc Addresses:}\vspace{-3mm}\begin{itemize}[leftmargin=0pt]\itemsep0pt \parskip0pt \parsep0pt

        \item[(a)]Departamento de Matem\'{a}ticas, Universidad Aut\'{o}noma de Madrid,\\
		ICMAT - Instituto de Ciencias Matem\'{a}ticas, CSIC-UAM-UC3M-UCM, \\
		Campus de Cantoblanco, 28049 Madrid, Spain.
		\item[(b)]
		E-mail:\texttt{~matteo.bonforte@uam.es }
		Web-page:  \texttt{http://verso.mat.uam.es/\~{}matteo.bonforte}
		\item[(c)]
		E-mail:\texttt{~carlos.fuertesm@uam.es}		
	\end{itemize}

	\noindent {\sc Keywords.}  Nonlocal diffusion, nonlinear equations, a priori estimates, positivity, boundary behavior, regularity, Harnack inequalities.
	
	\noindent{\sc Mathematics Subject Classification 2020}. 35B45, 35B65, 35K55, 35K65.

\smallskip\noindent {\sl\small\copyright~2025 by the authors. This paper may be reproduced, in its entirety, for non-commercial purposes.}

	\newpage
	
	\tableofcontents
	\normalsize
	
	\newpage
	
	\section{Introduction}\label{Sec1}
	
	In this paper we study fine regularity and pointwise properties of a suitable class of weak solutions to the nonlinear diffusion equation
	\begin{equation}\label{PM}
		\partial_t u + \A F(u)=0 \hspace{4mm}\mbox{ in }(0,\infty)\times \Omega\,,
	\end{equation}
posed on a  bounded domain $\Omega\subset \RN$ assumed to be of class $C^{1,1}$, and $N\ge 1$. We focus our attention on a number of quantitative a priori estimates of such solutions: local and global Harnack inequalities, (sharp) positivity and boundary behavior estimates, and interior regularity.

Here, $\A$ is a linear operator belonging to a quite large class, that we shall characterize by means of two parameters $s,\gamma\in (0,1]$, and that allows to study both local and nonlocal (fractional) equations. The nonlinearity $F$ is any non-decreasing monotone continuous function satisfying suitable assumptions, that allow for degeneracies, namely $F(0)=F'(0)=0$.

The prototype example, is given by the so-called (Fractional) Porous Medium equation, with the homogeneous nonlinearity $F(u)=|u|^{m-1}u$ with $m>1$,  which has been extensively studied in the last decade \cite{MB+AF+XR_positivity-and-regularity, MB+AF+JLV_sharp-global-estimates, MB+JLV+YS_exist-and-unique, DEPAB+QUI+ANA+JLV_fpme,DEPAB+QUI+ANA+JLV_general-fpme}. Here, we consider more general, possibly non-homogeneous nonlinearities, which introduce several new technical difficulties. These class of equations are often called Generalized Porous Medium equations or Filtration equations in literature, see \cite{ANDR+DIB_filtration, Daska+Kenig, AdP+FQ+AR_2016, AdP+FQ+JRC_2024, DIB+GIA+VES_harnineq, JLV_pmbook, AdP+FQ+AR+JLV_2017}. These equations allow to model a wide range of phenomena appearing in more applied sciences, like Physics, Biology, Engineering, Finance, etc. The use of nonlocal operators in diffusion equations reflects the need to model the presence of long-distance effects not included in evolution driven by the Laplace or any other local operator. When $\A$ is a local operator, the model has been extensively used since the mid 50s, see the monograph \cite{JLV_pmbook} and also \cite{JLV_dirichletproblem-pm}. On the other hand, the motivation and relevance of nonlinear diffusion models where $\A$ is a nonlocal operator, has been mentioned in many references; see for instance \cite{ATH+CAFF_2010, MB+JLV_quan-locandglo-aprioriestimates, MB+JLV_harnack-inequality, AdP+FQ+AR_2018, DEPAB+QUI+ANA+JLV_fpme, DEPAB+QUI+ANA+JLV_general-fpme, JLV_barenblatt} and the surveys \cite{JLV_survey-diff-interac, JLV_survey-fraclap}. Because in most of the applications $u$ usually represents a density, throughout the paper all data and solutions are supposed to be nonnegative.

	To better understand the effects of degeneration on the behaviour of the solutions, we impose homogeneous Dirichlet boundary conditions. As already mentioned, the prototype equation is the (possibly fractional) Porous Medium Equation
\[
u_t + (-\Delta)^s u^m=0\,,
\]
with $m>1$, $s\in (0,1]$, initial data $u_0\ge 0$ and $u=0$ on the lateral boundary\footnote{The boundary depends on the particular choice of operators, for instance if $\A=-\Delta$ we consider the topological boundary of $\Omega$, while for nonlocal operators, such as the Restricted Fractional Laplacian the natural lateral boundary is the complement of $\Omega$, namely $\RN\setminus \Omega$. Many more examples can be considered, see Section \ref{sec7} for more details.}. The nonlinearity $F$ need not to be homogeneous and it is allowed to be more general than a single power: our assumption will allow for non-decreasing $C^1(\RR\setminus\{0\})$ functions, trapped between two different powers both at zero and at infinity. This causes a number of extra difficulties in the proofs and also new behaviors (typically it produces different results for small and large times) that depart from the prototype case in a quantitative way. For technical reasons, we shall assume $N>2s$.

Note that, since we are in a bounded domain, the fractional Laplacian can take several non-equivalent forms, see Section \ref{sec7} and references therein, that affect also the type of lateral boundary conditions that we have to impose. The basic theory - existence, uniqueness and boundedness - of weak dual solutions for the equation \eqref{PM} have been studied in \cite{MB+JLV+YS_exist-and-unique, MB+JLV_harnack-inequality, MB+JLV_PM-with-F}. To the best of our knowledge, weak dual solutions represent the biggest class of nonnegative solutions known so far, allowing for ``big'' initial data, which may blow up at the boundary in a non integrable way. This complicates in a significant way the study of the boundary behaviour of these class of solutions. Our ambitious goal in this direction is to prove sharp pointwise boundary estimates both from above and from below. Such bounds are formulated in terms of $\Phi_1$, the first eigenfunction of the operator, which typically satisfies $\Phi_1 \asymp \dist(\cdot, \partial \Omega)^\gamma$, with $\gamma \in (0,1]$. The operator $\A$ is characterized in terms of the parameters $s$ and $\gamma$: It is clear now that the parameter $s$ correspond to the (possibly fractional) order of derivation in space, while $\gamma$ is related to the boundary behaviour. Notice that a large number of operators fall into this class, in particular, the three different possible Fractional Laplacians: Restricted, Spectral and Censored. We refer to \cite{MB+AF+JLV_sharp-global-estimates, MB+JLV_PM-with-F} and to Section \ref{sec7} for a list of examples.
	
\noindent\textit{Global pointwise inequalities and anomalous boundary behaviour. } The first set of main results, take the form of global Harnack-type inequalities, nowadays known as Global Harnack Principle (GHP), which can be roughly stated in the present setting, as
	\begin{equation}\label{ec}
		H_0(t,u_0) \Phi_1(x)^a \leq F(u(t,x))\leq H_1(t) \Phi_1(x)^b\qquad\mbox{for all $x\in \overline{\Omega}$ and all $t\ge t_*$},
	\end{equation}
where the expressions of $H_0, H_1$ and $a,b$ are explicit and change according to the choice of operator $\A$ and nonlinearity $F$, see Section \ref{sec2.4} for precise statements. The so-called waiting time $t_*(u_0)$ has an explicit expression, and in the local case ($s=1$) cannot be avoided, because of the possible finite speed of propagation\footnote{Meaning that compactly supported data produce compactly supported solutions for all times.}. An important role is played by the exponents $a$ and $b$: in some cases they can match, hence they are optimal and determine a precise boundary behaviour of (all nonnegative) solutions, and control  the sharp boundary regularity\footnote{Here we do not show boundary regularity for inhomogeneous $F$, but in \cite{MB+AF+JLV_sharp-global-estimates} sharp boundary regularity is established in the case $F(u)=|u|^{m-1}u$, with $m>1$.}. In some cases, the powers $a$ and $b$ do not match, and this is inevitable: we provide counterexamples that show that some initial data produce ``small'' solutions with boundary behaviour $\Phi_1^a$, while other data can produce ``larger'' solutions whose behaviour at the boundary is given by $\Phi_1^b$. Hence, also in this case, our global result turn out to be sharp. We refer to Remark \ref{rmk.sharpness} for a more detailed explanation. This anomalous boundary behaviour has been discovered in \cite{MB+AF+JLV_sharp-global-estimates}, for the case of homogeneous nonlinearities, where also numerical simulations confirm the theoretical results. It has to be mentioned that these kind of results are fundamental in the numerical implementation, indeed without the theoretical prediction, a priori one could think of a failure in the numerical method, see for instance the paper \cite{CUS+FDT+GER-GIOR+PAG_discre-sfl}, which was inspired by the results of  \cite{MB+AF+JLV_sharp-global-estimates}. Note that this anomalous boundary behaviour does not happen when the operator is local, nor in elliptic problems: it is a phenomenon typical of nonlinear, nonlocal and degenerate parabolic equations. Last but not least, the presence of a possibly non-homogeneous nonlinearity complicates the panorama, since different behaviours can appear for small and large times (which in turn depend on the size of the initial data).

\noindent\textit{Finite VS infinite speed of propagation. }A big difference between local and nonlocal diffusions can be appreciated in the validity of the above bounds: indeed, when we restrict ourselves to the nonlocal case, i.e. when $s<1$, the positivity estimate above holds for all $t>0$: this implies infinite speed of propagation, which is in clear contrast with the local case, for which finite speed of propagation happens, \cite{JLV_pmbook}. These phenomenon were observed for the first time in the case of the prototype nonlinearity $F(u)=u^m$, with $m>1$, in \cite{MB+AF+XR_positivity-and-regularity, MB+AF+JLV_sharp-global-estimates}.

\noindent\textit{Local estimates and regularity. }As a consequence of the above global Harnack estimates, we can derive more classical forms of local Harnack inequalities, which can be of backward/forward/elliptic type:
	$$\sup_{x\in B_R(x_0)}  F(u(t,x))  \leq H_2(t,u_0) \inf_{x\in B_R(x_0)} F(u(t\pm h,x)),$$
where again $H_2$ has an explicit form and $0<h\lesssim t\wedge t_*$. For linear local parabolic equations, only forward Harnack inequalities typically hold\footnote{It is well known in the case of the Cauchy problem on the whole space for the classical heat equation, it can be checked on the Gaussian. On the other hand, for the Dirichlet problem for the standard heat equation, backward Harnack inequalities were proved by Fabes, Garofalo and Salsa in \cite{FAB+GAR+SAL_harnineq-fatou}. In the case of the fractional heat equation on the whole space, backward/forward/elliptic Harnack have been proved in \cite{MB+JLV+YS_opt-exisanduniq-heatequ}.}and here, it is quite remarkable that the supremum and infimum can be taken at the same time or even at a previous one\footnote{In local nonlinear equations, this is a phenomenon typical of the fast diffusion, $m<1$ when the nonlinearity is inhomogeneous, but only forward Harnack inequalities hold in general for the degenerate case under consideration \cite{DIB_deg-para-equa, DIB+GIA+VES_harnineq}. Here, we are considering the Dirichlet problem, and we show how also in the local case, after some waiting time, backward/forward/elliptic Harnack inequalities hold. }.
\normalcolor
The above local and global Harnack estimates are the key to show regularity of solutions. Indeed, once solutions are bounded and positive, it is possible to show that they gain interior regularity. We focus here on the nonlocal case\footnote{The results in the local case follows in analogous way and holds true after the waiting time, but since the local result are nowadasy ``classical'', we decide to concentrate the efforts in the nonlocal part.} $s<1$: first we show that solutions are always H\"older continuous in the interior, and even classical, whenever the operator allows for it, see Section \ref{sec6} for more details.

\noindent\textit{Similarities and differences with the case of power $F(u)=u |u|^{m-1}$. }This paper is the natural continuation of \cite{MB+JLV_PM-with-F}, as the latter examines the same equation addressed in the present work, also considering a general nonlinearity $F$. In particular, the existence and uniqueness of a class of solutions—namely, the minimal weak dual solutions—are established, along with a number of quantitative a priori estimates, among which there are absolute upper bounds, pointwise estimates, smoothing effects, and weighted $\LL^1$-estimates, that we shall use systematically throughout the paper. For the reader's convenience, whenever necessary, we will state these results alongside some key observations to facilitate a better understanding.

One of the main goal of this paper is to determine the behavior near the boundary of solutions to \eqref{PM}, with homogeneous Dirichlet boundary conditions. The answer to this question can be expressed in the form \eqref{ec}, and the boundary behaviour is quantified in terms of $F^{-1}(\Phi_1^c)$, where $\Phi_1$ denotes the first eigenfunction of $\A$ and $c\in (0,1]$. We adapt the ideas of \cite{MB+AF+JLV_sharp-global-estimates} to the present case, but the lack of homogeneity in the nonlinearity $F$ significantly complicates the situation: many technical issues arise and must be solved through new alternative methods, we shall try to give a rough idea below.

The absence of an explicit solution, such as the separation of variables in the power case, prevents us from showing the long-term behavior\footnote{For general $F$ under our assumption, the asymptotic behavior is still an open problem even in the case of the Dirichlet problem for the classical GPME: $u_t=\Delta F(u)$.}, because of the lack of improved lower bounds obtained by comparison with separate  variables solutions. Moreover, the explicit expression of the waiting time $t_*$, may vary depending on the size of the initial data, measured in terms of  a suitable $L^1$-weighted norm. As a consequence, when determining the various lower quantitative estimates for positivity, the panorama gets more involved and we obtain different bounds in each case. Although global Harnack-type estimates of type \eqref{ec} can be obtained with matching powers, the lack of homogeneity of $F$ does not allow us to obtain sharp regularity up to the boundary, since rescaling techniques fail in this case. New techniques and ideas are needed, and we leave this issue as an open problem. However, our estimates give indications of the maximal boundary regularity in space, as in the case of homogeneous nonlinearities. For analogous reasons, it turns out to be delicate to analyze the regularity in time, which again we leave as an open problem.

\noindent\textbf{Plan of the paper.} Section 2 is dedicated to formulating the Cauchy-Dirichlet problem. We impose homogeneous boundary conditions along with the appropriate space of non-negative initial data. Additionally, we present the basic properties required for the operators $\A$ and the nonlinearities $F$ to which the techniques employed can be applied. We also recall and review how the (unique) weak dual solutions are constructed. Finally, we present the precise statements of our main results: global Harnack-type inequalities and interior regularity estimates.
	
	We proceed to demonstrate our main results throughout Sections \ref{sec3}, \ref{sec4}, and \ref{sec5}. In Section \ref{sec3}, we begin by recalling a series of technical results of \cite{MB+AF+JLV_boundary-estimates-elliptic, MB+JLV_PM-with-F, CRA+PIE} that we will use in the proofs.  In Section \ref{sec5}, we will prove the different versions of the lower bounds of the GHP. To this end, we need weighted $\LL^1$ estimates proven in Section \ref{sec4}, and a constructive proof by contradiction that generalizes the one introduced for the first time in  \cite{MB+AF+XR_positivity-and-regularity} and \cite{MB+AF+JLV_sharp-global-estimates}. Section \ref{sec6} contains the proof of the interior regularity results.  Finally, in Section \ref{sec7}, we present some examples of different operators to which all the techniques used throughout the work can be applied.

	\begin{nota}\label{notacion}
		The symbol $\infty$ means $+\infty$, when we write $\Omega$ it is always a bounded domain with boundary smooth enough, at least of class $C^{1,1}$. The distance to the boundary will be denoted by $d(x):=\dist(x,\partial\Omega)|_\Omega$, for $x\in \Omega$. Two quantities will be comparable, $a\asymp b$, iff there exists constants $c_0, c_1>0$ such that $c_0 a \leq b \leq c_1 a$. We represent the maximum or the minimum as follows, $a\land b = \min\left\{ a,b \right\}$ and $a\lor b = \max\left\{ a,b \right\}$. Finally, let's define, for $\gamma\in (0,1]$, the following parameters that will appear in the estimates
		$$m_i=\frac{1}{1-\mu_i},\mbox{ with }\mu_i\in (0,1)\qquad\mbox{and}\qquad\sigma_i=\left( 1 \land \frac{2s m_i}{\gamma (m_i-1)} \right).$$
	\end{nota}
	
	
	\section{Statement of the Problem}\label{sec2}
	We will consider the homogeneous Dirichlet problem
	\begin{equation}\label{CDP}\tag{CDP}
		\left\{\begin{array}{cll}\partial_t u +\A \n(u) =0, &\mbox{in } (0,\infty)\times \Omega,\\ u=0, & \mbox{on the lateral boundary},\\ u(0,x)=u_0(x), &\mbox{in } x\in \Omega,\end{array}\right.
	\end{equation}
where the lateral boundary depends on the operator $\A$, for example, for the Restricted Fractional Laplacian it is $(0,\infty)\times (\RN\setminus \Omega)$. We recall that $\Omega$ is a bounded domain with smooth boundary, at least $C^{1,1}$.
The initial data $u_0$ is nonnegative and belongs to the following space of measurable functions
$$\LL^1_{\Phi_1}(\Omega)=\left\{f:\Omega \rightarrow \RR \mbox{ such that }\int_{\Omega} |f(x)| \Phi_1(x) \dx <\infty \right\}.$$
	
	Now, let's fix some properties of both the operator $\A$ and the nonlinearity $\n$, following the notation of \cite{MB+JLV_PM-with-F}. We shall define the precise concept of weak solution at the end of this section.

	
	\subsection{General Operator and their Kernels}\label{sec2.1}
	The techniques we use can be applied to a wide class of operators under certain assumptions that we are going to present. The key is that we usually work with the inverse operator $\AI$ and its kernel $\K$, the "Green function," which is not necessarily explicit, but it typically satisfies some key estimates. Furthermore, we shall need some assumptions about the kernel of $\A$, which will be useful to prove the positivity of solutions.
	
	\begin{itemize}[leftmargin=*]
		\item \textbf{Basic Assumptions on $\A$}: The operator $\A: \dom(\A)\subseteq \LL^1(\Omega)\rightarrow \LL^1(\Omega)$ is assumed to be densely defined and  sub - Markovian, i.e. it satisfies:
		\begin{equation}\label{A1}\tag{A1}
			\mathcal{L} \mbox{ is $m$-acretive in }\LL^1(\Omega).
		\end{equation}
	
		\begin{equation}\label{A2}\tag{A2}
			\mbox{If }0\leq f\leq 1 \mbox{ then }0\leq e^{-t\A}f \leq 1.
		\end{equation}

		\item \textbf{Assumptions on the kernels}: Whenever $\A$ is well defined in terms of a (hyper)singular kernel $K(x,y)\ge 0$, i.e.
		$$\A f(x)= P.V \int_{\RN} [f(x)-f(y)]K(x,y) \dy,$$
		we will suppose that
		\begin{equation}\label{L1}\tag{L1}
			\inf_{x,y\in \Omega} K(x,y)\geq k_\Omega>0.
		\end{equation}
		
		Moreover, if $\A$ is defined by a kernel and a zero order term, i.e.,
		$$\A f(x)= P.V  \int_{\RN} [f(x)-f(y)]K(x,y) \dy + B(x)f(x),$$
		then, we shall assume
		\begin{equation}\label{L2}\tag{L2}
			K(x,y)\geq c_0 d^\gamma(x)  d^\gamma(y), \hspace{3mm}\mbox{and}\hspace{3mm} B(x)\geq 0.
		\end{equation}
		Where $d(x)=\dist(x,\partial\Omega)|_\Omega$ is the (inner) distance to the boundary and $\gamma\in(0,1]$ is a parameter that depends on the operator. See the end of the subsection for its meaning explained throughout a number of relevant examples, collected in Section \ref{sec7}.
		\item \textbf{Assumptions on $\AI$}: In order to prove our  quantitative estimates, we need to be more precise about $\A$. We suppose it has a left inverse $\AI:\LL^1(\Omega)\rightarrow \LL^1(\Omega)$ which can be defined through a kernel $\K$ as follows,
		$$\AI f(x)=\int_{\Omega} f(y)\K(x,y) \dy.$$
		Note that, in this definition, the homogeneous Dirichlet boundary conditions are automatically included thanks to the function $\K$, hence we can integrate only over $\Omega$.

we are considering the homogeneous Dirichlet conditions posed outside of $\Omega$, because we are only integrating over $\Omega$. The estimates we need to prove our results are the following:
		
		There exists a constant $c_1>0$ which depends on $\Omega$ such that for a.e. $x,y\in \Omega$ we have
		\begin{equation}\label{K1}\tag{K1}
			0\leq \K(x,y)\leq \frac{c_1}{|x-y|^{N-2s}}.
		\end{equation}
		
		There exists a parameter $\gamma\in(0,1]$ and constants $c_0, c_1>0$ which depend on $\A$ and $\Omega$ respectively, such that for almost every point $x,y\in \Omega$ we have
		\begin{equation}\label{K2}\tag{K2}
			c_0 d(x)^\gamma d(y)^\gamma \leq \K(x,y)\leq \frac{c_1}{|x-y|^{N-2s}}\left( \frac{d^\gamma(x)}{|x-y|^\gamma} \land 1 \right) \left( \frac{d^\gamma(y)}{|x-y|^\gamma} \land 1 \right).
		\end{equation}
		
		The lower bound of the above inequality is sometimes weaker than the next well know bound for the Green function of the Fractional Laplacian
		\begin{equation}\label{K4}\tag{K4}
			\K(x,y)\asymp \frac{1}{|x-y|^{N-2s}} \left( \frac{d^\gamma(x)}{|x-y|^\gamma} \land 1 \right) \left( \frac{d^\gamma(y)}{|x-y|^\gamma} \land 1 \right).
		\end{equation}
	\end{itemize}
	In the classical case, for $\A=(-\Delta)$, the Green function satisfies \eqref{K4} when $N\geq 3$. In the fracctional case, the formulas also change when $N=1$ and $s\in (0,1/2)$. This is the reason why we choose $N>2s$.
	
	\underline{The first eigenfunction}: If \eqref{K1} holds it is known that $\AI$ has a nonnegative bounded first eigenfunction $0\leq \ph \in \LL^\infty(\Omega)$ which satisfies the following assertion, there exists $\lambda_1>0$ such that $\A \ph =\lambda_1 \ph$. If \eqref{K2} holds it has been shown, see \cite{MB+AF+JLV_boundary-estimates-elliptic}, that for the parameter $\gamma$ of this assertion we have
\begin{equation}\label{phi1.gamma.xx}
\ph(x)\asymp d^\gamma(x),\hspace{4mm} \forall x\in \overline{\Omega}.
\end{equation}
	Here, we can observe that the parameter $\gamma$ encodes the boundary behaviour of solutions to the problem \eqref{CDP} that depends on the operator $\A$. Thanks to this last result, we can rewrite the hyphothesis \eqref{K2} and \eqref{K4} as follows:
	
	There exists a parameter $\gamma\in(0,1]$ such that \eqref{phi1.gamma.xx} holds and there exist constants $c_0, c_1>0$ which depend on $\A$ and $\Omega$ respectively, such that for almost every point $x,y\in \Omega$ we have
	\begin{equation}\label{K3}\tag{K3}
		c_0\ph(x)\ph(y)\leq \K(x,y)\leq \frac{c_1}{|x-y|^{N-2s}} \left( \frac{\ph(x)}{|x-y|^{\gamma}} \land 1 \right) \left( \frac{\ph(y)}{|x-y|^\gamma} \land 1 \right).
	\end{equation}
	
	\begin{equation}\label{K5}\tag{K5}
		\K(x,y)\asymp \frac{1}{|x-y|^{N-2s}} \left( \frac{\ph(x)}{|x-y|^{\gamma}} \land 1 \right) \left( \frac{\ph(y)}{|x-y|^\gamma} \land 1 \right).
	\end{equation}
	
	We will provide examples at the end of the paper, in Section \ref{sec7}, of operators for which these hypotheses hold. The primary examples include the three non-equivalent definitions of the fractional Laplacian on a bounded domain: the Restricted Fractional Laplacian (RFL), the Censored Fractional Laplacian (CFL), and the Spectral Fractional Laplacian (SFL).
	
	
	\subsection{The nonlinearity $F$}\label{sec2.2}
	One of the principal points of interest of this paper is the general nonlinearity $\n : \RR \longrightarrow \RR$. In the rest of this work, we always assume that $F$ is a continous function, non decreasing, with the normalizacion $\n(0)=0$ and the following hyphotesis:

\begin{equation}\label{N1}\tag{N1}\begin{split}
	\n \in C^1(\RR \setminus \{0\}), \frac{\n}{F'}\in \mbox{Lip}(\RR)\hspace{2mm}&\mbox{and there exists }0<\mu_0\leq \mu_1< 1 \mbox{ such that:}\\
	1-\mu_1&\leq \left( \frac{F}{F'} \right)'\leq 1 -\mu_0 \hspace{4mm}\mbox{in }\RR.
\end{split}
\end{equation}
	Where $F/F'$ is understood to vanish if $F(r)=F'(r)=0$ or $r=0$. We can alternatively express this property as follows.
	\begin{equation}\label{N2}\tag{N2}\begin{split}
		\n \in C^1(\RR \setminus \{0\}), F' \in \mbox{Lip}_{\mbox{\rm loc}}(\RR\setminus \{0\}) \hspace{2mm}&\mbox{and there exists } 0<\mu_0\leq \mu_1<1 \mbox{ such that:}\\
		\mu_0&\leq \frac{F F''}{(F')^2}\leq \mu_1 \hspace{4mm}\mbox{in }\RR.
	\end{split}
	\end{equation}
	Some consequences of these properties \eqref{N1} and \eqref{N2} can be found in Section \ref{sec3.1}. The main example of nonlinearity is $\n(u) = u |u|^{m-1}$ with $m > 1$, where $\mu_0 = \mu_1 = \frac{m-1}{m}$. This is why we denote $m_i = \frac{1}{1-\mu_i}$ or equivalently $\mu_i = \frac{m_i - 1}{m_i}$ for $i=0,1$. Another variation involves combining two powers, for example $\n(u)=u^4 + 2u^2$, one controls the behavior near $u=0$ and the other near $u=\infty$.
	
	
	\subsection{Minimal Weak Dual Solutions}\label{sec2.3}
	We are going to study properties of \solution, introduced by the first author and V\'azquez in \cite{MB+JLV_PM-with-F}. These solutions are obtained by approximation by monotone limits from below in terms of semigroup (or mild) solutions.

	\begin{defn}[\textbf{Mild Solutions}]\label{defmild}
		 A mild or semigroup solution of the problem \eqref{CDP} is a function $u\in C([0,\infty): \LL^1(\Omega))$, with $u(0,x)=u_0(x)$ that is obtained by Crandall-Liggett's method.
	\end{defn}
	Now, we describe briefly this method. Let the interval $[0,T]$ and $n\in \mathbb{N}$ large, we write for every $0\leq k\leq n$ the partition on time $t_k=\frac{k}{n}T$ and the distance between two consecutive times $h=t_{k+1}-t_k=\frac{T}{n}$. For all time $t\in [0,T]$ the mild solution $u(t,\cdot)$ is obtained as a $\LL^1(\Omega)$ limit of solutions $u_{k+1}= u(t_{k+1},\cdot)$ of the following elliptic equations
	$$\frac{T}{n}\A [F(u_{k+1})] + u_{k+1}= u_k,\hspace{4mm}\mbox{or}\hspace{4mm} \frac{u_{k+1}-u_k}{h}= -\A[F(u_{k+1})],$$
	where the data $u_k$ is known from the previous step. See the fully detailed proof in \cite{CRA+LIG}.
	\begin{thm}[\textbf{Crandall - Pierre \cite{CRA+PIE}}]\label{exismild}
		 Let $\A$ satisfy \eqref{A1} and \eqref{A2} and let $F$ satisfy \eqref{N1}. Then, for all initial datum $u_0\in \LL^1(\Omega)$ there exists a unique mild solution to the problem \eqref{CDP}. The semigroup is contractive in $\LL^1(\Omega)$, and futhermore, if $u$ the unique mild solution of \eqref{CDP} with initial datum $u_0\in \LL^p(\Omega)\subset \LL^1(\Omega)$, for $p\geq 1$. Then, $u(t)\in \LL^p(\Omega)$ for every $t>0$, more precisely,
		$$\left\|u(t)\right\|_{\LL^p(\Omega)}\leq \left\|u_0\right\|_{\LL^p(\Omega)}.$$
	\end{thm}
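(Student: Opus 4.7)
My plan is to construct the mild solution via the Crandall--Liggett exponential formula, by showing that the operator $B := \A\circ F$ is $m$-accretive in $\LL^1(\Omega)$. Given $h=T/n$ and $u_k\in\LL^1(\Omega)$, I would solve the implicit Euler step
\[
u_{k+1} + h\,\A F(u_{k+1}) = u_k,\qquad k=0,1,\dots,n-1,
\]
starting from $u_0$. To establish solvability of this elliptic equation I would substitute $w=F(u)$ with $G=F^{-1}$; by \eqref{N1} one has $F'>0$ on $\RR\setminus\{0\}$, so $F$ is strictly increasing and $G$ is well defined and monotone. To bypass the degeneracy $F'(0)=0$, I would regularize via $F_\varepsilon(u)=F(u)+\varepsilon u$ so that $G_\varepsilon$ becomes Lipschitz and strictly monotone, solve $G_\varepsilon(w)+h\A w = u_k$ by a Browder--Minty argument built on the $m$-accretivity of $\A$ from \eqref{A1}, and then pass to the limit $\varepsilon\to 0$.

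The second step is to prove that the resolvent $J_h v := (I+h\A F)^{-1}v$ is an order-preserving $\LL^1$-contraction. For two data $v,\tilde v$ with resolvents $u,\tilde u$, I would subtract the two equations and test against $\mathrm{sgn}(u-\tilde u)$. Since $F$ is strictly increasing, $\mathrm{sgn}(F(u)-F(\tilde u))=\mathrm{sgn}(u-\tilde u)$ a.e.\ on $\{u\ne\tilde u\}$, and a Kato-type inequality together with the sub-Markov property \eqref{A2} (Beurling--Deny criterion, giving $\int_\Omega \A\phi\cdot\mathrm{sgn}(\phi)\,\dx\ge 0$) yields
\[
\|u-\tilde u\|_{\LL^1(\Omega)} \le \|v-\tilde v\|_{\LL^1(\Omega)}.
\]
Order preservation is analogous, testing with the positive-part sign function. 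With $J_h$ thus a well-defined $\LL^1$-contraction whose range is all of $\LL^1(\Omega)$ for every $h>0$, the abstract Crandall--Liggett generation theorem delivers the contraction semigroup $S(t)u_0:=\lim_{n\to\infty}J_{t/n}^n u_0$ in $C([0,\infty);\LL^1(\Omega))$, which by the definition of Crandall--Liggett is the unique mild solution of \eqref{CDP}.

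For the $\LL^p$-estimate I would argue at the discrete level. Multiplying the implicit Euler identity by $|u_{k+1}|^{p-2}u_{k+1}$ (after a bounded truncation to make the pairing rigorous) and integrating gives
\[
\|u_{k+1}\|_{\LL^p}^p + h\int_\Omega \A F(u_{k+1})\,|u_{k+1}|^{p-2}u_{k+1}\,\dx \;\le\; \|u_k\|_{\LL^p}\,\|u_{k+1}\|_{\LL^p}^{p-1}.
\]
The key is the Stroock--Varopoulos inequality: sub-Markovianity \eqref{A2} forces $\int_\Omega \A\phi(r)\,\psi(r)\,\dx\ge 0$ whenever $\phi,\psi$ are non-decreasing with $\phi(0)\psi(0)=0$, which I would apply to $\phi=F$ and $\psi(r)=|r|^{p-2}r$. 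Hence the second term is non-negative, H\"older yields $\|u_{k+1}\|_{\LL^p}\le\|u_k\|_{\LL^p}$, and iteration followed by passage to the Crandall--Liggett limit gives $\|u(t)\|_{\LL^p}\le\|u_0\|_{\LL^p}$.

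The main obstacle will be rigorously justifying the first step: the degeneracy $F'(0)=0$ prevents a direct Browder--Minty argument on the bare equation, so the $\varepsilon$-regularization scheme must be closed by showing that the associated resolvents converge to a genuine solution of $u+h\A F(u)=u_k$ as $\varepsilon\to 0$. For this I would exploit the $\LL^1$-contraction of Step~2, which holds uniformly in $\varepsilon$, to extract a strong $\LL^1$ limit and identify it; once this is overcome, accretivity of $B$ plus the range condition follow automatically, and the remainder is standard nonlinear semigroup machinery.
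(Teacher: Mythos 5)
The paper itself does not prove Theorem \ref{exismild}: it is invoked as an external result, with the Crandall--Liggett scheme only sketched and the reader referred to \cite{CRA+LIG} and \cite{CRA+PIE} for the details. So your attempt is to be judged as a reconstruction of the Crandall--Pierre argument, and on that reading the skeleton you propose is indeed the one followed there: establish $m$-accretivity of $B=\A\circ F$ in $\LL^1(\Omega)$ by solving the resolvent equation and proving the resolvent is an order-preserving $\LL^1$-contraction, invoke Crandall--Liggett for generation of the semigroup, and obtain the $\LL^p$-nonexpansivity at the discrete level from sub-Markovianity of $\A$ (a Stroock--Varopoulos/Nash-type positivity applied with $\phi=F$ and $\psi(r)=|r|^{p-2}r$, which is legitimate once order-preservation of the resolvent has forced $u_{k+1}\ge 0$). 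This is the right route and matches the cited reference.

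The step that as written would not go through is the solvability of the resolvent equation by a ``Browder--Minty argument built on the $m$-accretivity of $\A$ from \eqref{A1}.'' Browder--Minty is a theorem about monotone coercive operators on a \emph{reflexive} Banach space (or a Hilbert space), and $\LL^1(\Omega)$ is not reflexive; moreover, hypothesis \eqref{A1} only places $\A$ as $m$-accretive in $\LL^1$, not as a bounded coercive monotone map on some reflexive space into its dual. The way Crandall--Pierre (and Bénilan--Crandall before them) actually close the range condition for $u+h\A F(u)=f$ is different: one either replaces $\A$ by its Yosida approximation $\A_\lambda=\A(I+\lambda\A)^{-1}$, which is globally Lipschitz on $\LL^1$, solves $u+h\A_\lambda F(u)=f$ by a Banach fixed-point/iteration argument, and passes $\lambda\to 0$ using the uniform $\LL^1$-contraction and the $\LL^\infty$ bound from sub-Markovianity; or one truncates $F$, works in $\LL^1\cap\LL^\infty$, and iterates. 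Your $\varepsilon$-regularization $F_\ve=F+\ve\cdot$ addresses the degeneracy $F'(0)=0$, which is a separate issue, but the surjectivity argument must be replaced by one of these $\LL^1$-compatible mechanisms before the Crandall--Liggett machinery is invoked. You do flag the range condition as the main obstacle, which is the correct diagnosis; the fix is to swap Browder--Minty for the Yosida/iteration scheme.
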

	
	Next, we recall the definition of weak dual solution used in \cite{MB+JLV_harnack-inequality, MB+JLV_PM-with-F}. This class of solutions  are expressed in terms of the inverse operator $\AI$ and encodes the Dirichlet boundary condition.

	\begin{defn}[\textbf{Weak Dual Solutions}]\label{defweakdual}
		 We say that a function $u$ is a weak dual solution of the Cauchy - Dirichlet problem \eqref{CDP} if:
		\begin{itemize}
			\item  $u\in C([0,\infty) : \LL^1_{\Phi_1}(\Omega))$ and $F(u)\in \LL^1 ((0,\infty) : \LL^1_{\Phi_1}(\Omega))$. Moreover, $u(0,x)= u_0\in \LL^1_{\Phi_1}(\Omega)$.
			\item For every test function $\psi$ such that $\psi/\Phi_1\in C^1_c ((0,\infty) : \LL^\infty (\Omega))$, the following identity holds
			\begin{equation}\label{weakdualfor}
				\int_0^\infty \int_{\Omega} \AI u \partial_t \psi = \int_0^\infty \int_{\Omega} F(u) \psi.
			\end{equation}
		\end{itemize}
	\end{defn}
	
	\begin{rem}\label{weakdualremark}\rm
		\renewcommand{\labelenumi}{\rm (\alph{enumi})}
		\begin{enumerate}
			\item We are considering the weak solution of the dual equation $\partial_t U = F(u)$, with $U=\AI u$. The equation is satisfied on $\Omega$ with homogeneous Dirichlet type boundary conditions, which are encoded in the inverse operator $\AI$. Note that the space $\LL^1_{\Phi_1}(\Omega)$ is bigger than $\LL^1(\Omega)$ and allows for non-integrable functions at the boundary of $\Omega$.
			\item Condition $\psi/\Phi_1\in C^1_c ((0,\infty) : \LL^\infty(\Omega))$ implies $\left\| \frac{\psi(t,\cdot)}{\Phi_1} \right\|_{\LL^\infty(\Omega)}<\infty$ and $\left\| \frac{\partial_t \psi(t,\cdot)}{\Phi_1} \right\|_{\LL^\infty(\Omega)}<\infty$ for all $t>0$. Moreover, these two functions have compact support in time, which lead us to conclude that they are in $\LL^1(0,\infty)$.
			\item Existence of weak dual solutions for \eqref{CDP} and ``uniqueness'' of the minimal ones (in the sense explained above) has been proven in \cite{MB+JLV_PM-with-F}. However, a uniqueness result is still missing. For the Cauchy problem on the whole space, the pioneering work \cite{PIE_uniq-sol} shows uniqueness for general measure data, when $\A=-\Delta$. In the case of RFL with general $F$ and measure data, uniquenes of distributional solutions has been proven \cite{GMP-Filtration}, and in \cite{DTEJ-uniq} for bounded integrable distributional solutions and a wide class of Levy operators. Uniqueness for Dirichlet-type problem remains a difficult open problem.
			\item If $u$ is the unique mild solution of the problem \eqref{CDP} with initial datum $u_0\in \LL^1(\Omega)$. Then, this $u$ is weak dual solution of \eqref{CDP} in the sense of Definition \ref{defweakdual}. This fact allow us to see the weak dual formulation \eqref{weakdualfor} as a property of mild solutions. The proof can be found in \cite[Proposition 7.2]{MB+JLV_PM-with-F}.
		\end{enumerate}
	\end{rem}
	In the rest of this section, we will show how to construct the \solution\, as the monotone limit from below of mild solutions. Such solutions satisfy Definition \ref{defweakdual}, and are unique, as we will see below.
	\begin{defn}[\textbf{Minimal Weak Dual Solutions}]\label{minimalweakdual}
		Let $0\leq u_0\in \LL^1_{\Phi_1}(\Omega)$ and choose any increasing sequence of bounded functions $\left\{ u_{0,n} \right\}_{n=0}^\infty$ that converge to $u_0$ in the topology of $\LL^1_{\Phi_1}(\Omega)$. We denote $u_n$ as the unique mild solution of the problem \eqref{CDP} with initial datum $u_{0,n}$ for all n, (Theorem \ref{exismild}). Then, we say that a \solution\,$u$ of \eqref{CDP} with initial datum $u_0$ is defined as the following monotone limit from below for $(t,x)\in ((0,\infty)\times \Omega)$
		$$u(t,x):= \lim\limits_{n\to \infty} u_n(t,x).$$
	\end{defn}
 Note that mild solutions are ordered by comparison $u_n\leq u_{n+1}$ in $(0,\infty)\times \Omega$. Hence, we have $\left\{ u_n(t,x) \right\}_{n=1}^\infty$ is a monotone increasing sequence for all $(t,x)\in (0,\infty)\times\Omega$. Therefore, there exists a pointwise limit $u(t,x)$, whose value, a priori, can be infinite. Nevertheless, it has been shown in \cite{MB+JLV_PM-with-F} that this limit converges uniformly. A standard procedure for choosing the increasing sequence is as follows: $\forall n\in \mathbb{N}$ define $u_{0,n}:= (u_0\land n)\in \LL^\infty(\Omega)$ and let $u_n$ the unique mild solution of problem \eqref{CDP} with initial datum $u_{0,n}$.

	\begin{thm}[\textbf{Existence Uniqueness and Properties of Minimal Weak Dual solutions \cite[Theorem 4.4 and Theorem 4.5]{MB+JLV_PM-with-F}}]\label{exisweakdual}
		Let $\A$ satisfy \eqref{A1}, \eqref{A2}, \eqref{K3} and let $F$ satisfy \eqref{N1}. Then, for every $0\leq u_0\in \LL^1_{\Phi_1}(\Omega)$ the following assertions are true:
		\renewcommand{\labelenumi}{\rm (\roman{enumi})}
		\begin{enumerate}
			\item There exists a unique $u$ \solution\,of \eqref{CDP} as we constructed in Definition \ref{minimalweakdual}. If we choose two differents increasing sequences of bounded functions $\left\{ u_{0,n} \right\}$, $\left\{ v_{0,n} \right\}$ and we denote $u_n$ and $v_n$ as the mild solutions of \eqref{CDP} with initial datum $u_{0,n}$ and $v_{0,n}$ respectively, then they converge to the same limit, i.e. $\lim\limits_{n\to \infty}u_n(t,x)=\lim\limits_{n\to \infty}v_n(t,x)$.
			\item For all $\tau>0$ we have that $u_n\to u$ as $n\to \infty$ in $\LL^\infty((\tau,\infty)\times \Omega)$.
			\item Minimal weak dual solution $u$ is a weak dual solution in the sense of Definition \ref{defweakdual}. In fact, $u\in C([0,\infty) : \LL^1_{\Phi_1}(\Omega))$ and identity \eqref{weakdualfor} holds. Moreover, the standard comparison result can be applied to \solution.
		\end{enumerate}
	\end{thm}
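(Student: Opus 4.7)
The plan is to establish the three statements sequentially by exploiting the monotonicity of the approximating scheme together with the smoothing (absolute upper bound) estimate already available for mild solutions. Starting from any sequence $0\le u_{0,n}\uparrow u_0$ of bounded approximations, Theorem \ref{exismild} produces unique mild solutions $u_n$. Because the Crandall--Liggett elliptic iteration is order preserving under the T-accretivity of $\A$ in $\LL^1$ (a consequence of \eqref{A1}--\eqref{A2}), one has $u_n\le u_{n+1}$ on $(0,\infty)\times\Omega$, so the pointwise limit $u(t,x)=\lim_n u_n(t,x)\in[0,\infty]$ exists by monotone convergence. The key ingredient that makes this limit meaningful is the absolute upper bound from \cite{MB+JLV_PM-with-F}, which gives $\|u_n(t)\|_{\LL^\infty(\Omega)}\le \mathcal{C}\bigl(t,\|u_0\|_{\LL^1_{\Phi_1}}\bigr)$ with a constant independent of $n$, since $\|u_{0,n}\|_{\LL^1_{\Phi_1}}\le\|u_0\|_{\LL^1_{\Phi_1}}$; in particular $u$ is finite and bounded on $(\tau,\infty)\times\Omega$ for every $\tau>0$, and a Dini-type argument applied to the monotone sequence of continuous $u_n$'s yields the $\LL^\infty$-convergence claimed in (ii).

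To prove the uniqueness part of (i), I take a second approximation $v_{0,n}\uparrow u_0$ in $\LL^1_{\Phi_1}$ with mild solutions $v_n$, and introduce the double truncation $w_{0,n,k}:=u_{0,n}\land v_{0,k}\in\LL^\infty(\Omega)$. Since $u_{0,n}$ is bounded and $v_{0,k}\uparrow u_0\ge u_{0,n}$ a.e., we have $w_{0,n,k}\uparrow u_{0,n}$ as $k\to\infty$; by the $\LL^1$-contractivity of the mild semigroup the corresponding mild solutions satisfy $w_{n,k}\to u_n$ in $\LL^1(\Omega)$ at each time, while pointwise comparison $w_{0,n,k}\le v_{0,k}$ gives $w_{n,k}\le v_k$. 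Letting first $k\to\infty$ and then $n\to\infty$ yields $u\le v$, and the opposite inequality follows by symmetry, so the monotone limit is independent of the chosen sequence. Comparison for minimal weak dual solutions is then a direct consequence: given $0\le u_0\le v_0$ in $\LL^1_{\Phi_1}$, the compatible truncations $u_{0,n}:=u_0\land n\le v_0\land n=:v_{0,n}$ propagate the inequality to the mild level and hence to the monotone limit.

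For (iii), each mild approximant $u_n$ satisfies the weak dual formulation by Remark \ref{weakdualremark}(d). Given any admissible test function $\psi$ with $\psi/\Phi_1\in C^1_c((0,\infty):\LL^\infty(\Omega))$, supported in $[\tau,T]\times\Omega$, the $\LL^\infty$-convergence of (ii) combined with the continuity of $F$ gives $F(u_n)\to F(u)$ uniformly on the support, so the right-hand side of \eqref{weakdualfor} passes to the limit. For the left-hand side, $\AI u_n(t,x)\to\AI u(t,x)$ by monotone convergence, and the upper bound in \eqref{K3} together with $u_n\le\|u(t)\|_{\LL^\infty}$ provides an integrable dominating kernel, so dominated convergence applies. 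Continuity of $u$ in $C([0,\infty):\LL^1_{\Phi_1}(\Omega))$ at positive times follows from the $\LL^1$-contraction of the mild semigroup and the monotone convergence of $u_n(t)\uparrow u(t)$ in $\LL^1_{\Phi_1}$ (using $\Phi_1\asymp d^\gamma$ and $u_n\le u$); at $t=0$ it reduces to the convergence $u_{0,n}\to u_0$ in $\LL^1_{\Phi_1}$ built into the approximation.

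The main obstacle is the interplay between the two aspects of (i): the independence of the monotone limit from the approximating sequence and its upgrade to an $\LL^\infty$ convergence away from $t=0$. Both rest on the absolute upper bound, which is the engine that converts a possibly badly behaved $\LL^1_{\Phi_1}$ datum into a bounded, continuous solution and lets the double-truncation comparison close; without it one can only assert pointwise a.e.\ convergence, and passing to the limit in \eqref{weakdualfor} (in particular in the nonlinear term $F(u_n)$) could not be justified. Everything else is a careful bookkeeping of monotone/dominated convergence arguments on top of the $\LL^1$-theory of mild solutions.
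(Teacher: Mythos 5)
This theorem is cited verbatim from \cite{MB+JLV_PM-with-F} (Theorems 4.4 and 4.5 there); the present paper does not reproduce a proof, so there is no internal argument against which to check your proposal line by line. Evaluated on its own terms, your strategy is the right one at a high level, and your double-truncation argument for sequence-independence in (i) is correct: monotonicity in $k$ of $w_{n,k}$ together with the $\LL^1$-contraction $\|w_{n,k}(t)-u_n(t)\|_{\LL^1(\Omega)}\le\|w_{0,n,k}-u_{0,n}\|_{\LL^1(\Omega)}\to 0$ gives $w_{n,k}\uparrow u_n$ a.e., and $w_{n,k}\le v_k\le v$ then yields $u\le v$, with symmetry concluding. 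The passage to the limit in \eqref{weakdualfor} in part (iii) is also sound once (ii) is in hand.

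The genuine gap is in your derivation of (ii). You invoke a ``Dini-type argument'' to upgrade the monotone pointwise convergence $u_n\uparrow u$ to $\LL^\infty\big((\tau,\infty)\times\Omega\big)$-convergence, using only uniform boundedness of $\{u_n\}$. This does not work: Dini's theorem requires the pointwise limit to be continuous, which is precisely what is not known a priori for $u$ (interior Hölder continuity is established much later, in Section \ref{sec6}, and only after positivity and the Harnack estimates are available). In addition, $(\tau,\infty)\times\Omega$ is not compact, and mild solutions $u_n$ are a priori only in $C([0,\infty):\LL^1(\Omega))$, not $C\big((0,\infty)\times\Omega\big)$, so even the continuity of the $u_n$ as functions of $(t,x)$ requires justification. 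Monotone bounded sequences of continuous functions can certainly converge non-uniformly, so uniform boundedness plus monotonicity is simply not enough. The mechanism behind (ii) is quantitative: a quasi-contraction in $\LL^1_{\Phi_1}(\Omega)$ combined with the $\LL^1_{\Phi_1}\to\LL^\infty$ smoothing effect (Theorem \ref{Smoothing}/Corollary \ref{SmoothingExplicit}) produces an explicit bound on $\|u_n(t)-u_m(t)\|_{\LL^\infty(\Omega)}$ for $t\ge\tau$ in terms of a positive power of $\|u_{0,n}-u_{0,m}\|_{\LL^1_{\Phi_1}(\Omega)}$, and this makes $\{u_n\}$ Cauchy in $\LL^\infty\big((\tau,\infty)\times\Omega\big)$. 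Without such a stability estimate, (ii) does not follow, and consequently the dominated-convergence passage in your proof of (iii) and the $t\to 0^+$ continuity in $C([0,\infty):\LL^1_{\Phi_1}(\Omega))$ (for which you would also need the weighted $\LL^1_{\Phi_1}$ estimate of Theorem \ref{WeightEstimates} applied uniformly in $n$) are left hanging.
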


The purpose of \solutions\,is to provide mild solutions the weak dual formulation \eqref{weakdualfor} as a property, because they are effective for proving existence and uniqueness but often challenging to obtain a priori estimates. This is why we introduce the weak dual formulation, which significantly aids in establishing these estimates. As we have seen, mild solutions are weak dual solutions, ensuring that the estimates for weak dual solutions hold for mild solutions too. Essentially, \solutions\,can be seen as an extension or a limit of mild solutions. These solutions exist, are unique for all non-negative initial data in $\LL^1_{\Phi_1}$ and satisfy the following properties:
	\begin{itemize}
		\item $u\in C( [0,\infty) : \LL^1_{\Phi_1}(\Omega))$
		\item For all test function $\psi$ such that $\psi/\Phi_1\in C^1_c ((0,\infty) : \LL^\infty(\Omega))$ we have
		$$\int_0^\infty \int_{\Omega} \AI u \partial_t \psi - \int_0^\infty \int_{\Omega} F(u) \psi =0.$$
		\item $u(t)\in \LL^p(\Omega)$ for all $t>0$ for some $p>\frac{N}{2s}$, in fact, they are in $\LL^\infty(\Omega)$.
	\end{itemize}
	
	
	\subsection{Main results}\label{sec2.4}
	
	Here, we present a summary of the most important estimates that we obtain along this paper for \solutions\,of \eqref{CDP}, these results are various forms of upper and lower bounds which we call Global Harnack Principle, (GHP) for short, namely Theorems \ref{GHPI}, \ref{GHPII} and \ref{GHPIII}. As a consequence, we can prove interior regularity estimates, Theorem \ref{interiorregularity}.
	
	Our initial result provides a quantitative estimate for $F(u)$ that is valid for a wide range of operators, including the classical Laplacian, for which, due to the finite speed of propagation, the lower bound can only be true after a waiting time $t_*$. For this reason the validity of such general estimates is restricted to ``large times''. We shall see that if we restrict to purely nonlocal operators, we can extend the estimates to all $t>0$.
	\begin{thm}[\textbf{GHP I}]\label{GHPI}
		Let $\A$ satisfy \eqref{A1}, \eqref{A2} and \eqref{K2}, with $\sigma_1=\left(1\land \frac{2sm_1}{\gamma (m_1 -1)}\right)=1$, let $F$ satisfy \eqref{N1} and let $u$ be the \solution\,of \eqref{CDP} with initial datum $0\leq u_0\in \LL^1_{\Phi_1}(\Omega)$. Then, for all $t>0$ large enough, i.e., $t\geq t_*= c_*\left( \|u_0\|_{\LL^1_{\Phi_1}(\Omega)}^{-(m_1-1)} \lor \|u_0\|_{\LL^1_{\Phi_1}(\Omega)}^{-(m_0-1)} \right)$, there exists constants $\underline{\kappa_1}$ and $\overline{\kappa_2}$ such that
		
		\begin{equation}\label{GHPineq1}
			\underline{\kappa_1} \frac{\Phi_1(x)}{t^{\frac{m_0}{m_0-1}}}\leq F(u(t,x))\leq \overline{\kappa_2} \frac{\Phi_1(x)}{t^{\frac{m_1}{m_1-1}}}.
		\end{equation}
		Constants $\underline{\kappa_1}$, $\overline{\kappa_2}$ and $c_*$ only depends on N, s, $\gamma$, $m_i$, F, $\Omega$ and $\lambda_1$.
	\end{thm}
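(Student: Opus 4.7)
The plan is to prove the two bounds of \eqref{GHPineq1} separately, tracking the two exponents $m_0,m_1$ that encode the non-homogeneity of $F$.

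For the upper bound $F(u(t,x))\le \overline{\kappa_2}\,\Phi_1(x)/t^{m_1/(m_1-1)}$, I would proceed in two steps. First, from the weak dual formulation $\partial_t(\AI u)=-F(u)$ together with hypothesis \eqref{N1} applied to $F/F'$, one derives a B\'enilan--Crandall style time-monotonicity estimate of the form $t\,\partial_t F(u)+\tfrac{m_1}{m_1-1}F(u)\ge 0$ in the appropriate weak dual sense; this translates into the pointwise inequality $F(u(t,x))\le \tfrac{m_1}{m_1-1}\,(\AI u)(t,x)/t$. Second, combining the Green function upper bound in \eqref{K2} with the absolute smoothing effect $\|u(t)\|_{\LL^\infty(\Omega)}\le C\,t^{-1/(m_1-1)}$ already proved in \cite{MB+JLV_PM-with-F}, one obtains $(\AI u)(t,x)\le c\,\Phi_1(x)\|u(t)\|_{\LL^\infty}$. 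The assumption $\sigma_1=1$ is precisely what guarantees that the exponent $\gamma$ on $\Phi_1$ survives in this Green-function-type estimate uniformly up to $\partial\Omega$, producing the claimed boundary profile.

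For the lower bound $F(u(t,x))\ge\underline{\kappa_1}\,\Phi_1(x)/t^{m_0/(m_0-1)}$, I would adapt the constructive contradiction argument of \cite{MB+AF+XR_positivity-and-regularity,MB+AF+JLV_sharp-global-estimates}. Two ingredients come into play: (i) a lower bound on the weighted $\LL^1$ norm of the form $\int_\Omega u(t,x)\Phi_1(x)\dx\ge c\,t^{-1/(m_0-1)}$ valid for $t\ge t_*$, which I expect to follow from the weighted $\LL^1$ estimates developed in Section \ref{sec4}, together with (ii) the upper bound just established. Assume by contradiction that there exist $t_0\ge t_*$ and $x_0\in\Omega$ for which $F(u(t_0,x_0))<\varepsilon\,\Phi_1(x_0)/t_0^{m_0/(m_0-1)}$ with $\varepsilon$ small. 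Propagating this smallness via the dual equation $\partial_t(\AI u)=-F(u)$ and the Green function bound \eqref{K2}, one shows that the weighted $\LL^1$ norm of $u(t)$ must lie below the lower bound in (i), which is the desired contradiction; this simultaneously fixes the threshold $t_*$ in terms of $\|u_0\|_{\LL^1_{\Phi_1}}$.

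The main obstacle is the non-homogeneity of $F$. In the pure power case $F(u)=u^m$ the equation enjoys a one-parameter scaling symmetry $(t,x,u)\mapsto(\lambda^{m-1}t,x,\lambda u)$ which makes both the time-monotonicity and the contradiction step essentially automatic. Here one must instead argue directly from the two-sided bound $\mu_0\le FF''/(F')^2\le\mu_1$ furnished by \eqref{N1}, which naturally splits the analysis into a ``small values'' regime driven by $m_0$ and a ``large values'' regime driven by $m_1$. Managing these two regimes coherently, and in particular identifying the correct waiting time $t_*\asymp\|u_0\|_{\LL^1_{\Phi_1}}^{-(m_1-1)}\vee\|u_0\|_{\LL^1_{\Phi_1}}^{-(m_0-1)}$ so that the contradiction closes uniformly, is the technically delicate part of the argument.
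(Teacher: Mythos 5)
Your upper-bound argument has a genuine gap. You decompose the estimate into
$F(u(t,x))\lesssim t^{-1}(\AI u)(t,x)$ followed by $(\AI u)(t,x)\lesssim\Phi_1(x)\,\|u(t)\|_{\LL^\infty}$, and the second inequality rests on $\int_\Omega\K(x,y)\,\dy\lesssim\Phi_1(x)$, which by the Green estimates in Lemma \ref{GestimatesI} only holds when $\gamma<2s$. The hypothesis $\sigma_1=1$, however, merely requires $\gamma\le\frac{2sm_1}{m_1-1}$, a quantity strictly larger than $2s$; in the range $2s\le\gamma\le\frac{2sm_1}{m_1-1}$ one gets $\int_\Omega\K(x,y)\,\dy\asymp\Phi_1(x)^{2s/\gamma}$, which is strictly larger than $\Phi_1(x)$ near $\partial\Omega$, so your chain only yields $F(u)\lesssim\Phi_1^{2s/\gamma}/t^{m_1/(m_1-1)}$, weaker than the claim. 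The paper's Theorem \ref{UpperBehaviour} does not bound $\AI u$ by $\Phi_1\|u\|_\infty$; it feeds the a priori information $u^{m_1}\lesssim F(u)\le k_0\AI u$ into the nonlinear Green estimate of Lemma \ref{GestimatesII}, a self-improving bootstrap that recovers the clean exponent $\Phi_1^{\sigma_1}$ on the full range $\gamma<\frac{2sm_1}{m_1-1}$ and simultaneously produces the exponent $k_0^{m_1/(m_1-1)}$, hence the correct time power $t^{-m_1/(m_1-1)}$. Without that lemma your argument proves a strictly weaker statement when $\gamma\ge 2s$.

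The lower-bound sketch also diverges from what actually works. Theorem \ref{LowerBoundI} is not a contradiction argument: it derives a \emph{Fundamental Lower Bound} of the form $t^{m_0/(m_0-1)}F(u(t,x_0))\gtrsim t_0^{1/(m_0-1)}\int_\Omega u(t_0,x)\K(x,x_0)\,\dx$ directly from the two-sided dual pointwise estimate of Proposition \ref{pointwiseestimates} (choosing $t_1$ large via the upper estimate of Remark \ref{remark}), then inserts the kernel lower bound $\K(x,x_0)\ge c_0\Phi_1(x)\Phi_1(x_0)$ from \eqref{K2} and the weighted $\LL^1$ near-conservation $\int_\Omega u(t_0)\Phi_1\ge\frac12\int_\Omega u_0\Phi_1$ of Corollary \ref{corWE}. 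Two ingredients you invoke are off: the weighted $\LL^1$ estimates of Section \ref{sec4} give a \emph{time-independent} lower bound $\int u(t)\Phi_1\ge\frac12\int u_0\Phi_1$ for $t$ up to a threshold governed by $\|u_0\|_{\LL^1_{\Phi_1}}$, not a decaying bound $\gtrsim t^{-1/(m_0-1)}$; and the proposed contradiction does not close because pointwise smallness of $F(u(t_0,\cdot))$ at a single $x_0$ does not control the global weighted $\LL^1$ norm. The barrier/contradiction machinery you cite from \cite{MB+AF+XR_positivity-and-regularity, MB+AF+JLV_sharp-global-estimates} is what the paper uses for the all-times positivity Theorems \ref{positivity1}--\ref{positivity22} under the additional kernel assumptions \eqref{L1} or \eqref{L2}; Theorem \ref{GHPI} needs only \eqref{K2} precisely because it admits this shorter direct proof after the waiting time $t_*$.
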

	
	We now present several results that provides us with upper and lower bounds valid for all positive times: the class of operators to which these result apply is ``purely'' non-local, i.e., local operators do not belong to this class. More precisely we shall assume \eqref{L1} in Theorem \ref{GHPII} or \eqref{L2} in Theorem \ref{GHPIII}. A relevant difference between Theorem \ref{GHPII} and Theorem \ref{GHPIII} is that in the first theorem the powers that describe the boundary behaviour (dictated by the first eigenfunction $\Phi_1$) are matching, while in the latter they are not, we shall devote further comments to this issue below.
	\begin{thm}[\textbf{GHP II}]\label{GHPII}
		Let $\A$ satisfy \eqref{A1}, \eqref{A2}, \eqref{K2} and \eqref{L1}, let F satisfy \eqref{N1} and let $u$ be the \solution\,of \eqref{CDP} with initial datum $0\leq u_0\in \LL^1_\ph(\Omega)$. Assume either $\sigma_1=1$ or $\sigma_1<1$, $K(x,y)<c_1|x-y|^{-(N+2s)}$ and $\ph\in C^\gamma(\overline{\Omega})$, then there exists positive constants $\underline{\kappa_3}$ and $\overline{\kappa_4}$ which only depends on N,s,$m_i$, F, $\gamma$, $\Omega$ and $\lambda_1$ such that:
		\renewcommand{\labelenumi}{\rm (\roman{enumi})}
		\begin{enumerate}
			\item If $\|u_0\|_{\LL^1_{\Phi_1}(\Omega)}\leq 1$, or equivalently $t_*=c_*\|u_0\|_{\LL^1_{\Phi_1}(\Omega)}^{-(m_1-1)}$, then for any $t>0$ and a.e. $x\in \Omega$ we have
			\begin{equation}\label{GHPineq2}
				\underline{\kappa_3}\left( 1 \land \frac{t}{t_*} \right)^{\frac{m_1^2}{m_1-1}} \frac{\ph^{\sigma_1}(x)}{t^{\frac{m_j}{m_j-1}}}\leq F(u(t,x)) \leq \overline{\kappa_4} \frac{\Phi_1(x)^{\sigma_1}}{t^{\frac{m_i}{m_i-1}}}.
			\end{equation}
			Where $i=1$ $j=0$ if $t\geq t_*$ and $i=0$ $j=1$ if $t\leq t_*$.
			\item If $\|u_0\|_{\LL^1_{\Phi_1}(\Omega)}>1$, or equivalently $t_*=c_*\|u_0\|_{\LL^1_{\Phi_1}(\Omega)}^{-(m_0-1)}$, then for $t>0$ and $x\in \Omega$
			\begin{equation}\label{GHPineq3}
				\underline{\kappa_3} \left( \left[ \frac{t}{t_*} \right]^{m_1} \land \left[ \frac{t_*}{t} \right]^{\frac{m_0}{m_0-1}} \right)  \Phi_1(x)^{\sigma_1}\leq F(u(t,x)) \leq \overline{\kappa_4} \frac{\Phi_1(x)^{\sigma_1}}{t^{\frac{m_i}{m_i-1}}}.
			\end{equation}
			Where $i=1$ if $t\geq t_*$ and $i=0$ if $t\leq t_*$.
		\end{enumerate}
		
	\end{thm}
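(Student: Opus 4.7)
The new element compared to Theorem \ref{GHPI} is hypothesis \eqref{L1}, which encodes infinite speed of propagation via the strict positivity $K(x,y)\geq k_\Omega > 0$ of the kernel of $\A$, and allows both bounds to be extended from $t \geq t_*$ (as in GHP I) to arbitrarily small $t > 0$ with explicit power-type corrections. I would split the argument into (a) the universal upper bound, and (b) the two cases of the lower bound, handling $t \geq t_*$ and $0 < t \leq t_*$ separately in each.

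\textbf{Upper bound.} The absolute $\LL^\infty$ smoothing estimates of \cite{MB+JLV_PM-with-F} give $\|F(u(t))\|_\infty \lesssim t^{-m_i/(m_i-1)}$ with $i=1$ for $t \geq t_*$ and $i=0$ for $t \leq t_*$. To inject the boundary factor $\Phi_1(x)^{\sigma_1}$, I start from the dual integrated equation $\AI u(t,x) + \int_0^t F(u(\tau,x))\,d\tau = \AI u_0(x)$, which together with $\AI u(t)\geq 0$ yields
\[
\int_0^t F(u(\tau,x))\,d\tau \leq \int_\Omega \K(x,y)\,u_0(y)\,dy.
\]
A Benilan--Crandall type time monotonicity valid under \eqref{N1} (derived via the implicit Euler scheme, recalled in Section \ref{sec3}) then converts the time integral into a pointwise estimate on $F(u(t,x))$, and the Green function upper bound in \eqref{K2} gives the factor $\Phi_1(x)^{\sigma_1}$ via a Riesz-potential calculation on $\int \K(x,y) u_0(y)\,dy$. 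In the borderline regime $\sigma_1 < 1$, the extra hypotheses $K(x,y) \leq c_1 |x-y|^{-(N+2s)}$ and $\Phi_1 \in C^\gamma(\overline{\Omega})$ enter precisely at this last step, to control the singular part of the Green function together with a H\"older bound on $\Phi_1$.

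\textbf{Lower bound.} For $t \geq t_*$ and $\sigma_1 = 1$, Theorem \ref{GHPI} gives the result directly; for $\sigma_1 < 1$, the same constructive proof-by-contradiction strategy of \cite{MB+AF+XR_positivity-and-regularity, MB+AF+JLV_sharp-global-estimates} produces the bound with $\Phi_1^{\sigma_1}$ in place of $\Phi_1$, by using the correspondingly weaker pointwise lower bound on $\K$. The novel content is the range $0 < t \leq t_*$, handled by exploiting \eqref{L1}. In case (i), $\|u_0\|_{\LL^1_{\Phi_1}(\Omega)} \leq 1$, I would transport the known lower bound at $t = t_*$ backward to $t$ using a reverse time monotonicity consistent with \eqref{N1}, tracking the two non-homogeneous exponents to produce the factor $(t/t_*)^{m_1^2/(m_1-1)}$. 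In case (ii), $\|u_0\|_{\LL^1_{\Phi_1}(\Omega)} > 1$, the initial data are already large, so instead of transporting backward one propagates positivity forward from $t = 0$: combining \eqref{L1} with the mild (Crandall--Liggett) scheme forces each time step to contribute a nonlocal term bounded below by $k_\Omega$ times an $\LL^1(\Omega)$ norm of $F(u)$, which, via the weighted $\LL^1$ estimates of Section \ref{sec4}, produces the growth rate $(t/t_*)^{m_1}$.

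\textbf{Main obstacle.} The non-homogeneity of $F$ is the core difficulty: the power rescaling $u_\lambda(t,x) = \lambda u(\lambda^{m-1} t, x)$ available for the classical FPME is not at hand, so every estimate that in the homogeneous case collapses to a single power law must here be tracked with both exponents $m_0, m_1$ and glued across $t = t_*$ and across $\|u_0\|_{\LL^1_{\Phi_1}(\Omega)}$ smaller or larger than one. The main technical hurdle I foresee is the derivation of a quantitative time monotonicity in both directions under the two-sided control \eqref{N2} of $FF''/(F')^2$: this is the quantitative bridge between the absolute $\LL^\infty$ estimates and the pointwise Harnack-type bounds, and it is where the sharp exponents $m_1^2/(m_1-1)$ and $m_1$ appearing in the lower bounds must originate.
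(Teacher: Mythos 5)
Your overall outline (universal upper bound plus lower bounds split into two time regimes and two data regimes) matches the paper's structure, and you correctly identify that \eqref{L1} is the mechanism for extending positivity to all $t>0$. However, there are two genuine errors in where the main work is done.

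First, the extra hypotheses $K(x,y)\leq c_1|x-y|^{-(N+2s)}$ and $\Phi_1\in C^\gamma(\overline{\Omega})$ in the case $\sigma_1<1$ do \emph{not} enter the upper bound: Theorem \ref{UpperBehaviour} needs only \eqref{A1}, \eqref{A2}, \eqref{K2}, \eqref{N1}. They are used in the \emph{lower} bound, specifically in Step 1.2 of the proof of Theorem \ref{positivity1}, to bound $\A F(\Psi(t_c,x_c))$ from above when $\Phi_1^{\sigma_1}$ is only H\"older (estimate \eqref{posIec4}). Attributing them to the Green-function upper bound misplaces them entirely.

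Second, and more fundamentally, the ``reverse time monotonicity'' you propose for $0<t\leq t_*$ in case (i) does not exist. Benilan--Crandall (Lemma \ref{benilancrandall}) gives $t^{\frac{m_0}{m_0-1}}F(u(t,x))\geq\tau^{\frac{m_0}{m_0-1}}F(u(\tau,x))$ for $\tau\leq t$, which transports a lower bound at $\tau$ \emph{forward} to $t$; applied in the direction you describe (from $t=t_*$ back to $t<t_*$) it yields only an upper bound on $F(u(t,\cdot))$, not a lower bound, and no analogous estimate valid under \eqref{N1} goes the other way. The small-time lower bound is instead the technical heart of the proof and is obtained by the barrier/contradiction method: one builds the explicit lower barrier $\Psi(t,x)=F^{-1}\!\left(K_0^{m_1}t^{m_1}\Phi_1(x)^{\sigma_1}\right)$ with $K_0=k\|u_0\|_{\LL^1_{\Phi_1}}^{m_1}$ in case (i) and $K_0=k\|u_0\|_{\LL^1_{\Phi_1}}^{m_0-1}$ in case (ii), compares it to the approximate solutions $u_\delta\geq\delta$ of \eqref{AP1}, and at the first touching point uses \eqref{L1} to bound $-\A[F(u_\delta)-F(\Psi)]$ from below by $k_\Omega\int_\Omega F(u_\delta)\,dy$; the contradiction comes from the weighted $\LL^1$ lower bound on $\int F(u_\delta)\Phi_1$ (Lemmas \ref{propiedadesudelta3} and \ref{propiedadesudelta4}). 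The exponent $m_1^2/(m_1-1)$ in case (i) is read off from the barrier after substituting $t_*=c_*\|u_0\|^{-(m_1-1)}$, and the exponent $m_1$ in case (ii) from $t_*=c_*\|u_0\|^{-(m_0-1)}$; large times then follow forward by Benilan--Crandall from $t=t_*$. You mention the constructive contradiction strategy, but only in passing for the $t\geq t_*$, $\sigma_1<1$ regime — whereas it is precisely the engine for the entire range $t\in(0,t_*]$ that your proposed ``backward transport'' cannot reach.
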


\medskip

In the case when we allow $\A$ to have a degenerate kernel at the boundary $\partial\Omega$, i.e. assumption \eqref{L2}, our sharp result -- with non matching power in general-- reads as follows.

\medskip

	\begin{thm}[\textbf{GHP III}]\label{GHPIII}
		Let $\A$ satisfy \eqref{A1}, \eqref{A2}, \eqref{K2} and \eqref{L2}, let F satisfy \eqref{N1} and let $u$ the \solution\,of \eqref{CDP} with initial datum $0\leq u_0\in \LL^1_{\ph(\Omega)}$. Then, there exists positive constants $\underline{\kappa_5}$ and $\overline{\kappa_6}$ which only depends on N,s,$m_i$, F, $\gamma$, $\Omega$ and $\lambda_1$ such that:
		\renewcommand{\labelenumi}{\rm (\roman{enumi})}
		\begin{enumerate} 	
			\item If $\|u_0\|_{\LL^1_{\Phi_1}(\Omega)}\leq 1$, or equivalently $t_*=c_*\|u_0\|_{\LL^1_{\Phi_1}(\Omega)}^{-(m_1-1)}$, then for any $t>0$ and a.e. $x\in \Omega$ we have
			\begin{equation}\label{GHPineq4}
				\underline{\kappa_5} \left( 1 \land \frac{t}{t_*} \right)^{\frac{m_1^2}{m_1-1}} \frac{\ph(x)^{m_1}}{t^{\frac{m_j}{m_j-1}}}\leq F(u(t,x))\leq \overline{\kappa_6} \frac{\Phi_1(x)^{\sigma_1}}{t^{\frac{m_i}{m_i-1}}}.
			\end{equation}
			Where $i=1$ $j=0$ if $t\geq t_*$ and $i=0$ $j=1$ if $t\leq t_*$.
			\item If $\|u_0\|_{\LL^1_{\Phi_1}(\Omega)}> 1$, or equivalently $t_*=c_*\|u_0\|_{\LL^1_{\Phi_1}(\Omega)}^{-(m_0-1)}$, then for $t>0$ and a.e. $x\in \Omega$ we have
			\begin{equation}\label{GHPineq5}
				\underline{\kappa_5} \left( \left[ \frac{t}{t_*} \right]^{m_1} \land \left[ \frac{t_*}{t} \right]^{\frac{m_0}{m_0-1}} \right)  \Phi_1(x)^{m_1}\leq F(u(t,x))\leq \overline{\kappa_6} \frac{\Phi_1(x)^{\sigma_1}}{t^{\frac{m_i}{m_i-1}}}.
			\end{equation}
			Where $i=0$ if $t\leq t_*$ and $i=1$ if $t\geq t_*$.
		\end{enumerate}
	\end{thm}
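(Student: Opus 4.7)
The upper bounds in \eqref{GHPineq4} and \eqref{GHPineq5} require only \eqref{K2} (not \eqref{L2}), so I would simply reuse the argument of Theorem \ref{GHPII}: the $\LL^\infty$-smoothing effect together with the Benilan--Crandall time monotonicity, combined with the boundary decay estimates recalled in Section \ref{sec3}, deliver
$$F(u(t,x))\leq\overline{\kappa_6}\,\frac{\Phi_1(x)^{\sigma_1}}{t^{m_i/(m_i-1)}},$$
with $i=1$ in the $\mu_1$-regime $t\geq t_*$ and $i=0$ in the $\mu_0$-regime $t\leq t_*$. Since $\sigma_1\leq 1\leq m_1$, this upper bound is strictly larger than the claimed lower bound near $\partial\Omega$, which is why the exponents on $\Phi_1$ cannot match.

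For the lower bound, my first observation is that integrating the two-sided control \eqref{N1} (equivalently $(1-\mu_1)\leq (F/F')'\leq (1-\mu_0)$) yields the pointwise comparison
$$c_F v^{m_1}\leq F(v)\leq C_F v^{m_0}\qquad\mbox{for }0\leq v\leq 1,$$
and the exponent $m_1$ on $\Phi_1$ in \eqref{GHPineq4}--\eqref{GHPineq5} comes precisely from the left half: once I can prove a linear-in-$\Phi_1$ near-boundary lower bound $u(t,x)\geq C(t,t_*)\Phi_1(x)$, the pointwise estimate $F(v)\geq c_F v^{m_1}$ delivers $F(u(t,x))\geq c_F C(t,t_*)^{m_1}\Phi_1(x)^{m_1}$ at once. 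Hence the task reduces to establishing $u(t,x)\gtrsim \Phi_1(x)\,C(t,t_*)$ with $C(t,t_*)$ matching the time profiles in the two cases.

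I plan to prove this linear lower bound via the constructive contradiction scheme introduced in \cite{MB+AF+XR_positivity-and-regularity} and refined in \cite{MB+AF+JLV_sharp-global-estimates}, combined with the weighted $\LL^1$ estimates to be developed in Section \ref{sec4}. Those give a lower bound $\|u(t)\|_{\LL^1_{\Phi_1}}\gtrsim H(t,t_*)\|u_0\|_{\LL^1_{\Phi_1}}$; the dichotomy $\|u_0\|_{\LL^1_{\Phi_1}}\lessgtr 1$ translates into the two different expressions of $t_*$ in the statement, hence into cases (i) and (ii). Using \eqref{K2} in the form $\K(x,y)\geq c_0\Phi_1(x)\Phi_1(y)$ together with the weak dual identity $\partial_t\AI u=-F(u)$, I obtain the pointwise dual bound $\AI u(t,x)\gtrsim \Phi_1(x)\,H(t,t_*)\|u_0\|_{\LL^1_{\Phi_1}}$. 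Assuming, by contradiction, that $F(u(t_0,x_0))$ is smaller than the claimed lower bound by a factor $\delta\ll 1$ at some $x_0$ near $\partial\Omega$, I would propagate this smallness to the backward interval $[t_0/2,t_0]$ using the upper bound together with a Benilan--Crandall-type almost-monotonicity of $\tau\mapsto \tau^\alpha F(u(\tau,x_0))$, time-integrate $\partial_t\AI u=-F(u)$ on $[t_0/2,t_0]$, and contradict the lower bound on $\AI u(t_0/2,x_0)$. The role of \eqref{L2} is precisely to furnish, through the boundary source term $\int K(x_0,y)F(u(y))\dy\gtrsim d(x_0)^\gamma\|F(u)\|_{\LL^1_{\Phi_1}}$, the linear-in-$\Phi_1$ growth needed to close the contradiction.

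The main obstacle will be the non-homogeneity of $F$: unlike the power case $F(u)=u^m$, I cannot rescale, and the Benilan--Crandall-type monotonicity for $\tau\mapsto \tau^\alpha F(u(\tau,x_0))$ takes a different exponent $\alpha$ in the small-$u$ regime (governed by $m_0$) and the large-$u$ regime (governed by $m_1$); in parallel, the weighted $\LL^1$ estimate also changes form at the threshold $\|u_0\|_{\LL^1_{\Phi_1}}=1$. This forces four distinct subcase analyses (two data sizes times two time regimes), each producing its own expression for $H(t,t_*)$ and $C(t,t_*)$, which is precisely the origin of the intricate $(1\land t/t_*)^{m_1^2/(m_1-1)}$ and $[(t/t_*)^{m_1}\land (t_*/t)^{m_0/(m_0-1)}]$ factors appearing in the statement.
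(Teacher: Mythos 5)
Your upper-bound plan is correct and matches the paper: Theorem~\ref{UpperBehaviour} only needs \eqref{K2}, so \eqref{UpperBehaviour1} gives $F(u(t,x))\leq k_3\,\Phi_1(x)^{\sigma_1}/t^{m_i/(m_i-1)}$ for all $t>0$ with no use of \eqref{L2}.

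Your lower-bound plan, however, has a real gap. You reduce the claim to proving a \emph{linear} near-boundary estimate $u(t,x)\gtrsim \Phi_1(x)\,C(t,t_*)$ and then apply $F(v)\gtrsim v^{m_1}$. But this reduction asks for strictly more than the theorem states. The paper's proof (Theorems~\ref{positivity2} and \ref{positivity22}) constructs the barrier
$\Psi(t,x)=F^{-1}\bigl((K_0\,t\,\Phi_1(x))^{m_1}\bigr)$
and proves $F(u_\delta)\geq F(\Psi)=(K_0 t\Phi_1)^{m_1}$ directly; by Lemma~\ref{Nonlin}, $\Psi$ sits between $(K_0 t\Phi_1)^{m_1/m_0}$ and $K_0 t\Phi_1$, with the upper comparison (Remark~\ref{obser1}\,(d)) giving $\Psi\lesssim K_0 t\Phi_1$, not the lower one. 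Thus the paper's conclusion $u\geq\Psi$ only yields $u\gtrsim\Phi_1^{m_1/m_0}$, not $u\gtrsim\Phi_1$; when $m_0<m_1$ those are genuinely different, and the theorem does not claim $u\gtrsim\Phi_1$. Your reduction therefore targets an unestablished (and possibly false) intermediate statement.

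Moreover, the specific shape of the barrier is not cosmetic: the key computation \eqref{posIIec3} controls $\A F(\Psi)$ via the Kato-type inequality $\A(f^{m_1})\leq m_1 f^{m_1-1}\A f$, which applies precisely because $F(\Psi)$ is a clean power of $\Phi_1$, namely $(K_0 t)^{m_1}\Phi_1^{m_1}$. If you instead take the linear barrier $\Psi=K_0 t\Phi_1$, then $F(\Psi)=F(K_0 t\Phi_1)$ is no longer a power of $\Phi_1$ for inhomogeneous $F$, the Kato step fails, and you have no tractable bound on $\A F(\Psi)$. This is exactly where the non-homogeneity of $F$ bites, and it is why the paper works on $F(u)$ rather than on $u$.

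Finally, the contradiction mechanism you describe, propagating smallness backward in time and integrating $\partial_t\AI u=-F(u)$ on $[t_0/2,t_0]$, is the engine of the large-time Theorem~\ref{GHPI} (Section~\ref{sec5.1}), not of the all-time positivity in Theorem~\ref{GHPIII}. For small $t<t_*$ the paper does something structurally different: it introduces the approximate solutions $u_\delta$ of \eqref{AP1}, shows $F(\Psi)<F(u_\delta)$ on $[0,t_*]\times\overline\Omega$ by a first-touching-point argument, uses \eqref{L2} and the weighted $\LL^1$ lower bound of Lemma~\ref{propiedadesudelta3}/\ref{propiedadesudelta4} to contradict the touching, and only then lets $\delta\to0^+$. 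The time-integration of the dual equation alone cannot produce positivity at arbitrarily small times because $u_0$ may be compactly supported far from the boundary, so $\AI u_0$ gives no pointwise lower bound at $t=0$. You would need to incorporate the pointwise barrier comparison via $u_\delta$; as written, the proposed scheme does not close the argument for $0<t<t_*$.

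Your identification of the origin of the time factors $(1\wedge t/t_*)^{m_1^2/(m_1-1)}$ and $[(t/t_*)^{m_1}\wedge(t_*/t)^{m_0/(m_0-1)}]$ from the two data sizes and the weighted $\LL^1$ dichotomy is correct in spirit, and the observation that $\sigma_1<m_1$ makes the exponents unmatched is accurate.
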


The proof of the above theorems is obtained by combining upper and lower bounds in Section \ref{sec3} and Section \ref{sec5}.

\begin{rem}[Sharpness of the main results]\label{rmk.sharpness}~\\[-8mm] \rm
\begin{enumerate}[leftmargin=*]
\item \textit{Theorem \ref{GHPI}: a general result. }Observe that Theorem \ref{GHPI} is only valid for large times,  i.e.,  after a waiting time $t\geq t_*$. However, this result has wider applicability since it only requires assumption \eqref{K2} on the Green function: indeed, it holds for both local and nonlocal operators, providing a new/alternative proof in the local case, where finite speed of propagation prevents from having the bounds for arbitrarily small times. Remember that if $\A=(-\Delta)$ the classical Laplacian, the Porous Medium equation exhibits finite speed of propagation, cf \cite{ARO+PEL}. Additionally, the bound is valid for a more general class of solutions, see Definition \ref{S}.  As a consequence,  in general it is unavoidable to have the waiting time $t_*$: roughly speaking, $t_*$ estimates the time required to ``fill the hole'' left by the initial data $u_0$, which may be concentrated near the boundary of $\Omega$, leaving a region in the domain's interior where it vanishes. Consequently, when $\|u_0\|_{\LL^1_{\Phi_1}(\Omega)}$ is small, then $t_*$ has to be large due to the prolonged time needed to fill this hole. This result is therefore sharp since it is the only possible if we want it to hold for the complete class of operators $\A$ that we consider.
\item
\textit{The sharp nonlocal results of Theorems \ref{GHPII} and \ref{GHPIII} and infinite speed of propagation. }These results establish the positivity of \solutions\,for all positive times, exploiting the non-local property of the operator. A quantitative lower bound is shown, indicating infinite speed of propagation for these equations, as it was proved in \cite{MB+AF+XR_positivity-and-regularity} for the first time, in the case when $F(u)=u^m$ and $\A$ is the RFL, see also \cite{MB+AF+JLV_sharp-global-estimates} for the case of more general $\A$. Note that, in Theorems \ref{GHPII} and \ref{GHPIII} the dependence of the initial datum $u_0$ may vanish in the lower bound for large times, hence the upper and lower estimates do not depend on $u_0$, as it happens in Theorem \ref{GHPI}.

In Theorem \ref{GHPII}, the powers that characterize the upper and lower boundary behaviour are matching (i.e. the same power of $\Phi_1\asymp \dist(\cdot, \partial\Omega)^\gamma$ appear in the upper and lower bounds of inequalities \eqref{GHPineq2} and \eqref{GHPineq3}), which will allow us to obtain the regularity results of Theorem \ref{interiorregularity}, following the ideas of \cite{MB+AF+JLV_sharp-global-estimates}. These bounds are indeed sharp, because in the particular case $F(u)=u^m$, there is a separate-variables solution whose boundary behaviour precisely conforms to inequalities \eqref{GHPineq2} and \eqref{GHPineq3}, namely
\[
U_T(t,x)^m=\frac{S(x)^m}{(T+t)^{\frac{m}{m-1}}}\qquad\mbox{where}\qquad S(x)^m\asymp \Phi_1(x)^\sigma\,,
\]
see \cite{MB+AF+JLV_boundary-estimates-elliptic, MB+AF+JLV_sharp-global-estimates}. This Theorem can be applied to a wide class of nonlocal operators: as main examples we mention the RFL and CFL.
The key property here is that the kernel of the operators is non-degenerate at the boundary, namely is strictly positive on $\overline{\Omega}$, which is assumption \eqref{L1}. See Section \ref{sec7} for more examples of operators.

In Theorem \ref{GHPIII} the powers of the first eigenfunction in the upper and lower bounds do not match, thus, a priori we cannot expect the bounds to be sharp. However, a posteriori these bounds turn out to be sharp, as we shall explain following the leading example provided by the Spectral Fractional Laplacian (SFL). The source of the problem in this case is that the operator is allowed to have a kernel which can be degenerate at the boundary: in the case of SFL, the kernel vanishes as $\Phi_1$ at $\partial\Omega$, and this may affect the boundary behaviour of solutions. The discriminant factor is the ``size'' of the initial datum: as first observed in \cite{MB+AF+JLV_sharp-global-estimates} for the case $F(u)=u^m$, when $\sigma<1$, and for small times and small initial data, there exists a solution whose upper bound match the lower bound of \eqref{GHPineq4}. We extend such result to the case of general nonlinearities in Corollary \ref{smalldata2} where we provide an upper bound that holds for small data (and small times) and matches the lower bounds of \eqref{GHPineq4} (when $m_0=m_1$) as follows:  for a.e. $x\in \Omega$, and all $0<t\leq t_*$ we have
\begin{equation}\label{matching.III}
\underline{\kappa_5} \left(\frac{t}{t_*} \right)^{\frac{m_1^2}{m_1-1}} \frac{\ph^{m_1}(x)}{t^{\frac{m_1}{m_1-1}}} \le F(u(t,x)) \leq \frac{\Phi_1^{m_0}(x) }{\left[ A^{1-m_1} - \tilde{C} t \right]^{\frac{m_1}{m_1-1}}}.
\end{equation}
where the positive constants $\underline{\kappa_5}, A, \tilde{C}$ have explicit expressions, see Theorem  \ref{GHPIII} and Corollary \ref{smalldata2}. It is remarkable that in this case also the upper and lower behaviour in time matches, hence it is sharp for small times.

\item\textit{On the form of the sharp bounds. }Finally, we would like to mention that for the boundary estimates,  we bound $F(u)$ from above and from below rather than the solution $u$. This is because, when estimating the general nonlinearity $F$ by the powers $m_0$ and $m_1$, as stated in the Lemma \ref{Nonlin} below, some information is lost. The sharp form of such estimates can only be stated in terms of $F(u)$, which is somehow more implicit: indeed, if we want to get more explicit bounds, the powers of the first eigenfunction that we get, could be different, i.e., $m_0$ and $m_1$ respectively, as we have seen in \eqref{matching.III}, and can be easily understood with a simple example such as $F(u)= u^2+u^{10}$. Let us make an example: fix a positive time $0<t\leq t_*$ in inequality \eqref{GHPineq2} of Theorem \ref{GHPII} (matching powers), and we obtain
$$\underline{\kappa_3}\left(  \frac{t}{t_*} \right)^{\frac{m_1^2}{m_1-1}} \frac{\ph^{\sigma_1}(x)}{t^{\frac{m_1}{m_1-1}}}\leq F(u(t,x)) \leq u(t,x)^{m_0} \hspace{5mm}\mbox{and}\hspace{5mm} u(t,x)^{m_1}\leq F(u(t,x)) \leq \overline{\kappa_4} \frac{\Phi_1(x)^{\sigma_1}}{t^{\frac{m_0}{m_0-1}}},$$
that combined give us the more explicit bound at the price of having non-matching powers:
\begin{equation}\label{a}
	\underline{\kappa_3}^{1/m_0}\left(  \frac{t}{t_*} \right)^{\frac{m_1^2}{(m_1-1)m_0}} \frac{\ph(x)^{\sigma_1/m_0}}{t^{\frac{m_1}{(m_1-1)m_0}}}\leq u(t,x)\leq \overline{\kappa_4}^{1/m_1} \frac{\Phi_1(x)^{\sigma_1/m_1}}{t^{\frac{m_0}{(m_0-1)m_1}}}.
\end{equation}
This may seem not optimal, but keep in mind that estimating $F(u)$ as above, some information is lost. This additional difficulty disappears when $m_1=m_0$, i.e., we are in the particular case of a single power, and we recover homogeneity, which in general we do not have. In the more general case, this phenomenon cannot be avoided, as far as we know, because the nonlinearity $F$ may oscillate between the two previously mentioned powers near zero. Therefore, it is natural for the lower bound behavior to be determined by the larger power $m_1$ and the upper bound by the smaller power $m_0$. Summing up, if we wish to express the sharp boundary behavior of $u$ in a more precise way than \eqref{a}, we must describe it in terms of $F^{-1}(\ph^{\sigma_1})$ as follows. Starting from Theorems \ref{GHPI}, \ref{GHPII} and \ref{GHPIII}, and, possibly modifying the constants, we can apply the inverse $F^{-1}$, because $F$ is non decreasing, to get
\begin{equation}\label{b}
	\underline{\kappa_3}F^{-1}\left( \left( \frac{t}{t_*} \right)^{\frac{m_1^2}{m_1-1}} \frac{\ph^{\sigma_1}(x)}{t^{\frac{m_1}{m_1-1}}} \right)\leq u(t,x) \leq \overline{\kappa_4} F^{-1}\left( \frac{\Phi_1(x)^{\sigma_1}}{t^{\frac{m_0}{m_0-1}}} \right).
\end{equation}
 \normalcolor
\end{enumerate}
\end{rem}	

\noindent\textit{Interior regularity. }Once solutions are positive and bounded, it often happens that they are (H\"older) continuous in the interior of the domain and even classical, when the operators allows it. We shall state our regularity results first for weak solutions (positive and bounded by general constants), and we shall explore their extension to minimal weak dual solutions in Section \ref{sec6}, where we also provide a complete proof of the following theorem.

	  \begin{thm}[\textbf{Interior Regularity}]\label{interiorregularity}
	  	Assume that the operator $\A$ is defined by
	  	\begin{equation*}\label{intregec1}
	  		\A f(x)= P.V \int_{\RN} (f(x)-f(y))K(x,y) \dy +B(x)f(x).
	  	\end{equation*}
	  	Let $r>0$ and $x_0\in \Omega$ such that $B_{2r}(x_0)\subset \Omega$, suppose the kernel satisfies the following properties
	  	$$K(x,y) \asymp \frac{1}{|x-y|^{N+2s}}\hspace{3mm}\mbox{in }B_{2r}(x_0)\hspace{4mm}\mbox{and}\hspace{4mm}K(x,y) \lesssim \frac{1}{|x-y|^{N+2s}}\hspace{3mm}\mbox{in }\RN \setminus B_{2r}(x_0).$$
	  	Assume $B$ is locally bounded on $\Omega$. Let u be a weak solution of the problem \eqref{CDP} in $(T_0,T_1)\times \Omega$ such that
	  	$$0<\delta\leq u(t,x)\hspace{3mm}\forall (t,x)\in (T_0,T_1)\times B_{2r}(x_0)\hspace{4mm}\mbox{and}\hspace{4mm} 0\leq u(t,x)\leq M \hspace{3mm}\forall (t,x)\in (T_0,T_1)\times \Omega.$$
	  	\renewcommand{\theenumi}{\roman{enumi})}
	  	\begin{enumerate}
	  		\item Then $u$ is H\"older continuous in the interior, that is, there exists an $\alpha\in (0,1]$ such that for all $0<T_0<T_2<T_1$ we have
	  		\begin{equation}\label{intregec2}
	  			\|u\|_{C_{x,t}^{\alpha, \alpha/2s}((T_2,T_1)\times B_r(x_0))}\leq C.
	  		\end{equation}
	  		\item If there exists a $\beta\in (0,1\land 2s)$ with $\beta+2s\not \in \mathbb{Z}$ such that $|K(x,y)-K(x',y)|\leq C |x-x'|^\beta |y|^{-(N+2s)}$. Then the solution $u$ is classical in the interior, more precisely, we have the following estimate for all $0<T_0<T_2<T_1$
	  		\begin{equation}\label{intregec3}
	  			\|u\|_{C_{x,t}^{2s+\beta, 1+\beta/2s}((T_2,T_1)\times B_r(x_0))}\leq C.
	  		\end{equation}
	  	\end{enumerate}
	  \end{thm}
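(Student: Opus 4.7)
The plan is to linearize the equation by setting $v := F(u)$. Since $u \in [\delta, M]$ on $(T_0,T_1)\times B_{2r}(x_0)$ and $F$ is a bi-Lipschitz homeomorphism of $[\delta,M]$ onto $[F(\delta),F(M)]$ by \eqref{N1}, it is equivalent to prove the corresponding regularity for $v$ and transfer it back through $F^{-1}$. Differentiating in $t$ and using the equation, $v$ satisfies the linear nonlocal parabolic equation
\begin{equation*}
\partial_t v + a(t,x)\,\A v = 0, \qquad a(t,x) := F'(u(t,x)) = F'(F^{-1}(v(t,x))),
\end{equation*}
on $(T_0,T_1)\times B_{2r}(x_0)$, where $a$ is bounded above and below by positive constants depending only on $\delta$, $M$ and $F$; the zero-order term $B(x)v$ hidden inside $\A v$ is locally bounded by hypothesis.

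For part (i), I would split $\A v = \A_{\text{loc}} v + g$, where $\A_{\text{loc}}$ integrates only against the piece of $K$ supported in $B_{2r}(x_0)$, for which $K\asymp|x-y|^{-(N+2s)}$ holds uniformly. The remainder $g$ gathers the tail contribution $\int_{\RN\setminus B_{2r}(x_0)}(v(x)-v(y))K(x,y)\,dy$ (finite and locally bounded thanks to $v\in L^\infty$ globally, together with the pointwise upper bound on $K$ off-diagonal) plus $B(x)v(x)$. We then appeal to the by now standard interior H\"older regularity theory for linear nonlocal parabolic equations $\partial_t v + a\,\A_{\text{loc}} v = g$ with bounded measurable coefficient $a$ bounded away from zero, symmetric kernel comparable to $|x-y|^{-(N+2s)}$, and bounded source $g$. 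This yields $v\in C^{\alpha,\alpha/(2s)}$ on compact subsets of $(T_0,T_1)\times B_r(x_0)$ for some $\alpha\in(0,1]$; composing with the Lipschitz inverse $F^{-1}$ on $[F(\delta),F(M)]$ transfers the estimate to $u$ and produces \eqref{intregec2}.

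For part (ii), we run a Schauder-type bootstrap. Part (i) already provides $v\in C^{\alpha,\alpha/(2s)}$, and hence $a = F'(F^{-1}(v))$ is H\"older of the same exponent, since $F'$ is locally Lipschitz on $(0,\infty)$ by \eqref{N1} (equivalently, $FF''/(F')^2$ bounded forces $F''$ bounded on $[\delta,M]$). The extra assumption $|K(x,y)-K(x',y)|\le C|x-x'|^\beta|y|^{-(N+2s)}$ makes the kernel H\"older in $x$. Interior Schauder estimates for nonlocal parabolic operators with H\"older coefficients and H\"older-in-$x$ kernels then promote $v$ to $C^{2s+\alpha',\,1+\alpha'/(2s)}$ locally, with $\alpha'>\alpha$ at each step; iterating finitely many times, either the exponent is capped at $\beta$ or it crosses $\beta$, and after one last application of the Schauder estimate we reach $C^{2s+\beta,\,1+\beta/(2s)}$, giving \eqref{intregec3} for $v$. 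The assumption $\beta+2s\notin\mathbb{Z}$ is invoked precisely to avoid the critical integer thresholds where the Schauder scale degenerates. Transferring the estimate to $u$ uses that $F^{-1}$ is $C^{1+\text{Lip}}$ on $[F(\delta),F(M)]$.

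The main obstacle I foresee is the non-translation invariance and non-compact support of $K$: the off-diagonal part contributes a source $g$ that is merely bounded rather than H\"older, so one needs versions of the H\"older/Schauder theorems that admit a measurable inhomogeneous right-hand side (alternatively, a cut-off argument that approximately commutes with $\A$). A secondary delicate point is the bootstrap bookkeeping in part (ii): one must verify that the sequence of exponents $\alpha,\,\alpha+2s,\,\alpha+4s,\dots$ can be steered to land at $\beta$ without crossing an integer value of $\beta+2s$, which is exactly why the hypothesis $\beta+2s\notin\mathbb{Z}$ appears in the statement.
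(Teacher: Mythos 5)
Your linearization is the wrong one, and the resulting equation does not match the theory you invoke. Setting $v = F(u)$ produces $\partial_t v + a(t,x)\,\A v = 0$ with $a(t,x)=F'(u(t,x))$ sitting \emph{outside} the nonlocal operator. Equivalently, the effective kernel is $a(t,x)K(x,y)$, which is \emph{not symmetric} in $(x,y)$. The De Giorgi--Nash--Moser-type interior H\"older result for nonlocal parabolic equations that the paper relies on (Felsinger--Kassmann, Theorem~\ref{Kass}) requires the modulated kernel $a(t,x,y)K(x,y)$ with $a$ symmetric, merely bounded above and below; equations of the form ``coefficient in front of the operator'' are not covered, and symmetrizing $a(t,x)K(x,y)$ creates an extra drift-type error $\int (v(x)-v(y))\tfrac{a(t,x)-a(t,y)}{2}K(x,y)\,dy$ that you cannot control without a priori regularity on $a$. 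The paper avoids this by keeping $u$ as the unknown and writing the nonlinearity as a quotient \emph{inside} the integral:
\begin{equation*}
F(u(t,x)) - F(u(t,y)) = a(t,x,y)\bigl(u(t,x)-u(t,y)\bigr) + h(t,x,y),\qquad
a(t,x,y) = \frac{F(u(t,x))-F(u(t,y))}{u(t,x)-u(t,y)}\,\eta(x-y) + \bigl(1-\eta(x-y)\bigr),
\end{equation*}
which is symmetric in $(x,y)$ and, thanks to $u\in[\delta,M]$ and \eqref{N1}, bounded above and below by positive constants, so that the equation for the localized $v=\rho u$ has exactly the form $v_t + \A_a v = g - f$ required by Theorem~\ref{Kass}. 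Your proposal also under-specifies the localization: the tail source $g$ must be tamed by a first cutoff $\rho$ on $u$ (not merely a split of $\A$), and a second cutoff $\eta$ is needed to keep the quotient coefficient bounded; both cutoffs are essential bookkeeping.

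A second, smaller problem is the transfer through $F^{-1}$ in part~(ii). Under \eqref{N1}, $F$ is only $C^{1,1}_{\rm loc}$ away from $0$, hence $F^{-1}$ is only $C^{1,1}$ on $[F(\delta),F(M)]$. When $2s+\beta>2$ (which happens for $s$ near $1$), a $C^{2s+\beta}$ estimate on $F(u)$ does \emph{not} transfer to a $C^{2s+\beta}$ estimate on $u=F^{-1}(F(u))$ via the chain rule. The paper's proof sidesteps this entirely by running the Schauder bootstrap (Dong--Zhang) on $v=\rho u$ itself, never passing through $F(u)$ as the unknown. Your bootstrap scheme for part~(ii) is structurally similar in spirit, but because of the two issues above it does not close as written.
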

	
\medskip

\subsection{The main results in the form of tables}
	We end this section by collecting all the sharp boundary estimates obtained in this work in tables, namely the results contained the previous section,
	 which describe when we have a Global Harnack Principle, and Theorems \ref{smalldata1}, \ref{smalldata2}, \ref{smalldata3}, \ref{smalldata4} which reflect the anomalous boundary behaviour.
The aim is to clearly present the best estimates that determine the precise boundary behavior of the solutions to the problem \eqref{CDP}, separating all the cases, depending on the size of the initial datum or of the time regime. We separate large and small data, depending on the size of $\|u_0\|_{\LL^1_{\Phi_1}}$, and also consider the case of ``very small data'', i.e., when $u_0\lesssim \ph^\beta$ for some $\beta>0$. We shall also separate the small and large times regimes, meaning times smaller or bigger than $t_*$. In this way, both the expression of $F$ and the exponents can be estimated  ``more explicitly'', see for instance Lemma \ref{Nonlin} below. We will now fix a positive time $t>0$, and see how the estimate changes in the various cases, focussing on three main examples: the Classical Laplacian, Theorem \ref{GHPI},  for which $\gamma=s=1$, the Restricted Fractional Laplacian, Theorem \ref{GHPII},  for which $\gamma=s\in (0,1)$ and $\sigma_1=1$. Observe that the Censorel Fractional Laplacian also belongs to this class of operators, with $s\in (1/2,1)$, $\gamma=2s-1$ and $\sigma_1=1$ . And finally, the Spectral Fractional Laplacian, Theorem \ref{GHPIII},  for which $s\in (0,1)$,  $\gamma=1$ and $\sigma_1=\left( 1 \land \frac{2s m_1}{m_1-1} \right)$.  

\begin{center}
	\begin{tabular}{| c | c | c | }
		\hline  \multicolumn{3}{ |c| }{} \\
		\multicolumn{3}{ |c| }{\textbf{\Large The Classical Laplacian}} \\
		\multicolumn{3}{ |c| }{} \\ \hline
		& Small times $t\leq t_*$
		
		& Large times $t\geq t_*$
		\\
		\hline & &  \\
		$ u_0\in \LL^1_{\Phi_1}(\Omega)$ &
		$F(u(t,x))\lesssim \frac{\Phi_1(x)}{t^{\frac{m_1}{m_1-1}}}$
		
		& $\frac{\Phi_1(x)}{t^{\frac{m_0}{m_0-1}}} \lesssim F(u(t,x))\lesssim \frac{\Phi_1(x)}{t^{\frac{m_1}{m_1-1}}}$ \\
		& &  \\ \hline

	\end{tabular}
\end{center}

\begin{center}
	\begin{tabular}{| c | c | c | }
		\hline  \multicolumn{3}{ |c| }{} \\
		\multicolumn{3}{ |c| }{\textbf{\Large Restricted Fractional Laplacian}} \\
		\multicolumn{3}{ |c| }{} \\ \hline
		& Small times $t\leq t_*$
		& Large times $t\geq t_*$
		\\
		 \hline &  & \\
		$ \|u_0\|_{\LL^1_{\Phi_1}}>1$ &
		$\left( \frac{t}{t_*} \right)^{m_1} \Phi_1(x) \lesssim F(u(t,x))\lesssim \frac{\Phi_1(x)}{t^{\frac{m_0}{m_0-1}}}$
		
		& $t_*^{\frac{m_0}{m_0-1}}\frac{\Phi_1(x)}{t^{\frac{m_0}{m_0-1}}} \lesssim F(u(t,x))\lesssim \frac{\Phi_1(x)}{t^{\frac{m_1}{m_1-1}}}$ \\
		& &  \\  \hline & &  \\
		$ \|u_0\|_{\LL^1_{\Phi_1}}\leq 1$ &
		$\left( \frac{t}{t_*} \right)^{\frac{m_1^2}{m_1-1}}\frac{\Phi_1(x)}{t^{\frac{m_1}{m_1-1}}}\lesssim F(u(t,x))\lesssim \frac{\Phi_1(x)}{t^{\frac{m_0}{m_0-1}}}$
		& $\frac{\Phi_1(x)}{t^{\frac{m_0}{m_0-1}}}\lesssim F(u(t,x))\lesssim \frac{\Phi_1(x)}{t^{\frac{m_1}{m_1-1}}}$ \\
		& &  \\  \hline
	\end{tabular}
\end{center}

\begin{center}
	\begin{tabular}{| c | c | c | }
		\hline  \multicolumn{3}{ |c| }{} \\
		\multicolumn{3}{ |c| }{\textbf{\Large Spectral Fractional Laplacian}} \\
		\multicolumn{3}{ |c| }{} \\ \hline
		& Small times $t\leq t_*$
		& Large times $t\geq t_*$
		\\
		\hline & &  \\
		$ \|u_0\|_{\LL^1_{\Phi_1}}>1$ &
		$\left( \frac{t}{t_*} \right)^{m_1}\Phi_1(x)^{m_1}\lesssim F(u(t,x))\lesssim \frac{\Phi_1(x)^{\sigma_1}}{t^{\frac{m_0}{m_0-1}}}$
		& $t_*^{\frac{m_0}{m_0-1}}\frac{\Phi_1(x)^{m_1}}{t^{\frac{m_0}{m_0-1}}}\lesssim F(u(t,x))\lesssim \frac{\Phi_1(x)^{\sigma_1}}{t^{\frac{m_1}{m_1-1}}}$ \\
		& & \\  \hline & & \\
		$ \|u_0\|_{\LL^1_{\Phi_1}}\leq 1$ &
		$\left( \frac{t}{t_*} \right)^{\frac{m_1^2}{m_1-1}} \frac{\Phi_1(x)^{m_1}}{t^{\frac{m_1}{m_1-1}}}\lesssim F(u(t,x))\lesssim \frac{\Phi_1(x)^{\sigma_1}}{t^{\frac{m_0}{m_0-1}}}$
		& $\frac{\Phi_1(x)^{m_1}}{t^{\frac{m_0}{m_0-1}}}\lesssim F(u(t,x))\lesssim \frac{\Phi_1(x)^{\sigma_1}}{t^{\frac{m_1}{m_1-1}}}$ \\
		& &  \\  \hline & &  \\
		$ u_0\leq A \Phi_1^{1-2s/\gamma}$ &
		$\left( \frac{t}{t_*} \right)^{\frac{m_1^2}{m_1-1}}\frac{\Phi_1(x)^{m_1}}{t^{\frac{m_1}{m_1-1}}}\lesssim F(u(t,x))\lesssim \frac{\Phi_1(x)^{m_0\frac{1-2s}{\gamma}}}{\left[ A^{1-m_1}-\tilde{C}t \right]^{\frac{m_1}{m_1-1}}}$
		&  \\
		$t<(t_* \land T_A)$& &  \\ \hline & &  \\
		$ u_0\leq A \Phi_1$ &
		$\left( \frac{t}{t_*} \right)^{\frac{m_1^2}{m_1-1}}\frac{\Phi_1(x)^{m_1}}{t^{\frac{m_1}{m_1-1}}}\lesssim F(u(t,x))\lesssim \frac{\Phi_1(x)^{m_0}}{\left[ A^{1-m_1}-\tilde{C}t \right]^{\frac{m_1}{m_1-1}}}$
		&  \\ and $B\equiv 0$& &  \\
		$t<(t_* \land T_A)$& &  \\ \hline
	\end{tabular}
\end{center}
	
	
	\section{Upper Bounds}\label{sec3}
	This section is devoted to proving the upper bound, i.e. the upper part of the Generalized Harnack Principle (GHP) of Subsection \ref{sec2.4}. We will divide the section into three steps: Firstly, we present some known results from \cite{ MB+AF+JLV_boundary-estimates-elliptic, MB+JLV_PM-with-F, CRA+PIE}. Then, we proceed with the proof of the quantitative upper estimate. Finally, we discuss the smoothing effects and elucidate the distinctions from previous bounds. These smoothing effects have already been established in \cite{MB+JLV_PM-with-F}; however, we restate them here as they are essential for the lower bounds.
	
	Before delving into the estimates, let us first define a more general class of weak dual solutions.
	\begin{defn}\label{S}
		We denote by $S$ the class of all non-negative weak dual solutions of the problem \eqref{CDP} with initial data $0\leq u_0\in \LL^1_{\Phi_1}(\Omega)$, as defined in Definition \ref{defweakdual}. These solutions $u\in S$ also satisfy the following conditions (i) the map $u_0 \to u(t)$ is order preserving in $\LL^1_{\Phi_1}(\Omega)$ (ii) for all $t>0$ we have $u(t)\in \LL^p(\Omega)$ for some $p>N/2s$.
	\end{defn}
	
	\begin{rem}\label{class} \rm
		Note that $u$, the unique \solution\,of \eqref{CDP} possesses all the properties to belong to the class $S$ because it satisfies Definition \ref{defweakdual}. Moreover, mild solutions of the sequence $\left\{ u_n \right\}_{n\in \mathbb{N}}$  are bounded. Therefore, for any $t>0$ we have $u_n(t)\in \LL^p(\Omega)$ for all $p>1$. In particular, $u_n\in S$ for all $n$, allowing us to apply the absolute upper bounds from Theorem \ref{AbsoluteUpperBounds}, which are independent of the initial datum.
		
		 Now, we can guarantee that for any fixed $0<\tau<t$, there exists the monotone limit from below $u(t,x)=\lim\limits_{n\to \infty} u_n(t,x)$ in $\LL^\infty((\tau,\infty)\times \Omega)$, and that such limit satisfies the same upper estimates due to the lower semicontinuity of the $\LL^\infty$ norm; that is $u(t,\cdot)$ is bounded on $\Omega$.
	\end{rem}
The upper bound and the lower bound for large times $t>t_*$ are true for any $u\in S$.
	
	
	\subsection{First set of estimates}\label{sec3.1}
	Now we collect some results of  \cite{MB+AF+JLV_boundary-estimates-elliptic, MB+JLV_PM-with-F,CRA+PIE} which we will use to prove the upper bound of GHP. We remember that hypothesis \eqref{N1} provides us a way to compare $F(r)$ with the powers $r^{m_0}$ and $r^{m_1}$. Furthermore, the monotonicity estimates of Bénilan and Crandall, first proven in the case of homogeneous $F$ in \cite{Benilan+Crandall}, are valid for this class of nonlinearities, as shown by Crandall and Pierre in \cite{CRA+PIE}.
	\begin{lem}[\textbf{Consequences of the hypothesis \eqref{N1} \cite[Lemma 10.1]{MB+JLV_PM-with-F}}]\label{Nonlin}
		Let $F:\RR \rightarrow \RR$ a nonlinearity, as in Subsection \ref{sec2.2}, which satisfies \eqref{N1}, or equivalently \eqref{N2}, $$\mu_0\leq \frac{F(r)F''(r)}{(F'(r)^2)}\leq \mu_1.$$
		Then, the following estimates are true:
		\begin{equation}\label{lem2.2_1}
			\left( \frac{r}{r_0} \right)^{m_0}\leq \frac{F(r)}{F(r_0)}\leq \left( \frac{r}{r_0} \right)^{m_1}\hspace{4mm}\mbox{for }0\leq r_0\leq r.
		\end{equation}
		\begin{equation}\label{lem2.2_2}
			\underline{k}\left( \frac{r}{r_0} \right)^{m_1}\leq \frac{F(r)}{F(r_0)}\leq \overline{k} \left( \frac{r}{r_0} \right)^{m_0}\hspace{4mm}\mbox{for }0\leq r\leq r_0.
		\end{equation}
		where we have denoted $\underline{k}= \left( \frac{m_0}{m_1} \right)^{m_1}<1$ and $\overline{k}= \left( \frac{m_1}{m_0} \right)^{m_0}>1$.

	\end{lem}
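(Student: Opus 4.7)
The plan is to reduce the statement to a two-sided bound on the logarithmic derivative $r F'(r)/F(r)$, and then deduce the power-law comparisons by integrating in the logarithmic variable.

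First, by the quotient rule, $(F/F')' = 1 - F F''/(F')^2$, which shows at once that assumptions \eqref{N1} and \eqref{N2} are equivalent. Next, I would use that the normalization $F(0)=0$ combined with $F/F'\in\mbox{Lip}(\mathbb{R})$ (and the convention $F/F'\equiv 0$ where $F=F'=0$ or $r=0$) ensures that $F(r)/F'(r)$ vanishes at the origin. Integrating the bound $1-\mu_1\leq (F/F')'\leq 1-\mu_0$ on $(0,r)$ for $r>0$ thus yields $(1-\mu_1)r\leq F(r)/F'(r) \leq (1-\mu_0)r$. Rearranging and using $m_i=1/(1-\mu_i)$ gives the key two-sided estimate
\[
m_0 \;\leq\; \frac{r F'(r)}{F(r)} \;\leq\; m_1.
\]

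With this in hand, I would exploit the identity $\tfrac{d}{dt}\log F(e^t) = e^{t} F'(e^t)/F(e^t)$, which by the previous display lies in $[m_0,m_1]$. For $0<r_0\leq r$, integrating in $t$ from $t_0=\log r_0$ to $t=\log r$ produces
\[
m_0\log(r/r_0) \;\leq\; \log\bigl(F(r)/F(r_0)\bigr) \;\leq\; m_1\log(r/r_0),
\]
and exponentiation immediately gives \eqref{lem2.2_1}. When instead $0<r\leq r_0$, the factor $\log(r/r_0)$ is non-positive, so the same integration flips the sandwich, producing $(r/r_0)^{m_1}\leq F(r)/F(r_0)\leq (r/r_0)^{m_0}$, which is strictly stronger than \eqref{lem2.2_2} since $\underline{k}<1<\overline{k}$; multiplying the lower bound by $\underline{k}$ and the upper bound by $\overline{k}$ recovers the stated form. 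The boundary case $r_0=0$ or $r=0$ is then handled by a limiting argument using continuity of $F$.

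I expect no serious obstacle here. The only mildly delicate point is the integration of $(F/F')'$ down to the origin, which requires the continuity and the vanishing of $F/F'$ at $r=0$; both are built into the hypotheses in \eqref{N1}. Once the logarithmic derivative is controlled, the remainder is a routine application of the fundamental theorem of calculus.
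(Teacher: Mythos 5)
The paper only cites this lemma (from \cite[Lemma 10.1]{MB+JLV_PM-with-F}) and does not reproduce a proof, so there is no in-paper argument to compare against; I will just assess your proof on its own terms. Your argument is correct. The quotient rule immediately gives the equivalence of \eqref{N1} and \eqref{N2}; integrating $(F/F')'$ from $0$ to $r$, using the Lipschitz continuity of $F/F'$ and the normalization $F/F'\to 0$ at the origin, yields the key two-sided bound $m_0 \le rF'(r)/F(r)\le m_1$ for $r>0$; and integrating this logarithmic derivative between $\log r_0$ and $\log r$ delivers both displayed inequalities. You also correctly point out that the argument produces the stronger form $(r/r_0)^{m_1}\leq F(r)/F(r_0)\leq (r/r_0)^{m_0}$ for $0<r\leq r_0$, i.e., with $\underline{k}$ and $\overline{k}$ replaced by $1$; since $\underline{k}\le 1\le \overline{k}$, the statement \eqref{lem2.2_2} is a weakening of what integration gives directly (they coincide precisely when $m_0=m_1$). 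No gap here; the only subtlety you needed to flag — integrating $(F/F')'$ down to $r=0$, using absolute continuity from the Lipschitz hypothesis and the vanishing convention at the origin — you did flag, and it is handled correctly.
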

	
	\begin{lem}[\textbf{Benilan-Crandall estimates \cite{CRA+PIE}}]\label{benilancrandall}
		Let $\A$ satisfy \eqref{A1}, \eqref{A2}, \eqref{K3} and let $F$ satisfy \eqref{N1}. If $u$ is the unique \solution\,for the initial datum $0\leq u_0 \in \LL^1_{\Phi_1}(\Omega)$, then, the following inequality holds (in the sense of distributions):
		\begin{equation*}\label{BC1}
			\partial_t u\geq - \frac{1}{\mu_0 t}\frac{F(u)}{F'(u)},
		\end{equation*}
		where the quotient $F/F'$ vanish when $u=0$ or $F(u)=F'(u)=0$.\\
		As a consequence, for a.e. point $x\in \Omega$ and $0<\tau\leq t$, we have
		\begin{equation}\label{BC2}
			t^{\frac{m_0 }{m_0-1}} F(u(t,x))\geq \tau^{\frac{m_0}{m_0-1}} F(u(\tau,x))
		\end{equation}
		Moreover, If we take into account $F(u)/F'(u)\leq (1-\mu_0) u$, we can reformulate the previous results as follows
		\begin{equation*}\label{BC3}
			\partial_t u \geq - \frac{1-\mu_0}{\mu_0} \frac{u}{t},
		\end{equation*}
		for a.e. $x\in \Omega$ and $0<\tau\leq t$.
		\begin{equation}\label{BC4}
			t ^{\frac{1}{m_0-1}} u(t,x)\geq \tau^{\frac{1}{m_0-1}} u(\tau,x).
		\end{equation}
	\end{lem}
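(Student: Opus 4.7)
My first move is to reduce \eqref{BC1}--\eqref{BC2} to a single pointwise monotonicity statement. Wherever $F'(u)>0$, the distributional inequality \eqref{BC1} is equivalent to
\begin{equation*}
\partial_t F(u)=F'(u)\,\partial_t u\geq -\frac{F(u)}{\mu_0 t},
\end{equation*}
which, integrating in $t$ and using $1/\mu_0=m_0/(m_0-1)$, is the same as saying that the map $t\mapsto t^{m_0/(m_0-1)}F(u(t,x))$ is non-decreasing. So \eqref{BC1} and \eqref{BC2} are one statement, and it suffices to prove the monotonicity pointwise a.e.\ in $x$.

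The natural place to establish this is at the level of the Crandall--Liggett implicit time-discretization that defines mild solutions. Since \solutions\ are monotone limits from below of mild solutions with bounded data (Definition \ref{minimalweakdual}), I would first prove the estimate for mild solutions and then transfer it to the \solution\ by passage to the limit, which preserves inequalities. With step $h=T/n$, $t_k=kh$ and iterates $u_{k+1}+h\,\A F(u_{k+1})=u_k$, the discrete target reads
\begin{equation*}
t_{k+1}^{m_0/(m_0-1)}F(u_{k+1})\geq t_k^{m_0/(m_0-1)}F(u_k) \qquad \text{a.e. in } \Omega.
\end{equation*}
I would prove this by exploiting the $T$-accretivity of $I+h\A F(\cdot)$ in $\LL^1(\Omega)$: it suffices to exhibit a subsolution of the resolvent problem with data $u_k$ that lies below $u_{k+1}$. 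The natural candidate is a pointwise rescaling of the form $F^{-1}(\theta_k F(u_k))$ with $\theta_k=(t_k/t_{k+1})^{1/\mu_0}$; the defect in the resolvent equation is then controlled via Lemma \ref{Nonlin}, which bounds $F(\lambda r)/F(r)$ between $\lambda^{m_0}$ and $\lambda^{m_1}$ times universal constants. A Taylor expansion in $h$ shows that the leading-order term in the defect is precisely what the lower bound $\mu_0\leq FF''/(F')^2$ controls. Passing $n\to\infty$ yields the continuous monotonicity \eqref{BC2} and, equivalently, \eqref{BC1}.

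The principal obstacle in this plan is the absence of scaling invariance for non-homogeneous $F$: in the power case $F(u)=u^m$, the rescaling $u(t,x)\mapsto \lambda u(\lambda^{m-1}t,x)$ produces another solution and the monotonicity follows from a one-line $\LL^1$-comparison. Here no such exact symmetry exists, and one must substitute at every step a quantitative two-sided control coming from \eqref{N1}; getting the sharp exponent $m_0/(m_0-1)$ rather than $m_1/(m_1-1)$ is what requires the lower bound $\mu_0\leq FF''/(F')^2$ specifically. Finally, \eqref{BC3} and \eqref{BC4} are algebraic consequences. Integrating the upper bound $(F/F')'\leq 1-\mu_0$ from $0$, using that the quotient vanishes there, gives $F(r)/F'(r)\leq (1-\mu_0)r$; inserting this in \eqref{BC1} produces \eqref{BC3} with constant $(1-\mu_0)/\mu_0=1/(m_0-1)$, and a direct integration of $\partial_t\log u\geq -1/((m_0-1)t)$ then yields the monotonicity \eqref{BC4}.
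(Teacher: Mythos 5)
Your reduction of \eqref{BC1} to the statement that $t\mapsto t^{1/\mu_0}F(u(t,x))$ is non-decreasing is correct, and the strategy of proving the estimate for mild solutions via the Crandall--Liggett scheme and then passing to the monotone limit to cover minimal weak dual solutions is sound. Note however that the paper does not prove this lemma at all: it is cited verbatim from Crandall--Pierre \cite{CRA+PIE}, so there is no in-paper argument to compare against.

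The proposed discrete mechanism, on the other hand, does not close. You propose $w_k:=F^{-1}(\theta_k F(u_k))$ with $\theta_k=(t_k/t_{k+1})^{1/\mu_0}$ as a subsolution of the resolvent problem with datum $u_k$, and then to invoke order-preservation of $(I+h\A F)^{-1}$ to conclude $w_k\leq u_{k+1}$. Writing $h\A F(w_k)=\theta_k\, h\A F(u_k)=\theta_k(u_{k-1}-u_k)$, the required inequality $w_k+h\A F(w_k)\leq u_k$ reduces to the pointwise statement
\begin{equation*}
F^{-1}\big(\theta_k F(u_k)\big)+\theta_k\,u_{k-1}\ \leq\ (1+\theta_k)\,u_k\,,
\end{equation*}
and the only control on $u_{k-1}$ supplied by the induction is $F(u_k)\geq\theta_{k-1}F(u_{k-1})$, i.e.\ $u_{k-1}\leq F^{-1}(F(u_k)/\theta_{k-1})$. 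In the tight case this inequality fails, already for the model nonlinearity $F(u)=u^m$: with $m=2$ one has $\theta_k=(k/(k+1))^2$, so the left-hand side becomes $\frac{k}{k+1}u_k+\frac{k^3}{(k+1)^2(k-1)}u_k$, the right-hand side is $\frac{2k^2+2k+1}{(k+1)^2}u_k$, and the inequality reduces to $2k^3-k\leq 2k^3-k-1$, which is false for every $k$. So the one-step Taylor heuristic you invoke does not rescue the argument; the defect has a strictly negative term of order $h^2$ that the $\mu_0$-bound does not absorb, and the induction on a single step with a fixed mesh breaks down.

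The genuine obstruction is that the Crandall--Pierre estimate is not obtained by producing a subsolution of the \emph{same} implicit-Euler scheme. Even for $F(u)=u^m$, where the argument is cleanest, one compares the orbit $S(t)u_0$ with the orbit started from the rescaled datum $\lambda u_0$, $\lambda\leq1$, via the exact semigroup commutation $S(\lambda^{m-1}t)(\lambda u_0)=\lambda S(t)u_0$ together with order-preservation of $S$; at the discrete level the rescaled sequence $\lambda u_k$ is the implicit-Euler iteration with a \emph{different} step size $\lambda^{1-m}h$, so the comparison cannot be performed step by step at fixed $h$ and must be carried out after passing to the semigroup limit. For general $F$ satisfying \eqref{N1}, the rescaling $G_\lambda(r)=F^{-1}(\lambda F(r))$ replaces $r\mapsto\lambda r$, the exact commutation degenerates into a sub/super-solution relation controlled quantitatively by $\mu_0\leq FF''/(F')^2\leq\mu_1$, and this is precisely the content of the Crandall--Pierre proof. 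To repair your argument you would need to switch to that structure (two orbits with rescaled data and rescaled time) rather than a one-step discrete subsolution. The derivation of \eqref{BC3} and \eqref{BC4} from \eqref{BC1} via $F/F'\leq(1-\mu_0)\,\mathrm{id}$ is correct as you state it.
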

	
	Here is where the assumptions \eqref{K1}, \eqref{K2} and \eqref{K3} play an important role in computing the Green estimates. The key element is the pointwise estimate for weak dual solutions $u\in S$, Proposition \ref{pointwiseestimates}, that allows us to obtain both the upper bounds (absolute and upper behaviour) and the smoothing effects.
	\begin{lem}[\textbf{Green function estimates I \cite[Lemma 4.2]{MB+AF+JLV_boundary-estimates-elliptic}}]\label{GestimatesI}
		Let $\A$ satisfy \eqref{A1}, \eqref{A2}, \eqref{K1} and $0<q<\frac{N}{N-2s}$. Then, there exists a constant $c_2>0$ such that:
		\begin{equation*}\label{Gestimates1}
			\sup_{x_0\in \Omega} \int_{\Omega} \K(x,x_0)^q \dx \leq c_2.
		\end{equation*}
		For each parameter $q$ we define
		
		$$	B_q(\Phi_1(x_0))=\left\{\begin{array}{lll}\Phi_1(x_0) &0<q<\frac{N}{N-2s+\gamma},\\ \Phi_1(x_0) (1+ |\log \Phi_1(x_0)|^{\frac{1}{q}}) &q=\frac{N}{N-2s+\gamma},\\ \Phi_1(x_0)^{\frac{N-q(N-2s)}{\gamma q}} &\frac{N}{N-2s+\gamma}<q<\frac{N}{N-2s}.\end{array}\right.$$
		
		Then, if $\A$ satisfy \eqref{K2}, there exists constants $c_3,c_4>0$ such that
		\begin{equation*}\label{Gestimates2}
			c_3\Phi_1(x_0)\leq \left( \int_{\Omega} \K(x,x_0)^q \dx \right)^{\frac{1}{q}}\leq c_4 B_q(\Phi_1(x_0)).
		\end{equation*}
		Constants $c_j$, $j=2,3,4$ only depends on N, s, $\gamma$, q, $\Omega$ and they have an explicit expression.
	\end{lem}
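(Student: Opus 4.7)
The first bound is immediate from the upper half of \eqref{K1}: on the bounded set $\Omega$, passing to polar coordinates around $x_0$ gives
\[
\int_\Omega \K(x,x_0)^q \dx \le c_1^q\int_{B_{\diam(\Omega)}(x_0)}|x-x_0|^{-q(N-2s)}\dx \le c_2,
\]
which is finite, uniformly in $x_0\in\Omega$, precisely when $q(N-2s)<N$. For the lower bound $c_3\Phi_1(x_0)\le (\int_\Omega \K^q\dx)^{1/q}$ in the second assertion, use the lower half of \eqref{K2}: $\K(x,x_0)\ge c_0\,d(x)^\gamma d(x_0)^\gamma$, so the $q$-th root is at least $c_0 d(x_0)^\gamma(\int_\Omega d(x)^{q\gamma}\dx)^{1/q}$, and the last factor is a positive finite constant depending only on $\Omega,q,\gamma$; then $\Phi_1\asymp d^\gamma$ from \eqref{phi1.gamma.xx} closes this step.

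The upper bound is the heart of the argument. I would set $R:=d(x_0)/2$ and split $\Omega=\Omega_1\cup\Omega_2$ with $\Omega_1:=\Omega\cap B_R(x_0)$. On $\Omega_1$ the triangle inequality yields $d(x)\ge R$ and $|x-x_0|\le R\le d(x_0)\wedge d(x)$, hence both min-factors in \eqref{K2} equal $1$, and
\[
\int_{\Omega_1}\K^q\dx \le c\int_0^R r^{N-1-q(N-2s)}\dr \asymp d(x_0)^{N-q(N-2s)}.
\]
On $\Omega_2$ the inequality $|x-x_0|\ge R$ gives $(d(x_0)^\gamma/|x-x_0|^\gamma)\wedge 1\le 2^\gamma d(x_0)^\gamma|x-x_0|^{-\gamma}$; bounding the remaining min trivially by $1$,
\[
\int_{\Omega_2}\K^q\dx \le c\,d(x_0)^{q\gamma}\int_R^{\diam(\Omega)}r^{N-1-q(N-2s+\gamma)}\dr.
\]

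The one-dimensional integral is then evaluated in the three regimes dictated by the sign of $N-q(N-2s+\gamma)$: a constant for $q<N/(N-2s+\gamma)$, a factor $|\log d(x_0)|$ at the critical value, and $R^{N-q(N-2s+\gamma)}\asymp d(x_0)^{N-q(N-2s+\gamma)}$ for $q>N/(N-2s+\gamma)$. Combining with the $\Omega_1$-contribution and taking $q$-th roots yields, respectively, $d(x_0)^\gamma$, $d(x_0)^\gamma(1+|\log d(x_0)|^{1/q})$, and $d(x_0)^{(N-q(N-2s))/q}$. Converting $d(x_0)=\Phi_1(x_0)^{1/\gamma}$ via \eqref{phi1.gamma.xx} produces exactly the three cases of $B_q(\Phi_1(x_0))$.

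The main obstacle is the bookkeeping: in each regime one must check which of the $\Omega_1$ and $\Omega_2$ contributions dominates, which amounts to comparing the exponents $q\gamma$ and $N-q(N-2s)$, whose ordering reverses exactly at $q=N/(N-2s+\gamma)$ (where they coincide, producing the logarithm). Once this is handled the constants are explicit in $N,s,\gamma,q,\diam(\Omega)$ and in the comparability constants appearing in \eqref{phi1.gamma.xx} and \eqref{K2}.
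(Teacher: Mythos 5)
Your proof is correct, and the decomposition you use — splitting $\Omega$ at the scale $R=d(x_0)/2$ into the near region $B_R(x_0)$ (where the min-factors in \eqref{K2} equal $1$) and the far region (where one keeps only the $d(x_0)^\gamma/|x-x_0|^\gamma$ factor and bounds the other min by $1$), then resolving the three exponent regimes of the resulting radial integral — is the standard route to these Green-function moment bounds. The paper itself gives no proof of this lemma (it cites it from \cite{MB+AF+JLV_boundary-estimates-elliptic}), so a line-by-line comparison is not possible, but your argument is the natural one and all constants are explicit in $N,s,\gamma,q,\diam(\Omega)$ and the comparability constants in \eqref{K2} and \eqref{phi1.gamma.xx}, exactly as claimed.
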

	Note that in the especial case $q=1$ we have
	\begin{equation*}\label{B1}
		B_1(\Phi_1(x_0))= \left\{\begin{array}{lll}\Phi_1(x_0) &\gamma<2s\\ \Phi_1(x_0) (1+ |\log \Phi_1(x_0)|) &\gamma=2s\\ \Phi_1(x_0)^{\frac{2s}{\gamma }} &\gamma >2s,\end{array}\right.
	\end{equation*}
	which implies that for $\epsilon$ small enough
	$$B_1(\Phi_1(x_0))\leq C \left\{\begin{array}{lll}\Phi_1(x_0) &\gamma<2s\\ \frac{1}{\epsilon}\Phi_1(x_0)^{1-\epsilon} &\gamma=2s\\ \Phi_1(x_0)^{\frac{2s}{\gamma }} &\gamma >2s.\end{array}\right.$$
	
	\begin{prop}[\textbf{\cite[Proposition 5.1]{MB+JLV_PM-with-F}}]\label{pointwiseestimates}
		Let $\A$ satisfy \eqref{A1}, \eqref{A2}, \eqref{K1} and let $F$ satisfy \eqref{N1}. If $u \in S$ is weak dual solution of \eqref{CDP} with initial datum $0\leq u_0\in \LL^1_{\Phi_1}(\Omega)$. Then, for a.e. $x_0\in \Omega$ and $t>0$ we obtain
		\begin{equation*}\label{pe1}
			\int_{\Omega} u(t,x) \K(x,x_0) \dx\leq \int_{\Omega} u_0(x) \K (x,x_0) \dx.
		\end{equation*}
		Besides, for every $0<t_0\leq t_1\leq t$ and for a.e. $x_0\in \Omega$, the following inequality is true
		\begin{align}\label{pe2}
			\left( \frac{t_0}{t_1} \right)^{\frac{m_0}{m_0-1}} (t_1-t_0) F(u(t_0,x_0))&\leq \int_{\Omega} [u(t_0,x) - u(t_1,x)] \K(x,x_0)\dx\nonumber\\
			&\leq (m_0-1)\left( \frac{t}{t_0^{\frac{1}{m_0}}} \right)^{\frac{m_0}{m_0-1}} F(u(t,x_0)).
		\end{align}
	\end{prop}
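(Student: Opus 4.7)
The plan is to interpret the weak dual formulation \eqref{weakdualfor} as a pointwise ODE in time for the quantity $U(t,x_0):=\int_\Omega u(t,x)\K(x,x_0)\dx=(\AI u)(t,x_0)$, namely $\partial_t U(t,x_0)=-F(u(t,x_0))$, and then combine the time-integrated version of this identity with the Bénilan--Crandall monotonicity of Lemma \ref{benilancrandall} to sandwich the integrand by $F(u(t_0,x_0))$ and $F(u(t,x_0))$. The first (monotonicity) inequality is then immediate from $F\ge 0$, while \eqref{pe2} follows by integrating explicit power bounds in $s$.

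Concretely, the first step is to fix $x_0\in\Omega$ and pick a mollifying sequence $\eta_\varepsilon\in C^\infty_c(\Omega)$, $\eta_\varepsilon\ge 0$, with $\int\eta_\varepsilon=1$ and $\supp\eta_\varepsilon\to\{x_0\}$. For any $\varphi\in C^1_c((0,\infty))$ the test function $\psi_\varepsilon(t,x)=\varphi(t)\eta_\varepsilon(x)$ satisfies the admissibility condition $\psi_\varepsilon/\Phi_1\in C^1_c((0,\infty):\LL^\infty(\Omega))$, since $\eta_\varepsilon$ is compactly supported inside $\Omega$ where $\Phi_1>0$. Symmetry of $\K$ gives
$$\int_\Omega (\AI u)(t,x)\eta_\varepsilon(x)\dx=\int_\Omega u(t,y)(\AI\eta_\varepsilon)(y)\dy,$$
and since $u(t)\in \LL^p(\Omega)$ for some $p>N/2s$ (by $u\in S$), Lemma \ref{GestimatesI} yields that $\AI\eta_\varepsilon\to \K(\cdot,x_0)$ in $\LL^{p'}(\Omega)$ as $\varepsilon\to 0$, while the boundary bound $\K(\cdot,x_0)\lesssim_{x_0}\Phi_1$ away from $x_0$ makes the $\LL^1_{\Phi_1}$ ingredients compatible with the integration. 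Passing to the limit $\varepsilon\to 0$ in \eqref{weakdualfor} therefore produces, for a.e. $x_0\in\Omega$,
$$\int_0^\infty \dot\varphi(t)\,U(t,x_0)\dt=\int_0^\infty \varphi(t)\,F(u(t,x_0))\dt.$$
This is exactly $\partial_t U(t,x_0)=-F(u(t,x_0))$ in the sense of distributions in $t$. Choosing $\varphi$ as a smooth approximation of $\chi_{[t_0,t_1]}$ and using the continuity $U(\cdot,x_0)\in C([0,\infty))$ (inherited from $u\in C([0,\infty):\LL^1_{\Phi_1}(\Omega))$ together with the splitting of $\K(\cdot,x_0)$ into its singular part near $x_0$, handled by the $\LL^\infty$ bound on $u(t)$, and its boundary part, controlled by $\Phi_1$) yields the key identity
$$U(t_0,x_0)-U(t_1,x_0)=\int_{t_0}^{t_1} F(u(s,x_0))\ds,$$
from which the monotonicity inequality follows at once.

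For \eqref{pe2}, apply the Bénilan--Crandall estimate \eqref{BC2}, which states that $s\mapsto s^{\frac{m_0}{m_0-1}}F(u(s,x_0))$ is non-decreasing. For the lower bound, for $s\in[t_0,t_1]$ one has $F(u(s,x_0))\ge (t_0/s)^{\frac{m_0}{m_0-1}}F(u(t_0,x_0))\ge (t_0/t_1)^{\frac{m_0}{m_0-1}}F(u(t_0,x_0))$; integrating on $[t_0,t_1]$ gives the lower bound of \eqref{pe2}. For the upper bound, for $s\le t$ one has $F(u(s,x_0))\le (t/s)^{\frac{m_0}{m_0-1}}F(u(t,x_0))$; the elementary computation $\int_{t_0}^{t_1}s^{-\frac{m_0}{m_0-1}}\ds\le (m_0-1)\,t_0^{-\frac{1}{m_0-1}}$ combined with $t^{\frac{m_0}{m_0-1}}/t_0^{\frac{1}{m_0-1}}=(t/t_0^{1/m_0})^{\frac{m_0}{m_0-1}}$ yields exactly the required upper estimate.

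The main obstacle is the technical justification of the limit $\varepsilon\to 0$: the test functions $\psi_\varepsilon$ must be admissible uniformly, one must identify $F(u(t,x_0))$ at a.e. point $x_0$ as the limit of $\int F(u(t,x))\eta_\varepsilon(x)\dx$ (which requires Lebesgue differentiation, available because $F(u)(t)\in \LL^\infty_{\rm loc}(\Omega)$ as $u(t)\in\LL^\infty(\Omega)$ and $F$ is continuous), and one must control the convergence $\AI\eta_\varepsilon\to\K(\cdot,x_0)$ against $u(t,\cdot)$, which requires coupling the $\LL^p$ smoothing of $u$ with the $\LL^{p'}$-integrability of $\K(\cdot,x_0)$ given by Lemma \ref{GestimatesI}. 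Once this approximation is settled, the rest of the argument reduces to the direct integration outlined above.
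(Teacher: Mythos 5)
Your proposal is correct and follows the same route as the proof in the cited reference \cite[Proposition 5.1]{MB+JLV_PM-with-F}: mollify the test function in space within the weak dual formulation \eqref{weakdualfor} to pass from the integral identity to the pointwise-in-$x_0$ relation $\int_\Omega[u(t_0,x)-u(t_1,x)]\K(x,x_0)\dx=\int_{t_0}^{t_1}F(u(s,x_0))\ds$, then invoke the Bénilan--Crandall monotonicity \eqref{BC2} of $s\mapsto s^{m_0/(m_0-1)}F(u(s,x_0))$ to sandwich the time-integral from below by $(t_1-t_0)(t_0/t_1)^{m_0/(m_0-1)}F(u(t_0,x_0))$ and from above by $(m_0-1)t^{m_0/(m_0-1)}t_0^{-1/(m_0-1)}F(u(t,x_0))$, whose arithmetic you verified correctly. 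The technical justifications you flag (Lebesgue differentiation, $\AI\eta_\varepsilon\to\K(\cdot,x_0)$ in $\LL^{p'}$, continuity of $t\mapsto\AI u(t,x_0)$ using only \eqref{K1} and the $\LL^p$-smoothing from Definition \ref{S}) are exactly the ones that need care, and they are what the reference carries out in detail.
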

	
	Observe that solutions $u\in S$ are bounded. The reason is that, for all $t>0$ we know $u(t)\in \LL^p(\Omega)$ for some $p>\frac{N}{2s}$. Consequently, its conjugate $p'=\frac{p}{p-1}< \frac{N}{N-2s}$. Therefore, by Hölder's inequality
	$$\int_{\Omega} u(t,x) \K(x,x_0)\dx<\infty.$$
	In this way, the first part of inequality \eqref{pe2} guarantees that $u(t)\in \LL^\infty(\Omega)$ $\forall t>0$. However, for $t=0$ this claim is not always true because $u_0\in \LL^1_{\Phi_1}(\Omega)$, so the integral above could be infinite. The key to Proposition \ref{pointwiseestimates} is the middle term of \eqref{pe2}, as it encodes the upper bounds from the first part and the lower bounds for large times from the second.
	
	\underline{Fundamental Upper Bound:}\\
	Starting with the lower bound of \eqref{pe2}, we select $t_1=2t_0>t_0$. By eliminating the non-positive term $-\int_{\Omega} u(t_1,x)\K(x,x_0)\dx$ we get
	\begin{equation}\label{FUB}\tag{FUB}
		F(u(t_0,x_0))\leq \frac{2^{\frac{m_0}{m_0-1}}}{t_0} \int_{\Omega} u(t_0,x) \K(x,x_0)\dx.
	\end{equation}
	This last bound encodes both the smoothing effect and the absolute upper bound, and it is sharp for both large and small times, (see \cite{MB+JLV_harnack-inequality} for more details). Summing up, the absolute upper bounds of Theorem \ref{AbsoluteUpperBounds} are sharp for large times. However, to be more precise for $t$ close to zero, a dependency on the initial datum appears, as is typical in smoothing effects (see Theorems \ref{Smoothing} and \ref{SmoothingExplicit}).
	
	\begin{lem}[\textbf{Green function estimates II \cite[Proposition 6.5]{MB+AF+JLV_boundary-estimates-elliptic}}]\label{GestimatesII}
		Let $\A$ satisfy \eqref{A1}, \eqref{A2}, \eqref{K2}, let $H$ a convex positive function, normalized at zero $H(0)=0$ and $v:\Omega\rightarrow [0,\infty)$ a measurable bounded function such that for all $x_0\in \Omega$
		\begin{equation}\label{Gestimates3}
			H(v(x_0))\leq k_0 \int_{\Omega} v(x)\K(x,x_0)\dx.
		\end{equation}
		Then, the following assertions are true:
			
		\renewcommand{\labelenumi}{\rm (\roman{enumi})}
		\begin{enumerate}
			\item There exists a positive constant $c_5>0$ such that
			\begin{equation*}\label{Gestimates4}
				H(v(x_0))\leq k_0 \int_{\Omega} v(x)\K(x,x_0) \dx \leq c_5 k_0 F^{-1}(\nl (2c_5 k_0)) B_1(\Phi_1(x_0)).
			\end{equation*}
where $\nl$ is the Legendre transform of the convex function $F$, whose definition we recall below in Remark \ref{remupper}\,(d).
			\item Moreover, if there exists $\underline{c}$ and $m_i>1$ such that $H(a)\geq \underline{c}a^{m_i}$ for all $a\in [0,1]$. Then, we define
			$$\sigma_i=\left(1\land \frac{2sm_i}{\gamma (m_i -1)}\right).$$
			\begin{enumerate}
				\item If $\gamma< \frac{2sm_i}{m_i-1}$, there exists $c_6>0$ such that
				\begin{equation}\label{Gestimates5}
					\underline{c} v(x_0)^{m_i}\leq H(v(x_0))\leq k_0 \int_{\Omega} v(x) \K(x,x_0) \dx \leq c_6 k_0^{\frac{m_i}{m_i-1}} \Phi_1(x_0).
				\end{equation}
				\item If $1\geq \gamma \geq \frac{2sm_i}{m_i-1}$, there exists $c_7>0$ such that
				\begin{align}\label{Gestimates6}
					\underline{c} v(x_0)^{m_i}\leq H(v(x_0))&\leq k_0 \int_{\Omega} v(x)\K(x,x_0) \dx\nonumber\\
					 &\leq c_7 k_0^{\frac{m_i}{m_i-1}} \left\{\begin{array}{ll}\Phi_1(x_0) (1+ \log|\Phi_1(x_0)|) &\gamma=\frac{2sm_i}{m_i-1}\\ \Phi_1(x_0)^{\frac{2sm_i}{\gamma (m_i-1)}} &\gamma> \frac{2sm_i}{m_i-1}.\end{array}\right.
				\end{align}
			\end{enumerate}
		\end{enumerate}
		These onstants $c_5,c_6,c_7$ only depend on N, s, $\gamma$, $m_i$, $\Omega$ and all the estimates are sharp.
	\end{lem}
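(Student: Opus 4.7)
The approach combines the pointwise hypothesis with the Green-kernel integrability bounds of Lemma~\ref{GestimatesI}, via an $L^\infty$-bootstrap in part (i) and a Hölder-bootstrap in part (ii).

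For part~(i), I would first extract a uniform $L^\infty$ bound on $v$: evaluating the hypothesis at $x_0$ approaching the essential supremum of $v$ and using Lemma~\ref{GestimatesI} with $q=1$, one gets $H(\|v\|_\infty)\lesssim k_0\|v\|_\infty c_2$. Converting this sublinear estimate into the closed form $\|v\|_\infty \leq F^{-1}(\nl(2c_5 k_0))$ is a standard application of the Fenchel--Young inequality $ab \leq F(a)+\nl(b)$ with $a = \|v\|_\infty$ and $b = 2c_5 k_0$, the constant $c_5$ being chosen to absorb $c_2$. Re-inserting this uniform bound into the pointwise hypothesis, this time with the refined estimate $\int_\Omega \K(x,x_0)\,dx \leq c_4 B_1(\Phi_1(x_0))$ from the second half of Lemma~\ref{GestimatesI}, yields the claim.

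For part~(ii), the polynomial lower bound $H(a)\geq \underline{c}\, a^{m_i}$ turns the hypothesis into $\underline{c}\, v(x_0)^{m_i} \leq k_0 \int_\Omega v(x) \K(x,x_0)\,dx$. I would apply Hölder's inequality with conjugate exponents $q \in (1, N/(N-2s))$ and $q' = q/(q-1)$, and invoke Lemma~\ref{GestimatesI} to obtain
$$v(x_0)^{m_i} \leq C\, k_0\, \|v\|_{L^{q'}(\Omega)}\, B_q(\Phi_1(x_0)).$$
Raising to the power $1/m_i$ and integrating in $x_0$ over $\Omega$ produces a self-improving inequality, which closes to $\|v\|_{L^{q'}} \leq C\, k_0^{1/(m_i-1)}$ provided $\int_\Omega B_q(\Phi_1)^{q'/m_i}\,dx < \infty$. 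Feeding this back into the pointwise bound yields $v(x_0)^{m_i} \lesssim k_0^{m_i/(m_i-1)} B_q(\Phi_1(x_0))$, which is the conclusion of (ii) up to identification of the boundary exponent.

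The main obstacle is the fine-tuned choice of $q$ producing the sharp exponent of $\Phi_1$ in each sub-case. Getting the linear weight $B_q = \Phi_1$ (case (ii)(a)) demands $q$ in the sub-critical range $q<N/(N-2s+\gamma)$, which is compatible with $q>1$ when $\gamma<2s$; the extended range $\gamma < 2sm_i/(m_i-1)$ is handled by iterating the bootstrap and exploiting the eigenfunction identity $\AI \Phi_1 = \Phi_1/\lambda_1$. At the critical threshold $\gamma = 2sm_i/(m_i-1)$ the logarithmic borderline $q = N/(N-2s+\gamma)$ gives the factor $1+|\log\Phi_1|$, while for super-critical $\gamma$ one picks $q$ in the upper sub-range, producing the fractional power $\Phi_1^{2sm_i/(\gamma(m_i-1))}$. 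The delicate step is verifying that $\int_\Omega B_q(\Phi_1)^{q'/m_i}\,dx$ stays finite at the prescribed $q$ with explicit constants; sharpness ultimately follows by matching the integrability threshold of $\K(\cdot,x_0)$ to the boundary decay $\Phi_1\asymp d^\gamma$.
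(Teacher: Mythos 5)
Your treatment of part~(i) is essentially the standard argument and is correct: the $q=1$ estimate of Lemma~\ref{GestimatesI} gives $H(\|v\|_\infty)\le c_2 k_0\|v\|_\infty$, Fenchel--Young then yields the uniform bound on $\|v\|_\infty$, and re-inserting it together with $\int_\Omega\K(\cdot,x_0)\dx\le c_4 B_1(\Phi_1(x_0))$ produces the weighted conclusion.

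Part~(ii), however, has a genuine gap that your ``iterate the bootstrap'' remark does not close. A single Hölder step combined with the self-improving $\LL^{q'}$ estimate yields $k_0\int_\Omega v\,\K(\cdot,x_0)\dx \lesssim k_0^{m_i/(m_i-1)} B_q(\Phi_1(x_0))$ for some admissible $q\in(1,N/(N-2s))$, but no such $q$ delivers the stated boundary power once $\gamma\ge 2s$. Indeed, solving $(N-q(N-2s))/(\gamma q)=2sm_i/(\gamma(m_i-1))$ gives $q=N(m_i-1)/(N(m_i-1)+2s)<1$, outside the Hölder range; equivalently, as $q\to 1^+$ the best exponent attainable in $B_q$ is $2s/\gamma$, which is strictly smaller than the target $2sm_i/(\gamma(m_i-1))$ because $m_i/(m_i-1)>1$. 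Likewise, when $\gamma=2s$ no admissible $q$ gives the clean linear $\Phi_1$ demanded by (ii)(a). Iterating your scheme does not help: Hölder with a different $q$ can never surpass $B_1(\Phi_1)$, and your plan leaves $q$ fixed in any case. What actually closes the argument (and what the cited proof in \cite{MB+AF+JLV_boundary-estimates-elliptic} uses) is an iteration on the pointwise bound $v\le C_k\Phi_1^{\alpha_k}$, which requires a sharp estimate of the \emph{weighted} integral $\int_\Omega \Phi_1(x)^{\alpha}\,\K(x,x_0)\dx$ for fractional $\alpha\in(0,1)$ --- a separate Green-function lemma, not Lemma~\ref{GestimatesI}, and not implied by $\AI\Phi_1=\Phi_1/\lambda_1$, which only covers $\alpha=1$. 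That lemma gives the recursion $\alpha_{k+1}=\min\{1,\alpha_k+2s/\gamma\}/m_i$, whose fixed point produces $v\lesssim\Phi_1^{\sigma_i/m_i}$ and hence the exponent $\sigma_i$ in all sub-cases, with the log-correction appearing exactly at the critical fixed point. Without this ingredient the sharp exponents in (ii) are unreachable.
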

	Note that Lemma \ref{GestimatesII} is valid for any H, v, $m_i$ which satisfy the hypotheses. In particular, it applies to the nonlinearity $F$ with \eqref{N1}, a weak dual solution of \eqref{CDP} $u\in S$ and the parameters $m_i=\frac{1}{1-\mu_i}$, $i=0,1$. Thus, thanks to \eqref{Gestimates5} and \eqref{Gestimates6}, in our case we obtain
	$$u(t,x_0)^{m_i}\leq F(u(t,x_0))\leq c_6 k_0^{\frac{m_i}{m_i-1}} \Phi_1(x_0)^{\sigma_i}.$$
	
	The following theorem tell us how the the solution $u(t)$ is bounded in terms of t.
	\begin{thm}[\textbf{Absolute Upper Bound \cite[Theorem 5.2]{MB+JLV_PM-with-F}}]\label{AbsoluteUpperBounds}
		Let $\A$ satisfy \eqref{A1}, \eqref{A2} and \eqref{K1}, let $F$ satisfy \eqref{N1} and $u\in S$ weak dual solution of \eqref{CDP}. Then, if we write the Legendre Transform of $F$ as $F^*$, there exists constants $k_1$, $k_2$ and $\overline{k_2}$ such that for all $t>0$.
		\begin{equation}\label{AUB1}
			F(\|u(t)\|_{\LL^\infty(\Omega)})\leq \nl \left( \frac{\overline{k_2}}{t} \right).
		\end{equation}
		Futhermore, there exists one nonnegative time $0\leq\tau_1(u_0)\leq k_1$, which depends on the initial datum, such that $||u(t)||_{\LL^\infty(\Omega)}\leq 1$ $\forall t \geq \tau_1(u_0)$ and we also have the following bounds
		\begin{equation}\label{AUB3}
			F(\|u(t)\|_{\LL^\infty(\Omega)})\leq \frac{k_2}{t^{\frac{m_i}{m_i-1}}},
		\end{equation}
		\begin{equation}\label{AUB2}
			\|u(t)\|_{\LL^\infty(\Omega)}\leq \frac{k_2}{t^{\frac{1}{m_i-1}}},
		\end{equation}
		
		where $i=0$ if $t\leq k_1$ and $i=1$ if $t\geq k_1$.
	\end{thm}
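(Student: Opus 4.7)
The plan is to derive all three bounds from the Fundamental Upper Bound \eqref{FUB} combined with the Green-function estimates of Lemma \ref{GestimatesII}(i). First observe that hypothesis \eqref{N2} forces $FF''/(F')^2 \geq \mu_0 > 0$, and since $F \geq 0$ and $(F')^2 \geq 0$ this gives $F'' \geq 0$, so $F$ is convex with $F(0) = 0$; this allows us to invoke Lemma \ref{GestimatesII}(i) with $H = F$.

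Starting from \eqref{FUB} at time $t>0$ and a.e.\ $x_0 \in \Omega$, set $k_0 := 2^{m_0/(m_0-1)}/t$ so that
\[
F(u(t,x_0)) \leq k_0 \int_{\Omega} u(t,x)\, \K(x,x_0)\, \dx.
\]
Applying Lemma \ref{GestimatesII}(i) with $v = u(t,\cdot)$ gives
\[
F(u(t,x_0)) \leq c_5\, k_0\, F^{-1}\!\bigl(\nl(2c_5 k_0)\bigr)\, B_1(\Phi_1(x_0)),
\]
and since $\Phi_1$ is bounded on $\overline{\Omega}$, the factor $B_1(\Phi_1(x_0))$ is uniformly controlled; taking the supremum over $x_0$ yields $F(\|u(t)\|_{\LL^\infty}) \leq C_1 k_0 F^{-1}(\nl(C_2 k_0))$. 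To reach the form \eqref{AUB1}, apply Young's inequality $ab \leq F(a) + \nl(b)$ with $a = F^{-1}(\nl(C_2 k_0))$ and $b = C_2 k_0$, obtaining $C_2 k_0 F^{-1}(\nl(C_2 k_0)) \leq 2\nl(C_2 k_0)$. Substituting back produces $F(\|u(t)\|_{\LL^\infty}) \leq C_3 \nl(C_2 k_0)$; since \eqref{N1} passes to $\nl$ and gives it power-like comparability (so $C_3 \nl(p) \leq \nl(\alpha p)$ for a suitable $\alpha \geq 1$), we absorb $C_3$ into the argument and obtain \eqref{AUB1} with $\bar{k_2} = \alpha C_2 \cdot 2^{m_0/(m_0-1)}$.

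For the explicit estimates \eqref{AUB3}-\eqref{AUB2}, I would use Lemma \ref{Nonlin} translated to $\nl$ via the Legendre duality: for $p$ large (equivalently small $t$) the bound behaves like $\nl(p) \lesssim p^{m_0/(m_0-1)}$, and for $p$ small (large $t$) like $\nl(p) \lesssim p^{m_1/(m_1-1)}$. Define
\[
k_1 := \bar{k_2}\,\bigl[(\nl)^{-1}(F(1))\bigr]^{-1} \qquad \text{and}\qquad \tau_1(u_0) := \inf\{t > 0\,:\, \|u(t)\|_{\LL^\infty} \leq 1\}.
\]
By monotonicity of $\nl$ and \eqref{AUB1}, for $t \geq k_1$ one has $\nl(\bar{k_2}/t) \leq F(1)$, hence $\|u(t)\|_{\LL^\infty} \leq 1$, so $\tau_1(u_0) \leq k_1$. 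On $\{t \geq k_1\}$ apply Lemma \ref{Nonlin} with $r_0 = 1$ to deduce $F(\|u\|_{\LL^\infty}) \geq \underline{k}\,\|u\|_{\LL^\infty}^{m_1}$ and $\nl(\bar{k_2}/t) \lesssim t^{-m_1/(m_1-1)}$, which together yield \eqref{AUB3}-\eqref{AUB2} with $i=1$. On $\{t \leq k_1\}$ the analogous comparison with power $m_0$ gives the $i=0$ case; both $k_2$ constants are absolute.

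The main obstacle is the passage from the abstract inequality $F(\|u\|_{\LL^\infty}) \leq C\, k_0\, F^{-1}(\nl(C'k_0))$ to the compact form involving only $\nl$: this depends on a quantitative version of Young's inequality together with the fact that \eqref{N1} transfers to $\nl$ as a bilateral power-type growth condition, which is what permits absorbing multiplicative constants into the argument of $\nl$. A secondary subtlety is that $\tau_1(u_0)$ may vanish (for initial data already satisfying $\|u_0\|_{\LL^\infty} \leq 1$), so the statement must allow $\tau_1(u_0) = 0$; the key point is only the universal upper bound $\tau_1(u_0) \leq k_1$, which depends on $F$, $\Omega$, $N$, $s$ but not on $u_0$.
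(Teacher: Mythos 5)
The paper does not actually prove Theorem~\ref{AbsoluteUpperBounds}; it is quoted from \cite[Theorem~5.2]{MB+JLV_PM-with-F}, so there is no in-paper proof to compare against. Judging your reconstruction on its own terms, the overall strategy (start from \eqref{FUB}, convert the integral against $\K$ into an $\LL^\infty$ bound, then trade between $F$, $F^{-1}$ and $\nl$ via Young and the power sandwich from Lemma~\ref{Nonlin}) is reasonable, and the later steps --- absorbing multiplicative constants into the argument of $\nl$ via convexity ($\lambda\nl(z)\leq\nl(\lambda z)$ for $\lambda\geq 1$), defining $k_1$ by $\nl(\bar k_2/k_1)=F(1)$, and splitting the power of $t$ at $t=k_1$ --- are all in order.

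However, there is a genuine gap at the key step. You invoke Lemma~\ref{GestimatesII}(i), which requires the two-sided kernel hypothesis \eqref{K2}. The theorem you are proving assumes only the weaker upper bound \eqref{K1}: no lower bound on $\K$, and hence no boundary weight $B_1(\Phi_1)$. Your chain therefore proves the statement only under \eqref{K2}, which is strictly stronger than what is claimed. The fix is to bypass Lemma~\ref{GestimatesII} entirely and instead use the $q=1$ case of Lemma~\ref{GestimatesI}, namely $\sup_{x_0\in\Omega}\int_\Omega\K(x,x_0)\dx\leq c_2$, which does hold under \eqref{K1}. Combining this with \eqref{FUB} gives, after taking the supremum in $x_0$,
\[
F\bigl(\|u(t)\|_{\LL^\infty(\Omega)}\bigr)\leq \frac{2^{\frac{m_0}{m_0-1}}c_2}{t}\,\|u(t)\|_{\LL^\infty(\Omega)}.
\]
From here you close with the structural inequalities that follow from \eqref{N1}: writing $M=\|u(t)\|_{\LL^\infty}$, one has $F'(M)\leq m_1 F(M)/M \leq m_1\,2^{\frac{m_0}{m_0-1}}c_2/t$, hence $M\leq (F')^{-1}\bigl(m_1\,2^{\frac{m_0}{m_0-1}}c_2/t\bigr)$, while $\nl(F'(r))=rF'(r)-F(r)\geq (m_0-1)F(r)$ gives $F(M)\leq (m_0-1)^{-1}\nl\bigl(m_1\,2^{\frac{m_0}{m_0-1}}c_2/t\bigr)$; absorbing $(m_0-1)^{-1}$ into the argument of $\nl$ as you already explain yields \eqref{AUB1}. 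Your derivation of \eqref{AUB3}--\eqref{AUB2} from \eqref{AUB1} via the Legendre-dual version of Lemma~\ref{Nonlin} can then be kept essentially unchanged, though it is worth stating explicitly that the ``power sandwich'' for $\nl$ is exactly what is obtained by dualizing \eqref{lem2.2_1}--\eqref{lem2.2_2}.
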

	\begin{rem}\rm \label{remupper}
		\renewcommand{\labelenumi}{\rm (\alph{enumi})}
		\begin{enumerate}
			\item The upper bound proves a strong regularization that is independent of the initial datum. Inequality \eqref{AUB1} is intrinsic and more precise, while inequalities \eqref{AUB2} and \eqref{AUB3} are more useful for achieving explicit estimates. The constants $k_1$, $k_2$ and $\overline{k_2}$ are positive, depend only on N, s, $m_0$, $m_1$ and $\Omega$, and have an explicit form given in \cite{MB+JLV_PM-with-F}.
			\item $\tau_1(u_0)$ is the first time for which $||u(t_0)||_{\LL^\infty(\Omega)}\leq 1$ for all $t_0\geq \tau_1(u_0)$ and $k_1$ is an upper bound for $\tau_1(u_0)$ that does not depend on $u_0$. This is important because the behavior of $u\to F(u)$ changes when $u$ is less than or bigger than 1, as noted in Lemma \ref{Nonlin}, where the powers in these estimates change.
			\item Inequality \eqref{AUB2} is valid for all times; however, it is not sharp for small times. The precise upper bound for small times involves the initial datum $u_0$. We will address this topic after discussing the smoothing effects.
			\item If $F$ is a nonlinearity as in Section \ref{sec2.2}, we define the Legendre Transform of $F$ as follows $$\nl (z)=\sup_{r\in \RR}(r z - F(r)) = z (F')^{-1}(z) - F((F')^{-1}z) = r F'(r) - F(r).$$
			Where, $r=(F')^{-1}(z)$ and $z=F'(r)$.
		\end{enumerate}
	\end{rem}

	
	\subsection{Proof of the Upper Bound of the GHP}\label{sec3.2}
	This section is devoted to the proof of the Upper bound for all different versions of GHP that we have presented above, in Section \ref{sec2.4}.
The estimate will be sharp for all nonnegative, nontrivial solutions in the case of (RFL) and (CFL). However, it can be improved in the case of (SFL) when data are small, as  in Theorem \ref{smalldata3}.
	\begin{thm}[\textbf{Upper Boundary Behaviour of Solutions}]\label{UpperBehaviour}
		Let $\A$ satisfy \eqref{A1} \eqref{A2} and \eqref{K2}, let $F$ satisfy \eqref{N1} and $u\in S$ a weak dual solution of \eqref{CDP} with initial datum $0\leq u_0\in \LL^1_{\Phi_1}(\Omega)$. Then, there exists positive constants  $k_3$ and $k_4$ which depends only on N, s, $\gamma$, $m_i$, F and $\Omega$, such that for any $t>0$ and $x\in \Omega$ we have
		\begin{equation}\label{UpperBehaviour1}
			F(u(t,x))\leq k_3 \frac{\Phi_1(x)^{\sigma_1}}{t^{\frac{m_i}{m_i-1}}}.
		\end{equation}
		\begin{equation}\label{UpperBehaviour2}
			u(t,x)\leq k_3 \frac{\Phi_1(x)^{\frac{\sigma_1}{m_1}}}{t^{\frac{1}{m_i-1}}}.
		\end{equation}
		Where $i=0$ if $t\leq k_1$ and $i=1$ if $t\geq k_1$. Moreover, we have an intrinsic inequality
		\begin{equation}\label{UpperBehaviour3}
			F(u(t,x))\leq \frac{k_4}{t} F^{-1}\left( \nl \left( \frac{k_4}{t} \right) \right).
		\end{equation}
	\end{thm}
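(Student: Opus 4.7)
The starting point is the Fundamental Upper Bound \eqref{FUB}, which gives
\[
F(u(t,x_0)) \le \frac{2^{m_0/(m_0-1)}}{t}\int_{\Omega} u(t,x)\,\K(x,x_0)\dx
\]
for every $t>0$ and a.e.\ $x_0\in\Omega$. The idea is to read this as a hypothesis of the form \eqref{Gestimates3} in Lemma \ref{GestimatesII}, with $H=F$, $v=u(t,\cdot)$ and $k_0=2^{m_0/(m_0-1)}/t$. To do so I first need to control $v$ pointwise so that the convex lower bound $H(a)\ge \underline c\,a^{m_1}$ on $[0,1]$ required by part~(ii) of the lemma is available through \eqref{lem2.2_2} of Lemma \ref{Nonlin} (with $r_0=1$).

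\textbf{Step 1: large times $t\ge k_1$.} By Theorem \ref{AbsoluteUpperBounds} we have $\|u(t)\|_{\LL^\infty(\Omega)}\le 1$ for all $t\ge \tau_1(u_0)$ and $\tau_1(u_0)\le k_1$. Hence, for $t\ge k_1$, Lemma \ref{Nonlin} gives $F(a)\ge \underline k\,F(1)\,a^{m_1}$ on $[0,1]$, so Lemma \ref{GestimatesII}\,(ii) applies with $m_i=m_1$ and produces
\[
F(u(t,x_0))\le c_6\,k_0^{\frac{m_1}{m_1-1}}\,\Phi_1(x_0)^{\sigma_1}
= k_3\,\frac{\Phi_1(x_0)^{\sigma_1}}{t^{\frac{m_1}{m_1-1}}}\,,
\]
which is \eqref{UpperBehaviour1} with $i=1$.

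\textbf{Step 2: small times $t\le k_1$.} Here $u(t)$ need not be $\le 1$, so Lemma \ref{GestimatesII}\,(ii) cannot be directly invoked with the useful value $m_i=m_1$. Instead I transfer the large-time estimate backwards in time via the Bénilan--Crandall monotonicity \eqref{BC2}: for $0<t\le k_1$, applying \eqref{BC2} with $\tau=t$ and ``final time'' $k_1$ gives
\[
F(u(t,x_0))\le \Bigl(\tfrac{k_1}{t}\Bigr)^{\frac{m_0}{m_0-1}} F(u(k_1,x_0))
\le \Bigl(\tfrac{k_1}{t}\Bigr)^{\frac{m_0}{m_0-1}} k_3\,\frac{\Phi_1(x_0)^{\sigma_1}}{k_1^{m_1/(m_1-1)}}
\le k_3'\,\frac{\Phi_1(x_0)^{\sigma_1}}{t^{\frac{m_0}{m_0-1}}}\,,
\]
which is \eqref{UpperBehaviour1} with $i=0$. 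Adjusting constants yields \eqref{UpperBehaviour1} in both regimes.

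\textbf{Step 3: bound on $u$.} Because $\Phi_1\le\|\Phi_1\|_\infty$ and $t^{-m_i/(m_i-1)}$ is bounded by an absolute constant for $t\ge k_1$, \eqref{UpperBehaviour1} implies in particular $F(u(t,x))\le F(1)$ for all sufficiently large $t$; the range $t\le k_1$ is handled analogously after enlarging $k_3$ to absorb $F(1)$ where needed. Hence $u(t,x)\le 1$ everywhere we use the argument, and one more application of $F(a)\ge\underline k\,F(1)\,a^{m_1}$ on $[0,1]$ gives $u(t,x)\le \bigl(F(u(t,x))/(\underline k F(1))\bigr)^{1/m_1}$. Substituting \eqref{UpperBehaviour1} into this inequality and absorbing constants produces \eqref{UpperBehaviour2}.

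\textbf{Step 4: intrinsic inequality.} To obtain \eqref{UpperBehaviour3} I invoke instead part~(i) of Lemma \ref{GestimatesII}, again with $k_0=2^{m_0/(m_0-1)}/t$, which yields
\[
F(u(t,x_0))\le c_5\,k_0\,F^{-1}\!\bigl(\nl(2c_5 k_0)\bigr)\,B_1(\Phi_1(x_0)).
\]
Since $\Phi_1$ is bounded on $\Omega$, the factor $B_1(\Phi_1(x_0))$ is bounded by a constant depending only on $N,s,\gamma,\Omega$, and can be absorbed into a new constant $k_4$; this gives \eqref{UpperBehaviour3}.

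\textbf{Anticipated obstacle.} The delicate point is Step 2: the lack of homogeneity means that $u$ may exceed $1$ for small times (when $\|u_0\|_{\LL^1_{\Phi_1}}$ is large), so one cannot apply Lemma \ref{GestimatesII}\,(ii) with $m_1$ directly. The key is that the Bénilan--Crandall inequality \eqref{BC2} holds globally and uses the ``small-$u$'' exponent $m_0$, which is precisely what produces the exponent $m_0/(m_0-1)$ in the small-time estimate. Keeping track of which power ($m_0$ or $m_1$) is in force, and in which regime, is what makes the estimate consistent between the two time scales.
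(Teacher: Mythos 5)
Your Steps 1, 2, and 4 track the paper's proof almost line by line: \eqref{FUB} feeds into Lemma \ref{GestimatesII}\,(i) with $k_0=2^{m_0/(m_0-1)}/t$ for the intrinsic bound \eqref{UpperBehaviour3}; for $t\ge k_1$ the absolute bound $\|u(t)\|_{\LL^\infty}\le1$ lets you apply Lemma \ref{GestimatesII}\,(ii) with exponent $m_1$; and the small-time case of \eqref{UpperBehaviour1} is carried back from $t=k_1$ by the B\'enilan--Crandall monotonicity \eqref{BC2} of $t^{m_0/(m_0-1)}F(u(t))$.

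The gap is in Step~3, specifically in the derivation of \eqref{UpperBehaviour2} for $t<k_1$. You propose to extract $u$ from $F(u)$ via the inequality $F(a)\ge\underline k\,F(1)\,a^{m_1}$ on $[0,1]$ and then substitute \eqref{UpperBehaviour1}, but for $t<k_1$ the solution $u(t,\cdot)$ need not be bounded by $1$, so that inequality is not available; for $a>1$ Lemma \ref{Nonlin} only gives $F(a)\ge F(1)\,a^{m_0}$, and working with it would produce the time exponent $\tfrac{m_0}{(m_0-1)m_1}$ rather than the stated $\tfrac{1}{m_0-1}$. Your "handled analogously after enlarging $k_3$" remark glosses over exactly this mismatch. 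The correct repair, and what the paper does, is not to pass through $F$ at all: apply the B\'enilan--Crandall monotonicity for $u$ itself, estimate \eqref{BC4} in Lemma \ref{benilancrandall}, i.e.\ $t^{1/(m_0-1)}u(t,x)$ is non-decreasing, so that for $t\le k_1$
\begin{equation*}
t^{\frac{1}{m_0-1}}u(t,x)\le k_1^{\frac{1}{m_0-1}}u(k_1,x)\le k_3\,k_1^{\frac{1}{m_0-1}-\frac{1}{m_1-1}}\,\Phi_1(x)^{\sigma_1/m_1}\,,
\end{equation*}
using the already established large-time bound at $t=k_1$. This gives \eqref{UpperBehaviour2} with $i=0$ directly and with the correct exponent.
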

	\begin{proof}
		We begin by recalling the Fundamental Upper Bound \eqref{FUB}
		$$F(u(t_0,x_0))\leq \frac{2^{\frac{m_0}{m_0-1}}}{t_0} \int_{\Omega} u(t_0,x) \K(x,x_0)\dx.$$
		Additionally, we know $u(t,\cdot)\in \LL^\infty(\Omega)$ $\forall t>0$ by Theorem \ref{AbsoluteUpperBounds} and the nonlinearity $F$ satisfy \eqref{N1}, so  all the hypotheses of Lemma \ref{GestimatesII} are satisfied with $k_0= 2^{\frac{m_0}{m_0-1}}t_0^{-1}$. Hence, for all $t_0>0$ and $x_0\in \Omega$
		$$F(u(t_0,x_0))\leq c_5 \frac{2^{\frac{m_0}{m_0-1}}}{t_0} F^{-1}\left( \nl \left( 2 c_5 \frac{2^{\frac{m_0}{m_0-1}}}{t_0} \right) \right),$$
		which implies \eqref{UpperBehaviour3}.
		
		Now, let's assume first that $t_0\geq k_1$, so $\|u(t_0)\|_{\LL^\infty(\Omega)}\leq 1$, remember (b) of Remark \ref{remupper}. For $r\leq 1$ we have $\underline{k}F(1)r^{m_1}\leq F(r)$ by Lemma \ref{Nonlin}. Hence, applying Lemma \ref{GestimatesII} again, we obtain
		\begin{align*}
			\underline{k}F(1)u(t_0,x_0)^{m_1} &\leq F(u(t_0,x_0))
			\leq k_0 \int_{\Omega} u(t_0,x) \K(x,x_0) \dx
			\leq c_6 k_0^{\frac{m_1}{m_1-1}} \Phi_1(x_0)^{\sigma_1},
		\end{align*}
		therefore, we have shown
		\begin{equation}\label{UpperBehaviour4}
			u(t_0,x_0)\leq k_3 \frac{\Phi_1(x_0)^{\frac{\sigma_1}{m_1}}}{t_0^{\frac{1}{m_1-1}}} \hspace{5mm}\mbox{and} \hspace{5mm} F(u(t_0,x_0))\leq k_3 \frac{\Phi_1(x_0)^{\sigma_1}}{t_0^{\frac{m_1}{m_1-1}}}.
		\end{equation}

		Finally, for $t_0\leq k_1$, we use Benilan-Crandall monotonicity (Lemma \ref{benilancrandall}) and after that, we apply inequalities \eqref{UpperBehaviour4}
		 for $t_0=k_1$. On the one hand,
		\begin{align*}
			t_0^{\frac{m_0}{m_0-1}} F(u(t_0,x_0))&\leq k_1^{\frac{m_0}{m_0-1}} F(u(k_1,x_0))
			\leq k_3 k_1^{\frac{m_0}{m_0-1}-\frac{m_1}{m_1-1}} \Phi_1(x_0)^{\sigma_1},
		\end{align*}
		which finishes the proof of \eqref{UpperBehaviour1}. On the other hand,
		
		\begin{align*}
			t_0^{\frac{1}{m_0-1}}u(t_0,x_0)&\leq k_1^{\frac{1}{m_0-1}} u(k_1,x_0)
			\leq k_3 k_1^{\frac{1}{m_0-1}-\frac{1}{m_1-1}} \Phi_1(x_0)^{\frac{\sigma_1}{m_1}},
		\end{align*}
		which completes the proof of \eqref{UpperBehaviour2}.
	\end{proof}
	Inequality \eqref{UpperBehaviour1} allows us to describe the upper behavior of \solutions\,near the boundary, indeed, it establishes the upper bound for $F(u)$ of the GHP of Theorems \ref{GHPI}, \ref{GHPII}, and \ref{GHPIII}.
	
	\begin{rem}\rm \label{remark}A careful inspection of the above proof reveals we have the following estimate for large times, namely
there exists a constant $\overline{\k_3}$ such that for all $t\geq k_1$
		$$\int_{\Omega} u(t,x) \K(x,x_0) \dx \leq \overline{k_3} \frac{\Phi_1(x_0)^{\sigma_1}}{t^{\frac{1}{m_1-1}}}.$$
		However, we can extend it to small times, thanks to Lemma \ref{benilancrandall}: for $t<k_1$ we have
		\begin{align*}
			t^{\frac{1}{m_0-1}}\int_{\Omega} u(t,x) \K(x,x_0) \dx \leq k_1^{\frac{1}{m_0-1}} \int_{\Omega} u(k_1,x) \K(x,x_0) \dx
			\leq k_1^{\frac{1}{m_0-1}} k_3 \frac{\Phi_1^{\sigma_1}(x)}{k_1^{\frac{1}{m_1-1}}},
		\end{align*}
		hence, we get
		\begin{equation}\label{ecremark}
			\int_{\Omega} u(t,x) \K(x,x_0)\dx \leq \overline{k_3} \frac{\Phi_1^{\sigma_1}(x)}{t^{\frac{1}{m_i-1}}}.
		\end{equation}
		Where, $i=0$ if $t<k_1$ and $i=1$ if $t\geq k_1$. Notice that $\overline{\k_3}$ depends only on N,s, $\gamma$, $m_i$, F and $\Omega$.
We have therefore sharp upper estimates for all times. This will be a essential tool in proving the lower bounds of the next section.
	\end{rem}


\subsection{Upper bounds for small data and small times}

Consider now the SFL with $\sigma_1\leq\sigma_0<1$. In this case, we can improve the previous upper bound, cf. Theorem \ref{UpperBehaviour}, because it is possible to find initial data for which inequality \eqref{UpperBehaviour1} is not sharp. This means that depending on the initial data, there can appear different boundary behaviours, i.e. different powers of $\ph$. More precisely, when $u_0\leq A \Phi_1^{1-2s/\gamma}$ then:
$$u(t)\leq g(t)\Phi_1^{1-2s/\gamma}\ll F^{-1}(\Phi_1^{\sigma_1})\lesssim \Phi_1^{\sigma_1/m_1}.$$
This happens because, when $x$ is close enough to the boundary, $\Phi_1(x)<1$ and moreover, the inequality $\sigma_1<\sigma_0<1$ implies $1-2s/\gamma>\sigma_1/m_1$. This has been observed in \cite{MB+AF+JLV_sharp-global-estimates} for the homogeneous case $F(u)=u^m$, here, due to the lack of such homogeneity, things become technically more complicated, this motivates our extra -yet not so restrictive- assumption $F(ab)\leq C F(a) F(b)$. This is a reasonable hypothesis because our non-linearities are allowed to be  sum of powers, for instance, $F(x)=x^2 + x^4$, which satisfies the extra assumption.

\begin{thm}\label{smalldata1}
	Let $\A$ satisfy \eqref{A1}, \eqref{A2} and \eqref{L2}. Suppose also that $\A$ has a first eigenfunction $\Phi_1 \asymp \dist^\gamma$ with $\sigma_0<1$ and let $F$ be the non-linearity which satisfies \eqref{N1} and $F(ab)\leq C F(a)F(b)$. Assume that for all $x,y\in \Omega$ we have
	\begin{equation}\label{ineq2}
		K(x,y)\leq \frac{c_1}{|x-y|^{N+2s}}\left( \frac{\Phi_1(x)}{|x-y|^\gamma} \land 1 \right) \left( \frac{\Phi_1(y)}{|x-y|^\gamma} \land 1 \right)\hspace{3mm}\mbox{and}\hspace{3mm}B(x)\leq c_1\Phi_1(x)^{-2s/\gamma}.
	\end{equation}
	
	Let $u$ be a minimal weak dual solution to the problem \eqref{CDP} with initial data $0\leq u_0\leq A \Phi_1^{1-2s/\gamma}$, for some $A>0$. Then, we have
	$$u(t,x)\leq \frac{\Phi_1^{1-2s/\gamma}(x)}{\left[ A^{1-m_1} - \tilde{C}t \right]^{\frac{1}{m_1-1}}}\hspace{4mm}\mbox{on }\left[0,T_A\right], \hspace{4mm}\mbox{where }T_A=\frac{1}{\tilde{C}A^{m_1-1}}.$$
\end{thm}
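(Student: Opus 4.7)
The plan is to construct a separable supersolution of the form $U(t,x) = g(t)\,\ph(x)^{\beta}$ with $\beta := 1-2s/\gamma$, and then conclude by the comparison principle for \solutions, which is available via Theorem \ref{exisweakdual}(iii). Choosing $\beta$ this way is precisely what the hypothesis $u_0 \leq A\,\ph^{1-2s/\gamma}$ makes natural, as it matches the initial condition $U(0,x) \geq u_0(x)$ with the choice $g(0)=A$.

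\textbf{Step 1 (pointwise lower bound for $\A F(\ph^\beta)$).} The technical heart is to establish, for a.e.\ $x\in\Omega$,
\[
\A F(\ph^{\beta})(x) \;\geq\; -\,\tilde{C}_0 \, F(\ph(x)^{\beta}).
\]
Splitting the principal-value integral defining $\A$ into a near-diagonal region $\{|y-x|<d(x)/2\}$ and its complement, and using $\ph \asymp d^{\gamma}$ together with the kernel bound \eqref{ineq2}, reduces the matter to the scalar estimate $|\A\ph^{m\beta}(x)| \leq C\,\ph(x)^{m\beta - 2s/\gamma}$ for $m\in\{m_0,m_1\}$ (the exponents forced by Lemma \ref{Nonlin} depending on whether $\ph^{\beta}\gtrless 1$). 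The standing assumption $\sigma_0<1$ is equivalent to $m_0>\gamma/(\gamma-2s)$, which in turn gives $m_i\,\beta - 2s/\gamma \geq \beta$, so that $\ph^{m_i\beta-2s/\gamma}\leq \ph^{\beta}$ near $\partial\Omega$. The zero-order contribution $B(x)\,F(\ph^{\beta}(x))$ is nonnegative by \eqref{L2} and is harmless.

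\textbf{Step 2 (supersolution inequality and ODE for $g$).} Using linearity of $\A$ and the sub-multiplicativity $F(ab)\leq C\,F(a)F(b)$, Step 1 upgrades to
\[
\A F\bigl(g(t)\,\ph^{\beta}\bigr)(x) \;\geq\; -\tilde{C}\, F(g(t))\,\ph(x)^{\beta}.
\]
Therefore $U=g\,\ph^{\beta}$ is a supersolution whenever $g'(t)\geq \tilde{C}\,F(g(t))$. Invoking Lemma \ref{Nonlin} with $r_0=A$, $r=g(t)\geq A$ yields $F(g(t))\leq F(A)\,(g(t)/A)^{m_1}$, so it is enough to solve
\[
g'(t) = \tilde{C}'\, g(t)^{m_1}, \qquad g(0)=A,
\]
whose explicit solution is $g(t) = \bigl[A^{1-m_1}-\tilde{C}\,t\bigr]^{-\frac{1}{m_1-1}}$ (after relabelling constants), defined on $[0,T_A)$ with $T_A=(\tilde{C}\,A^{m_1-1})^{-1}$.

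\textbf{Step 3 (comparison).} Since $u_0(x)\leq A\,\ph(x)^{\beta}=U(0,x)$ and $U$ is a (dual) supersolution on $[0,T_A)\times\Omega$, the comparison principle for \solutions\ from Theorem \ref{exisweakdual}(iii) implies $u(t,x)\leq U(t,x)$ for every $(t,x)\in[0,T_A]\times\Omega$, which is exactly the claimed bound.

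\textbf{Main obstacle.} The delicate ingredient is Step 1. While the required pointwise estimate on $\A\ph^{\alpha}$ is of the same flavour as boundary computations in \cite{MB+AF+JLV_boundary-estimates-elliptic}, here it must be carried out after factorising the non-homogeneous nonlinearity via $F(ab)\leq CF(a)F(b)$, and the sharp matching of exponents $m_i\beta-2s/\gamma\geq \beta$ relies crucially on the strict inequality $\sigma_0<1$; this is exactly the regime in which an anomalous boundary behaviour (a power of $\ph$ different from $\ph^{\sigma_1/m_1}$) is both possible and sharp.
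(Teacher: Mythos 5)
Your overall strategy (separable supersolution $U=g(t)\ph^\beta$ with $\beta=1-2s/\gamma$, reduce to an ODE for $g$, conclude by comparison) is the same as the paper's, and Steps~2's ODE and Step~3's comparison argument are fine. The genuine gap is in the verification that $U$ is a supersolution. First, the ``upgrade'' in Step~2 from $\A F(\ph^\beta)\geq -\tilde C_0 F(\ph^\beta)$ to $\A F(g\ph^\beta)\geq-\tilde C F(g)\ph^\beta$ does not follow from ``linearity of $\A$'' and sub-multiplicativity: $F(g\ph^\beta)$ is not a constant multiple of $F(\ph^\beta)$ (no homogeneity), so $\A$ does not factor through, and $F(ab)\leq CF(a)F(b)$ only gives one-sided pointwise bounds that do not commute with the principal-value integral (in particular, the reverse inequality $F(ab)\gtrsim F(a)F(b)$ is false for, e.g., $F(u)=u^2+u^4$). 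Second, Step~1's claimed reduction to a ``scalar estimate'' $|\A\ph^{m\beta}|\leq C\ph^{m\beta-2s/\gamma}$ is not actually a clean reduction: since $F$ is not a power, the substitution $F(\ph^\beta)\leftrightarrow\ph^{m_i\beta}$ cannot be performed inside the nonlocal difference $F(\ph(y)^\beta)-F(\ph(x)^\beta)$ without losing control, and the dichotomy ``$\ph^\beta\gtrless 1$'' you invoke does not partition the $y$-integration cleanly (indeed $\ph\leq 1$ after normalisation, but $g\ph^\beta$ crosses $1$). There is also a mismatch of exponents between the statement of Step~1 ($F(\ph^\beta)$ on the right) and what the scalar estimate would deliver ($\ph^\beta$), so the chain of inequalities as stated is internally inconsistent.

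What the paper actually does, and what you would need, is quite different: it expands $\ph(y)^\eta-\ph(x)^\eta$ (with $\eta=m_0(1-2s/\gamma)>1$, the strict inequality being exactly $\sigma_0<1$) by the elementary Taylor-type inequality $a^\eta-b^\eta\leq\eta b^{\eta-1}(a-b)+\tilde b|a-b|^{\eta\wedge 2}$ for bounded $a,b$, so that after integrating against $K(x,y)$ the first term is handled by the eigenvalue identity $\int(\ph(y)-\ph(x))K(x,y)\,dy=-\lambda_1\ph(x)+B(x)\ph(x)$ and the remainder by the kernel moment estimate $\int|x-y|^{(\eta\wedge 2)\gamma}K(x,y)\,dy\lesssim\ph(x)^{1-2s/\gamma}$, both yielding exactly a multiple of $\ph^{1-2s/\gamma}=\ph^\beta$. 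Before this one controls the $F$-difference by the power difference via $F(g\ph(y)^\beta)-F(g\ph(x)^\beta)\leq C\,g^{m_1}\bigl(\ph(y)^\eta-\ph(x)^\eta\bigr)$, which uses $F'(r)\asymp F(r)/r$, $F(ab)\leq CF(a)F(b)$, and the boundedness of $\ph$ and $g$ on $[0,T_A]$, applied carefully inside the integral (and using the sign structure plus absolute convergence for $\gamma>2s$). None of these devices appear in your proposal, and without them Steps~1--2 do not close.
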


\begin{proof}
	It is enough to prove that $\overline{u}(t,x):= \frac{\Phi_1(x)^{1-2s/\gamma}}{\left[ A^{1-m_1}-\tilde{C} t \right]^{\frac{1}{m_1-1}}}$ is a supersolution, i.e. $\partial_t \overline{u}\geq -\A F(\overline{u})$, because the initial data satisfies $u_0\leq A \Phi_1^{1-2s/\gamma}$ and we have comparison.
	
	We will use the following inequality: for any $\eta>1$ and $M>0$, then letting $\tilde{\eta}=\eta \land 2$, there exists $\tilde{b}(M)>0$ such that
	\begin{equation}\label{ineq}
		a^\eta - b^\eta \leq \eta b^{\eta-1} (a-b)+ \tilde{b} |a-b|^{\tilde{\eta}}\hspace{4mm}\mbox{for all }0\leq a,b\leq M.
	\end{equation}
	Now, we apply inequality \eqref{ineq} to $a=\Phi_1(y)$ and $b=\Phi_1(x)$, $\eta=m_0(1-2s/\gamma)$, recalling $\eta>1$ if and only if $\sigma_0<1$ and both $\Phi_1$ and $\left[ A^{1-m_1}-\tilde{C} t \right]^{\frac{1}{m_1-1}}$ are bounded. If we denote $g(t)=\frac{1}{\left[ A^{1-m_1}-\tilde{C} t \right]^{\frac{1}{m_1-1}}}$, thus, we can also use Lemma \ref{Nonlin} and $F(ab)\leq CF(a)F(b)$ to obtain
	\begin{align*}
		F(\overline{u}(t,y))-F(\overline{u}(t,x))&\leq C \left(F(\Phi_1(y)^{1-2s/\gamma})-F(\Phi_1(x)^{1-2s/\gamma})\right)F(g(t))\\
		&\leq C \left(\Phi_1(y)^\eta - \Phi_1(x)^\eta\right) g(t)^{m_1}\\
		&\leq C\eta \Phi_1(x)^{\eta-1}\left( \Phi_1(y)-\Phi_1(x) \right)g(t)^{m_1} + C\tilde{b} \left( \Phi_1(y)-\Phi_1(x) \right)^{\tilde{\eta}} g(t)^{m_1}\\
		&\leq C\eta \Phi_1(x)^{\eta-1}\left( \Phi_1(y)-\Phi_1(x) \right)g(t)^{m_1} + C\tilde{b} C_\gamma^{\tilde{\eta}} |x-y|^{\tilde{\eta}\gamma} g(t)^{m_1},
	\end{align*}
	where we have used $|\Phi_1(x)-\Phi_1(y)|\leq c_\gamma |x-y|^\gamma$. Using \eqref{ineq2}, we get
	$$\int_{\RN}[\Phi_1(y)-\Phi_1(x)]K(x,y)\dy = -\A\Phi_1(x)+B(x)\Phi_1(x)\leq \lambda_1 \Phi_1(x)+c_1 \Phi_1(x)^{1-2s/\gamma}.$$
	Thus, recalling that $\eta, \tilde{\eta}>1$ and that $\Phi_1$ is bounded, it follows
	\begin{align*}
		-\A F(\overline{u}(t,x))&= \int_{\RN}[F(\overline{u}(t,y))-F(\overline{u}(t,x))]K(x,y)\dy - B(x)F(\overline{u}(t,x))\\
		&\leq C\eta \Phi_1(x)^{\eta-1} g(t)^{m_1} \int_{\RN}\left( \Phi_1(y)-\Phi_1(x) \right)K(x,y)\dy + Cg(t)^{m_1} \int_{\RN} |x-y|^{\tilde{\eta}\gamma} K(x,y)\dy\\
		&+ c_1 \Phi_1(x)^{-2s/\gamma}\Phi_1(x)^\eta g(t)^{m_1}\\
		&\leq\overline{K}\frac{1}{\left[ A^{1-m_1}-\tilde{C} t \right]^{\frac{m_1}{m_1-1}}}\left( \Phi_1(x)^{\eta-2s/\gamma} + \int_{\RN} |x-y|^{\tilde{\eta}\gamma} K(x,y)\dy \right).
	\end{align*}
	  We claim that
	 $$	g'(t)= \frac{\tilde{C}}{m_1-1}g(t)^{m_1}\hspace{3mm}\mbox{and}\hspace{3mm}\int_{\RN} |x-y|^{\tilde{\eta}\gamma} K(x,y)\dy\leq c_4 \Phi_1(x)^{1-2s/\gamma}.$$
	 Assume these last inequalities, therefore, if we choose $\tilde{C}\geq \overline{K} (m_1-1)$ we have finished the proof because $\overline{u}$ is a supersolution
	 \begin{equation*}
	 	\partial_t \overline{u}\geq \overline{K} \frac{\Phi_1(x)^{1-2s/\gamma}}{\left[ A^{1-m_1}-\tilde{C}t \right]^{\frac{m_1}{m_1-1}}}\geq -\A F(\overline{u}).
	 \end{equation*}
	 Finally, we prove the claim. For this, using hypothesis \eqref{ineq2} and choosing $r=\Phi_1(x)^{1/\gamma}$ we have
	 \begin{align*}
	 	\int_{\RN} |x-y|^{\tilde{\eta}\gamma} K(x,y)\dy&\leq c_1 \int_{\B_r(x)} \frac{1}{|x-y|^{N+2s-\tilde{\eta}\gamma}}\dy + c_1\Phi_1(x)\int_{\RN\setminus B_r(x)} \frac{1}{|x-y|^{N+2s+\gamma-\tilde{\eta}\gamma}}\dy\\
	 	&\leq c_2 r^{\tilde{\eta}\gamma-2s}+c_1\frac{\Phi_1(x)}{r^{2s}}\int_{\RN\setminus B_r(x)} \frac{1}{|x-y|^{N+\gamma-\tilde{\eta}\gamma}}\dy\\
	 	&\leq c_4 \Phi_1(x)^{1-2s/\gamma}.
	 \end{align*}
 Where we use that $\tilde{\eta}\gamma-2s>0$ and $\tilde{\eta}>1$.
\end{proof}

This Theorem can be applied to the Spectral Fractional Laplacian, here, the general upper bound of \ref{GHPIII} is improved for small data. For \eqref{L2} type operators with $B(x)\equiv 0$, we can actually prove a better upper bound for smaller data, namely:
\begin{cor}\label{smalldata2}
	In the hipotheses of Theorem \ref{smalldata1}, assume also that $B(x)\equiv 0$ and $u_0\leq A \Phi_1$ for some $A>0$. Then, for a.e. $x\in\Omega$ we have
	$$u(t,x)\leq \frac{\Phi_1(x)}{\left[ A^{1-m_1} - \tilde{C} t \right]^{\frac{1}{m_1-1}}}\hspace{4mm}\mbox{on }\left[0,T_A\right],\hspace{4mm}\mbox{where }T_A=\frac{1}{\tilde{C}A^{m_1-1}}.$$
\end{cor}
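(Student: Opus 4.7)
\textbf{Proof plan for Corollary \ref{smalldata2}.} The plan is to mimic the strategy of Theorem \ref{smalldata1}, but with a slimmer ansatz. I would set
\[
\overline{u}(t,x) := \Phi_1(x)\,g(t), \qquad g(t) = \frac{1}{[A^{1-m_1} - \tilde{C} t]^{1/(m_1-1)}},
\]
so that $g'(t) = \frac{\tilde{C}}{m_1-1} g(t)^{m_1}$ and $\overline{u}(0,x) = A\,\Phi_1(x) \geq u_0(x)$. By the comparison principle for minimal weak dual solutions (Theorem \ref{exisweakdual}(iii)) it suffices to show that $\overline{u}$ is a supersolution on $[0,T_A]$, i.e.\ $\partial_t \overline{u} \geq -\A F(\overline{u})$, and then choose $\tilde{C}$ large enough.

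The key step is the pointwise estimate of $-\A F(\overline{u})$. Since $B\equiv 0$, one has
\[
-\A F(\overline{u})(t,x) = \int_{\RN}\bigl[F(\Phi_1(y) g(t)) - F(\Phi_1(x) g(t))\bigr] K(x,y)\,\dy.
\]
Using $F(ab)\leq C F(a)F(b)$ together with Lemma \ref{Nonlin} (to pass from $F$ to the power $r^{m_0}$ since $\Phi_1$ and $g$ can be kept below controlled constants on $[0,T_A]$), I would reduce to estimating $\int [\Phi_1(y)^{m_0} - \Phi_1(x)^{m_0}]\,K(x,y)\,\dy$. Applying the elementary inequality \eqref{ineq} of Theorem \ref{smalldata1} with $\eta = m_0 > 1$ and $\tilde\eta = m_0 \wedge 2$ gives
\[
\Phi_1(y)^{m_0} - \Phi_1(x)^{m_0} \leq m_0\,\Phi_1(x)^{m_0-1}(\Phi_1(y)-\Phi_1(x)) + \tilde{b}\,|\Phi_1(y)-\Phi_1(x)|^{\tilde\eta}.
\]

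At this point the crucial algebraic gain over Theorem \ref{smalldata1} appears: because $B\equiv 0$, the first eigenvalue identity reads $\int_{\RN}[\Phi_1(y)-\Phi_1(x)]\,K(x,y)\,\dy = -\A\Phi_1(x) = -\lambda_1\Phi_1(x)$, so the linear-in-$(\Phi_1(y)-\Phi_1(x))$ term produces a genuinely negative contribution $-m_0\lambda_1 \Phi_1(x)^{m_0}$ instead of the lossy $\Phi_1^{1-2s/\gamma}$ term from Theorem \ref{smalldata1}. For the quadratic-type remainder I would use the H\"older bound $|\Phi_1(y)-\Phi_1(x)|\leq c_\gamma |x-y|^\gamma$ and the estimate already proved in Theorem \ref{smalldata1},
\[
\int_{\RN} |x-y|^{\tilde\eta \gamma}\,K(x,y)\,\dy \leq c_4\,\Phi_1(x)^{1-2s/\gamma},
\]
which comes from splitting at $r = \Phi_1(x)^{1/\gamma}$ and using \eqref{ineq2}.

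Putting everything together yields
\[
-\A F(\overline{u})(t,x) \leq C g(t)^{m_0}\Bigl[-m_0\lambda_1\,\Phi_1(x)^{m_0} + \tilde{b}\,c_\gamma^{\tilde\eta}\,c_4\,\Phi_1(x)^{1-2s/\gamma}\Bigr].
\]
The comparison $\partial_t\overline{u} \geq -\A F(\overline{u})$ then reduces to choosing $\tilde{C}$ so that $\frac{\tilde{C}}{m_1-1}\,g^{m_1-m_0}\,\Phi_1$ controls the right-hand side. The main obstacle, as expected, is controlling the remainder term $\Phi_1(x)^{1-2s/\gamma}$ relative to the leading $\Phi_1(x)$: one uses that on $[0,T_A]$ the factor $g(t)^{m_1-m_0}$ may be absorbed into $\tilde C$, while near $\partial\Omega$ the negative term $-m_0\lambda_1\Phi_1^{m_0}$ is of lower order and can be discarded, leaving a purely algebraic inequality of the form $\frac{\tilde C}{m_1-1}\Phi_1(x) \geq C\,\Phi_1(x)^{1-2s/\gamma}$ which is valid up to $\partial\Omega$ exactly when $1-2s/\gamma \geq 1$ fails; in the remaining case one balances with the negative leading term. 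Once $\tilde{C}$ is fixed, comparison with the approximating mild solutions $u_n$ and passage to the monotone limit (as in Definition \ref{minimalweakdual}) yields the stated bound on $[0,T_A]$.
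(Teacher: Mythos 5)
Your setup is the one the paper intends --- the paper's own proof of this corollary is the single remark that one should repeat the argument of Theorem~\ref{smalldata1} with $1-2s/\gamma$ replaced by $1$ --- and your structural observation is correct: with $B\equiv 0$ the eigenfunction identity yields $\int_{\RN}[\Phi_1(y)-\Phi_1(x)]K(x,y)\,\dy=-\lambda_1\Phi_1(x)\le 0$, so the linear Taylor term is simply discarded instead of producing the $\Phi_1^{1-2s/\gamma}$ contribution that $B$ caused in the theorem. However, your handling of the remainder term has a genuine gap, and the last paragraph is logically inverted. You recycle the coarse bound $\int_{\RN}|x-y|^{\tilde\eta\gamma}K(x,y)\,\dy\lesssim\Phi_1(x)^{1-2s/\gamma}$ from Theorem~\ref{smalldata1}, which forces the comparison $\tilde C\,\Phi_1(x)\gtrsim\Phi_1(x)^{1-2s/\gamma}$, i.e.\ $\tilde C\gtrsim\Phi_1(x)^{-2s/\gamma}$, and this blows up as $x\to\partial\Omega$: since $1-2s/\gamma<1$, one has $\Phi_1^{1-2s/\gamma}\gg\Phi_1$ near the boundary, for every fixed $\tilde C$. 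Your stated criterion that this ``holds exactly when $1-2s/\gamma\ge 1$ fails'' is the reverse of what is needed, and the proposed fallback of balancing against the discarded negative term cannot save it either: that contribution is of order $\Phi_1^{m_0}$, which is far smaller than $\Phi_1^{1-2s/\gamma}$ near $\partial\Omega$ (since $m_0>1>1-2s/\gamma$) and so cannot absorb it.

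The correct fix is to not borrow the lossy remainder estimate. With the new exponent $\eta=m_0$ you have $\tilde\eta=m_0\wedge 2$, and the standing hypothesis $\sigma_0<1$, i.e.\ $\gamma(m_0-1)>2sm_0$, gives both $\gamma>2s$ and $m_0>1+2s/\gamma$; hence in either branch of the minimum $\tilde\eta>1+2s/\gamma$, i.e.\ $\tilde\eta\gamma>2s+\gamma$. Redo the split at $r=\Phi_1(x)^{1/\gamma}$ with this exponent: the near piece is $\lesssim r^{\tilde\eta\gamma-2s}=\Phi_1(x)^{\tilde\eta-2s/\gamma}\lesssim\Phi_1(x)$, and the far piece is $\lesssim\Phi_1(x)\int_{\{|x-y|>r\}\cap\Omega}|x-y|^{\tilde\eta\gamma-\gamma-2s-N}\,\dy\lesssim\Phi_1(x)$, because $\tilde\eta\gamma-\gamma-2s>0$ and $\Omega$ is bounded, so that integral is an $x$-independent constant. (The proof of Theorem~\ref{smalldata1} extracts a factor $r^{-2s}$ at this point, which is what produces the $\Phi_1^{1-2s/\gamma}$; that extraction is harmless there because it matches that barrier, but for the present corollary it throws away exactly the decay you need.) Once the remainder is seen to be $\lesssim\Phi_1(x)$, the supersolution inequality collapses to $\frac{\tilde C}{m_1-1}\,\Phi_1\,g^{m_1}\ge C\,\Phi_1\,g^{m_1}$, a fixed $\tilde C$ independent of $x$ works, and comparison with the approximating solutions gives the claim on $[0,T_A]$. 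With this correction your argument is precisely the paper's.
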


Here, we essentialy repeat the proof of Theorem \ref{smalldata1} replacing $1-2s/\gamma$ by $1$. The only fact which we must take into account is that $B\equiv 0$ and $u_0\leq A \Phi_1$.
	
	
	\subsection{Smoothing Effects}\label{sec3.3}
	Before starting with the lower bounds, we need to prove some weighted $\LL^1$ estimates. Some fundamental facts to achieve this goal are the next smoothing effects results; see \cite{MB+JLV_PM-with-F} for more details. In the classical case, $\A=-\Delta$, see the monograph \cite{JLV_smooth-decay-estimates}. Here, we recall these bounds of the $\LL^{\infty}$ norm  of solutions for $t>0$, in terms of the weighted $\LL^1$ norm of solutions at time $t$, or even at a previous time $0\leq t_0 \leq t$. For each parameter $\gamma\in (0,1]$ of the hipothesis \eqref{K2}, we define the exponent
	$$\theta_{i,\gamma}=\left( 2s + (N+\gamma) (m_i-1) \right)^{-1}.$$
	
	\begin{thm}[\textbf{Smoothing Effects I \cite[Theorem 6.1]{MB+JLV_PM-with-F}}]\label{Smoothing}
		Let $\A$ satisfy \eqref{A1}, \eqref{A2} and \eqref{K2}, and let $F$ satisfy \eqref{N1}. Suppose $u\in S$ is a weak dual solution of \eqref{CDP} with initial datum $0\leq u_0\in \LL^1_{\Phi_1}(\Omega)$. Then, there exists a constant $k_5>0$ which depends only on $N$,$s$, $m_i$, $\gamma$, $F$ and $\Omega$ such that for all $0\leq t_0\leq t$
		\begin{equation}\label{Smoothing1}
			F(\|u(t)\|_{\LL^\infty(\Omega)})\leq \left\{\begin{array}{ll}k_5 \frac{\|u(t_0)\|_{\LL^1_{\Phi_1}(\Omega)}^{2s m_1 \theta_{1,\gamma}}}{t^{m_1(N+\gamma)\theta_{1,\gamma}}} \hspace{8mm}\mbox{if }t\geq \|u(t_0)\|_{\LL^1_{\Phi_1}(\Omega)}^{\frac{2s}{N+\gamma}}\\ k_5 \frac{\|u(t_0)\|_{\LL^1_{\Phi_1}(\Omega)}^{2s m_0 \theta_{0,\gamma}}}{t^{m_0(N+\gamma)\theta_{0,\gamma}}} \hspace{8mm}\mbox{if }t\leq \|u(t_0)\|_{\LL^1_{\Phi_1}(\Omega)}^{\frac{2s}{N+\gamma}}.\end{array}\right.
		\end{equation}
	\end{thm}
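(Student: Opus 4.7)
The plan is to combine the Fundamental Upper Bound \eqref{FUB} with a weighted Hölder interpolation between $\LL^\infty$ and $\LL^1_{\Phi_1}$, and then close the resulting implicit inequality using the $\LL^1_{\Phi_1}$--type monotonicity that follows from Proposition~\ref{pointwiseestimates}(i) and the weak dual formulation.

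Starting from \eqref{FUB},
$$F(u(t,x_0))\leq \frac{c}{t}\int_\Omega u(t,x)\K(x,x_0)\,\dx,$$
I would interpolate by writing $u\K=(u\Phi_1)^\theta\, u^{1-\theta}\, \K\Phi_1^{-\theta}$ and applying Hölder with exponents $1/\theta$ and $1/(1-\theta)$, then bounding the second factor using $u\leq \|u(t)\|_\infty$:
$$\int_\Omega u(t,x)\K(x,x_0)\,\dx\ \leq\ \|u(t)\|_{\LL^1_{\Phi_1}}^{\theta}\,\|u(t)\|_\infty^{1-\theta}\left(\int_\Omega \K(x,x_0)^{\frac{1}{1-\theta}}\Phi_1(x)^{-\frac{\theta}{1-\theta}}\,\dx\right)^{1-\theta}.$$
The critical choice is $\theta=\tfrac{2s}{N+\gamma}$, which forces $\tfrac{1}{1-\theta}=\tfrac{N+\gamma}{N+\gamma-2s}<\tfrac{N}{N-2s}$ (this is where $N>2s$ enters). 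With this choice, \eqref{K2} together with Lemma~\ref{GestimatesI} bound the last integral by a constant, uniformly in $x_0$ up to a factor of $\|\Phi_1\|_\infty$ that is absorbed. Taking the supremum over $x_0\in\Omega$ yields
$$F(\|u(t)\|_\infty)\leq \frac{C}{t}\,\|u(t)\|_{\LL^1_{\Phi_1}}^{\theta}\,\|u(t)\|_\infty^{1-\theta}.$$

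Next, I would invoke Lemma~\ref{Nonlin} to replace $F(\|u(t)\|_\infty)$ by $\|u(t)\|_\infty^{m_i}$, with $i=1$ when $\|u(t)\|_\infty\leq 1$ and $i=0$ otherwise; the existence of such a dichotomy is exactly the content of Theorem~\ref{AbsoluteUpperBounds}. Solving the resulting inequality $\|u(t)\|_\infty^{m_i-1+\theta}\leq \tfrac{C}{t}\,\|u(t)\|_{\LL^1_{\Phi_1}}^{\theta}$ for $\|u(t)\|_\infty$ and substituting $\theta=2s/(N+\gamma)$, a short algebraic check shows that the exponent $1/(m_i-1+\theta)$ equals $(N+\gamma)\theta_{i,\gamma}$ and the exponent $\theta/(m_i-1+\theta)$ equals $2s\theta_{i,\gamma}$, producing the stated form. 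Finally, to upgrade $\|u(t)\|_{\LL^1_{\Phi_1}}$ into $\|u(t_0)\|_{\LL^1_{\Phi_1}}$, I would use Proposition~\ref{pointwiseestimates}(i) applied to the intermediate time $t_0$, combined with the two-sided bound \eqref{K3} (equivalently \eqref{K2} with $\Phi_1\asymp d^\gamma$), which transfers the weighted-kernel monotonicity into a monotonicity of the $\LL^1_{\Phi_1}$-norm up to universal constants. The threshold $t\gtrless\|u(t_0)\|_{\LL^1_{\Phi_1}}^{2s/(N+\gamma)}$ then appears naturally as the time at which the right-hand side of the $\LL^\infty$-estimate crosses the value $1$, selecting the correct $m_i$.

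The main obstacle, and the technical heart of the argument, is the control of the weighted Green-function integral $\int_\Omega \K(x,x_0)^{1/(1-\theta)}\Phi_1(x)^{-\theta/(1-\theta)}\,\dx$ with $\theta=2s/(N+\gamma)$. The negative power of $\Phi_1$ introduces a boundary singularity that is integrable only because of the precise decay of $\K$ near $\partial\Omega$ encoded in \eqref{K2}, and obtaining uniformity in $x_0$ requires a careful splitting of $\Omega$ into a ball around $x_0$ (where the Riesz-type singularity $|x-x_0|^{-(N-2s)}$ dominates and Lemma~\ref{GestimatesI} applies directly) and its complement (where the mixed $\Phi_1/|x-x_0|^\gamma$ factor must be analysed by hand, exploiting $\Phi_1(x)\gtrsim |x-x_0|^\gamma$ in the appropriate regime). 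Once this balancing is carried out, the rest reduces to algebra matching the exponents $\theta_{i,\gamma}$.
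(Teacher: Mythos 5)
Your overall plan is sound and matches the expected route for this type of smoothing estimate: start from \eqref{FUB}, interpolate between $\LL^1_{\Phi_1}$ and $\LL^\infty$ with the critical exponent $\theta=2s/(N+\gamma)$, close the implicit inequality via Lemma~\ref{Nonlin}, and transfer to $t_0$ via the $\LL^1_{\Phi_1}$-monotonicity. The exponent algebra $m_i-1+\theta = [(N+\gamma)\theta_{i,\gamma}]^{-1}$ is correct and the threshold $t\gtrless \|u(t_0)\|_{\LL^1_{\Phi_1}}^{2s/(N+\gamma)}$ does coincide with the crossing of $\|u(t)\|_\infty$ past $1$, since the ratio of the two exponents in the explicit bound is always $2s/(N+\gamma)$. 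Two technical points should be corrected before this can be considered a complete proof.

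First, the weighted Green integral. Lemma~\ref{GestimatesI} controls $\int_\Omega \K(\cdot,x_0)^q\,\dx$ \emph{without} the weight $\Phi_1^{-\theta q}$, so it does not directly "bound the last integral." When you analyse the weighted integral yourself, the lower bound $\Phi_1(x)\gtrsim |x-x_0|^\gamma$ that you invoke is \emph{not} sufficient: in the region $|x-x_0|\leq d(x)\wedge d(x_0)$ it replaces $\Phi_1(x)^{-\theta/(1-\theta)}$ by $|x-x_0|^{-\gamma\theta/(1-\theta)}$, and since $(N-2s)\tfrac{1}{1-\theta}+\gamma\tfrac{\theta}{1-\theta}=N$ exactly, this produces the borderline $|x-x_0|^{-N}$ and a log divergence. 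The inequality that actually saves the day there is $d(x)\geq\max(|x-x_0|,\,d(x_0)-|x-x_0|)\geq d(x_0)/2$, so $\Phi_1(x)\gtrsim\Phi_1(x_0)$, which is a uniform lower bound rather than a vanishing one; the scaling then closes with exponent $0$ in $d(x_0)$. With that input the remaining regions (using $\K\lesssim d(x)^\gamma |x-x_0|^{-(N-2s+\gamma)}$ and $(1-\theta)q=1$) also close to a constant. Alternatively, you can avoid the weighted Green integral entirely by splitting $\int_\Omega u\K=\int_{B_R(x_0)}+\int_{\Omega\setminus B_R(x_0)}$: the first term is $\lesssim \|u\|_\infty R^{2s}$ by \eqref{K1}, the second is $\lesssim R^{-(N-2s+\gamma)}\|u\|_{\LL^1_{\Phi_1}}$ by the off-diagonal bound in \eqref{K2}, and optimising in $R$ gives the same interpolation inequality with $\theta=2s/(N+\gamma)$ in one line, uniformly in $x_0$.

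Second, the last step. Solving $\|u(t)\|_\infty^{m_i-1+\theta}\leq \tfrac{C}{t}\|u(t)\|_{\LL^1_{\Phi_1}}^\theta$ yields the explicit bound of Corollary~\ref{SmoothingExplicit}, not the stated bound on $F(\|u(t)\|_\infty)$. You cannot recover the latter by applying the upper half of Lemma~\ref{Nonlin} to $F$, because for $r\leq 1$ that gives $F(r)\lesssim r^{m_0}$, not $F(r)\lesssim r^{m_1}$. Instead, substitute the explicit bound for $\|u(t)\|_\infty$ back into $F(\|u(t)\|_\infty)\leq \tfrac{C}{t}\|u(t)\|_{\LL^1_{\Phi_1}}^\theta\|u(t)\|_\infty^{1-\theta}$; the resulting exponent $1+\tfrac{1-\theta}{m_i-1+\theta}=m_i(N+\gamma)\theta_{i,\gamma}$ and the power $\theta\cdot m_i(N+\gamma)\theta_{i,\gamma}=2sm_i\theta_{i,\gamma}$ give exactly \eqref{Smoothing1}. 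Finally, on the dichotomy: when $t$ is above the threshold but $\|u(t)\|_\infty>1$, the $m_0$-bound still implies the $m_1$-bound, because the RHS in \eqref{Smoothing1} equals $\bigl(\|u(t_0)\|_{\LL^1_{\Phi_1}}^{2s/(N+\gamma)}/t\bigr)^{m_i(N+\gamma)\theta_{i,\gamma}}$ and $m_1(N+\gamma)\theta_{1,\gamma}\leq m_0(N+\gamma)\theta_{0,\gamma}$ so the $m_1$-side dominates when the base is $\leq1$; this bridges the a priori $t$-threshold and the a posteriori $\|u(t)\|_\infty\gtrless 1$ cases.
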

		Note that the powers of the smoothing change for large or small times, depending on the norm $\|u(t_0)\|_{\LL^1_{\Phi_1}(\Omega)}^{\frac{2s}{N+\gamma}}$. We can choose $t_0=t$, so the smoothing happen at the same time, or we can choose $t_0=0$, in which case the bound depends on the initial data $u_0$. Note that, if we compare the upper bounds \eqref{AUB3} and \eqref{Smoothing1}, the power of the time $\frac{m_i}{m_i-1}$ is greater than $m_i(N+\gamma)\theta_{i,\gamma}$. Thus, for small times, $t<\left( k_1 \land \|u(t_0)\|_{\LL^1_{\Phi_1}(\Omega)}^{\frac{2s}{N+\gamma}} \land 1 \right)$, \eqref{Smoothing1} is more precise, and for large time $t> \left( k_1 \lor \|u(t_0)\|_{\LL^1_{\Phi_1}(\Omega)}^{\frac{2s}{N+\gamma}} \lor 1 \right)$, \eqref{AUB3} is better.

	In the case $F(u)=u^m$ with $m>1$, we have $m_0=m_1$, so there is only one case in inequality \eqref{Smoothing1}. Theorem \ref{Smoothing} presents an intrinsic form of the smoothing, which can be made explicit as follows.
	
	\begin{cor}[\textbf{Smoothing Effects II \cite[Corollary 6.3]{MB+JLV_PM-with-F}}]\label{SmoothingExplicit}
		With the same hypotheses as in Theorem \ref{Smoothing}, there exists an explicit constant $k_6$ which depends only on $N$,$s$, $m_i$, $\gamma$, $F$ and $\Omega$ such that for all times $0\leq t_0\leq t$, we have
		
		\begin{equation}\label{Smoothing3}
			\|u(t)\|_{\LL^\infty(\Omega)}\leq k_6 \frac{\|u(t_0)\|_{\LL^1_{\Phi_1}(\Omega)}^{2s\theta_{1,\gamma}}}{t^{(N+\gamma) \theta_{1,\gamma}}}, \hspace{6mm}\mbox{if }t\geq \|u(t_0)\|_{\LL^1_{\Phi_1}(\Omega)}^{\frac{2s}{N+\gamma}}.
		\end{equation}
		
		\begin{equation}\label{Smoothing2}
			\|u(t)\|_{\LL^\infty(\Omega)}\leq k_6 \frac{\|u(t_0)\|_{\LL^1_{\Phi_1}(\Omega)}^{2s\theta_{0,\gamma}}}{t^{(N+\gamma) \theta_{0,\gamma}}},\hspace{6mm}\mbox{if }t\leq \|u(t_0)\|_{\LL^1_{\Phi_1}(\Omega)}^{\frac{2s}{N+\gamma}}.
		\end{equation}
	\end{cor}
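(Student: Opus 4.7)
The plan is to deduce Corollary \ref{SmoothingExplicit} from the intrinsic smoothing estimate of Theorem \ref{Smoothing} by inverting $F$ through the power comparisons of Lemma \ref{Nonlin}. Setting $r_0 = 1$ in \eqref{lem2.2_1}--\eqref{lem2.2_2} gives $F(r) \geq F(1)\,r^{m_0}$ for $r \geq 1$ and $F(r) \geq \underline{k}F(1)\,r^{m_1}$ for $r \leq 1$; equivalently, for every $r \geq 0$,
\begin{equation*}
r \;\leq\; \max\!\left\{\left(\frac{F(r)}{F(1)}\right)^{1/m_0},\;\left(\frac{F(r)}{\underline{k}F(1)}\right)^{1/m_1}\right\}.
\end{equation*}
The entire task is then to apply this to $r = \|u(t)\|_{\LL^\infty(\Omega)}$ in each of the two time regimes of Theorem \ref{Smoothing} and to verify that both candidate right-hand sides collapse, up to multiplicative constants, to the expressions claimed in \eqref{Smoothing3}--\eqref{Smoothing2}.

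For the large-time regime $t \geq \|u(t_0)\|_{\LL^1_{\ph}(\Omega)}^{2s/(N+\gamma)}$ I introduce the shorthand
\begin{equation*}
B_1 \;:=\; \frac{\|u(t_0)\|_{\LL^1_{\ph}(\Omega)}^{2s\theta_{1,\gamma}}}{t^{(N+\gamma)\theta_{1,\gamma}}},
\end{equation*}
so that Theorem \ref{Smoothing} reads $F(\|u(t)\|_{\LL^\infty(\Omega)}) \leq k_5 B_1^{m_1}$. The ratio of exponents $2s\theta_{1,\gamma}/[(N+\gamma)\theta_{1,\gamma}] = 2s/(N+\gamma)$ equals precisely the exponent appearing in the regime threshold, hence the regime assumption forces $B_1 \leq 1$. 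Substituting into the two inversion branches yields $(k_5/F(1))^{1/m_0}\,B_1^{m_1/m_0}$ and $(k_5/(\underline{k}F(1)))^{1/m_1}\,B_1$; because $m_1/m_0 \geq 1$ and $B_1 \leq 1$, the first is dominated by a constant multiple of $B_1$, while the second already has this form. Taking the larger of the two constants produces the desired $k_6$, which gives \eqref{Smoothing3}.

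The small-time regime is entirely symmetric: setting $B_0 := \|u(t_0)\|_{\LL^1_{\ph}(\Omega)}^{2s\theta_{0,\gamma}}/t^{(N+\gamma)\theta_{0,\gamma}}$, the same exponent-ratio computation gives $B_0 \geq 1$, Theorem \ref{Smoothing} becomes $F(\|u(t)\|_{\LL^\infty(\Omega)}) \leq k_5 B_0^{m_0}$, and the two inversion branches produce $(k_5/F(1))^{1/m_0}\,B_0$ and $(k_5/(\underline{k}F(1)))^{1/m_1}\,B_0^{m_0/m_1}$; now $B_0 \geq 1$ and $m_0/m_1 \leq 1$ ensure both are $\lesssim B_0$, yielding \eqref{Smoothing2}. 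The only conceptual subtlety is the bifurcation of the $F$-inversion at $r = 1$, which in principle would require a case split on whether $\|u(t)\|_{\LL^\infty(\Omega)}$ lies above or below 1; the decisive observation that sidesteps this split is that the exponents $\theta_{i,\gamma}$ are tuned so that the two branches of the inversion agree, up to constants, in the regime in which they are applied. Consequently no separate sup/inf analysis is required and $k_6$ can be written down explicitly in terms of $k_5$, $F(1)$, $\underline{k}$, $m_0$ and $m_1$.
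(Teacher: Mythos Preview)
Your proof is correct and follows essentially the same strategy as the paper's: start from Theorem \ref{Smoothing}, invert $F$ via the power comparisons of Lemma \ref{Nonlin}, and exploit that the quantity $B_i$ lies on the correct side of $1$ in each time regime. Your packaging via a single max-of-two-branches inversion formula is slightly cleaner than the paper's case-by-case treatment (which in the large-time regime first bounds $\|u(t)\|_{\LL^\infty}\le F^{-1}(k_5)$ and then applies Lemma \ref{Nonlin} with $r_0=F^{-1}(k_5)$, and in the small-time regime splits on whether $\|u(t)\|_{\LL^\infty}\le U_{0,\gamma}$), but the underlying argument is the same.
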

	We provide here below a proof of \eqref{Smoothing3}, as a correction of the original proof   in \cite[Corollary 6.3]{MB+JLV_PM-with-F}, since it contained a fixable error.
	\begin{proof}
		We first prove the instantaneous smoothing effects, namely $t_0=t$. The case $t_0<t$ follows from monotonicity of the norms $\LL^1_{\Phi_1}(\Omega)$, see Theorem \ref{WeightEstimates} below.\\	
		\underline{Bound for small times} $t_0\leq \|u(t_0)\|_{\LL^1_{\Phi_1}(\Omega)}^\frac{2s}{N+\gamma}$:
		
		Here, we are in the case of inequality \eqref{Smoothing2}, therefore we have $i=0$ and define the quantity
		$$U_{0,\gamma}(t_0)=\frac{\|u(t_0)\|_{\LL^1_{\Phi_1}(\Omega)}^{2s\theta_{0,\gamma}}}{t_0^{(N+\gamma)\theta_{0,\gamma}}}\geq 1.$$
		If $\|u(t_0)\|_{\LL^\infty(\Omega)}\leq U_{0,\gamma}(t_0)$ there is nothing to prove. Thus, we assume $\|u(t_0)\|_{\LL^\infty(\Omega)}\geq U_{0,\gamma}(t_0)\geq 1$. Applying Lemma \ref{Nonlin} with $r\geq 1$ we get
		\begin{align*}
			F(1)\|u(t_0)\|_{\LL^\infty(\Omega)}^{m_0} &\leq F\left( \|u(t_0)\|_{\LL^\infty(\Omega)} \right)\\
			&\leq k_5 \frac{\|u(t_0)\|_{\LL^1_{\Phi_1}(\Omega)}^{2sm_0\theta_{0,\gamma}}}{t_0^{(N+\gamma) \theta_{0,\gamma} m_0}},
		\end{align*}
		therefore, we have already proved \eqref{Smoothing2}.\\
		\underline{Bound for large times} $t_0\geq \|u(t_0)\|_{\LL^1_{\Phi_1}(\Omega)}^\frac{2s}{N+\gamma}$:
		
		In this case, we have to demostrate inequality \eqref{Smoothing3}. Thus for $i=1$, we similarly define
		$$U_{1,\gamma}(t_0)=\frac{\|u(t_0)\|_{\LL^1_{\Phi_1}(\Omega)}^{2s\theta_{1,\gamma}}}{t_0^{(N+\gamma)\theta_{1,\gamma}}}\leq 1.$$
		By Theorem \ref{Smoothing} we get
		\begin{equation*}
			F\left( \|u(t_0)\|_{\LL^\infty(\Omega)} \right) \leq k_5 \frac{\|u(t_0)\|_{\LL^1_{\Phi_1}(\Omega)}^{2sm_1\theta_{1,\gamma}}}{t_0^{(N+\gamma)m_1 \theta_{1,\gamma}}}= k_5 U_{1,\gamma}(t_0)^{(N+\gamma)m_1 \theta_{1,\gamma}}\leq k_5.
		\end{equation*}
		Therefore, we have $\|u(t_0)\|_{\LL^\infty(\Omega)}\leq F^{-1}(k_5)$ because $F$ is non decreasing. So, we apply Lemma \ref{Nonlin} with $F^{-1}(k_5)=r_0$ to obtain
		\begin{align*}
			F(\|u(t_0)\|_{\LL^\infty(\Omega)})\geq \underline{k} \frac{F(r_0)}{r_0^{m_1}} \|u(t_0)\|_{\LL^\infty(\Omega)}^{m_1}
			= \frac{\underline{k}}{(F^{-1}(k_5))^{m_1}}k_5 ||u(t_0)||_{\LL^\infty(\Omega)}^{m_1},
		\end{align*}
		and using Theorem \ref{Smoothing} again in the inequality above
		\begin{align*}
			\|u(t_0)\|_{\LL^\infty(\Omega)}^{m_1}&\leq \left( \frac{\underline{k}}{(F^{-1}(k_5))^{m_1}}k_5 \right)^{-1} F(\|u(t_0)\|_{\LL^\infty(\Omega)})\\
			&\leq  \frac{(F^{-1}(k_5))^{m_1}}{\underline{k}} \frac{\|u(t_0)\|_{\LL^1_{\Phi_1}(\Omega)}^{2sm_1 \theta_{1,\gamma}}}{t_0^{(N+\gamma) m_1 \theta_{1,\gamma}}}.
		\end{align*}
		Choosing $k_6$ as the maximum of all the constant above, we conclude the proof.
	\end{proof}


	\section{Weighted $\LL^1$ - Estimates}\label{sec4}
	
	One of the most important applications of smoothing effects is the Weighted $\LL^1$-estimates, which enable us to determine the lower boundary behavior for solutions of \eqref{CDP}.
	
	\begin{thm}\label{WeightEstimates}
		Let $\A$ satisfy \eqref{A1}, \eqref{A2} and \eqref{K2}, and let $F$ satisfy \eqref{N1}. Suppose $u\in S$ is a weak dual solution of \eqref{CDP} with initial datum $0\leq u_0\in \LL^1_{\Phi_1}(\Omega)$. Then, for any $0\leq \tau \leq t$ we have
		\begin{equation}\label{WeightEstimates1}
			\int_{\Omega} u(t,x) \Phi_1(x)\dx \leq \int_{\Omega} u(\tau,x) \Phi_1(x)\dx.
		\end{equation}
		Besides, for all $0\leq \tau_0\leq \tau \leq t<\infty$, we get
		\begin{equation}\label{WeightEstimates2}
			\int_{\Omega} u(\tau,x)\Phi_1(x)\dx\leq \int_{\Omega} u(t,x)\Phi_1(x)\dx + k_7 |t-\tau|^{2s\theta_{i,\gamma}} \|u(\tau_0)\|_{\LL^1_{\Phi_1}(\Omega)}^{2s(m_i-1)\theta_{i,\gamma} +1},
		\end{equation}
		where $i=0$ if $t\leq \|u(\tau_0)\|_{\LL^1_{\Phi_1}(\Omega)}^{\frac{2s}{N+\gamma}}$ and $i=1$ if $\tau\geq \|u(\tau_0)\|_{\LL^1_{\Phi_1}(\Omega)}^{\frac{2s}{N+\gamma}}$. Constant $k_7$ depends only on N, s, $m_i$, $\gamma$, F, $\Omega$ and $\lambda_1$.
	\end{thm}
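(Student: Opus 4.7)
The starting point is to test the weak dual formulation \eqref{weakdualfor} against $\psi(s,x)=\eta(s)\Phi_1(x)$ with $\eta\in C^1_c((0,\infty))$, which is admissible since $\psi/\Phi_1=\eta \in C^1_c((0,\infty):\LL^\infty(\Omega))$. Using $\A\Phi_1=\lambda_1\Phi_1$ together with the symmetry of $\K$ gives $\int_\Omega (\AI u(s))\,\Phi_1\dx = \lambda_1^{-1}\int_\Omega u(s)\Phi_1\dx$, so \eqref{weakdualfor} reduces to
\[
\int_0^\infty \eta'(s) \int_\Omega u(s)\Phi_1\dx\ds = \lambda_1 \int_0^\infty \eta(s) \int_\Omega F(u(s))\Phi_1\dx\ds.
\]
Approximating $\eta$ by the indicator of $[\tau,t]$, justified by the continuity $u\in C([0,\infty):\LL^1_{\Phi_1}(\Omega))$ from Theorem \ref{exisweakdual}, one obtains the clean identity
\[
\int_\Omega u(\tau)\Phi_1\dx - \int_\Omega u(t)\Phi_1\dx = \lambda_1 \int_\tau^t \int_\Omega F(u(s,x))\Phi_1(x)\dx\ds,
\]
and Part (i) follows immediately from $F(u)\geq 0$.

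For Part (ii) the task is to upper bound the right-hand side of the above identity. By Lemma \ref{Nonlin}, the nonlinearity satisfies $F(r)\leq C r^{m_i}$ in the relevant regime (with $C$ depending only on $F$); writing $F(u)\leq C u\cdot u^{m_i-1}\leq C\|u(s)\|_{\LL^\infty}^{m_i-1}\,u$ and integrating against $\Phi_1$, the monotonicity from Part (i) gives
\[
\int_\Omega F(u(s,x))\Phi_1(x)\dx \leq C \|u(s)\|_{\LL^\infty(\Omega)}^{m_i-1} \int_\Omega u(s)\Phi_1\dx \leq C M\, \|u(s)\|_{\LL^\infty(\Omega)}^{m_i-1},
\]
where $M:=\|u(\tau_0)\|_{\LL^1_{\Phi_1}(\Omega)}$. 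Plugging in the instantaneous smoothing effect $\|u(s)\|_{\LL^\infty}\leq k_6 M^{2s\theta_{i,\gamma}}/s^{(N+\gamma)\theta_{i,\gamma}}$ of Corollary \ref{SmoothingExplicit}, with the index $i$ dictated by the theorem's dichotomy, produces
\[
\int_\tau^t \int_\Omega F(u(s))\Phi_1\dx\ds \leq C\, M^{2s(m_i-1)\theta_{i,\gamma}+1} \int_\tau^t s^{-(N+\gamma)(m_i-1)\theta_{i,\gamma}}\ds.
\]
The definition of $\theta_{i,\gamma}$ gives exactly $1-(N+\gamma)(m_i-1)\theta_{i,\gamma}=2s\theta_{i,\gamma}\in(0,1)$, so the subadditivity $t^\alpha-\tau^\alpha\leq|t-\tau|^\alpha$ for $\alpha\in(0,1)$ bounds the time integral by $(2s\theta_{i,\gamma})^{-1}|t-\tau|^{2s\theta_{i,\gamma}}$. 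Inserting this into the identity of the first step yields \eqref{WeightEstimates2} with explicit $k_7$ depending only on $N,s,m_i,\gamma,F,\Omega,\lambda_1$.

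The delicate point lies in matching the exponent $m_i$ appearing in the bound $F(r)\leq C r^{m_i}$ (which is $m_0$ when $r\leq 1$ and $m_1$ when $r\geq 1$) with the smoothing dichotomy that is controlled by $s$ versus $M^{2s/(N+\gamma)}$ and whose regimes do not a priori correspond to $\|u(s)\|_{\LL^\infty}\lessgtr 1$. Reconciling the two requires combining the absolute $\LL^\infty$-bound of Theorem \ref{AbsoluteUpperBounds}, which forces $\|u(s)\|_{\LL^\infty}\leq 1$ after a universal time $\tau_1(u_0)\leq k_1$, with the Benilan–Crandall monotonicity of Lemma \ref{benilancrandall}, which lets one transfer bounds between different times at controlled cost; once this bookkeeping is performed the computation in the previous paragraph goes through. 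This case analysis, rather than any genuinely new analytic ingredient, is where the bulk of the technical effort is expected to concentrate.
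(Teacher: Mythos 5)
Your Step 1 and Part (i) match the paper: the identity
\[
\int_\Omega u(\tau)\Phi_1\dx - \int_\Omega u(t)\Phi_1\dx = \lambda_1\int_\tau^t\int_\Omega F(u(s,x))\Phi_1(x)\dx\ds
\]
is exactly the paper's display \eqref{We3}, and your derivation via the test function $\psi=\eta\Phi_1$ with the symmetry $\int(\AI u)\Phi_1 = \lambda_1^{-1}\int u\Phi_1$ is a perfectly sound alternative to the paper's direct differentiation of $\int u(t)\Phi_1$. Part (i) then follows from $F\geq 0$ in both treatments, and your closing algebra $1-(N+\gamma)(m_i-1)\theta_{i,\gamma}=2s\theta_{i,\gamma}$ together with subadditivity of $r\mapsto r^{\alpha}$ is identical to the paper's Step 3.

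The genuine gap is in your Step 2, and you have correctly diagnosed it yourself: the power $m_i$ in $F(r)\leq C r^{m_i}$ is governed by whether $r\lessgtr 1$, while the index $i$ in Corollary \ref{SmoothingExplicit} is governed by whether $s\lessgtr \|u(\tau_0)\|_{\LL^1_{\Phi_1}(\Omega)}^{2s/(N+\gamma)}$, and these two dichotomies need not coincide. The issue is real: if one ends up pairing $m_1$ with $\theta_{0,\gamma}$, the key identity $1-(N+\gamma)(m_i-1)\theta_{i,\gamma}=2s\theta_{i,\gamma}$ breaks and the time integral no longer produces $|t-\tau|^{2s\theta_{i,\gamma}}$ (it may not even converge near $s=0$ if $(N+\gamma)(m_1-1)\theta_{0,\gamma}\geq 1$). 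You flag this but do not resolve it; the phrase ``once this bookkeeping is performed the computation goes through'' is exactly the step that requires proof.

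Your proposed remedy is also not the one the paper uses, and it is not clear it can work. You suggest invoking the absolute bound of Theorem \ref{AbsoluteUpperBounds}, which gives $\|u(s)\|_{\LL^\infty}\leq 1$ for $s\geq \tau_1(u_0)\leq k_1$, together with Benilan--Crandall monotonicity. But $k_1$ is a universal time (independent of $u_0$), whereas the smoothing dichotomy threshold $\|u(\tau_0)\|_{\LL^1_{\Phi_1}}^{2s/(N+\gamma)}$ depends on the data; splitting the time axis at $k_1$ does not make it coincide with the split at $\|u(\tau_0)\|_{\LL^1_{\Phi_1}}^{2s/(N+\gamma)}$, and you do not explain how Benilan--Crandall bridges that. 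What the paper actually does in its Step 2 is different and more elegant: it applies the convexity inequality $F(u)\leq m_1\,u\,F(V)/V$ with $V=U_{j,\gamma}(t)$, and normalizes Lemma \ref{Nonlin} at $r_0=k_6$ (the smoothing constant), so the dichotomy $V\lessgtr r_0$ from the nonlinearity lemma is, by construction, the same as the dichotomy $U_{j,\gamma}\gtrless k_6$ coming from the smoothing (cf.\ \eqref{We4}--\eqref{We6}). No appeal to $\LL^\infty$-absolute bounds or time-monotonicity is needed in that step. Your proposal therefore contains the right skeleton but leaves the load-bearing lemma (the regime alignment) both unproved and pointed in a direction that does not obviously lead to a proof.
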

	\begin{proof}
		We split the proof in several steps.
		
		\textbf{Step 1}: Monotonicity in time of the $\LL^1_{\Phi_1}$ norm.\\
		Let's differentiate the weighted $\LL^1$ norm at time $t$
		\begin{align*}
			\frac{d}{dt} \int_{\Omega} u(t,x) \Phi_1(x)\dx &= - \int_{\Omega} \A F(u(t,x))\Phi_1(x)\dx
			= -\lambda_1 \int_{\Omega} F(u(t,x))\Phi_1(x) \dx
			\leq 0.
		\end{align*}
		This last derivative is non-positive because $u$ is non negative and the non linearity $F$ is non-decreasing and normalized at zero. Note that, this derivative is devoutly justified by the argument presented in \cite[Proposition 5.1]{MB+JLV_PM-with-F}. Integrating  from $\tau$ to $t$ yields \eqref{WeightEstimates1} that is
		\begin{equation}\label{We3}
			\int_{\Omega} u(t,x) \Phi_1(x) \dx - \int_{\Omega} u(\tau,x)\Phi_1(x) \dx = -\lambda_1 \int_\tau^t \int_{\Omega} F(u(r,x))\Phi_1(x) \dx \dr.
		\end{equation}
		
		\textbf{Step 2}: For every $t\geq \tau_0$ there exists a constant $k_6'$ such that
		$$F(u(t,x))\leq k_6'\frac{\|u(\tau_0)\|_{\LL^1_{\Phi_1}(\Omega)}^{2s(m_i-1)\theta_{i,\gamma}}}{t^{(N+\gamma)(m_i-1)\theta_{i,\gamma}}} u(t,x),$$
		where $i=0$ if $t\leq \|u(\tau_0)\|_{\LL^1_{\Phi_1}(\Omega)}^{\frac{2s}{N+\gamma}}$ and $i=1$ if $t\geq \|u(\tau_0)\|_{\LL^1_{\Phi_1}(\Omega)}^{\frac{2s}{N+\gamma}}$.
		
		$F$ is convex as a consequence of \eqref{N1} thus, for any $0\leq u \leq V$ we have
		\begin{equation}\label{We4}
			F(u)\leq u F'(u)\leq u F'(V)\leq m_1 u \frac{F(V)}{V}.
		\end{equation}
		Moreover, by Lemma \ref{Nonlin} for every $r_0\geq 0$, we have
		\begin{equation}\label{We5}
			F(V)\leq (1 \lor \overline{k}) F(r_0) \left( \frac{V}{r_0} \right)^{m_i},
		\end{equation}
		where $i=0$ if $V\in [0,r_0]$ and $i=1$ if $V\geq r_0$. Combining inequalities \eqref{We4} and \eqref{We5} we obtain
		\begin{equation}\label{We6}
			F(u)\leq m_1 u V^{m_i-1} \frac{(1 \lor \overline{k}) F(r_0)}{r_0^{m_i}}.
		\end{equation}
		 Now, we denote
		$$U_{j,\gamma}= k_6 \frac{\|u(\tau_0)\|_{\LL^1_{\Phi_1}(\Omega)}^{2s\theta_{j,\gamma}}}{t^{(N+\gamma)\theta_{j,\gamma}}},$$
		with $j=0$ if $U_{j,\gamma} \geq k_6$ and $j=1$ if $U_{j,\gamma}\leq k_6$. Then, if we choose $r_0=k_6$ fixed and $U_{j,\gamma}=V \geq u$, the index $j$ in the inequality of Collolary \ref{SmoothingExplicit}, $\|u(t)\|_{\LL^\infty(\Omega)}\leq k_6 \frac{\|u(\tau_0)\|_{\LL^1_{\Phi_1}(\Omega)}^{2s\theta_{j,\gamma}}}{t^{(N+\gamma) \theta_{j,\gamma}}}$, will be the same as $i$ in formula \eqref{We6}. Therefore,
		\begin{align*}
			F(u(t,x))&\leq m_1 \frac{(1 \lor \overline{k}) F(k_6)}{k_6^{m_i}} \left( k_6 \frac{\|u(\tau_0)\|_{\LL^1_{\Phi_1}(\Omega)}^{2s\theta_{i,\gamma}}}{t^{(N+\gamma) \theta_{i,\gamma}}} \right)^{m_i-1} u(t,x)\\ &:= k_6' \frac{\|u(\tau_0)\|_{\LL^1_{\Phi_1}(\Omega)}^{2s\theta_{i,\gamma}(m_i-1)}}{t^{(N+\gamma)\theta_{i,\gamma}(m_i-1)}} u(t,x).
		\end{align*}
		With $i=0$ if $t\leq \|u(\tau_0)\|_{\LL^1_{\Phi_1}(\Omega)}^{\frac{2s}{N+\gamma}}$ and $i=1$ if $t\geq \|u(\tau_0)\|_{\LL^1_{\Phi_1}(\Omega)}^{\frac{2s}{N+\gamma}}$ and $\displaystyle k_6'= m_1 (1 \lor \overline{k}) \frac{F(k_6)}{k_6}$.
		
		\textbf{Step 3}: Bound of $\displaystyle \lambda_1 \int_\tau^t \int_{\Omega} F(u(r,x)) \Phi_1(x) \dx \dr$.\\
		The key is applying Step 2 to $F(u(r,x))$ inside the integral because $r\geq \tau\geq \tau_0$. We shall adopt the following notation until the end of the proof: let  $0\leq \tau_0\leq\tau\leq t<\infty$, and the index $i$ will denote $i=0$ if $t\leq \|u(\tau_0)\|_{\LL^1_{\Phi_1}(\Omega)}^{\frac{2s}{N+\gamma}}$ and $i=1$ if $\tau \geq \|u(\tau_0)\|_{\LL^1_{\Phi_1}(\Omega)}^{\frac{2s}{N+\gamma}}$. Thus,
		\begin{align}\label{We9}
			\lambda_1 \int_\tau^t \int_{\Omega} F(u(r,x)) \Phi_1(x) \dx \dr &\leq \lambda_1 k_6' \int_\tau^t \int_{\Omega} \frac{\|u(\tau_0)\|_{\LL^1_{\Phi_1}(\Omega)}^{2s(m_i-1)\theta_{i,\gamma}}}{(r-\tau_0)^{(N+\gamma)(m_i-1)\theta_{i,\gamma}}} u(r,x) \Phi_1(x) \dx \dr\nonumber\\
			&\leq \lambda_1 k_6' \|u(\tau_0)\|_{\LL^1_{\Phi_1}(\Omega)}^{2s (m_i-1)\theta_{i,\gamma} +1}\int_\tau^t \frac{\dr}{(r-\tau_0)^{(N+\gamma)(m_i-1)\theta_{i,\gamma}}}\nonumber\\
			&\leq \frac{\lambda_1 k_6' \|u(\tau_0)\|_{\LL^1_{\Phi_1}(\Omega)}^{2s(m_i-1)\theta_{i,\gamma} +1}}{2s\theta_{i,\gamma}} \left[ (t-\tau_0)^{2s\theta_{i,\gamma}} - (\tau-\tau_0)^{2s\theta_{i,\gamma}} \right]\nonumber\\
			&\leq \frac{\lambda_1 k_6'}{2s\theta_{i,\gamma}} |t-\tau|^{2s\theta_{i,\gamma}} \|u(\tau_0)\|_{\LL^1_{\Phi_1}(\Omega)}^{2s(m_i-1)\theta_{i,\gamma} +1}.
		\end{align}
		Where we have used the following claims:
		\begin{itemize}
			\item When we integrate, we rewrite the power as follows $$1-(N+\gamma)(m_i-1)\theta_{i,\gamma} = \frac{2s+(N+\gamma)(m_i-1) - (N+\gamma)(m_i-1)}{2s+(N+\gamma)(m_i-1)} = 2s\theta_{i,\gamma}<1.$$
			\item The numerical inequality $(t-\tau_0)^\beta - (\tau-\tau_0)^\beta\leq (t-\tau)^\beta, \hspace{4mm}\mbox{for any }\beta\in (0,1).$
		\end{itemize}
		Finally, joinning \eqref{We9} and \eqref{We3} we obtain \eqref{WeightEstimates2} with the explicit constant is $k_7= \frac{\lambda_1 k_6'}{2s \theta_{1,\gamma}}$.
	\end{proof}
	
	Note that in the intermediate case, i.e. $\tau \leq \|u(\tau_0)\|_{\LL^1_{\Phi_1}(\Omega)}^{\frac{2s}{N+\gamma}} \leq t$, we can also derive a similar inequality of the form
	$$\int_{\Omega} u(\tau,x)\Phi_1(x)\dx\leq \int_{\Omega} u(t,x)\Phi_1(x)\dx + 2k_7 \max_{i=0,1} \left\{ |t-\tau|^{2s\theta_{i,\gamma}} \|u(\tau_0)\|_{\LL^1_{\Phi_1}(\Omega)}^{2s(m_i-1)\theta_{i,\gamma} +1} \right\}.$$
	However, it is no needed in this work.
	
	\begin{cor}\label{corWE}
		Let $\A$ satisfy \eqref{A1}, \eqref{A2} and \eqref{K2}, and let F satisfy \eqref{N1}. Suppose $u\in S$ is a weak dual solution of \eqref{CDP} with initial datum $0\leq u_0\in \LL^1_{\Phi_1}(\Omega)$. Then, for any positive $\tau_0$ and $t$ such that:
		\begin{equation}\label{corWE1}
			0\leq \tau_0\leq t\leq \tau_0 + \left( (2k_7)^{\frac{1}{2s\theta_{i,\gamma}}} \|u(\tau_0)\|_{\LL^1_{\Phi_1}(\Omega)}^{m_i-1} \right)^{-1},
		\end{equation}
		with, $i=0$ if $t\leq \|u(\tau_0)\|_{\LL^1_{\Phi_1}(\Omega)}^{\frac{2s}{N+\gamma}}$ and $i=1$ if $t\geq \|u(\tau_0)\|_{\LL^1_{\Phi_1}(\Omega)}^{\frac{2s}{N+\gamma}}$, the following inequality holds
		\begin{equation}\label{corWE2}
			\frac{1}{2} \int_{\Omega} u(\tau_0,x)\Phi_1(x) \dx \leq \int_{\Omega} u(t,x) \Phi_1(x)\dx.
		\end{equation}
	\end{cor}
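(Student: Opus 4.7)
The plan is to apply Theorem \ref{WeightEstimates} directly, with $\tau = \tau_0$ and the given $t$, and then convert the hypothesis \eqref{corWE1} on the length of the time interval into the desired absorption of the error term. So, my first step would be to invoke \eqref{WeightEstimates2} to obtain
\begin{equation*}
\int_\Omega u(\tau_0,x)\Phi_1(x)\dx \le \int_\Omega u(t,x)\Phi_1(x)\dx + k_7 |t-\tau_0|^{2s\theta_{i,\gamma}}\|u(\tau_0)\|_{\LL^1_{\Phi_1}(\Omega)}^{2s(m_i-1)\theta_{i,\gamma}+1},
\end{equation*}
where the index $i$ is picked exactly as in the statement (this is consistent because the corollary assumes the same dichotomy on $t$ versus $\|u(\tau_0)\|_{\LL^1_{\Phi_1}}^{2s/(N+\gamma)}$, and we may take $\tau_0$ as the reference time in Theorem~\ref{WeightEstimates}).

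Next, I would show that \eqref{corWE1} forces the error term to be controlled by $\tfrac12\|u(\tau_0)\|_{\LL^1_{\Phi_1}(\Omega)}$. Indeed, from \eqref{corWE1} one has
\begin{equation*}
|t-\tau_0|^{2s\theta_{i,\gamma}} \le (2k_7)^{-1}\|u(\tau_0)\|_{\LL^1_{\Phi_1}(\Omega)}^{-2s(m_i-1)\theta_{i,\gamma}},
\end{equation*}
so that
\begin{equation*}
k_7|t-\tau_0|^{2s\theta_{i,\gamma}}\|u(\tau_0)\|_{\LL^1_{\Phi_1}(\Omega)}^{2s(m_i-1)\theta_{i,\gamma}+1} \le \tfrac12 \|u(\tau_0)\|_{\LL^1_{\Phi_1}(\Omega)} = \tfrac12\int_\Omega u(\tau_0,x)\Phi_1(x)\dx.
\end{equation*}
Substituting this back into the inequality above and rearranging yields \eqref{corWE2}.

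There is no real obstacle here: the corollary is essentially a quantitative stability statement that quantifies how long one can go forward in time while losing at most half of the weighted $\LL^1$ mass. The only subtlety worth a remark is to make sure the choice of index $i$ in Theorem~\ref{WeightEstimates} is consistent with that in the corollary. Since the dichotomy in both statements is tied to the same threshold $\|u(\tau_0)\|_{\LL^1_{\Phi_1}(\Omega)}^{2s/(N+\gamma)}$ and the same reference time $\tau_0$, the indices match and the argument closes cleanly.
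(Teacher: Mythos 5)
Your proof is correct and follows exactly the argument the paper gives (which consists of a single sentence: take $\tau=\tau_0$ in Theorem \ref{WeightEstimates}, bound $|t-\tau_0|^{2s\theta_{i,\gamma}}$ via \eqref{corWE1}, and absorb the resulting $\tfrac12\|u(\tau_0)\|_{\LL^1_{\Phi_1}}$ term). You simply spell out the absorption computation explicitly, which the paper leaves to the reader.
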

	\begin{proof}
		Taking $\tau=\tau_0$ in Theorem \ref{WeightEstimates} and applying condition \eqref{corWE1} to the term $|t-\tau|^{2s\theta_{i,\gamma}}$ in \eqref{WeightEstimates2} directly implies \eqref{corWE2}.
	\end{proof}
	The way we are going to use this result for the lower boundary behaviour of \solutions\,is taking $\tau_0=0$. Thus, Corollary \ref{corWE} tells us that for sufficiently small times i.e. $\displaystyle t\leq \left( (2k_7)^{\frac{1}{2s\theta_{i,\gamma}}} \|u_0\|_{\LL^1_{\Phi_1}(\Omega)}^{m_i-1} \right)^{-1}$, where $i=0$ if $t\leq \|u_0\|_{\LL^1_{\Phi_1}(\Omega)}^{\frac{2s}{N+\gamma}}$ and $i=1$ if $t\geq \|u_0\|_{\LL^1_{\Phi_1}(\Omega)}^{\frac{2s}{N+\gamma}}$, we obtain
		\begin{equation}\label{corWE3}
			\frac{1}{2} \int_{\Omega} u_0(x)\Phi_1(x)\dx \leq \int_{\Omega} u(t,x) \Phi_1(x) \dx.
		\end{equation}

	
	\section{Lower Bounds}\label{sec5}
	In this section we will prove the lower bounds of the Global Harnack Principle (GHP), which will determine the lower boundary behaviour of \solutions\,to \eqref{CDP}. Depending on the operator $\A$ we will observe tree types of lower bounds. The first one, presented in Theorem \ref{GHPI}, applied after a waiting time $t_*$ and allows for a very general kind of operator, namely \eqref{A1}, \eqref{A2} and \eqref{K2}. It is important to recall that the classical Laplacian belong to this case. In fact, the well-known finite speed of propagation implies that a waiting time is necessary, so this result is consistent.
	
	 The two next lower bounds, presented in Theorems \ref{GHPII} and \ref{GHPIII}, ensure the positivity of \solutions, i.e., they are valid for any time $t>0$. The trade-off is that we need to be more exlicit about the operator, specifically \eqref{L1} and \eqref{L2}. Here, the non-local nature of the operator plays a fundamental role in achieving the positivity, which implies infinite speed of propagation.
	
	
	\subsection{Lower bounds for large time}\label{sec5.1}
	
	The main tools to achieve the lower bound are the pointwise estimates of Proposition \ref{pointwiseestimates} and the weighted-$\LL^1$ estimates of Corolary \ref{corWE}, specifically in the form of inequality \eqref{corWE3}.
	
	\begin{thm}\label{LowerBoundI}
		Let $\A$ satisfy \eqref{A1}, \eqref{A2} and \eqref{K2}, with $\sigma_1=1$, and let $F$ satisfy \eqref{N1}. Suppose $u\in S$ is a weak dual solution of \eqref{CDP} with initial datum $0\leq u_0\in \LL^1_{\Phi_1}(\Omega)$. Then, there exists a waiting time
		\begin{equation}\label{waitingtime}
			t_*= c_*\left( \frac{1}{\|u_0\|_{\LL^1_{\Phi_1}(\Omega)}^{m_1-1}} \lor \frac{1}{\|u_0\|_{\LL^1_{\Phi_1}(\Omega)}^{m_0-1}} \right),
		\end{equation}
		where $c_*$ is given in the proof, such that for all $ t\geq t_*$ and a.e. $x\in \Omega$ we have
		\begin{equation}\label{lowerbound1}
			F(u(t,x))\geq k_8 \frac{\Phi_1(x)}{t^{\frac{m_0}{m_0-1}}}.
		\end{equation}
		For a positive constant $k_8$ which depends on $N$,$s$, $\gamma$, $m_i$, $F$, $\Omega$ and $\lambda_1$.
	\end{thm}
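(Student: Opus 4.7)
The plan is to exploit the pointwise estimate \eqref{pe2} of Proposition \ref{pointwiseestimates}, using its right-hand half: for every $0<t_0\le t_1\le t$,
\begin{equation*}
F(u(t,x_0)) \;\ge\; \frac{t_0^{1/(m_0-1)}}{(m_0-1)\,t^{m_0/(m_0-1)}}\int_\Omega \bigl[u(t_0,x)-u(t_1,x)\bigr]\,\K(x,x_0)\,dx.
\end{equation*}
The whole game is to turn the Green-weighted difference on the right into a positive quantity of order $\Phi_1(x_0)\,\|u_0\|_{\LL^1_{\Phi_1}(\Omega)}$, by choosing $t_0$ very small (so the integral at $t_0$ still remembers $u_0$) and $t_1$ moderately large (so the $u(t_1)$ integral is absorbed in half of the $u(t_0)$ one). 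The resulting threshold on $t_1$ will be exactly the waiting time $t_*$ of \eqref{waitingtime}.

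For the lower estimate on the $u(t_0)$ integral I would use the lower part of \eqref{K3}, $\K(x,x_0)\ge c_0\,\Phi_1(x)\Phi_1(x_0)$, combined with Corollary \ref{corWE} (with $\tau_0=0$), to obtain
\begin{equation*}
\int_\Omega u(t_0,x)\,\K(x,x_0)\,dx \;\ge\; c_0\,\Phi_1(x_0)\int_\Omega u(t_0,x)\Phi_1(x)\,dx \;\ge\; \frac{c_0}{2}\Phi_1(x_0)\,\|u_0\|_{\LL^1_{\Phi_1}(\Omega)},
\end{equation*}
which is valid as soon as $t_0$ is chosen of size $\asymp \|u_0\|_{\LL^1_{\Phi_1}(\Omega)}^{-(m_i-1)}$ for the appropriate $i\in\{0,1\}$ in the corollary. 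For the $u(t_1)$ integral I would invoke inequality \eqref{ecremark} of Remark \ref{remark} (using $\sigma_1=1$),
\begin{equation*}
\int_\Omega u(t_1,x)\,\K(x,x_0)\,dx \;\le\; \overline{k_3}\,\frac{\Phi_1(x_0)}{t_1^{1/(m_j-1)}},
\end{equation*}
and then pick $t_1=t_*\asymp\max\bigl\{\|u_0\|_{\LL^1_{\Phi_1}(\Omega)}^{-(m_0-1)},\,\|u_0\|_{\LL^1_{\Phi_1}(\Omega)}^{-(m_1-1)}\bigr\}$ large enough that the upper bound is at most $\tfrac14 c_0\,\Phi_1(x_0)\,\|u_0\|_{\LL^1_{\Phi_1}(\Omega)}$. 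Subtracting and inserting back into Step~1 then gives
\begin{equation*}
F(u(t,x_0)) \;\ge\; \frac{c_0}{4(m_0-1)}\,\frac{t_0^{1/(m_0-1)}\,\|u_0\|_{\LL^1_{\Phi_1}(\Omega)}}{t^{m_0/(m_0-1)}}\,\Phi_1(x_0),\qquad\forall\,t\ge t_*.
\end{equation*}
A short computation with $t_0\asymp \|u_0\|_{\LL^1_{\Phi_1}(\Omega)}^{-(m_i-1)}$ yields $t_0^{1/(m_0-1)}\,\|u_0\|_{\LL^1_{\Phi_1}(\Omega)}\asymp \|u_0\|_{\LL^1_{\Phi_1}(\Omega)}^{(m_0-m_i)/(m_0-1)}$, which is bounded below by a positive universal constant in both the small-data ($\|u_0\|_{\LL^1_{\Phi_1}(\Omega)}\le 1$, leading to $i=1$) and the large-data ($\|u_0\|_{\LL^1_{\Phi_1}(\Omega)}\ge 1$, leading to $i=0$) regimes, so \eqref{lowerbound1} follows.

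The main obstacle is the bookkeeping across the (up to) four regime combinations: the index $i$ in Corollary \ref{corWE} (whether $t_0\lessgtr\|u_0\|_{\LL^1_{\Phi_1}(\Omega)}^{2s/(N+\gamma)}$) and the index $j$ in Remark \ref{remark} (whether $t_1\lessgtr k_1$) each split into two cases, and all choices must be simultaneously consistent and all produce the same final estimate. It is exactly this case analysis that forces $t_*$ in \eqref{waitingtime} to be the \emph{maximum} of the two negative powers of $\|u_0\|_{\LL^1_{\Phi_1}(\Omega)}$, a phenomenon caused by the non-homogeneity of $F$; in the purely homogeneous case $m_0=m_1$ the two exponents coincide and one recovers the simpler waiting time of \cite{MB+AF+JLV_sharp-global-estimates}. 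The explicit value of $c_*$ is then the maximum of the explicit constants produced by Corollary \ref{corWE} and by the absorption condition in Step~3.
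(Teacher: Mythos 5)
Your proposal is correct and follows essentially the same route as the paper's proof: combining the pointwise estimate \eqref{pe2} (the Fundamental Lower Bound), Remark \ref{remark}'s bound on $\int u(t_1)\K(\cdot,x_0)$ to absorb the subtracted term, the Green lower bound $\K\ge c_0\Phi_1\Phi_1$ together with Corollary \ref{corWE} at $\tau_0=0$ to keep the $u(t_0)$ integral of order $\Phi_1(x_0)\|u_0\|_{\LL^1_{\Phi_1}}$, and then bounding the admissible $t_1$ by the waiting time \eqref{waitingtime}. The only cosmetic divergence is the choice of $t_0$: you take $t_0\asymp\|u_0\|_{\LL^1_{\Phi_1}(\Omega)}^{-(m_i-1)}$ with $i$ depending on the data regime, while the paper always takes $t_0\asymp\|u_0\|_{\LL^1_{\Phi_1}(\Omega)}^{-(m_0-1)}$ (with the constant $C$ of Remark \ref{rem2} chosen to make Corollary \ref{corWE} applicable in all regimes), which gives $t_0^{1/(m_0-1)}\|u_0\|_{\LL^1_{\Phi_1}(\Omega)}\asymp 1$ exactly and removes the case split from the final estimate.
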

	\begin{proof}
		We split the proof in several steps.
		
		\textbf{Step 1}: Fundamental Lower Bounds.\\
		Starting from \eqref{pe2}, we have for almost every point $x_0\in \Omega$ and any $0<t_0\leq t_1\leq t$
		\begin{equation}\label{LB1}
			\int_{\Omega} u(t_0,x) \K(x,x_0)\dx - \int_{\Omega} u(t_1,x)\K(x,x_0) \dx \leq (m_0-1) \left( \frac{t^{m_0}}{t_0} \right)^{\frac{1}{m_0-1}} F(u(t,x_0)).
		\end{equation}
		Moreover, we have seen in Remark \ref{remark} the following inequality holds for any $\tau>0$
		\begin{equation}\label{LB2}
			\int_{\Omega} u(\tau,x) \K(x,x_0) \dx \leq \overline{k_3}\frac{\Phi_1(x_0)}{\tau^{\frac{1}{m_i-1}}},
		\end{equation}
		where $i=0$ if $\tau<k_1$ and $i=1$ if $\tau\geq k_1$. Then, for $t_1$ large enough i.e.
		\begin{equation}\label{LB3}
			t_1\geq \max_{i=0,1} \left( \frac{2 \overline{k_3} \Phi_1(x_0)}{\int_{\Omega} u(t_0,x) \K(x,x_0) \dx} \right)^{m_i-1},
		\end{equation}
		we get by combining \eqref{LB2} and \eqref{LB3}
		$$\int_{\Omega} u(t_1,x) \K(x,x_0)\dx \leq \frac{1}{2} \int_{\Omega} u(t_0,x)\K (x,x_0) \dx.$$
		Therefore, using this last inequality together with \eqref{LB1} we obtain the Fundamental Lower Bound
		\begin{equation}\label{FLB}\tag{FLB}
			t^{\frac{m_0}{m_0-1}} F(u(t,x_0))\geq \frac{t_0^{\frac{1}{m_0-1}}}{2(m_0-1)} \int_{\Omega} u(t_0,x) \K(x,x_0)\dx,
		\end{equation}
		which is valid whenever $t_0\leq t_1\leq t$. This fact always happen because thanks to \eqref{LB2} we have
		$$t_0\leq \max_{i=0,1}\left( \frac{\overline{k_3} \Phi_1(x_0)}{\int_{\Omega} u(t_0,x)\K(x,x_0)\dx} \right)^{m_i-1} \leq \max_{i=0,1}\left( \frac{2\overline{k_3} \Phi_1(x_0)}{\int_{\Omega} u(t_0,x)\K(x,x_0)\dx} \right)^{m_i-1}\leq t_1.$$
		
		\textbf{Step 2}: Quantitative Lower Bounds.\\
		Note that $t_0$ and $t_1$ have not been fixed yet, we have only restriction \eqref{LB3}. Now, let's fix $t_0$ small enough
		$$t_0=\frac{C}{(2k_8)^{\frac{1}{2s\theta_{0,\gamma}}} \|u_0\|_{\LL^1_{\Phi_1}(\Omega)}^{m_0-1}},$$
		where $C$ is a constant which allow us to use Corollary \ref{corWE} with $\tau_0=0$, we will explain how to choose it later, see Remarck \ref{rem2} below.
		Therefore, we have
		\begin{equation}\label{LB5}
			\frac{1}{2} \int_{\Omega} u_0(x) \Phi_1(x)\dx \leq \int_{\Omega} u(t_0,x)\Phi_1(x)\dx.
		\end{equation}
		Starting from \eqref{FLB} and take into account both \eqref{LB5} and the lower bound of the Green function in \eqref{K2}, i.e., $\K(x,y)\geq c_0 \Phi_1(x)\Phi_1(y)$, we obtain the following quantitative lower bound
		\begin{align*}
			t^{\frac{m_0}{m_0-1}} F(u(t,x_0)) &\geq \frac{t_0^{\frac{1}{m_0-1}}}{2(m_0-1)}\int_{\Omega} u(t_0,x)\K(x,x_0)\dx\\
			&\geq \frac{c_0 \Phi_1(x_0)}{2(m_0-1)} t^{\frac{1}{m_0-1}} \int_{\Omega} u(t_0,x)\Phi_1(x)\dx\\
			&\geq \frac{c_0 \Phi_1(x_0)}{4(m_0-1)} t_0^{\frac{1}{m_0-1}} \int_{\Omega} u_0(x)\Phi_1(x)\dx\\
			&= \frac{c_0}{4(m_0-1) (2k_7)^{\frac{1}{2s(m_0-1)\theta_{0,\gamma}}}} \Phi_1(x_0)\\
			&= k_8 \Phi_1(x_0).
		\end{align*}

		Summing up, we have already shown \eqref{lowerbound1} holds whenever $t\geq t_1$, now, $t_1$ is fixed and it is already given by \eqref{LB3}. The purpose of the waiting time \eqref{waitingtime} is to find an explicit upper bound for $t_1$ which does not depend on $t_0$, $x_0$ and $u$.
		
		\textbf{Step 3:} Critical time $t_*$.\\
		Let's assume for a moment that $t_1= \left( \frac{2 \overline{k_3} \Phi_1(x_0)}{\int_{\Omega} u(t_0,x)\K(x,x_0)\dx} \right)^{m_1-1}$. We use Green estimates \eqref{K2} and inequality \eqref{corWE3} for $t_0$ to obtain an explicit bound
		\begin{align*}
			t_1^{\frac{1}{m_1-1}}&= \frac{2\overline{k_3}\Phi_1(x_0)}{\int_{\Omega} u(t_0,x)\K(x,x_0)\dx}
			\leq \frac{2\overline{k_3}}{c_0 \int_{\Omega} u(t_0,x)\Phi_1(x)\dx}\\
			&\leq \frac{4 \overline{k_3}}{c_0 \|u_0\|_{\LL^1_{\Phi_1}(\Omega)}}
			= \frac{c_*^{\frac{1}{m_1-1}}}{\|u_0\|_{\LL^1_{\Phi_1}(\Omega)}}.
		\end{align*}
		If the maximun of the $t_1$ formula is the other case, we can apply the same argument. Thus, we achieve the expresion \eqref{waitingtime} for the waiting time.
	\end{proof}
	This lower bound for large times, together with the upper bound of Theorem \ref{UpperBehaviour}, proves our first result of the Global Harnack Principle, Theorem \ref{GHPI} of Section \ref{sec2.4}. Remember it is the most general result because it could be applied to both local and non-local operators.
	
	\begin{rem}\label{rem2}\rm
		Let's name the following constants in order to choose $t_0$ properly in the proof above. $A:= (2k_7)^{\left( \frac{1}{2s\theta_{0,\gamma}} - \frac{1}{2s \theta_{1,\gamma}} \right)\frac{1}{m_1-m_0} }$ and $B:= (2k_7)^{- \frac{1}{2s\theta_{0,\gamma}} \frac{N+\gamma}{(N+\gamma)(m_0-1)+2s}}$. Note that we have the next assertions:
		\begin{align*}
			\|u_0\|_{\LL^1_{\Phi_1}(\Omega)}&\leq A, \hspace{3mm} \mbox{ if and only if } \hspace{3mm} \frac{1}{(2k_7)^{\frac{1}{2s\theta_{0,\gamma}}} \|u_0\|_{\LL^1_{\Phi_1}(\Omega)}^{m_0-1}}\leq \frac{1}{(2k_7)^{\frac{1}{2s\theta_{1,\gamma}}} \|u_0\|_{\LL^1_{\Phi_1}(\Omega)}^{m_1-1}}.\\
			\|u_0\|_{\LL^1_{\Phi_1}(\Omega)}&\geq B \hspace{3mm} \mbox{ if and only if } \hspace{3mm} \frac{1}{(2k_7)^{\frac{1}{2s\theta_{0,\gamma}}} \|u_0\|_{\LL^1_{\Phi_1}(\Omega)}^{m_0-1}} \leq \|u_0\|_{\LL^1_{\Phi_1}(\Omega)}^{\frac{2s}{N+\gamma}}.
		\end{align*}
		Therefore, according to the statements of Corollary \ref{corWE}, if we choose $t_0=((2k_7)^{\frac{1}{2s\theta_{0,\gamma}}} \|u_0\|_{\LL^1_{\Phi_1}(\Omega)}^{m_0-1})^{-1}$ in any of these cases above, we can directly apply inequality \eqref{corWE3}. In the intermediate case $A\leq ||u_0||_{\LL^1_{\Phi_1}(\Omega)}\leq B$, we have the following computation
		\begin{align*}
			\frac{1}{(2k_7)^{\frac{1}{2s\theta_{1,\gamma}}} \|u_0\|_{\LL^1_{\Phi_1}(\Omega)}^{m_1-1}} &= \frac{1}{(2k_7)^{\frac{1}{2s\theta_{0,\gamma}}} \|u_0\|_{\LL^1_{\Phi_1}(\Omega)}^{m_0-1}} \frac{(2k_7)^{\frac{1}{2s\theta_{0,\gamma}}-\frac{1}{2s\theta_{1,\gamma}}}}{\|u_0\|_{\LL^1_{\Phi_1}(\Omega)}^{m_1-m_0}}\\
			&\geq \frac{1}{(2k_7)^{\frac{1}{2s\theta_{0,\gamma}}} \|u_0\|_{\LL^1_{\Phi_1}(\Omega)}^{m_0-1}} (2k_7)^{\frac{1}{2s\theta_{0,\gamma}}-\frac{1}{2s\theta_{1,\gamma}}+ \frac{m_1-m_0}{2s\theta_{0,\gamma}}\frac{N+\gamma}{(N+\gamma)(m_0-1)+2s}}.
		\end{align*}
		
		Thus, we denote $$C=\frac{1}{2}\left( 1 \land (2k_7)^{\frac{1}{2s\theta_{0,\gamma}}-\frac{1}{2s\theta_{1,\gamma}}+ \frac{m_1-m_0}{2s\theta_{0,\gamma}}\frac{N+\gamma}{(N+\gamma)(m_0-1)+2s}} \right),$$
		and if we choose $t_0=C\left( (2k_7)^{\frac{1}{2s\theta_{0,\gamma}}} \|u_0\|_{\LL^1_{\Phi_1}(\Omega)}^{m_0-1} \right)^{-1}$, all the hipotheses of Coralary \ref{corWE} are always satisfied.
		
	\end{rem}
	
	
	\subsection{Approximate solutions}\label{sec5.2}
	Some new tools are required to establish the lower bounds for all positive times, we collect them below; further details can be found in \cite{MB+AF+XR_positivity-and-regularity}. We begin by defining the following class of approximate solutions, that we will use throughout the proof. Let $\delta > 0$ be fixed, and we address the larger approximate problem.
	\renewcommand{\theequation}{AP1}
	\begin{equation}\label{AP1}
		\left\{\begin{array}{cll}\partial_t u_\delta +\A \n(u_\delta) =0, & (0,\infty)\times \Omega.\\ u_\delta(t,x)=\delta, & (0,\infty)\times \RN \setminus\Omega.\\ u_\delta(0,x)=u_0(x) + \delta, & x\in \Omega.\end{array}\right.
	\end{equation}
	Or equivalently, if we write $u_\delta=v_\delta+\delta$
	\renewcommand{\theequation}{AP2}
	\begin{equation}\label{AP2}
		\left\{\begin{array}{cll}\partial_t v_\delta +\A (\n(v_\delta +\delta) - \n(\delta)) =0, & (0,\infty)\times \Omega.\\ v_\delta=0, & (0,\infty)\times \RN \setminus\Omega.\\ v_\delta(0,x)=u_0(x), & x\in \Omega.\end{array}\right.
	\end{equation}
	
	\renewcommand{\theequation}{\thesection.\arabic{equation}}
	\setcounter{equation}{7}
	
	The idea is to utilize problem \eqref{AP1} for a priori estimates and \eqref{AP2} for establishing the existence of solutions. Problem \eqref{AP2} represents an approximate Porous Medium type equation with a nonlinearity $H_\delta(v)= F(v+\delta)-F(\delta)$ satisfying $H_\delta(0)=0$ and $H'_\delta(0)= F'(\delta)>0$. Thus, it admits a solution, implying the existence of solutions for problem \eqref{AP1}. We adopt the solution definition as detailed in Section \ref{sec2.3}, ensuring its uniqueness and existence.
	
	 Some properties of approximate solutions $u_\delta$ include the following:
	\begin{prop}[\textbf{\cite[Apendix II]{MB+AF+XR_positivity-and-regularity}}]\label{propiedadesudelta1}
		Let $u$ be a \solution\,of \eqref{CDP} with initial data $0\leq u_0\in \LL^1_{\Phi_1}(\Omega)$ and let $u_\delta$ be a \solution\,of \eqref{AP1} with the same initial data. Then, the following assertions hold true:
		\renewcommand{\labelenumi}{\rm (\roman{enumi})}
		\begin{enumerate}
			\item Positivity. $u_\delta\geq \delta >0$.
			\item Aproximate solutions are ordered. For all $\delta_1\geq \delta_2$ we have $u_{\delta_1}\geq u_{\delta_2}$.
			\item Aproximate solutions are bigger than solutions of \eqref{CDP}. $u_\delta\geq u$.
		\end{enumerate}
	\end{prop}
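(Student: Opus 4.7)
The plan is to derive all three assertions from a single comparison principle for minimal weak dual solutions of $\partial_t w + \A F(w) = 0$ on $(0,\infty)\times\Omega$ with (possibly non-zero, time-independent) Dirichlet data on the lateral boundary: namely, if $w_1, w_2$ are two such solutions with lateral data $b_1 \leq b_2$ and initial data $w_1(0) \leq w_2(0)$, then $w_1 \leq w_2$ on $(0,\infty)\times\Omega$. Once this is in hand, each of (i)--(iii) follows by choosing appropriate competitors.

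For the comparison principle, the case of \emph{equal} boundary data $b_1 = b_2 = b$ is standard: the shift $w = v + b$ reduces the problem to \eqref{AP2} with homogeneous Dirichlet data and shifted nonlinearity $H(v) = F(v+b) - F(b)$, which is continuous, non-decreasing, and vanishes at zero; Theorem~\ref{exismild} then applies and gives $L^1$-contraction of mild solutions, hence order preservation, and this lifts to minimal weak dual solutions via the monotone approximation of Definition~\ref{minimalweakdual}. For \emph{different} constant boundary data $b_1 \leq b_2$, I would apply Kato's inequality to $(w_1 - w_2)_+$: since $F$ is non-decreasing, one obtains distributionally
\[
\partial_t (w_1 - w_2)_+ + \A\bigl[(F(w_1) - F(w_2))_+\bigr] \leq 0,
\]
with $(w_1 - w_2)_+ \equiv 0$ on the lateral boundary (because $(b_1 - b_2)_+ = 0$) and at $t=0$; testing against $\Phi_1$ and using the positivity of $\AI$ then forces $(w_1 - w_2)_+ \equiv 0$. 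Equivalently, one may propagate the order through the Crandall--Liggett implicit Euler scheme, where each elliptic resolvent equation $(I + h \A F)(u) = g$ admits a comparison principle across different Dirichlet data by elementary testing with $\mathrm{sign}_+$.

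Specializing: for (i), the constant $w \equiv \delta$ is a stationary solution of \eqref{AP1} since $\A$ annihilates constants (either because $\A$ is local, or because its kernel representation $\A f(x) = \mathrm{P.V.}\int (f(x)-f(y))K(x,y)\,dy$ vanishes on $f \equiv c$); it matches the lateral datum of $u_\delta$, while $u_\delta(0) = u_0 + \delta \geq \delta$, so comparison gives $u_\delta \geq \delta$. For (ii), both $u_{\delta_1}$ and $u_{\delta_2}$ solve \eqref{AP1} with initial data $u_0 + \delta_1 \geq u_0 + \delta_2$ and lateral data $\delta_1 \geq \delta_2$, hence $u_{\delta_1} \geq u_{\delta_2}$. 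For (iii), $u$ solves \eqref{CDP} with zero lateral data and initial datum $u_0 \leq u_0 + \delta$; comparing with $u_\delta$ yields $u_\delta \geq u$. The main obstacle is precisely the comparison principle with \emph{different} constant boundary data required by (ii) and (iii), because the vertical shift $u \mapsto u + c$ is not a symmetry of the nonlinear equation; I expect the Kato/testing argument for $(w_1 - w_2)_+$, carried out in the weak dual formulation of Definition~\ref{defweakdual}, to be the most delicate step, after which parts (i)--(iii) are immediate specializations.
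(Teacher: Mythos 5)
Your plan — a single comparison principle for solutions with constant lateral data, then specialize — correctly captures what drives (ii) and (iii), and you are right that the delicate point is comparison across \emph{different} constant boundary data; either the Kato-type argument on $(w_1-w_2)_+$ tested against $\Phi_1$, or propagating the order step by step through the Crandall--Liggett resolvents, can be made to work (in the latter case one uses that $\int_\Omega \A g \geq 0$ for $g\geq 0$ extended by zero, which holds for the kernel part and, when a zero-order term $B\geq 0$ is present, also for $\int B g$). This is consistent with the route of the cited Appendix II of \cite{MB+AF+XR_positivity-and-regularity}, although that reference is written for the RFL only.

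There is however a genuine gap in your proof of (i). You assert that the constant $w\equiv\delta$ is a stationary solution of \eqref{AP1} ``since $\A$ annihilates constants,'' offering as justification that $\A$ is either local or given purely by $\A f(x)=\mathrm{P.V.}\int(f(x)-f(y))K(x,y)\,dy$. But the class of operators considered here explicitly includes those with a zero-order term $B(x)\geq 0$, cf.\ assumption \eqref{L2} and the Spectral Fractional Laplacian in Section \ref{sec7}, for which $\A c = B(x)\,c \not\equiv 0$. Then $\partial_t\delta + \A F(\delta) = B(x)F(\delta) \geq 0$, so $\delta$ is a \emph{super}solution, and the comparison you invoke would point the wrong way. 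In fact (i) does not need a comparison principle at all: by construction $u_\delta := v_\delta + \delta$, where $v_\delta$ is the minimal weak dual solution of \eqref{AP2} with nonnegative initial datum $u_0$, hence $v_\delta\geq 0$ (order preservation against the zero solution, Theorem \ref{exisweakdual}), giving $u_\delta\geq \delta$ immediately and for every operator in the class. (The same remark explains why (ii) and (iii) should also be run through the formulation \eqref{AP2}: when $B\not\equiv 0$ the shift $u_\delta = v_\delta+\delta$ turns \eqref{AP1} into \eqref{AP2} only up to the term $B\,F(\delta)$, and it is the sign of this term, $\text{sign}_+(w_2-w_1)\,B\,(F(\delta_1)-F(\delta_2))\geq 0$, that ultimately saves your Kato computation; this deserves to be stated explicitly.)
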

	Since the Benilan-Crandall estimates hold true for $u_\delta$, the quantity $t^{\frac{1}{m_0-1}} u_\delta(t,x)$ is monotonically non-decreasing for $t>0$. Similar to previous sections, we can prove the positivity (by comparison) and boundedness of solutions:
	$$\delta\leq u_\delta(t,x)\leq \frac{k_2}{t^{\frac{1}{m_i-1}}} +2\delta.$$
	Thus, we can employ the arguments from \cite{MB+AF+XR_positivity-and-regularity, MB+AF+JLV_sharp-global-estimates} to establish the regularity of approximate solutions, i.e.
	\begin{itemize}
		\item $u_\delta$ is globally Holder continuous in $x$ and $t$.
		\item $u_\delta$ is classical in the interior, $u_\delta\in C_x^\infty(\Omega)$ and $u_\delta\in C_t^{1,\alpha}([t_0,T])$ for an $\alpha>0$.
	\end{itemize}

This regularity of $\ud$ justifies the calculations that we will perform the rest of this section.
	
	\begin{lem}\label{propiedadesudelta2}
		Let $\A$ satisfy \eqref{A1} and \eqref{A2}, and let $F$ satisfy \eqref{N1}. Assume $u$ and $\ud$ are \solutions\,of \eqref{CDP} and \eqref{AP1} respectively with the same initial datum $0\leq u_0\in \LL^1_{\Phi_1}(\Omega)$. Then, for all $t>0$ we have
		$$\| \ud (t)-u(t)\|_{\LL^1_{\Phi_1}(\Omega)}\leq \|u_\delta (0)-u_0\|_{\LL^1_{\Phi_1}(\Omega)}= \delta \|\Phi_1\|_{\LL^1(\Omega)}.$$
	\end{lem}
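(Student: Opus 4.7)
The plan is to control $V(t):=\int_\Omega(\ud(t,x)-u(t,x))\Phi_1(x)\dx$, which coincides with $\|\ud(t)-u(t)\|_{\LL^1_{\Phi_1}(\Omega)}$ since $\ud\geq u$ by Proposition~\ref{propiedadesudelta1}(iii), and satisfies $V(0)=\delta\|\Phi_1\|_{\LL^1(\Omega)}$. To avoid the non-homogeneous boundary value of $\ud$, I would work with the shifted problem \eqref{AP2} for $v_\delta=\ud-\delta$, which has homogeneous boundary. Subtracting the equations for $v_\delta$ and $u$ yields
\[
\partial_t(v_\delta-u)+\A\bigl[H_\delta(v_\delta)-F(u)\bigr]=0,\qquad H_\delta(r):=F(r+\delta)-F(\delta),
\]
and, since $H_\delta(0)=F(0)=0$, the argument inside $\A$ vanishes on the lateral boundary. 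Arguing as in the proof of Theorem~\ref{WeightEstimates}, testing against $\Phi_1$ and using $\A\Phi_1=\lambda_1\Phi_1$ together with the identity $\int_\Omega\A h\cdot\Phi_1\dx=\lambda_1\int_\Omega h\Phi_1\dx$ valid for $h$ vanishing on the boundary, combined with $\int_\Omega(v_\delta-u)\Phi_1\dx=V(t)-\delta\|\Phi_1\|_{\LL^1}$, produces
\[
V'(t)=-\lambda_1\int_\Omega\bigl(F(\ud)-F(u)\bigr)\Phi_1\dx+\lambda_1 F(\delta)\|\Phi_1\|_{\LL^1(\Omega)}.
\]

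The second step exploits the convexity of $F$ with $F(0)=0$: the resulting superadditivity $F(a+b)\geq F(a)+F(b)$ for $a,b\geq 0$, applied to $\ud=u+(\ud-u)$, gives $F(\ud)-F(u)\geq F(\ud-u)$; and Jensen's inequality against the probability measure $\Phi_1\dx/\|\Phi_1\|_{\LL^1}$ yields
\[
\int_\Omega F(\ud-u)\Phi_1\dx\geq\|\Phi_1\|_{\LL^1}\,F\!\left(\frac{V(t)}{\|\Phi_1\|_{\LL^1}}\right).
\]
Setting $m(t):=V(t)/\|\Phi_1\|_{\LL^1}$, the two bounds combine into the differential inequality
\[
m'(t)\leq\lambda_1\bigl[F(\delta)-F(m(t))\bigr],\qquad m(0)=\delta.
\]
Since $F$ is non-decreasing, $m\equiv\delta$ is a stationary solution of the associated ODE, and a standard comparison argument -- any first time $t_0$ at which $m(t_0)>\delta$ would force $m'(t_0)\leq 0$, a contradiction -- implies $m(t)\leq\delta$ for all $t\geq 0$, which is the claimed bound.

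The main technical obstacle is justifying the formula for $V'(t)$ rigorously, since $u$ and $\ud$ are only \solutions. I would handle this exactly as in the proof of Theorem~\ref{WeightEstimates}: establish the inequality first for the bounded mild-solution approximants $u_n$ and $\ud_n$ from Definition~\ref{minimalweakdual}, for which the required regularity is available and the differentiation is standard, and then pass to the monotone limit via Theorem~\ref{exisweakdual}, using the monotone convergence of $u_n\to u$ and $\ud_n\to\ud$ against the weight $\Phi_1$.
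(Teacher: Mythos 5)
Your proof is correct, and it shares the paper's starting point — differentiate $V(t)=\int_\Omega(\ud-u)\Phi_1\dx$ in time and pair the equation against $\Phi_1$ via self-adjointness — but you treat with more care a subtlety that the paper's very short proof elides. Because $\ud$ carries the non-homogeneous lateral datum $\delta$, the function $F(\ud)$ equals $F(\delta)\neq 0$ on the lateral boundary, and the eigenvalue identity $\int_\Omega (\A h)\Phi_1=\lambda_1\int_\Omega h\Phi_1$ applies only after subtracting that constant; passing through the shifted problem \eqref{AP2}, as you do, correctly produces the extra nonnegative term $+\lambda_1 F(\delta)\|\Phi_1\|_{\LL^1(\Omega)}$ in $V'(t)$. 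The paper's one-line computation instead identifies $\int_\Omega \A F(\ud)\,\Phi_1$ with $\lambda_1\int_\Omega F(\ud)\,\Phi_1$, so it drops this boundary contribution and reads off $V'\le 0$ directly from $\ud\ge u$ and the monotonicity of $F$. With the boundary term restored, the sign of $V'$ is no longer immediate, and that is precisely the gap your chain — superadditivity $F(\ud)-F(u)\ge F(\ud-u)$ (from convexity and $F(0)=0$, with convexity guaranteed by \eqref{N1} since $\mu_0>0$ forces $F''>0$), Jensen against the probability measure $\Phi_1\dx/\|\Phi_1\|_{\LL^1}$, and the ODE comparison for $m'\le\lambda_1[F(\delta)-F(m)]$, $m(0)=\delta$ — closes, giving $m(t)\le\delta$ and hence the stated bound. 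Your appeal to the bounded mild-solution approximation to justify the time-differentiation is the same device the paper uses. In summary, your route is longer but tighter: the paper's proof as written needs either the correction you supply or an additional comparison $\ud\ge u+\delta$ that is not established, and the discrepancy is worth flagging.
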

	\begin{proof}By a direct computation we obtain:
		\begin{align*}
			\int_{\Omega} \left( \ud (t,x) - u(t,x) \right) \Phi_1(x) \dx &- \int_{\Omega} \left( \ud (0,x) - u_0(x) \right) \ph(x) \dx\\
			&= \int_0^t \int_{\Omega} \partial_t (\ud -u) (\tau,x) \ph(x) \dx \dtau\\
			&= \int_0^t \int_{\Omega} (\A [F(\ud)] - \A [F(u)]) (\tau,x) \ph(x) \dx \dtau\\
			&=- \lambda_1 \int_0^t \int_{\Omega} \left( F( \ud ) - F(u) \right) (\tau,x) \ph(x) \dx \dtau \leq 0.
		\end{align*}
	\end{proof}
	
	Note that, as $\delta$ decreases to zero, the sequence $\left\{ \ud \right\}$ is monotonically decreasing and bounded below by $u$. This, combined with the convergence in $\LL^1_{\Phi_1}(\Omega)$ allows us to conclude the following pointwise convergence. For all $t>0$, we have
	\begin{equation}\label{pointconvudelta}
		\lim\limits_{\delta \to 0^+} \ud (t,x) = u(t,x) \hspace{6mm}\mbox{for a.e. }x\in \Omega.
	\end{equation}
	Before proceeding with the lower bounds, we divide the proofs into two cases: when the initial datum is small or large relative to the norm  $\|u_0\|_{\LL^1_{\ph(\Omega)}}$, i.e. when the critical time $t_*$, defined by \eqref{waitingtime} has the power $m_1-1$ or $m_0-1$.
	
	
	\subsection{Positivity for small initial data}\label{sec5.3}
	Throughout this subsection the waiting time will have the expression
\begin{equation}\label{tstar.111}
t_*=\frac{c_*}{\|u_0\|_{\LL^1_\ph (\Omega)}^{m_1-1}},
\end{equation}
given that our initial assumption is $\|u_0\|_{\LL^1_\ph(\Omega)}\leq 1$. Let us begin with an essential property of approximate solutions that compares the $\LL^1_{\Phi_1}(\Omega)$ norm of $u_0$ and $\ud$.
	\begin{lem}\label{propiedadesudelta3}
		Let $\A$ satisfy \eqref{A1}, \eqref{A2} and \eqref{K2}, and let $F$ satisfy \eqref{N1}. Let $u$ be the \solution\,of \eqref{CDP} with initial data $0\leq u_0 \in \LL^1_\ph(\Omega)$ and $\ud$ be the approximate solution of \eqref{AP1} with the same initial data, and let $t_*$ as in \eqref{tstar.111}. Then,  there exists a constant $\underline{c}$ such that for all $t\in [0,t_*]$ we have
		\begin{align}\label{cotaudelta}
			\underline{c} \|u_0\|_{\LL^1_\ph(\Omega)}^{m_1} \leq \int_{\Omega} F(u(t,x)) \ph(x)\dx
			 \leq \int_{\Omega} F(\ud (t,x)) \ph(x)\dx.
		\end{align}
	\end{lem}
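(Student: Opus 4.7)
\textbf{Proof plan for Lemma \ref{propiedadesudelta3}.}

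The plan is to treat the two inequalities separately. The right-hand inequality is the easy one: by Proposition \ref{propiedadesudelta1}(iii) we have $u(t,x)\le \ud(t,x)$ pointwise, and since $F$ is non-decreasing (and $\Phi_1\ge 0$), monotonicity of the integral gives $\int_\Omega F(u)\ph\dx \le \int_\Omega F(\ud)\ph\dx$ directly.

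The substantive part is the lower bound. The plan here is to combine three ingredients already available. First, by the computation in Step 1 of the proof of Theorem \ref{WeightEstimates}, $t\mapsto \int_\Omega u(t,\cdot)\ph\dx$ is absolutely continuous with derivative $-\lambda_1\int_\Omega F(u(t,\cdot))\ph\dx$, so this quantity is non-increasing. Second, since $\|u_0\|_{\LL^1_\ph(\Omega)}\le 1$, the waiting time is $t_*=c_*\|u_0\|_{\LL^1_\ph(\Omega)}^{-(m_1-1)}$, and by choosing $c_*$ as in Remark \ref{rem2} this falls into the regime of Corollary \ref{corWE} applied with $\tau_0=0$, yielding
\begin{equation*}
\int_\Omega u(t,x)\ph(x)\dx \ge \tfrac{1}{2}\|u_0\|_{\LL^1_\ph(\Omega)} \qquad \text{for every }t\in[0,t_*].
\end{equation*}
Third, the nonlinearity $F$ is convex (a consequence of \eqref{N1}, since $(F/F')'\le 1$ forces $F''\ge 0$ where $F'>0$), so Jensen's inequality with the probability measure $d\mu=\ph\dx/\|\ph\|_{\LL^1(\Omega)}$ gives
\begin{equation*}
\frac{1}{\|\ph\|_{\LL^1(\Omega)}}\int_\Omega F(u(t,x))\ph(x)\dx \ge F\!\left(\frac{1}{\|\ph\|_{\LL^1(\Omega)}}\int_\Omega u(t,x)\ph(x)\dx\right).
\end{equation*}

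Combining Jensen with the lower bound on $\int u(t)\ph$, I obtain
\begin{equation*}
\int_\Omega F(u(t,x))\ph(x)\dx \ge \|\ph\|_{\LL^1(\Omega)}\, F\!\left(\frac{\|u_0\|_{\LL^1_\ph(\Omega)}}{2\|\ph\|_{\LL^1(\Omega)}}\right).
\end{equation*}
The final step is to make this bound explicit in terms of $\|u_0\|_{\LL^1_\ph(\Omega)}^{m_1}$. Since $\|u_0\|_{\LL^1_\ph(\Omega)}\le 1$, the argument of $F$ is bounded above by a fixed reference value $r_0:=1/(2\|\ph\|_{\LL^1(\Omega)})\vee 1$, so by Lemma \ref{Nonlin} (inequality \eqref{lem2.2_2}) applied with this $r_0$,
\begin{equation*}
F\!\left(\tfrac{\|u_0\|_{\LL^1_\ph(\Omega)}}{2\|\ph\|_{\LL^1(\Omega)}}\right) \ge \underline{k}\,\frac{F(r_0)}{r_0^{m_1}}\left(\tfrac{\|u_0\|_{\LL^1_\ph(\Omega)}}{2\|\ph\|_{\LL^1(\Omega)}}\right)^{m_1},
\end{equation*}
which yields exactly $\underline{c}\,\|u_0\|_{\LL^1_\ph(\Omega)}^{m_1}$ with a constant $\underline{c}$ depending only on $N,s,\gamma,m_i,F,\Omega,\lambda_1$.

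The only delicate point is choosing $c_*$ in the definition of $t_*$ small enough that Corollary \ref{corWE} applies uniformly for $t\in[0,t_*]$, which requires distinguishing whether $t\le \|u_0\|_{\LL^1_\ph(\Omega)}^{2s/(N+\gamma)}$ (index $i=0$) or $t\ge \|u_0\|_{\LL^1_\ph(\Omega)}^{2s/(N+\gamma)}$ (index $i=1$); however this is precisely the book-keeping already carried out in Remark \ref{rem2}, so no new obstacle arises. Everything else is monotonicity, Jensen, and Lemma \ref{Nonlin}.
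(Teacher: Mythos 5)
Your outline of the easy inequality and of the Jensen step is sound, and the Jensen argument with the normalized measure $d\mu=\ph\dx/\|\ph\|_{\LL^1(\Omega)}$ is in fact cleaner than the paper's slightly terse version. However, there is a real gap in the way you cover the full time interval $[0,t_*]$.

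You claim that Corollary \ref{corWE} (with $\tau_0=0$) gives $\int_\Omega u(t)\ph \ge \frac12\|u_0\|_{\LL^1_\ph(\Omega)}$ for \emph{every} $t\in[0,t_*]$, "by choosing $c_*$ as in Remark \ref{rem2}". This does not work for two reasons. First, $c_*$ is not a free parameter here: it is already fixed by Step 3 of the proof of Theorem \ref{LowerBoundI} (roughly $c_*^{1/(m_1-1)}\sim\overline{k_3}/c_0$), and the Lemma's statement references precisely that $t_*$; it cannot be shrunk ad hoc without weakening the positivity results that rely on the same $t_*$. Second, the time up to which Corollary \ref{corWE} is valid is $t_0 = C\bigl((2k_7)^{1/(2s\theta_{1,\gamma})}\|u_0\|_{\LL^1_\ph}^{m_1-1}\bigr)^{-1}$, and while this is proportional to $t_*$ (say $t_0=A_1 t_*$), there is no reason $A_1\ge 1$: generically $t_0<t_*$. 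The paper's proof therefore splits $[0,t_*]$ into $[0,t_0]$ and $[t_0,t_*]$; on the first piece it applies Corollary \ref{corWE}, and on the second it uses the B\'enilan--Crandall time monotonicity $t^{1/(m_0-1)}u(t,x)\ge t_0^{1/(m_0-1)}u(t_0,x)$ to transfer the weighted-$\LL^1$ lower bound from $t_0$ to $t\in[t_0,t_*]$ at the cost of a factor $A_1^{1/(m_0-1)}$. That step is absent from your argument and cannot be dispensed with.

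A secondary but not negligible point: Remark \ref{rem2} handles the choice of $t_0$ with exponent $m_0-1$ and $\theta_{0,\gamma}$ (the setting of Theorem \ref{LowerBoundI}); here, under the small-data hypothesis $\|u_0\|_{\LL^1_\ph}\le 1$, the relevant $t_0$ carries $m_1-1$ and $\theta_{1,\gamma}$, which requires a separate case analysis with a different threshold $M=\bigl(2(2k_7)^{1/(2s\theta_{1,\gamma})}\bigr)^{-(N+\gamma)\theta_{1,\gamma}}$ and a different constant $C$ — this is exactly Step 1 of the paper's proof, and it is not "the book-keeping already carried out in Remark \ref{rem2}". So even the applicability of Corollary \ref{corWE} on $[0,t_0]$ needs an argument you have not supplied.
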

	\begin{proof}
		We split the proof in two steps.\\
		\textbf{Step 1:} We claim that there exists a constant $C$ such that for $t_0=C\left( (2k_7)^{\frac{1}{2s\theta_{1,\gamma}}} \|u_0\|_{\LL^1_{\Phi_1}(\Omega)}^{m_1-1} \right)^{-1}$, the hyphoteses of Corollary \ref{corWE} are satisfied.
		
		Let's define the constant $M:= \left( 2 (2k_7)^{\frac{1}{2s\theta_{1,\gamma}}} \right)^{-(N+\gamma)\theta_{1,\gamma}}$. Then, if we have $\|u_0\|_{\LL^1_\ph(\Omega)}\leq M$, a direct calculation shows us that we can use Corollary \ref{corWE} because
		$$\|u_0\|_{\LL^1_{\Phi_1}(\Omega)}^{\frac{2s}{N+\gamma}}\leq\frac{1}{2(2k_7)^{\frac{1}{2s\theta_{1,\gamma}}} \|u_0\|_{\LL^1_{\Phi_1}(\Omega)}^{m_1-1}}:=t_0\leq \frac{1}{(2k_7)^{\frac{1}{2s\theta_{1,\gamma}}} \|u_0\|_{\LL^1_{\Phi_1}(\Omega)}^{m_1-1}}.$$
		In the other case, when the norm is bounded from above and from below, $M\leq \|u_0\|_{\LL^1_{\Phi_1}(\Omega)}\leq 1$, we consider the quantity $\min\limits_{i=0,1}\left( (2k_7)^{\frac{1}{2s\theta_{i,\gamma}}} ||u_0||_{\LL^1_{\Phi_1}(\Omega)}^{m_i-1} \right)^{-1}$. On the one hand, if the minimum occurs at $i=1$, we are in the same scenario as above. On the other hand, if we have $i=0$ in the minimum we can proceed as in Remark \ref{rem2} to choose
		$$C=\frac{1}{2}\left( 1 \land (1/2)^{m_1-m_0} (2k_7)^{\frac{1-(m_1-m_0)(N+\gamma)\theta_{1,\gamma}}{2s\theta_{1,\gamma}}-\frac{1}{2s\theta_{0,\gamma}}} \right).$$
		
		\textbf{Step 2:} Proof of the bound \eqref{cotaudelta}.
		
		Choose $t_0$ as in Step 1. Thus, thanks to Corolary \ref{corWE} the next inequality holds, for all $0<t\leq t_0$
		$$\frac{1}{2}\int_{\Omega} u_0(x)\ph(x)\dx\leq \int_{\Omega} u(t,x)\ph(x)\dx.$$
		Using the fact $F$ is convex and increasing, and applying Lemma \ref{Nonlin} we get
		\begin{align*}
			\underline{k} F(1/2) \|u_0\|_{\LL^1_\ph(\Omega)}^{m_1}&\leq F \left( \frac{1}{2} \int_{\Omega} u_0(x) \ph(x) \dx \right)
			 \leq \int_{\Omega} F(u(t,x)) \ph(x)\dx
			 \leq \int_{\Omega} F(\ud(t,x))\ph(x)\dx.
		\end{align*}
		Therefore, inequality \eqref{cotaudelta} is proved for any $0<t\leq t_0$. Now, for all $t\in [t_0,t_*]$ observe that $t_0= A_1 t_*$ by construction of $t_0$. Then, we use the Bénilan-Crandall estimates, Lemma \ref{benilancrandall}, and Corollary \ref{corWE} for  $t_0$ as follows:
		\begin{align*}
			\frac{1}{2} \int_{\Omega} u_0(x)\ph(x)\dx &\leq \int_{\Omega} u(t_0,x)\ph(x)\dx
			 \leq \left( \frac{t}{t_0} \right)^{\frac{1}{m_0-1}} \int_{\Omega} u(t,x)\ph(x)\dx
			 \leq \frac{1}{A_1^{\frac{1}{m_0-1}}} \int_{\Omega} u(t,x)\ph(x)\dx,
		\end{align*}
		and using F is convex and increasing again, remember $u\leq \ud$, we get
		\begin{align*}
			\underline{k}F\left( \frac{A_1^{\frac{1}{m_0-1}}}{2} \right) \|u_0\|_{\LL^1_\ph(\Omega)}^{m_1}&\leq F \left( \frac{A_1^{\frac{1}{m_0-1}}}{2} \|u_0\|_{\LL^1_\ph(\Omega)} \right)
			\leq F\left( \int_{\Omega} u(t,x) \ph(x)\dx \right)\\
			&\leq \int_{\Omega} F(u(t,x)) \ph(x)\dx
			\leq \int_{\Omega} F(\ud (t,x)) \ph(x)\dx.
		\end{align*}
	\end{proof}
	To prove the quantitative lower bounds, we need to be more precise about the operator $\A$. This is why we require either \eqref{L1}, for example, for RFL or CFL, or \eqref{L2}, for example, for SFL. The non-local nature of the operator plays a fundamental role in the proof.
	\begin{thm}\label{positivity1}
		Let $\A$ satisfy \eqref{A1}, \eqref{A2} and \eqref{L1}, and let F satisfy \eqref{N1}. Suppose $u$ is the \solution\,of \eqref{CDP} with initial datum $0\leq u_0\in \LL^1_\ph(\Omega)$. Given the waiting time as in \eqref{tstar.111}, we establish:
		
		Assume either $\sigma_1=1$ or $\sigma_1<1$, $K(x,y)<c_1|x-y|^{-(N+2s)}$ and $\ph\in C^\gamma(\overline{\Omega})$. Then, there exists a constant $k_{9}>0$ which depends only on $N$, $s$, $m_i$, $\gamma$, $F$, $\Omega$ and $\lambda_1$ such that for all $t>0$ and a.e. $x\in \Omega$ it follows that
		\begin{equation}\label{boundpositivity1}
			F(u(t,x))\geq k_{9} \left( 1 \land \frac{t}{t_*} \right)^{\frac{m_1^2}{m_1-1}} \frac{\ph^{\sigma_1}(x)}{t^{\frac{m_i}{m_i-1}}}.
		\end{equation}
		Where $i=1$ if $t\leq t_*$ and $i=0$ if $t\geq t_*$.
	\end{thm}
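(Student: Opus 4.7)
The plan is to prove \eqref{boundpositivity1} first for the approximate solutions $\ud$ of \eqref{AP1}, uniformly in $\delta>0$, and then pass to the pointwise limit \eqref{pointconvudelta} as $\delta\to 0^+$. The approach extends the constructive method of \cite{MB+AF+XR_positivity-and-regularity, MB+AF+JLV_sharp-global-estimates} to non-homogeneous $F$, combining five ingredients: the Fundamental Lower Bound \eqref{FLB}, the B\'enilan--Crandall monotonicity \eqref{BC2}--\eqref{BC4}, the weighted $\LL^1$ lower bound of Lemma~\ref{propiedadesudelta3}, the upper bound of Remark~\ref{remark}, and the nonlocal positivity of the kernel \eqref{L1}. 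I split the argument by time regime $t\ge t_*$ versus $t\le t_*$, and by boundary-exponent regime $\sigma_1=1$ versus $\sigma_1<1$.

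\emph{Large times $t\ge t_*$.} The prefactor $(1\wedge t/t_*)^{m_1^2/(m_1-1)}=1$, and the target is $F(u(t,x))\gtrsim \ph^{\sigma_1}(x)/t^{m_0/(m_0-1)}$. Apply the FLB to $\ud$ with intermediate time $t_0\sim t_*$: the admissibility condition \eqref{LB3} is satisfied thanks to the upper bound from Remark~\ref{remark} together with Corollary~\ref{corWE} giving $\int \ud(t_0)\ph\gtrsim \|u_0\|_{\LL^1_\ph(\Omega)}$ in the admissible range of $t_0$. Using $\K(y,x_0)\ge c_0\ph(y)\ph(x_0)$ from \eqref{K2}, one obtains $F(\ud(t,x_0))\gtrsim \ph(x_0)/t^{m_0/(m_0-1)}$, which matches the claim when $\sigma_1=1$.

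\emph{Small times $t\in (0,t_*]$.} Here FLB does not apply directly since the condition $t\ge t_1$ may fail. Instead, exploit the nonlocal kernel bound \eqref{L1} together with Lemma~\ref{propiedadesudelta3}. Since $K(x_0,y)\ge k_\Omega$ for $y\in\Omega$, after discarding the nonpositive outside-$\Omega$ contribution (where $\ud=\delta\le \ud(x_0)$) the equation gives
\[
\partial_t\ud(t,x_0)+F(\ud(t,x_0))\,I(x_0)\,\ge\,k_\Omega\!\int_\Omega F(\ud(t,y))\,\dy\,\gtrsim\,\|u_0\|_{\LL^1_\ph(\Omega)}^{m_1}\|\ph\|_\infty^{-1},
\]
where $I(x_0)$ is the P.V.\ kernel mass controlled by the upper bound on $K$ and by the absolute bound of Theorem~\ref{AbsoluteUpperBounds}. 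Integrating in time over $[0,t]$ yields $\ud(t,x_0)\gtrsim t\,\|u_0\|_{\LL^1_\ph(\Omega)}^{m_1}$ in a controlled range; Lemma~\ref{Nonlin} then converts this into $F(\ud(t,x_0))\gtrsim t^{m_1}\|u_0\|_{\LL^1_\ph(\Omega)}^{m_1^2}$, which rewrites precisely as $(t/t_*)^{m_1^2/(m_1-1)}/t^{m_1/(m_1-1)}$ times $\ph^{\sigma_1}(x_0)$ through the definition of $t_*$ and, in the case $\sigma_1=1$, by including the factor $\ph(x_0)$ via the lower Green bound at the point $x_0$.

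\emph{Main obstacle: the sharp exponent $\sigma_1<1$.} The naive use of $\K\ge c_0\ph\otimes\ph$ only yields $\ph(x_0)$, while the sharp answer is $\ph^{\sigma_1}(x_0)$. Under the additional hypotheses $K(x,y)\le c_1|x-y|^{-(N+2s)}$ and $\ph\in C^\gamma(\overline\Omega)$, one runs a constructive contradiction argument in the style of \cite{MB+AF+XR_positivity-and-regularity, MB+AF+JLV_sharp-global-estimates}: if \eqref{boundpositivity1} fails at $(t,x_0)$ with a small enough constant, propagate the smallness of $F(\ud)$ to a neighborhood of $x_0$ of radius comparable to $\ph(x_0)^{1/\gamma}$ using the H\"older regularity of $\ph$ and the singular-kernel control, and then reach a contradiction with Lemma~\ref{propiedadesudelta3} by integrating against $\ph$. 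The non-homogeneity of $F$ is the main technical hurdle: no separation-of-variables comparison solution is available, so every transition between $\ud$ and $F(\ud)$ must be tracked simultaneously through the two exponents $m_0,m_1$ of Lemma~\ref{Nonlin}, and the small/large $\|u_0\|_{\LL^1_\ph(\Omega)}$ cases must be kept aligned with the waiting-time expression. Finally, pass $\delta\to 0^+$ via \eqref{pointconvudelta} to conclude the bound for $u$.
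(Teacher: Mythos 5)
Your overall scaffolding (approximate solutions $\ud$, Lemma \ref{propiedadesudelta3}, the kernel lower bound \eqref{L1}, limit $\delta\to 0^+$) is the right one, but the two central steps do not go through as written. For small times, you split the principal value as $\partial_t\ud(t,x_0)+F(\ud(t,x_0))\,I(x_0)\ge k_\Omega\int_\Omega F(\ud(t,y))\,\dy$ with $I(x_0)$ the ``P.V.\ kernel mass''; for a hypersingular kernel (e.g.\ $K\asymp|x-y|^{-(N+2s)}$) the quantity $\int_{\RN}K(x_0,y)\,\dy$ is infinite, so this decomposition is not legitimate. The paper avoids exactly this by a comparison/touching-point argument: one introduces the space--time lower barrier $\Psi(t,x)=F^{-1}\bigl(K_0^{m_1}t^{m_1}\Phi_1(x)^{\sigma_1}\bigr)$ with $K_0=k\|u_0\|_{\LL^1_{\Phi_1}(\Omega)}^{m_1}$, and at a first touching point $(t_c,x_c)$ the difference $F(\ud)-F(\Psi)$ vanishes at $x_c$ and is nonnegative elsewhere, so $-\A[F(\ud)-F(\Psi)](x_c)$ reduces to $\int[F(\ud(t_c,y))-F(\Psi(t_c,y))]K(x_c,y)\,\dy\ge k_\Omega\int_\Omega F(\ud(t_c,y))\,\dy-O(K_0)$ without any divergent term; the matching upper bound comes from $\partial_t\ud(t_c,x_c)\le\partial_t\Psi(t_c,x_c)\lesssim K_0$ (via \eqref{N1}) together with $\A F(\Psi)\lesssim K_0$, the latter using $\A\Phi_1=\lambda_1\Phi_1$ when $\sigma_1=1$ and the concavity/H\"older estimate $|\Phi_1(x)^{\sigma_1}-\Phi_1(y)^{\sigma_1}|\le|x-y|^{\gamma\sigma_1}$ with $\gamma\sigma_1>2s$ when $\sigma_1<1$. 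Lemma \ref{propiedadesudelta3} then forces a contradiction for $k$ small. This barrier is also what produces the spatial factor $\Phi_1^{\sigma_1}(x)$; in your version the time-integration yields a bound independent of $x_0$, and ``including the factor $\ph(x_0)$ via the lower Green bound'' has no foothold in a pointwise ODE argument (and would in any case give $\ph$, not $\ph^{\sigma_1}$). Your closing paragraph gestures at a contradiction argument for $\sigma_1<1$ but does not supply the barrier or the touching-point computation, which is the actual content of the proof.

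The large-time regime has a second gap. For $\sigma_1<1$ the claimed bound $F(u(t,x))\gtrsim\Phi_1^{\sigma_1}(x)\,t^{-m_0/(m_0-1)}$ is \emph{stronger} near $\partial\Omega$ than the $\Phi_1(x)$ obtained from the FLB route via $\K\ge c_0\ph\otimes\ph$ (note Theorem \ref{LowerBoundI} is stated only for $\sigma_1=1$), so your large-times paragraph covers only the case $\sigma_1=1$. The paper instead obtains the large-time bound for all admissible $\sigma_1$ by evaluating the small-time estimate at $t=t_*$ and propagating it forward with the B\'enilan--Crandall monotonicity $t^{m_0/(m_0-1)}F(u(t,x))\ge t_*^{m_0/(m_0-1)}F(u(t_*,x))$ from \eqref{BC2}; this also explains why the exponent degrades from $m_1/(m_1-1)$ to $m_0/(m_0-1)$ across $t=t_*$.
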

	\begin{proof}
		We split the proof into three steps. Firstly, we estimate a lower bound for $F(u(t,x))$ which is crucial for the proof. Afterward, we derive inequality \eqref{boundpositivity1} for small times $t<t_*$, and finally for large times $t\geq t_*$.
		
		\textbf{Step 1: Claim.} There exists a constant $k>0$ small enought such that $\forall t\in [0,t_*]$
		\begin{equation}\label{posIec1}
			F(u(t,x))\geq \left( k \|u_0\|_{\LL^1_{\Phi_1}(\Omega)}^{m_1} t \right)^{m_1} \phi_1(x)^{\sigma_1}.
		\end{equation}
		\underline{Proof of the claim:} Step 1.1. \textit{Aproximate Solutions.}
		
		We fix $\delta>0$ until the end of the proof of the claim and let $u_\delta$  the \solution\,of \eqref{AP1} with initial data $u_0$. Define $K_0=k\|u_0\|_{\LL^1_{\Phi_1}(\Omega)}^{m_1}$ where k is a constant to be determined. Now, we consider the lower barrier $$\Psi(t,x)=F^{-1}(K_0^{m_1} t^{m_1} \Phi_1(x)^{\sigma_1}).$$
		\begin{rem}\label{obser1}\rm
				Note the following observations, whose estimates we will use throughout the proof.
					\renewcommand{\labelenumi}{\rm (\alph{enumi})}
			\begin{enumerate}
				\item \label{puntoa} We have $K_0\leq 1$ because we have $\|u_0\|_{\LL^1_{\ph(\Omega)}}\leq 1$ and we will choose $k<1$.
				\item \label{puntob} Here, with the goal of simplifying the calculation, $\Phi_1$ is the first eigenfunction of $\A$ normalized, i.e., $\|\Phi_1\|_{\LL^\infty(\Omega)}=1$.
				\item \label{puntoc} We have $K_0^{m_1-1} t_*^{m_1}\leq \overline{k_0}$ by definition of $K_0$ and  $t_*$.
				\item \label{puntod} For any $t\in [0,t_*]$ we have $K_0 t\leq K_0 t_*=k\|u_0\|_{\LL^1_{\Phi_1}(\Omega)}c_*\leq k c_*:=r_0$. Therefore, changing the value of k if it is needed, we get by Lemma \ref{Nonlin} $$F^{-1}\left( \frac{F(kc_*)}{(kc_*)^{m_1}} (K_0t)^{m_1} \right)\leq K_0t.$$
			\end{enumerate}
		\end{rem}
		
		Step 1.2.\textit{ $\Psi$ is a lower barrier for $\ud$, }i.e. $F(\Psi(t,x))< F(u_{\delta}(t,x))$ for any $(t,x)\in [0,t_*] \times \overline{\Omega}$.\\
		We prove this by contradiction. Assume that the inequality is false. By the regularity of $F$, $\ud$ and $\Psi$, there exists a first touching point $(t_c,x_c)\in [0,t_*] \times \overline{\Omega}$ such that $F(\Psi(t_c,x_c))=F(\ud(t_c,x_c))$. Note that $(t_c,x_c)\in (0,t_*)\times \Omega$ because if $t_c=0$ or $x_c\in \partial \Omega$ we have the following: $$F(\Psi(t_c,x_c))=0<F(\delta)\leq F(\ud(t_c,x_c)).$$
		
		Now, let's bound $-\A[F(\ud(t_c,x_c))-F(\Psi(t_c,x_c))]$ from above and from below. Firstly, we use the equation of \eqref{AP1} to establish the upper bound
		$$-\A [F(\ud(t_c,x_c))-F(\Psi(t_c,x_c))]= \partial_t \ud(t_c,x_c) + \A F(\Psi(t_c,x_c)),$$
		and we bound every single term of the equation above. On the one hand, as $(t_c,x_c)$ is the first touching point, what means $F(\Psi(t_c,x_c))=F(\ud(t_c,x_c))$ and $F(\Psi(t,x_c))<F(\ud(t,x_c))$ for any time $0<t<t_c$, we have
		\begin{align}\label{posIec2}
			\partial_t \ud(t_c,x_c)\leq \partial_t \Psi(t_c,x_c) &= \frac{m_1 K_0^{m_1} t_c^{m_1-1} \Phi_1(x)^{\sigma_1}}{F'(F^{-1}(K_0^{m_1}t_c^{m_1}\Phi_1(x)^{\sigma_1}))}\nonumber\\
			&\leq \frac{m_1 K_0^{m_1} t_c^{m_1-1} \Phi_1(x)^{\sigma_1}}{m_0 K_0^{m_1} t_c^{m_1} \Phi_1(x)^{\sigma_1}} F^{-1}(K_0^{m_1} t_c^{m_1} \Phi_1(x)^{\sigma_1})\nonumber\\
			&\leq \frac{m_1}{m_0} K_0.
		\end{align}
		Where we have used \eqref{N1}, more precisely, $m_0 F(r)/r\leq F'(r)\leq m_1 F(r)/r$ and Remark \ref{obser1}\,$\rm{(d)}$. On the other hand, if $\sigma_1=1$ we have
		\begin{equation}\label{posIec3}
			\A F(\Psi(t_c,x_c))=K_0^{m_1} t_c^{m_1}\lambda_1 \Phi_1(x_c)\leq K_0 \overline{k_0}\lambda_1.
		\end{equation}
		Remember Remark \ref{obser1}\,$\rm{(c)}$, which establishes $K_0^{m_1-1} t_c^{m_1}\leq K_0^{m_1-1} t_*^{m_1}\leq \overline{k_0}$. Now, if $\sigma_1<1$, using the facts that $\Phi_1\in C^{\gamma}(\overline{\Omega})$ and $g(x)=x^{\sigma_1}$ is concave, we have
		$$|\Phi_1(x)^{\sigma_1} - \Phi_1(y)^{\sigma_1}|\leq |\Phi_1(x)-\Phi_1(y)|^{\sigma_1}\leq |x-y|^{\gamma \sigma_1}.$$
		Observe that, here $\gamma\sigma_1>2s$ and $K(x,y)\leq c_1|x-y|^{-N-2s}$ by assumption. Thus, we obtain
		\begin{align}\label{posIec4}
			\A F(\Psi(t_c,x_c)) = K_0^{m_1} t_c^{m_1} \A \Phi_1^{\sigma_1}(x_c) &= K_0^{m_1} t_c^{m_1} P.V \int_{\RN} |\Phi_1(x_c)^{\sigma_1} - \Phi_1(y)^{\sigma_1}| K(x_c,y) \dy\nonumber \\
			&\leq K_0^{m_1} t_c^{m_1} \left(\int_{\Omega} c |x_c-y|^{\gamma \sigma_1} \frac{c_1}{|x_c-y|^{N+2s}}\dy + \int_{\RN \setminus \Omega} \frac{2 c_1}{|x_c-y|^{N+2s}}\right)\dy\nonumber\\
			&\leq \overline{C}K_0 \overline{k_0}.
		\end{align}
		Therefore, putting together \eqref{posIec2}, \eqref{posIec3} and \eqref{posIec4} we achieve the following upper bound
		\begin{equation}\label{posIec5}
			-\A[F(\ud(t_c,x_c))-F(\Psi(t_c,x_c))]\leq K_0 \left( \frac{m_1}{m_0} + \overline{k_0} (\overline{C}+\lambda_1) \right).
		\end{equation}
		
		Secondly, let's establish the lower bound. Here we use that the operator is nonlocal, specifically property \eqref{L1}, which states $\inf_{x,y\in \Omega} K(x,y)\geq k_\Omega>0$. Remember that $F(\ud(t_c,x_c))=F(\Psi(t_c,x_c))$.
		\begin{align}\label{posIec6}
			-\A [F(\ud(t_c,x_c)) &- F(\Psi(t_c,x_c))]\nonumber\\ &= - P.V \int_{\RN} [F(\ud(t_c,x_c) - F(\Psi(t_c,x_c))) - (F(\ud(t_c,y)) -F(\Psi(t_c,y)))] K(x_c,y) \dy\nonumber \\
			&= \int_{\RN} F(\ud(t_c,y)) K(x_c,y) \dy - \int_{\RN} F(\Psi(t_c,y)) K(x_c,y)\dy\nonumber\\
			&\geq k_{\Omega} \int_{\RN} F(\ud(t_c,y)) \dy - k_{\Omega} \int_{\RN} (K_0 t_c)^{m_1}\Phi_1(y)^{\sigma_1}\dy\nonumber\\
			&\geq k_{\Omega} \int_{\RN} F(\ud(t_c,y)) \dy - k_{\Omega} K_0 \overline{k_0} |\Omega|.
		\end{align}
		Where we have used Remark \ref{obser1} again. Summing up, thank to \eqref{posIec5} and \eqref{posIec6} we know that
		\begin{equation}\label{posIec7}
			k_{\Omega} \int_{\Omega} F(\ud(t_c,x)) \dx \leq K_0\left( k_{\Omega} \overline{k_0} |\Omega| + \frac{m_1}{m_0} + \overline{k_0} (\overline{C}+\lambda_1) \right).
		\end{equation}
		Finally, this is the moment when we realize we have a contradiction. By combining inequality \eqref{posIec7} and Lemma \ref{propiedadesudelta3}, we obtain
		\begin{equation*}
			k_{\Omega} \underline{c} \|u_0\|_{\LL^1_{\Phi_1}(\Omega)}^{m_1}\leq k \|u_0\|_{\LL^1_{\Phi_1}(\Omega)}^{m_1} \left( k_{\Omega} \overline{k_0} |\Omega| + \frac{m_1}{m_0} + \overline{k_0} (\overline{C}+\lambda_1) \right),
		\end{equation*}
		where the norm of the initial data is not zero and all constants are fixed except $k$. Thus, we can choose $k$ small enough to create a contradiction and conclude $F(\Psi)<F(\ud)$ in $[0,t_*]\times \overline{\Omega}$.
		
		Step 1.3. \textit{Taking the limit as $\delta\to 0^+$.}\\
		Previously, we have seen the convergence \eqref{pointconvudelta}, $\lim_{\delta \to 0^+} \ud(t,x)=u(t,x)$ for all $t>0$ and a.e $x\in \Omega$. Thus, by the continuity of F and the last step, we conclude that for any $t\in [0,t_*)$ and a.e. $x\in \Omega$ we have
		$$F(u(t,x)) = \lim_{\delta \to 0} F(\ud(t,x))\geq F(\Psi(t,x))= (k\|u_0\|_{\LL^1_{\Phi_1}(\Omega)}^{m_1} t)^{m_1} \Phi_1(x)^{\sigma_1}.$$
		
		\textbf{Step 2:} The following inequality holds for all $t\in (0,t_*]$
		\begin{equation}\label{posIec8}
			F(u(t,x))\geq \underline{K} \left( \frac{t}{t_*} \right)^{\frac{m_1^2}{m_1-1}} \frac{\Phi_1(x)^{\sigma_1}}{t^{\frac{m_1}{m_1-1}}}.
		\end{equation}
		We just have to rewrite \eqref{posIec1} as follows
		\begin{align*}
			F(u(t,x))&\geq (k \|u_0\|_{\LL^1_{\Phi_1}(\Omega)}^{m_1} t)^{m_1} \Phi_1(x)^{\sigma_1}\\
			&= \left( k \frac{c_*^{\frac{m_1}{m_1-1}}}{t_*^{\frac{m_1}{m_1-1}}}  \frac{t^{\frac{m_1}{m_1-1}}}{t^{\frac{1}{m_1-1}}} \right)^{m_1} \Phi_1(x)^{\sigma_1}
			\geq \underline{K} \left( \frac{t}{t_*} \right)^{\frac{m_1^2}{m_1-1}} \frac{\Phi_1(x)^{\sigma_1}}{t^{\frac{m_1}{m_1-1}}}.
		\end{align*}
		
		\textbf{Step 3:} For all $t> t_*$ we have
		\begin{equation}\label{posIec9}
			F(u(t,x))\geq \underline{K} \frac{\Phi_1(x)^{\sigma_1}}{t^{\frac{m_0}{m_0-1}}}.
		\end{equation}
		Using Benilan-Crandall estimates \ref{BC2} and step 2 for $t=t_*$, we obtain the desired lower bound
		\begin{align*}
			t^{\frac{m_0}{m_0-1}}F(u(t,x)) &\geq t_*^{\frac{m_0}{m_0-1}} F(u(t_*,x))
			\geq t_*^{\frac{m_0}{m_0-1}} \underline{K} \frac{\Phi_1(x)^{\sigma_1}}{t_*^{\frac{m_1}{m_1-1}}}\\
			&= \underline{K} t_*^{\frac{m_0}{m_0-1}-\frac{m_1}{m_1-1}} \Phi_1(x)^{\sigma_1}
			\geq \underline{K} c_*^{\frac{m_0}{m_0-1}-\frac{m_1}{m_1-1}} \Phi_1(x)^{\sigma_1}.
		\end{align*}
		 Putting together inequalities \eqref{posIec8} and \eqref{posIec9}, we conclude the proof of the theorem.
	\end{proof}
	
	\begin{thm}\label{positivity2}
		Let $\A$ satisfy \eqref{A1}, \eqref{A2} and \eqref{L2}, and let F satisfy \eqref{N1}. Suppose $u$ is the \solution\,of \eqref{CDP} with initial datum $0\leq u_0\in \LL^1_{\ph(\Omega)}$. Let the waiting time  $t_*$ be as in \eqref{tstar.111}, then there exists a positive constant $k_{10}>0$ which depends only on $N$, $s$, $m_i$, $F$, $\gamma$, $\Omega$ and $\lambda_1$ such that $\forall t>0$ and a.e. $x\in \Omega$ the following inequality holds
		\begin{equation*}
			F(u(t,x))\geq k_{10} \left( 1 \land \frac{t}{t_*} \right)^{\frac{m_1^2}{m_1-1}} \frac{\ph(x)^{m_1}}{t^{\frac{m_i}{m_i-1}}}.
		\end{equation*}
		Where $i=1$ if $t\leq t_*$ and $i=0$ if $t\geq t_*$.
	\end{thm}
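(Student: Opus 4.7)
The plan is to adapt the barrier-by-contradiction scheme used for Theorem \ref{positivity1}, modifying the barrier and the kernel bounds to exploit the degenerate assumption \eqref{L2} instead of the uniform assumption \eqref{L1}. With $K_0 = k\,\|u_0\|^{m_1}_{\LL^1_{\ph}(\Omega)}$ where $k>0$ is a small parameter to be fixed, and $\Phi_1$ normalized so that $\|\Phi_1\|_{\LL^\infty(\Omega)}\le 1$, I work with the lower barrier
$$\Psi(t,x) = F^{-1}\bigl(K_0^{m_1} t^{m_1} \Phi_1(x)^{m_1}\bigr).$$
The crucial change is that the power of $\Phi_1$ inside $F(\Psi)$ is $m_1$, not $\sigma_1$: this will be forced by the degenerate lower bound on the kernel, which produces an extra factor $\Phi_1(x_c)$ in the lower estimate at the touching point.

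Following Step 1.2 in the proof of Theorem \ref{positivity1}, I consider the approximate solutions $\ud$ of \eqref{AP1} and argue by contradiction: assume there is a first touching point $(t_c,x_c) \in (0,t_*)\times\Omega$ where $F(\Psi)(t_c,x_c) = F(\ud)(t_c,x_c)$. For the upper bound on $-\A[F(\ud)-F(\Psi)]$ at this point, the time derivative inequality $\partial_t \ud(t_c,x_c)\le \partial_t\Psi(t_c,x_c)$ combined with \eqref{N1} and the pointwise estimate $\Psi\le K_0\,t\,\Phi_1$ on $[0,t_*]\times\Omega$ (obtained from Lemma \ref{Nonlin} after possibly shrinking $k$, analogous to Remark \ref{obser1}(d)) yields $\partial_t\Psi(t_c,x_c)\lesssim K_0\,\Phi_1(x_c)$. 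For $\A F(\Psi)$, the convexity of $r\mapsto r^{m_1}$ together with $B\ge 0$ and $\A\Phi_1=\lambda_1\Phi_1$ gives the pointwise bound
$$\A \Phi_1^{m_1}(x)\leq m_1\lambda_1\,\Phi_1(x)^{m_1},$$
and hence $\A F(\Psi)(t_c,x_c)\lesssim K_0\,\Phi_1(x_c)^{m_1}\leq K_0\,\Phi_1(x_c)$ since $m_1>1$ and $\Phi_1\le 1$. Altogether the upper bound at the touching point is of order $K_0\,\Phi_1(x_c)$.

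For the lower bound, using $F(\ud)(t_c,x_c)=F(\Psi)(t_c,x_c)$ and $B\ge 0$, I rewrite
$$-\A[F(\ud)-F(\Psi)](t_c,x_c) = \int_\Omega \bigl[F(\ud)(t_c,y)-F(\Psi)(t_c,y)\bigr]K(x_c,y)\,\dy,$$
and apply the degenerate lower bound \eqref{L2} together with $\Phi_1\asymp d^\gamma$ to obtain $K(x_c,y) \gtrsim \Phi_1(x_c)\Phi_1(y)$. The contribution of $F(\Psi)$ is controlled by $K_0\int_\Omega \Phi_1^{m_1+1}(y)\,\dy \lesssim K_0$, while Lemma \ref{propiedadesudelta3} provides $\int_\Omega F(\ud)(t_c,y)\Phi_1(y)\,\dy \ge \underline c\,\|u_0\|^{m_1}_{\LL^1_\ph(\Omega)} = (\underline c/k) K_0$. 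Combining these, the lower bound is at least $(c_0\underline c/k)\,K_0\,\Phi_1(x_c)$ up to a small error absorbed by taking $k$ small. Confronting with the upper bound $\lesssim K_0\,\Phi_1(x_c)$ yields the contradiction, so $F(\Psi)<F(\ud)$ on $[0,t_*]\times\overline\Omega$. Letting $\delta\to 0^+$ via \eqref{pointconvudelta} gives $F(u(t,x)) \geq K_0^{m_1} t^{m_1}\Phi_1(x)^{m_1}$ for a.e. $x\in\Omega$ and $0<t\le t_*$; rewriting in terms of $t/t_*$ produces the claimed factor $(t/t_*)^{m_1^2/(m_1-1)}$. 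For $t\ge t_*$, Benilan-Crandall monotonicity (Lemma \ref{benilancrandall}) applied at $t=t_*$ propagates the estimate forward, exactly as in Step 3 of the proof of Theorem \ref{positivity1}.

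The main obstacle is the simultaneous matching of boundary powers on the two sides of the inequality $-\A[F(\ud)-F(\Psi)](t_c,x_c)\le\cdots$: in the non-degenerate setting of Theorem \ref{positivity1} the barrier $\Phi_1^{\sigma_1}$ works because the lower bound picks up no factor of $\Phi_1(x_c)$, whereas here the degenerate kernel forces the appearance of $\Phi_1(x_c)$ on the lower side. The barrier power $m_1$ is exactly what makes both sides scale like $\Phi_1(x_c)$ after using the convexity estimate for $\A\Phi_1^{m_1}$ and the crude bound $\Phi_1^{m_1}\le\Phi_1$ (valid because $m_1>1$ and $\Phi_1\le 1$). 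This delicate calibration is precisely the mechanism behind the anomalous non-matching boundary behaviour reflected in Theorem \ref{GHPIII}.
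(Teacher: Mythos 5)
Your proposal is correct and follows essentially the same route as the paper's proof: the same lower barrier $\Psi = F^{-1}(K_0^{m_1}t^{m_1}\Phi_1^{m_1})$, the same first-touching-point contradiction with the upper bound from $\partial_t\Psi$ together with the Kato/convexity inequality $\A\Phi_1^{m_1}\leq m_1\Phi_1^{m_1-1}\A\Phi_1$ (your "convexity of $r\mapsto r^{m_1}$" is exactly the Kato-type inequality the paper invokes), the same degenerate-kernel lower bound via \eqref{L2} producing the factor $\Phi_1(x_c)$ on both sides, and the same use of Lemma \ref{propiedadesudelta3} to close the contradiction by shrinking $k$, followed by the $\delta\to 0^+$ limit, algebraic rewriting in $t/t_*$, and B\'enilan--Crandall propagation for $t\ge t_*$.
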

	\begin{proof}
		The proof follows from the same argument as Theorem \ref{positivity1} with some slight modifications in the calculations. In fact, we split the proof in the same steps.\\
		\textbf{Step 1:} For all $t\in [0,t_*]$ there exists a constant $k>0$ such that
		\begin{equation}\label{posIIec1}
			F(u(t,x))\geq \left( k \|u_0\|_{\LL^1_{\Phi_1}(\Omega)}^{m_1} t \right)^{m_1} \Phi_1(x)^{m_1}.
		\end{equation}
	The idea to complete the proof of inequality \eqref{posIIec1} is similar to that of step 1 in the previous proof. Therefore, we will remake the proof without dwelling too much on the details. As before, we fix $\delta>0$, let $\ud$ be the \solution\,of \eqref{AP1} and define the lower barrier $$\Psi(t,x)=F^{-1}((K_0 t \Phi_1(x))^{m_1}).$$
		Where $K_0= k \|u_0\|_{\LL^1_{\Phi_1}(\Omega)}^{m_1}$ and $k$ is a positive constant to be determined. Notice that we have the same properties as Remark \ref{obser1}.
		
		Now, we claim $F(\Psi)<F(\ud)$ in $[0,t_*]\times \overline{\Omega}$. Let's prove it by contradiction, assume the inequality is false; thus, there exists a point $(t_c,x_c)\in (0,t_*]\times \Omega$ such that $F(\Psi(t_c,x_c))=F(\ud(t_c,x_c))$ for the first time. The contradiction follows if we bound the next term from above and from below
		$$-\A [F(\ud(t_c,x_c)) - F(\Psi(t_c,x_c))]=\partial_t \ud(t_c,x_c) + \A F(\Psi(t_c,x_c)).$$
		
		Firstly, we find an upper bound for every term on right-hand side of the equation above.
		\begin{align}\label{posIIec2}
			\partial_t \ud(t_c,x_c)\leq \partial_t \Psi(t_c,x_c) &= \frac{m_1 K_0^{m_1}t_c^{m_1-1}\Phi_1(x_c)^{m_1}}{F'(F^{-1}(K_0^{m_1}t_c^{m_1}\Phi_1(x_c)^{m_1}))}\nonumber\\
			&\leq \frac{m_1 K_0^{m_1}t_c^{m_1-1}\Phi_1(x_c)^{m_1}}{m_0 K_0^{m_1}t_c^{m_1} \Phi_1(x_c)^{m_1}} F^{-1}(K_0^{m_1} t_c^{m_1} \Phi_1(x_c)^{m_1})\nonumber\\
			&\leq \frac{m_1}{m_0}\frac{K_0 t_c \Phi_1(x_c)}{t_c}\nonumber\\
			&\leq \frac{m_1}{m_0} K_0 \Phi_1(x_c).
		\end{align}
	Here, we have used the condition \eqref{N1} of the nonlinearity $F$. To find an upper bound for $\A F(\Psi)$ we will use Kato type inequality, $\A(f^m)\leq m f^{m-1}\A(f)$ for any non negative $f$ and $m>1$. You can see the proof in \cite{MB+AF+JLV_sharp-global-estimates}. Note that $F(\Psi(t,x))= (K_0 t \Phi_1(x))^{m_1}$, so we have
		\begin{align}\label{posIIec3}
			\A F(\Psi(t_c,x_c))&\leq m_1 (K_0 t_c \Phi_1(x_c))^{m_1-1} K_0 t_c \A \Phi_1(x_c)\nonumber\\
			&= m_1 K_0^{m_1} t_c^{m_1} \lambda_1 \Phi_1(x_c)^{m_1}\nonumber\\
			&\leq K_0 (m_1 \lambda_1 \overline{k_0} \|\Phi_1\|_{\LL^\infty(\Omega)}) \Phi_1(x_c).
		\end{align}
		
		Secondly, for the lower bound, we are going to use the definition of the operator
		$$P.V \int_{\RN} (F(\ud(t_c,y))-F(\Psi(t_c,y))) K(x_c,y)\dy=-\A [F(\ud(t_c,x_c)) - F(\Psi(t_c,x_c))].$$
		 Thus, using the left-hand side of the equation above  and the property \eqref{L2} of the kernels, we obtain
		\begin{align}\label{posIIec4}
			P.V \int_{\RN} (F(\ud(t_c,y))&-F(\Psi(t_c,y))) K(x_c,y)\dy\geq c_0 \Phi_1(x_c) \int_{\Omega} (F(\ud(t_c,y)) - F(\Psi(t_c,y)))\Phi_1(y)\dy\nonumber\\
			&= c_0 \Phi_1(x_c) \int_{\Omega} F(\ud(t_c,y))\Phi_1(y)\dy - c_0 \Phi_1(x_c) \int_{\Omega} (K_0 t_c \Phi_1(y))^{m_1} \Phi_1(y)\dy\nonumber\\
			&\geq c_0 \Phi_1(x_c)\int_{\Omega} F(\ud(t_c,y))\Phi_1(y)\dy - c_0\Phi_1(x_c) K_0 \overline{k_0} ||\Phi_1||_{\LL^\infty(\Omega)}^{m_1+1}|\Omega|.
		\end{align}
		
		Moreover, by Lemma \ref{propiedadesudelta3} we also know $\underline{c}\|u_0\|_{\LL^1_{\Phi_1}(\Omega)}^{m_1}\leq \int_{\Omega} F(\ud(t,x))\Phi_1(x)\dx$. Putting all these inequalities together, \eqref{posIIec2}, \eqref{posIIec3}, \eqref{posIIec4}, and this last one, we have
		\begin{align*}
			c_0\underline{C} \|u_0\|_{\LL^1_{\Phi_1}(\Omega)}^{m_1}&\leq c_0 \int_{\Omega} F(\ud(t_c,x))\Phi_1(x)\dx\\
			&\leq K_0 \left( \frac{m_1}{m_0} + m_1\lambda_1 \overline{k_0} \|\Phi_1\|_{\LL^\infty(\Omega)} + c_0 \overline{k_0} \|\Phi_1\|_{\LL^1(\Omega)}^{m_1+1} |\Omega| \right)\\
			&\leq k \|u_0\|_{\LL^1_{\Phi_1}(\Omega)}^{m_1} \left( \frac{m_1}{m_0} + m_1\lambda_1 \overline{k_0} \|\Phi_1\|_{\LL^\infty(\Omega)} + c_0 \overline{k_0} \|\Phi_1\|_{\LL^1(\Omega)}^{m_1+1} |\Omega| \right).
		\end{align*}
	Then, all constants are fixed except $k$, and the initial data is not identically zero. Thus, we can choose $k$ small enough to lead us to the contradiction. Finally, we take the limit as $\delta \to 0^+$ and conclude \eqref{posIIec1}.
		
		\textbf{Step 2:} For all $t\in (0,t_*]$ the following inequality holds
		\begin{equation}\label{posIIec5}
			F(u(t,x))\geq \underline{K} \left( \frac{t}{t_*} \right)^{\frac{m_1^2}{m_1-1}} \frac{\Phi_1(x)^{m_1}}{t^{\frac{m_1}{m_1-1}}}.
		\end{equation}
		The proof follows by the same argument as in Step 2 of the proof of Theorem \ref{positivity1}.
		
		\textbf{Step 3:} For all $t> t_*$ we obtain the bound
		\begin{equation}\label{posIIec6}
			F(u(t,x))\geq \underline{K} \frac{\Phi_1(x)^{m_1}}{t^{\frac{m_0}{m_0-1}}}.
		\end{equation}
		The proof follows by the same argument as in Step 3 of the proof of Theorem \ref{positivity1}.
		
		We can complete the proof by joining inequalities \eqref{posIIec5} and \eqref{posIIec6}.
	\end{proof}
	
	
	\subsection{Positivity for large initial data}\label{sec5.4}
	
	In this subsection, we will proceed similarly to the previous one. However, since the norm of the initial datum is large, $\|u_0\|_{\LL^1_{\Phi_1}(\Omega)}>1$, we have
\begin{equation}\label{tstar.222}
t_*=\frac{c_*}{\|u_0\|_{\LL^1_{\Phi_1}(\Omega)}^{m_0-1}}.
\end{equation}
The proof follows by similar ideas, but there are several relevant technical modifications that need to be addressed.
	
	\begin{lem}\label{propiedadesudelta4}
		Let $\A$ satisfy \eqref{A1}, \eqref{A2} and \eqref{K2}, let $F$ satisfy \eqref{N1}. If $u$ is a \solution\,of \eqref{CDP} with initial data $0\leq u_0 \in \LL^1_\ph(\Omega)$, let $\ud$ be the approximate solution of \eqref{AP1} with the same initial data and let $t_*$ be as in \eqref{tstar.222}. Then, there exists a constant $\underline{c}>0$ such that for all $t\in [0,t_*]$ we have
		\begin{align}\label{cotaudelta2}
			\underline{c} \|u_0\|_{\LL^1_\ph(\Omega)}^{m_0}
            \leq \int_{\Omega} F(u(t,x)) \ph(x)\dx
			\leq \int_{\Omega} F(\ud (t,x)) \ph(x)\dx.
		\end{align}
	\end{lem}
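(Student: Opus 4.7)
The proof will mirror that of Lemma \ref{propiedadesudelta3} almost step by step, the switch from $\|u_0\|_{\LL^1_\ph(\Omega)}\le 1$ to $\|u_0\|_{\LL^1_\ph(\Omega)}>1$ being reflected throughout by the replacement of $m_1$ with $m_0$ in every estimate that invokes Lemma \ref{Nonlin}. I split the plan in two steps.

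\textbf{Step 1: Choice of $t_0$ compatible with Corollary \ref{corWE}.} I will take
\[
t_0 = C\left((2k_7)^{\frac{1}{2s\theta_{0,\gamma}}}\|u_0\|_{\LL^1_{\Phi_1}(\Omega)}^{m_0-1}\right)^{-1},
\]
with $C\in(0,1]$ a small constant (independent of $u_0$) to be fixed. Since $\|u_0\|_{\LL^1_{\Phi_1}(\Omega)}>1$ we have $\|u_0\|_{\LL^1_{\Phi_1}(\Omega)}^{2s/(N+\gamma)}>1$, so for $C$ small enough we also have $t_0\le \|u_0\|_{\LL^1_{\Phi_1}(\Omega)}^{2s/(N+\gamma)}$. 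This places us squarely in the regime $i=0$ of Corollary \ref{corWE}, and the defining inequality \eqref{corWE1} is satisfied by construction. A case analysis as in Remark \ref{rem2} handles the borderline values of $\|u_0\|_{\LL^1_{\Phi_1}(\Omega)}$ close to $1$, fixing $C$ once for all.

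\textbf{Step 2: The $\LL^1_\ph$ bound on $F(u)$.} By Corollary \ref{corWE}, for every $t\in(0,t_0]$,
\[
\frac{1}{2}\|u_0\|_{\LL^1_{\Phi_1}(\Omega)}\le \int_\Omega u(t,x)\,\Phi_1(x)\dx.
\]
Combining Jensen's inequality (applied with the probability measure $\Phi_1\dx/\|\Phi_1\|_{\LL^1(\Omega)}$, using that $F$ is convex, increasing and normalized at zero) with Lemma \ref{Nonlin}, this time in the regime $r\ge r_0$ (legitimate because $\|u_0\|_{\LL^1_{\Phi_1}(\Omega)}\ge 1$ so the averaged value $\|u_0\|_{\LL^1_{\Phi_1}(\Omega)}/(2\|\Phi_1\|_{\LL^1(\Omega)})$ is bounded below by a fixed positive constant $r_0$ depending only on $\Omega$), I obtain the $m_0$-power bound
\[
\underline{c}\,\|u_0\|_{\LL^1_{\Phi_1}(\Omega)}^{m_0}\le \int_\Omega F(u(t,x))\,\Phi_1(x)\dx, \qquad t\in(0,t_0].
\]
For $t\in[t_0,t_*]$ I extend this bound exactly as in Lemma \ref{propiedadesudelta3}: by our choice of $t_0$ the ratio $A_1:=t_0/t_*$ is a fixed positive constant (this is where using $m_0-1$ in $t_0$ and in $t_*=c_*/\|u_0\|_{\LL^1_{\Phi_1}(\Omega)}^{m_0-1}$ is essential), and the Bénilan--Crandall estimate \eqref{BC4} applied pointwise and integrated against $\Phi_1$ gives
\[
\int_\Omega u(t,x)\,\Phi_1(x)\dx\ge A_1^{\frac{1}{m_0-1}}\cdot\frac{1}{2}\|u_0\|_{\LL^1_{\Phi_1}(\Omega)},
\]
whence the same Jensen plus Lemma \ref{Nonlin} argument yields the required $m_0$-power lower bound, after possibly redefining the constant $\underline{c}$. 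The remaining upper inequality in \eqref{cotaudelta2} follows at once from Proposition \ref{propiedadesudelta1}\,(iii), namely $u\le \ud$, and the monotonicity of $F$.

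\textbf{Main obstacle.} The only genuinely delicate point is the verification of the hypotheses of Corollary \ref{corWE} in Step 1: differently from the small-data case, here one must exploit $\|u_0\|_{\LL^1_{\Phi_1}(\Omega)}>1$ to stay in the regime $i=0$ uniformly on the whole interval $[0,t_0]$. The rest is a routine adaptation, with the understanding that the power $m_0$ (instead of $m_1$) comes systematically from the $r\ge r_0$ branch of Lemma \ref{Nonlin}, which is the correct regime for large weighted-$\LL^1$ mass.
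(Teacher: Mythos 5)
Your proposal is correct and follows essentially the same route as the paper's proof: choose $t_0$ compatible with Corollary \ref{corWE} at index $i=0$, pass the weighted mass through $F$ by convexity (Jensen with the probability measure $\Phi_1\dx/\|\Phi_1\|_{\LL^1}$), invoke the $r\ge r_0$ branch of Lemma \ref{Nonlin} to produce the $m_0$-power, and extend from $(0,t_0]$ to $(0,t_*]$ by Bénilan--Crandall, with the final inequality coming from $u\le u_\delta$. One small simplification worth noting: since $\|u_0\|_{\LL^1_{\Phi_1}}>1$ and $m_0>1$ force $\|u_0\|_{\LL^1_{\Phi_1}}^{1-m_0}<1$, taking $C\le(2k_7)^{1/(2s\theta_{0,\gamma})}\wedge 1$ already gives $t_0<1<\|u_0\|_{\LL^1_{\Phi_1}}^{2s/(N+\gamma)}$ uniformly for all such data, so the Remark~\ref{rem2}-style case analysis you invoke as a safety net (and which the paper does carry out by splitting on whether $\|u_0\|_{\LL^1_{\Phi_1}}\ge(2k_7)^{-(N+\gamma)/2s}$) is in fact unnecessary with your choice of $C$.
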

	
	\begin{proof}
		We split the proof in two steps.\\
		\textbf{Step 1:} We assert that there exists a constant $C$ such that for $t_0=C\left( (2k_7)^{\frac{1}{2s\theta_{0,\gamma}}} ||u_0||_{\LL^1_{\Phi_1}(\Omega)}^{m_0-1} \right)^{-1}$, the conditions of Corollary \ref{corWE} are satisfied.
		
		Let us define $M:= (2k_7)^{-\frac{(n+\gamma)}{2s}}$. A straightforward calculation shows that if $\|u_0\|_{\LL^1_\ph(\Omega)}\geq M$, then $\|u_0\|_{\LL^1_{\Phi_1}(\Omega)}^{\frac{2s}{N+\gamma}}\geq\left( (2k_7)^{\frac{1}{2s\theta_{0,\gamma}}} \|u_0\|_{\LL^1_{\Phi_1}(\Omega)}^{m_0-1} \right)^{-1}$. Therefore, we can apply Corollary \ref{corWE} since, if we choose $t_0=\left( 2(2k_7)^{\frac{1}{2s\theta_{0,\gamma}}} \|u_0\|_{\ph(\Omega)}^{m_0-1} \right)^{-1}$, at the same time we get $t_0\leq \|u_0\|_{\LL^1_{\Phi_1}(\Omega)}^{\frac{2s}{N+\gamma}}$ and $t_0\leq \left( (2k_7)^{\frac{1}{2s\theta_{0,\gamma}}} \|u_0\|_{\ph(\Omega)}^{m_0-1} \right)^{-1}$.
		
		In the case when the norm is bounded from above and from below, $1\leq \|u_0\|_{\LL^1_{\Phi_1}(\Omega)}\leq M$, we consider the quantity $\min_{i=0,1}\left( \left( (2k_7)^{\frac{1}{2s\theta_{i,\gamma}}} \|u_0\|_{\LL^1_{\Phi_1}(\Omega)}^{m_i-1} \right)^{-1} \right)$. If the minimum occurs at $i=0$ we are in the same scenario as described above and,  if it occurs at $i=1$, we can repeat the computation of Remark \ref{rem2} to obtain that the constant for which the hyphoteses of Corollary \ref{corWE} are always satisfied is
		$$C=\frac{1}{2}\left( 1 \land (2k_7)^{(m_1-m_0)\frac{N+\gamma}{2s}-\frac{1}{2s\theta_{1,\gamma}}+\frac{1}{2s\theta_{0,\gamma}}} \right).$$
		
		\textbf{Step 2:} Proof of the bound \eqref{cotaudelta2}.
		
		Select $t_0$ as   in Step 1, then we can repeat the same proof as in Lemma \ref{propiedadesudelta3}, just using the properties of Lemma \ref{Nonlin} for $F$ in the case where $\|u_0\|_{\LL^1_\ph(\Omega)}\geq 1$ together with the fact that $t_0= A_2 t_*$.
	\end{proof}
	
	We prove now the quantitative lower bounds, which require hypotheses \eqref{L1} and \eqref{L2} for the operator $\A$. The main idea is similar to the previous subsection, but the there are dramatic changes in the technical details, in particular the barriers are different.
	\begin{thm}\label{positivity12}
		Let $\A$ satisfy \eqref{A1}, \eqref{A2} and \eqref{L1}, and let F satisfy \eqref{N1}. Suppose $u$ is the \solution\,of \eqref{CDP} with initial datum $0\leq u_0\in \LL^1_\ph(\Omega)$. Given the waiting time $t_*$ as in \eqref{tstar.222}, we establish the following result:
		
		Assume either $\sigma_1=1$ or $\sigma_1<1$, $K(x,y)<c_1|x-y|^{-(N+2s)}$ and $\ph\in C^\gamma(\overline{\Omega})$, then there exists a constant $k_{11}>0$ which depends only on $N$, $s$, $m_i$, $\gamma$, $F$, $\Omega$ and $\lambda_1$ such that for all $t>0$ and a.e. $x\in \Omega$ the following holds
		\begin{equation}\label{boundpositivity12}
			F(u(t,x))\geq k_{11} \left\{\begin{array}{ll} \left( \frac{t}{t_*} \right)^{m_1} \Phi_1^{\sigma_1}, & \mbox{ if }t\leq t_*.\\ \left( \frac{t_*}{t} \right)^{\frac{m_0}{m_0-1}} \Phi_1(x)^{\sigma_1}, & \mbox{ if }t\geq t_*. \end{array}\right.
		\end{equation}
	\end{thm}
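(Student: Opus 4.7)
The plan is to mirror the barrier-and-contradiction argument of Theorem \ref{positivity1}, but with a rescaled barrier that reflects the different waiting time $t_*=c_*/\|u_0\|_{\LL^1_{\Phi_1}(\Omega)}^{m_0-1}$. Fix $\delta>0$ and let $u_\delta$ be the \solution\ of \eqref{AP1}. The natural barrier is
\[
\Psi(t,x)=F^{-1}\!\bigl((K_0 t)^{m_1}\Phi_1(x)^{\sigma_1}\bigr),\qquad K_0:=\frac{k}{t_*}=\frac{k\,\|u_0\|_{\LL^1_{\Phi_1}(\Omega)}^{m_0-1}}{c_*},
\]
for a small constant $k\in(0,1)$ to be chosen. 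Note that $K_0 t\le K_0 t_*=k\le 1$ for all $t\in[0,t_*]$, so Lemma \ref{Nonlin} and the inequality $F^{-1}(r^{m_1})\lesssim r$ for small $r$ are available; moreover the target value at $t=t_*$ is $F(\Psi(t_*,x))=k^{m_1}\Phi_1^{\sigma_1}(x)$, which matches the lower bound we want to prove there.

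The first step is to claim that $F(\Psi)<F(u_\delta)$ on $[0,t_*]\times\overline{\Omega}$, arguing by contradiction. At a first touching point $(t_c,x_c)\in(0,t_*]\times\Omega$ we have
\[
-\A\bigl[F(u_\delta)-F(\Psi)\bigr](t_c,x_c)=\partial_t u_\delta(t_c,x_c)+\A F(\Psi(t_c,x_c)).
\]
Using $F'(r)\ge m_0 F(r)/r$ exactly as in \eqref{posIec2}, the time derivative is controlled by $\partial_t\Psi\le (m_1/m_0)K_0$. For $\A F(\Psi(t_c,x_c))=(K_0 t_c)^{m_1}\A\Phi_1^{\sigma_1}(x_c)$: when $\sigma_1=1$ we have $\A\Phi_1=\lambda_1\Phi_1$, and when $\sigma_1<1$ the extra hypotheses $K(x,y)\le c_1|x-y|^{-(N+2s)}$ and $\Phi_1\in C^\gamma(\overline{\Omega})$ give $\A\Phi_1^{\sigma_1}(x_c)\le\overline{C}$ by repeating the computation of \eqref{posIec4}. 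Hence, since $K_0 t_c\le k$,
\[
-\A\bigl[F(u_\delta)-F(\Psi)\bigr](t_c,x_c)\le \tfrac{m_1}{m_0}K_0+k^{m_1}(\lambda_1\|\Phi_1\|_{\LL^\infty}+\overline{C}).
\]
For the lower bound, the nonlocality assumption \eqref{L1} yields, as in \eqref{posIec6},
\[
-\A\bigl[F(u_\delta)-F(\Psi)\bigr](t_c,x_c)\ge k_\Omega\!\int_{\Omega}\!F(u_\delta(t_c,y))\dy-k_\Omega k^{m_1}|\Omega|,
\]
and Lemma \ref{propiedadesudelta4} furnishes $\int_\Omega F(u_\delta)\Phi_1\,\dx\ge \underline{c}\|u_0\|_{\LL^1_{\Phi_1}(\Omega)}^{m_0}$, so $\int_\Omega F(u_\delta)\,\dx\ge\underline{c}\|u_0\|_{\LL^1_{\Phi_1}(\Omega)}^{m_0}/\|\Phi_1\|_{\LL^\infty}$.

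Combining the two bounds and substituting $K_0=k\|u_0\|_{\LL^1_{\Phi_1}(\Omega)}^{m_0-1}/c_*$,
\[
\frac{k_\Omega\underline{c}}{\|\Phi_1\|_{\LL^\infty}}\|u_0\|_{\LL^1_{\Phi_1}(\Omega)}^{m_0}\le \frac{m_1 k}{m_0 c_*}\|u_0\|_{\LL^1_{\Phi_1}(\Omega)}^{m_0-1}+k^{m_1}\bigl(\lambda_1\|\Phi_1\|_{\LL^\infty}+\overline{C}+k_\Omega|\Omega|\bigr).
\]
Dividing by $\|u_0\|_{\LL^1_{\Phi_1}(\Omega)}^{m_0}$ and using the standing hypothesis $\|u_0\|_{\LL^1_{\Phi_1}(\Omega)}>1$ (so that $\|u_0\|^{-1}\le 1$ and $\|u_0\|^{-m_0}\le 1$) produces an inequality with $k$-dependent right-hand side that can be made strictly smaller than the fixed left-hand side by choosing $k$ sufficiently small, contradicting the existence of a touching point. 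Passing to the limit $\delta\to0^+$ via \eqref{pointconvudelta} yields, for every $t\in[0,t_*]$ and a.e.\ $x\in\Omega$,
\[
F(u(t,x))\ge (K_0 t)^{m_1}\Phi_1(x)^{\sigma_1}=k^{m_1}\Bigl(\frac{t}{t_*}\Bigr)^{\!m_1}\Phi_1(x)^{\sigma_1},
\]
which is the first case of \eqref{boundpositivity12}. For $t\ge t_*$ we apply the Bénilan--Crandall monotonicity \eqref{BC2} with $\tau=t_*$ and the lower bound just proved at $t=t_*$:
\[
t^{\frac{m_0}{m_0-1}}F(u(t,x))\ge t_*^{\frac{m_0}{m_0-1}}F(u(t_*,x))\ge k^{m_1}\,t_*^{\frac{m_0}{m_0-1}}\Phi_1(x)^{\sigma_1},
\]
giving the second case of \eqref{boundpositivity12} with $k_{11}=k^{m_1}$ (adjusted).

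The main delicate point is the bookkeeping of powers of $\|u_0\|_{\LL^1_{\Phi_1}(\Omega)}$ in the contradiction step: unlike the small-data case where $\|u_0\|^{m_1}$ cancels cleanly, here the two sides carry different powers ($m_0$ vs.\ $m_0-1$), and the argument crucially exploits $\|u_0\|_{\LL^1_{\Phi_1}(\Omega)}>1$ to absorb the $m_0-1$ term. A secondary technical point is that the choice $K_0=k/t_*$ (rather than $k\|u_0\|^{m_1}$ as in the small-data case) is dictated by the requirement that the target profile equal $\Phi_1^{\sigma_1}$ at $t=t_*$, so the same barrier mechanism produces a qualitatively different dependence on the initial datum.
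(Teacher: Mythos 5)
Your proposal is correct and essentially reproduces the paper's proof: the same barrier $\Psi = F^{-1}((K_0 t)^{m_1}\Phi_1^{\sigma_1})$ with $K_0\propto \|u_0\|_{\LL^1_{\Phi_1}}^{m_0-1}$, the same first-touching-point argument using \eqref{L1} for the lower bound and $\partial_t\Psi$ plus $\A\Phi_1^{\sigma_1}$ for the upper bound, the same appeal to Lemma~\ref{propiedadesudelta4}, and the same Bénilan--Crandall extension to $t\ge t_*$. The only cosmetic differences are the normalization of $K_0$ (absorbing $c_*$), bounding $(K_0 t_c)^{m_1}$ directly by $k^{m_1}$ rather than by $K_0\overline{k_0}$, and dividing the contradiction inequality by $\|u_0\|^{m_0}$ instead of $\|u_0\|^{m_0-1}$ --- all harmless variants that lead to the same conclusion.
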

	\begin{proof}
		We split the proof in three steps. Firstly, we estimate a lower bound for $F(u(t,x))$ which is the key of the proof, after that, we obtain inequality \eqref{boundpositivity12} for small time $t\leq t_*$ and finally for large time $t\geq t_*$.
		
		\textbf{Step 1: Claim.}  There exists a constant $k>0$ small enought such that  for all $ t\in [0,t_*]$ we have
		\begin{equation}\label{posIec12}
			F(u(t,x))\geq \left( k \|u_0\|_{\LL^1_{\Phi_1}(\Omega)}^{m_0-1} t \right)^{m_1} \Phi_1(x)^{\sigma_1}.
		\end{equation}
		Proof of the claim. Step 1.1.\textit{ Aproximate Solutions.}\\
		We fix $\delta>0$ for the duration of the proof of the claim and let $u_\delta$ be the approximate solution of \eqref{AP1} with initial data $u_0$. Define $K_0=k\|u_0\|_{\LL^1_{\Phi_1}(\Omega)}^{m_0-1}$, where $k$ is a positive constant to be determined. Now let the lower barrier be given by $$\Psi(t,x)=F^{-1}(K_0^{m_1} t^{m_1} \Phi_1(x)^{\sigma_1}).$$
		Note that here, in contrast to the theorem \ref{positivity1} and the results on a priori estimates of \cite{MB+AF+JLV_sharp-global-estimates}, the power of the norm of the initial data in the constant $K_0$ is $m_0-1$, not $m_0$. This is because the norm can be arbitrarily large, always bigger than 1. Some fundamental observations are explained in the following remark.

		\begin{rem}\label{obser2}\rm
			\renewcommand{\labelenumi}{\rm (\alph{enumi})}
			\begin{enumerate}
				\item \label{puntoa2} Here, we have $t_*=c_*\|u_0\|_{\LL^1_{\ph}(\Omega)}^{1-m_0}\leq c_*$ because  $\|u_0\|_{\LL^1_{\Phi_1}(\Omega)}>1$.
				\item \label{puntob2} In order to simplify the calculations, let $\Phi_1$ be the first eigenfunction of $\A$ normalized i.e., $\|\Phi_1\|_{\LL^\infty(\Omega)}=1$.
				\item \label{puntoc2} By the construction of $K_0$ and the definition of $t_*$, for any $t\in [0,t_*]$ we have $$K_0^{m_1-1} t^{m_1}\leq K_0^{m_1-1} t_*^{m_1}\leq (K_0 t_*)^{m_1-1} c_*\leq \overline{k_0} .$$
				\item \label{puntod2} For any $t\in [0,t_*]$ we have $K_0 t\leq K_0 t_*=kc_*:=r_0$. Therefore, by adjusting the value of $k$ if necessary, we obtain from Lemma \ref{Nonlin} that $$F^{-1}\left( \frac{F(kc_*)}{(kc_*)^{m_1}} (K_0t)^{m_1} \right)\leq K_0t.$$
			\end{enumerate}
		\end{rem}
		
		Step 1.2. \textit{$\Psi$ is a lower barrier for $\ud$: }We prove $F(\Psi(t,x))< F(u_{\delta}(t,x))$ for any $(t,x)\in [0,t_*] \times \overline{\Omega}$ by contradiction.
		 Assume that the inequality is false. By the regularity of $F$, $\ud$ and $\Psi$, there exists a first touching point $(t_c,x_c)\in [0,t_*] \times \overline{\Omega}$ where $F(\Psi(t_c,x_c))=F(\ud(t_c,x_c))$ for the first time. Note that $(t_c,x_c)\in (0,t_*)\times \Omega$ because is $t_c=0$ or $x_c\in \partial \Omega$ we have $$F(\Psi(t_c,x_c))=0<F(\delta)\leq F(\ud(t_c,x_c)).$$ Now, let's bound $-\A[F(\ud(t_c,x_c))-F(\Psi(t_c,x_c))]$ from above and below. First, we find the upper bound similarly to Theorem \ref{positivity1}:
		$$-\A [F(\ud(t_c,x_c))-F(\Psi(t_c,x_c))]= \partial_t \ud(t_c,x_c) + \A F(\Psi(t_c,x_c)),$$
		and we bound every single term of above. On the one hand, as $(t_c,x_c)$ is the first touching point, we have
		\begin{align}\label{posIec22}
			\partial_t \ud(t_c,x_c)\leq \partial_t \Psi(t_c,x_c) &= \frac{m_1 K_0^{m_1} t_c^{m_1-1} \Phi_1(x)^{\sigma_1}}{F'(F^{-1}(K_0^{m_1}t_c^{m_1}\Phi_1(x)^{\sigma_1}))}\nonumber\\
			&\leq \frac{m_1 K_0^{m_1} t_c^{m_1-1} \Phi_1(x)^{\sigma_1}}{m_0 K_0^{m_1} t_c^{m_1} \Phi_1(x)^{\sigma_1}} F^{-1}(K_0^{m_1} t_c^{m_1} \Phi_1(x)^{\sigma_1})\nonumber\\
			&\leq \frac{m_1}{m_0} K_0.
		\end{align}
		Where we have used $m_0 F(r)/r\leq F'(r)\leq m_1 F(r)/r$ as a consequence of \eqref{N1}, and Remark \ref{obser2}\,$\rm{(d)}$. On the other hand, if $\sigma_1=1$ we have
		\begin{equation}\label{posIec32}
			\A F(\Psi(t_c,x_c))=K_0^{m_1} t_c^{m_1}\lambda_1 \Phi_1(x_c)\leq K_0 \overline{k_0}\lambda_1.
		\end{equation}
		Where we have used Remark \ref{obser2}\,$\rm{(c)}$. If $\sigma_1<1$, then using  that $\Phi_1\in C^{\gamma}(\overline{\Omega})$ and $g(x)=x^{\sigma_1}$ is concave we get
		$$|\Phi_1(x)^{\sigma_1} - \Phi_1(y)^{\sigma_1}|\leq |\Phi_1(x)-\Phi_1(y)|^{\sigma_1}\leq |x-y|^{\gamma \sigma_1}.$$
		Thus, as we know $\gamma\sigma_1>2s$ and $K(x,y)\leq \frac{c_1}{|x-y|^{N+2s}}$, we obtain
		\begin{align}\label{posIec42}
			\A F(\Psi(t_c,x_c)) = K_0^{m_1} t_c^{m_1} \A &\Phi_1^{\sigma_1}(x_c) = K_0^{m_1} t_c^{m_1} P.V \int_{\RN} |\Phi_1(x_c)^{\sigma_1} - \Phi_1(y)^{\sigma_1}| K(x_c,y) \dy\nonumber \\
			&\leq K_0^{m_1} t_c^{m_1} \left(\int_{\Omega} c |x_c-y|^{\gamma \sigma_1} \frac{c_1}{|x_c-y|^{N+2s}}\dy + \int_{\RN \setminus \Omega} \frac{2 c_1}{|x_c-y|^{N+2s}}\dy\right)\nonumber\\
			&\leq \overline{C}K_0 \overline{k_0}.
		\end{align}
		Therefore, by combining inequalities \eqref{posIec22}, \eqref{posIec32} and \eqref{posIec42} we obtain the following upper bound:
		\begin{equation}\label{posIec52}
			-\A[F(\ud(t_c,x_c))-F(\Psi(t_c,x_c))]\leq K_0 \left( \frac{m_1}{m_0} + \overline{k_0} (\overline{C}+\lambda_1) \right).
		\end{equation}
		
		Secondly, we show how to obtain the lower bound, employing a method similar to that used in Theorem \ref{positivity1}. Here, we utilize the nonlocal nature of the operator, as specified by condition \eqref{L1}.
		\begin{align}\label{posIec62}
			-\A [F(\ud(t_c,x_c)) &- F(\Psi(t_c,x_c))]\nonumber \\
            &= - P.V \int_{\RN} [F(\ud(t_c,x_c) - F(\Psi(t_c,x_c))) - (F(\ud(t_c,y)) -F(\Psi(t_c,y)))] K(x_c,y) \dy\nonumber \\
			&= \int_{\RN} F(\ud(t_c,y)) K(x_c,y) \dy - \int_{\RN} F(\Psi(t_c,y)) K(x_c,y)\dy\nonumber\\
			&\geq k_{\Omega} \int_{\RN} F(\ud(t_c,y)) \dy - k_{\Omega} \int_{\RN} (K_0 t_c)^{m_1}\Phi_1(y)^{\sigma_1}\dy\nonumber\\
			&\geq k_{\Omega} \int_{\RN} F(\ud(t_c,y)) \dy - k_{\Omega} K_0 \overline{k_0} |\Omega|.
		\end{align}
		Therefore, thank to \eqref{posIec52} and \eqref{posIec62} we have
		\begin{equation*}
			k_{\Omega} \int_{\Omega} F(\ud(t_c,x)) \dx \leq K_0\left( k_{\Omega} \overline{k_0} |\Omega| + \frac{m_1}{m_0} + \overline{k_0} (\overline{C}+\lambda_1) \right).
		\end{equation*}
		Remembering Lemma \ref{propiedadesudelta4} we have the inequality which allow us to obtain the contradiction,
		\begin{equation*}\label{posIec72}
			k_{\Omega} \underline{c} \|u_0\|_{\LL^1_{\Phi_1}(\Omega)}^{m_0-1} \leq k_{\Omega} \underline{c} \|u_0\|_{\LL^1_{\Phi_1}(\Omega)}^{m_0}\leq k \|u_0\|_{\LL^1_{\Phi_1}(\Omega)}^{m_0-1} \left( k_{\Omega} \overline{k_0} |\Omega| + \frac{m_1}{m_0} + \overline{k_0} (\overline{C}+\lambda_1) \right).
		\end{equation*}
		Where the norm of the initial data is not zero and all constants are fixed except $k$. Thus, we can choose $k$ small enough to create a contradiction and conclude $F(\Psi)<F(\ud)$ in $[0,t_*]\times \overline{\Omega}$.
		
		Step 1.3.\textit{Taking the limit as $\delta\to 0^+$.}\\
		By the continuity of F, the pointwise limit of $\ud(t,x)$ as $\delta\to 0$ and the lower barrier from step 1.2, we conclude that for any $t\in [0,t_*]$ and a.e. $x\in \Omega$
		$$F(u(t,x))\geq F(\Psi(t,x))= (k||u_0||_{\LL^1_{\Phi_1}(\Omega)}^{m_0-1} t)^{m_1} \Phi_1(x)^{\sigma_1}.$$
		
		\textbf{Step 2:} The following inequality holds for all $t\in (0,t_*]$
		\begin{equation}\label{posIec82}
			F(u(t,x))\geq \underline{K} \left( \frac{t}{t_*} \right)^{m_1} \Phi_1(x)^{\sigma_1}.
		\end{equation}
		We just have to rewrite \eqref{posIec12} as follows
		\begin{align*}
			F(u(t,x))&\geq \left(k \|u_0\|_{\LL^1_{\Phi_1}(\Omega)}^{m_0-1} t\right)^{m_1} \Phi_1(x)^{\sigma_1}
			= \left( k \frac{c_* t}{t_*} \right)^{m_1} \Phi_1(x)^{\sigma_1}
			\geq \underline{K} \left( \frac{t}{t_*} \right)^{m_1} \Phi_1(x)^{\sigma_1}.
		\end{align*}
		
		\textbf{Step 3:} For large times, i.e. $t> t_*$ we have
		\begin{equation}\label{posIec92}
			F(u(t,x))\geq \underline{K} t_*^{\frac{m_0}{m_0-1}}\frac{\Phi_1(x)^{\sigma_1}}{t^{\frac{m_0}{m_0-1}}}.
		\end{equation}
		Using Benilan-Crandall estimates, more precisely inequality \ref{BC2}, and \eqref{posIec82} for $t=t_*$, we obtain
		\begin{align*}
			F(u(t,x)) &\geq \frac{t_*^{\frac{m_0}{m_0-1}}}{t^{\frac{m_0}{m_0-1}}} F(u(t_*,x))
			\geq t_*^{\frac{m_0}{m_0-1}} \underline{K} \frac{\Phi_1(x)^{\sigma_1}}{t^{\frac{m_0}{m_0-1}}}.
		\end{align*}
		Putting together inequalities \eqref{posIec82} and \eqref{posIec92}, we conclude the proof of the theorem.
	\end{proof}
	
	\begin{thm}\label{positivity22}
		Let $\A$ satisfy \eqref{A1}, \eqref{A2} and \eqref{L2}, and let F satisfy \eqref{N1}. Suppose $u$ is the \solution\,of \eqref{CDP} with initial datum $0\leq u_0\in \LL^1_{\ph(\Omega)}$ and let the waiting time $t_*$ be as in \eqref{tstar.222}. Then, there exists a constant $k_{12}>0$ which depends only on $N$, $s$, $m_i$, $F$, $\gamma$, $\Omega$ and $\lambda_1$ such that for all $t>0$ and a.e. $x\in \Omega$ we have
		\begin{equation}\label{boundpositivity22}
			F(u(t,x))\geq k_{12} \left\{\begin{array}{ll} \left( \frac{t}{t_*} \right)^{m_1} \Phi_1^{m_1}, & \mbox{ if }t\leq t_*.\\ \left( \frac{t_*}{t} \right)^{\frac{m_0}{m_0-1}} \Phi_1(x)^{m_1}, & \mbox{ if }t\geq t_*. \end{array}\right.
		\end{equation}
	\end{thm}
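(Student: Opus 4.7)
The plan is to combine the barrier construction of Theorem \ref{positivity2} (which handles the \eqref{L2}-degenerate kernel case for small data via the barrier $\Psi(t,x)=F^{-1}((K_0 t\,\Phi_1(x))^{m_1})$) with the adjusted constant $K_0 = k\,\|u_0\|_{\LL^1_{\Phi_1}(\Omega)}^{m_0-1}$ used in Theorem \ref{positivity12} for the large data regime $\|u_0\|_{\LL^1_{\Phi_1}(\Omega)}>1$. As in the previous proofs, I will work with the approximate solutions $u_\delta$ of \eqref{AP1} so that the regularity justifies the pointwise computations at the first touching point, and at the end pass to the limit $\delta\to 0^+$ using the pointwise convergence \eqref{pointconvudelta}.

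\textbf{Step 1: Barrier construction and contradiction.} I would define
\[
\Psi(t,x):=F^{-1}\bigl((K_0\,t\,\Phi_1(x))^{m_1}\bigr),\qquad K_0 := k\,\|u_0\|_{\LL^1_{\Phi_1}(\Omega)}^{m_0-1},
\]
with $k>0$ a small constant to be fixed, and normalize $\|\Phi_1\|_{\LL^\infty(\Omega)}=1$. Since $t_*=c_*\|u_0\|_{\LL^1_{\Phi_1}}^{-(m_0-1)}\leq c_*$ in the large-data regime, the key observations of Remark \ref{obser2} remain valid (in particular $K_0 t\le k c_*$ for $t\in[0,t_*]$ and $K_0^{m_1-1}t^{m_1}\le \overline{k_0}$). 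I would then assume by contradiction that $F(\Psi)\geq F(u_\delta)$ at some first touching point $(t_c,x_c)\in(0,t_*)\times\Omega$ and estimate
\[
-\A\bigl[F(u_\delta(t_c,x_c))-F(\Psi(t_c,x_c))\bigr]=\partial_t u_\delta(t_c,x_c)+\A F(\Psi(t_c,x_c))
\]
from above and below.

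\textbf{Step 2: Upper bound via Kato's inequality.} Exactly as in Theorem \ref{positivity2}, the time derivative satisfies $\partial_t u_\delta(t_c,x_c)\le \partial_t\Psi(t_c,x_c)\le (m_1/m_0)K_0\,\Phi_1(x_c)$ using $m_0 F(r)/r\leq F'(r)\leq m_1 F(r)/r$, and the Kato-type inequality $\A(f^{m_1})\le m_1 f^{m_1-1}\A f$ applied to $f=K_0 t_c\Phi_1$ gives
\[
\A F(\Psi(t_c,x_c))\le m_1\lambda_1 K_0^{m_1} t_c^{m_1}\Phi_1(x_c)^{m_1}\le K_0\,(m_1\lambda_1\overline{k_0})\,\Phi_1(x_c).
\]
Summing yields an upper bound proportional to $K_0\,\Phi_1(x_c)$.

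\textbf{Step 3: Lower bound via \eqref{L2}.} Using the definition of $\A$ together with the kernel lower bound $K(x,y)\ge c_0 \Phi_1(x)\Phi_1(y)$ and $B\ge 0$, I compute
\[
-\A\bigl[F(u_\delta)-F(\Psi)\bigr](t_c,x_c)\ge c_0\,\Phi_1(x_c)\int_\Omega F(u_\delta(t_c,y))\Phi_1(y)\dy - c_0\,\Phi_1(x_c)\,K_0\,\overline{k_0}\,\|\Phi_1\|_\infty^{m_1+1}|\Omega|.
\]
Dividing by $\Phi_1(x_c)$ (which is positive in $\Omega$), combining with Step 2, and invoking Lemma \ref{propiedadesudelta4} (the large-data weighted $\LL^1$ estimate $\underline{c}\,\|u_0\|_{\LL^1_{\Phi_1}}^{m_0}\le\int_\Omega F(u_\delta(t_c,y))\Phi_1(y)\dy$) produces
\[
c_0\,\underline{c}\,\|u_0\|_{\LL^1_{\Phi_1}}^{m_0}\le k\,\|u_0\|_{\LL^1_{\Phi_1}}^{m_0-1}\,C_*,
\]
where $C_*$ depends only on the fixed data. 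Since $\|u_0\|_{\LL^1_{\Phi_1}}>1$ we bound the left side below by $c_0\underline{c}\,\|u_0\|_{\LL^1_{\Phi_1}}^{m_0-1}$, so choosing $k$ small enough (depending only on $N,s,m_i,F,\gamma,\Omega,\lambda_1$) yields the desired contradiction, proving
\begin{equation}\label{posIIec7}
F(u_\delta(t,x))\ge (K_0\,t)^{m_1}\,\Phi_1(x)^{m_1}\qquad\forall(t,x)\in[0,t_*]\times\overline{\Omega}.
\end{equation}

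\textbf{Step 4: Limit and time regimes.} Passing to the limit $\delta\to 0^+$ via \eqref{pointconvudelta} and the continuity of $F$, inequality \eqref{posIIec7} holds for $u$. For $t\le t_*$, rewriting $K_0\,t=k c_*\,(t/t_*)$ gives the first branch of \eqref{boundpositivity22}. For $t>t_*$, the Benilan--Crandall monotonicity \eqref{BC2} applied between $t_*$ and $t$ yields $F(u(t,x))\ge (t_*/t)^{m_0/(m_0-1)}F(u(t_*,x))\ge\underline{K}\,(t_*/t)^{m_0/(m_0-1)}\Phi_1(x)^{m_1}$, which is the second branch.

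The main technical obstacle is the careful bookkeeping in Step 3: one must verify that the choice $K_0=k\|u_0\|^{m_0-1}$ (rather than $\|u_0\|^{m_1}$ as in Theorem \ref{positivity2}) is precisely the one that makes the weighted $\LL^1$ lower bound $\|u_0\|^{m_0}$ of Lemma \ref{propiedadesudelta4} dominate the term $K_0=k\|u_0\|^{m_0-1}$ from the upper-bound estimates, so that a universal small $k$ closes the contradiction independently of the (possibly very large) norm $\|u_0\|_{\LL^1_{\Phi_1}(\Omega)}$.
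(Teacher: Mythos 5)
Your proposal is correct and follows essentially the same route as the paper: the paper's proof of this theorem is exactly the barrier argument with $\Psi=F^{-1}\bigl((K_0 t\Phi_1)^{m_1}\bigr)$, $K_0=k\|u_0\|_{\LL^1_{\Phi_1}}^{m_0-1}$, using the Kato-type inequality for the upper bound, the \eqref{L2} kernel lower bound, Lemma \ref{propiedadesudelta4} for the weighted $\LL^1$ estimate, the inequality $\|u_0\|^{m_0}\ge\|u_0\|^{m_0-1}$ (since the norm exceeds $1$) to close the contradiction, then $\delta\to 0^+$ and Bénilan--Crandall for large times. Your concluding remark about matching the exponent $m_0-1$ in $K_0$ against the exponent $m_0$ from Lemma \ref{propiedadesudelta4} is precisely the observation the paper relies on.
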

	\begin{proof}
		The proof follows a similar argument to that of Theorem \ref{positivity2} with minor modifications.
		
		\textbf{Step 1:} For all $t\in [0,t_*]$ there exists a positive constant $k>0$ such that
		\begin{equation}\label{posIIec12}
			F(u(t,x))\geq \left( k \|u_0\|_{\LL^1_{\Phi_1}(\Omega)}^{m_0-1} t \right)^{m_1} \Phi_1(x)^{m_1}.
		\end{equation}
		To proceed similarly to our previous approach, we fix $\delta>0$, and denote $\ud$ the approximate solution of \eqref{AP1}. We define the lower barrier $$\Psi(t,x)=F^{-1}((K_0 t \Phi_1(x))^{m_1}),$$
		where $K_0= k \|u_0\|_{\LL^1_{\Phi_1}(\Omega)}^{m_0-1}$. Here, $k$ is a positive constant that will be determined, and it possesses similar properties as in the previous proof, (cf. Remark \ref{obser2}).
		
		 We now observe that $F(\Psi)<F(\ud)$ on $[0,t_*]\times \overline{\Omega}$ and we proceed to establish it by contradiction. Suppose the inequality does not hold; then there exists a point $(t_c,x_c)\in (0,t_*]\times \Omega$ such that $F(\Psi(t_c,x_c))=F(\ud(t_c,x_c))$ for the first time. The contradiction follows if we properly bound the following quantity from both above and below
\[\begin{split}
P.V \int_{\RN} (F(\ud(t_c,y))-F(\Psi(t_c,y))) K(x_c,y)\dy&=-\A [F(\ud(t_c,x_c)) - F(\Psi(t_c,x_c))]\\
&=\partial_t \ud(t_c,x_c) + \A F(\Psi(t_c,x_c)).
\end{split}\]
		We first establish an upper bound for each term on the right-hand side of the equation above.
		\begin{align}\label{posIIec22}
			\partial_t \ud(t_c,x_c)\leq \partial_t \Psi(t_c,x_c) &= \frac{m_1 K_0^{m_1}t_c^{m_1-1}\Phi_1(x_c)^{m_1}}{F'(F^{-1}(K_0^{m_1}t_c^{m_1}\Phi_1(x_c)^{m_1}))}\nonumber\\
			&\leq \frac{m_1 K_0^{m_1}t_c^{m_1-1}\Phi_1(x_c)^{m_1}}{m_0 K_0^{m_1}t_c^{m_1} \Phi_1(x_c)^{m_1}} F^{-1}(K_0^{m_1} t_c^{m_1} \Phi_1(x_c)^{m_1})\nonumber\\
			&\leq \frac{m_1}{m_0}\frac{K_0 t_c \Phi_1(x_c)}{t_c}\nonumber\\
			&\leq \frac{m_1}{m_0} K_0 \Phi_1(x_c).
		\end{align}
		Using Kato type inequality $\A(f^m)\leq m f^{m-1}\A(f)$, or any non-negative function $f$ and $m>1$, and noting that $F(\Psi(t,x))= (K_0 t \Phi_1(x))^{m_1}$, we can write:
		\begin{align}\label{posIIec32}
			\A F(\Psi(t_c,x_c))&\leq m_1 (K_0 t_c \Phi_1(x_c))^{m_1} K_0 t_c \A \Phi_1(x_c)\nonumber\\
			&= m_1 K_0^{m_1} t_c^{m_1} \lambda_1 \Phi_1(x_c)^{m_1}
			\leq K_0 (m_1 \lambda_1 \overline{k_0} \|\Phi_1\|_{\LL^\infty(\Omega)}) \Phi_1(x_c).
		\end{align}
		
		Since the operator is non-local (cf. \eqref{L2}) and considering the identity $F(\Psi(t_c,x_c))=F(\ud(t_c,x_c))$ we proceed to derive the lower bound of the left-hand side of the previous equation.
		\begin{align}\label{posIIec42}
			P.V \int_{\RN} (F(\ud(t_c,y))&-F(\Psi(t_c,y))) K(x_c,y)\dy\geq c_0 \Phi_1(x_c) \int_{\Omega} (F(\ud(t_c,y)) - F(\Psi(t_c,y)))\Phi_1(y)\dy\nonumber\\
			&= c_0 \Phi_1(x_c) \int_{\Omega} F(\ud(t_c,y))\Phi_1(y)\dy - c_0 \Phi_1(x_c) \int_{\Omega} (K_0 t_c \Phi_1(y))^{m_1} \Phi_1(y)\dy\nonumber\\
			&\geq c_0 \Phi_1(x_c)\int_{\Omega} F(\ud(t_c,y))\Phi_1(y)\dy - c_0\Phi_1(x_c) K_0 \overline{k_0} \|\Phi_1\|_{\LL^\infty(\Omega)}^{m_1+1}|\Omega|.
		\end{align}
		
		Moreover, thanks to Lemma \ref{propiedadesudelta4} we know $\underline{c}\|u_0\|_{\LL^1_{\Phi_1}(\Omega)}^{m_0}\leq \int_{\Omega} F(\ud(t,x))\Phi_1(x)\dx$. Thus, combining this inequality together with inequalities \eqref{posIIec22}, \eqref{posIIec32} and \eqref{posIIec42}, we obtain
		\begin{align*}
			c_0\underline{c} \|u_0\|_{\LL^1_{\Phi_1}(\Omega)}^{m_0-1}&\leq
			c_0\underline{c} \|u_0\|_{\LL^1_{\Phi_1}(\Omega)}^{m_0}
			\leq c_0 \int_{\Omega} F(\ud(t_c,x))\Phi_1(x)\dx\\
			&\leq K_0 \left( \frac{m_1}{m_0} + m_1\lambda_1 \overline{k_0} \|\Phi_1\|_{\LL^\infty(\Omega)} + c_0 \overline{k_0} \|\Phi_1\|_{\LL^1(\Omega)}^{m_1+1} |\Omega| \right)\\
			&\leq k \|u_0\|_{\LL^1_{\Phi_1}(\Omega)}^{m_0-1} \left( \frac{m_1}{m_0} + m_1\lambda_1 \overline{k_0} \|\Phi_1\|_{\LL^\infty(\Omega)} + c_0 \overline{k_0} \|\Phi_1\|_{\LL^1(\Omega)}^{m_1+1} |\Omega| \right).
		\end{align*}
		Then, all constants are fixed except $k$, and the initial data is non-zero. Thus, we can choose $k$ small enough to reach a contradiction.
		
		Finally, we take the limit as $\delta \to 0^+$ which lead us to \eqref{posIIec12}, namely
		$$F(u(t,x))\geq F(\Psi(t,x))= (k ||u_0||_{\LL^1_{\Phi_1}(\Omega)}^{m_0-1}t \Phi_1(x))^{m_1}.$$
		
		\textbf{Step 2:}  The following inequality holds for all $t\in (0,t_*]$
		\begin{equation}\label{posIIec52}
			F(u(t,x))\geq \underline{K} \left( \frac{t}{t_*} \right)^{m_1} \Phi_1(x)^{m_1}.
		\end{equation}
        The proof follows by the same argument as in Step 2 of the proof of Theorem \ref{positivity12}.
		
		\textbf{Step 3:} For large times, i.e. $t> t_*$ we have
		\begin{equation}\label{posIIec62}
			F(u(t,x))\geq \underline{K} t_*^{\frac{m_0}{m_0-1}} \frac{\Phi_1(x)^{m_1}}{t^{\frac{m_0}{m_0-1}}}.
		\end{equation}
		The proof follows by the same argument as in Step 3 of the proof of Theorem \ref{positivity12}.
		
		We complete the proof by joining estimates \eqref{posIIec52} and \eqref{posIIec62}.
	\end{proof}


\subsection{The anomalous lower bounds with small data}

	Since $\sigma_1\leq 1 < m_1$ we have $\Phi_1(x)^{m_1}< \Phi_1(x)^{\sigma_1}$ for any $x$ close enough to the boundary of the domain. As we show in Theorem \ref{GHPIII}, the lower bound $F(u(t))\gtrsim \Phi_1^{m_1}$ is always valid, but the power does not match the one of the upper bound. Now, we want to discuss the possibility of improving this lower bound.
	
	We first show that we cannot hope to prove that $F(u(t))$ is larger than $\Phi_1^{\sigma_1}$ in general, hence we cannot match the powers of the lower and upper bounds to get $F(u(t))\asymp \Phi_1^{\sigma_1}$ when $\sigma_1<1$.
	
	\begin{thm}\label{smalldata3}
		Let $\A$ be an operator that satisfies \eqref{A1}, \eqref{A2} and \eqref{K2}, and $u$ be a nonnegative weak dual solution to the problem \eqref{CDP} corresponding with the initial datum $0\leq u_0\in \LL^1_{\Phi_1}(\Omega)$. Assume that the initial datum is small, $u_0(x)\leq C_0 \Phi_1(x)$, for some positive constant $C_0$. Then, there exists a constant $k_{13}>0$ depending only on $N$, $s$, $\gamma$, $F$ and $\Omega$ such that
		
		\begin{equation*}
			F(u(t,x))\leq C_0 k_{13} \frac{\Phi_1(x)}{t} \hspace{6mm}\mbox{for all }t>0 \mbox{ and a.e. }x\in \Omega.
		\end{equation*}
	In particular, if $\sigma_1<1$, then we have
	\begin{equation*}
		\lim\limits_{x\to \partial \Omega}\frac{F(u(t,x))}{\Phi_1(x)^{\sigma_1}}=0 \hspace{6mm}\mbox{for any }t>0.
	\end{equation*}
	\end{thm}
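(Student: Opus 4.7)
The plan is to combine the Fundamental Upper Bound with a one-line eigenfunction computation, exploiting that the hypothesis $u_0 \leq C_0 \Phi_1$ is perfectly tuned to the dual norm $\int_\Omega u(t,x)\K(x,x_0)\dx$ that controls $F(u)$ through \eqref{FUB}. First, since $\A\Phi_1=\lambda_1 \Phi_1$, the inverse operator satisfies $\AI \Phi_1 = \Phi_1/\lambda_1$, which read at the level of the Green kernel is the identity
\[
\int_\Omega \K(x,x_0)\Phi_1(x)\dx \;=\; \frac{\Phi_1(x_0)}{\lambda_1}\qquad\text{for a.e. }x_0\in\Omega.
\]
The smallness hypothesis $u_0\leq C_0\Phi_1$ then transfers to the dual norm as
\[
\int_\Omega u_0(x)\K(x,x_0)\dx \;\leq\; \frac{C_0\,\Phi_1(x_0)}{\lambda_1}.
\]

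Next, the plan is to invoke the pointwise contraction of the dual norm in Proposition \ref{pointwiseestimates}, namely $\int_\Omega u(t,x)\K(x,x_0)\dx \leq \int_\Omega u_0(x)\K(x,x_0)\dx$, which propagates the bound above to every $t>0$. Plugging the resulting estimate on $\int_\Omega u(t,x)\K(x,x_0)\dx$ into the Fundamental Upper Bound \eqref{FUB} (valid under \eqref{K1}, which follows from \eqref{K2}) immediately yields
\[
F(u(t,x_0))\;\leq\;\frac{2^{m_0/(m_0-1)}}{t}\cdot\frac{C_0\,\Phi_1(x_0)}{\lambda_1},
\]
so the desired pointwise bound holds with $k_{13}=2^{m_0/(m_0-1)}/\lambda_1$, depending only on $N$, $s$, $\gamma$, $F$, $\Omega$ through the explicit constants $m_0$ and $\lambda_1$. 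The limiting statement is then a direct consequence: when $\sigma_1<1$, the exponent $1-\sigma_1$ is strictly positive, and since $\Phi_1\asymp d^\gamma$ vanishes on $\partial\Omega$,
\[
\frac{F(u(t,x))}{\Phi_1(x)^{\sigma_1}}\;\leq\; \frac{C_0\,k_{13}}{t}\,\Phi_1(x)^{1-\sigma_1}\;\longrightarrow\; 0 \qquad\text{as }x\to\partial\Omega.
\]

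The only mild technical subtlety is that Proposition \ref{pointwiseestimates} is stated for solutions in the class $S$ of Definition \ref{S}, whereas here $u$ is assumed to be just a nonnegative weak dual solution. The plan is to handle this by an approximation argument: the hypothesis $u_0\leq C_0\Phi_1\in\LL^\infty(\Omega)$ ensures $u_0\in\LL^1_{\Phi_1}(\Omega)$ and that the approximating truncations $u_{0,n}=u_0\wedge n$ converge to $u_0$ monotonically, so the minimal weak dual solution constructed via Definition \ref{minimalweakdual} falls in $S$, inherits the contraction from Proposition \ref{pointwiseestimates}, and satisfies the claimed estimate; passing to the monotone limit and using comparison extends the bound to the prescribed class. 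Beyond this observation the argument is algebraic, so I do not anticipate any serious obstacle.
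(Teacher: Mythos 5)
Your proof is correct and follows essentially the same route as the paper's: start from the Fundamental Upper Bound \eqref{FUB}, use the monotonicity of $t\mapsto\int_\Omega u(t,\cdot)\K(\cdot,x_0)$ from Proposition \ref{pointwiseestimates}, bound $u_0$ by $C_0\Phi_1$, and close with the eigenfunction identity $\AI\Phi_1=\Phi_1/\lambda_1$. The only extra material is your closing remark about the class $S$ versus general weak dual solutions — a fair point which the paper glosses over — but it does not change the argument, so the two proofs are in substance identical.
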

	\begin{proof}
		Starting from the Fundamental Upper Bound \eqref{FUB} and since the function $t \to \int_{\Omega} u(t,y) \K(x,y) \dy$ is decreasing, remember Proposition \ref{pointwiseestimates}. We obtain the following:
		\begin{align*}
			F(u(t,x_0))&\leq \frac{2^{\frac{m_0}{m_0-1}}}{t} \int_{\Omega} u(t,x)\K(x,x_0) \dx \leq \frac{2^{\frac{m_0}{m_0-1}}}{t} \int_{\Omega} u_0(x) \K (x,x_0) \dx\\
			&\leq \frac{2^{\frac{m_0}{m_0-1}}}{t} C_0 \int_{\Omega} \Phi_1(x) \K (x,x_0) \dx = \frac{2^{\frac{m_0}{m_0-1}}}{t} C_0 \AI \Phi_1(x_0) = \frac{2^{\frac{m_0}{m_0-1}}}{t} \frac{C_0}{\lambda_1} \Phi_1(x_0).
		\end{align*}
	Where in the last steps we have used the upper bound for the initial data $u_0\leq C_0 \Phi_1$ and the fact $\A \Phi_1=\lambda_1 \Phi_1$.
	\end{proof}
A careful inspection of the above proof reveals that when $u_0(x)\leq C_0 \Phi_1(x)$ for a.e. $x\in\Omega$, we get
	\begin{equation}\label{ecabove}
		\int_{\Omega} u(t,x) \K(x,x_0) \dx \leq \frac{C_0}{\lambda_1} \Phi_1(x_0),\qquad\mbox{for a.e. }x_0\in \Omega\,.
	\end{equation}
	
	A natural question that appears after the above result is: \textit{Is it possible to obtain a sharp general lower bound in terms of $\ph$\,?} The answer is negative, as we shall see. Indeed, under assumption \eqref{K4}, the bound $F(u(t))\gtrsim \Phi_1$ is false for $\sigma_1<1$ and for small initial data, as the following Theorem shows.
	
	\begin{thm}\label{smalldata4}
		Let $\A$ be an operator that satisfies \eqref{A1}, \eqref{A2} and \eqref{K4}, and let $u$ be a nonnegative weak dual solution to the problem \eqref{CDP} corresponding to the initial datum $0\leq u_0 \in \LL^1_{\Phi_1}(\Omega)$. Assume that $u_0\leq C_0 \Phi_1$ for some positive constant $C_0$ and
		$$u(T,x)\geq k_{14}\Phi_1(x)^\alpha \hspace{6mm}\mbox{for a.e. }x\in \Omega,$$
		for some constants $k_{14}$, $T$, $\alpha>0$.
		Then, $\alpha\geq 1-\frac{2s}{\gamma}$. Moreover, if $\sigma_1<1$ we have $\alpha>\frac{1}{m_1}$.
	\end{thm}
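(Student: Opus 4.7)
The plan is to exploit the upper bound \eqref{ecabove}, valid under $u_0\le C_0\Phi_1$, namely
\[
\int_\Omega u(T,x)\,\K(x,x_0)\,\dx \;\le\; \frac{C_0}{\lambda_1}\,\Phi_1(x_0)\qquad\text{for a.e.\ }x_0\in\Omega,
\]
and to combine it with the assumed pointwise lower bound $u(T,x)\ge k_{14}\,\Phi_1(x)^\alpha$. Inserting the latter yields
\[
k_{14}\int_\Omega \Phi_1(x)^\alpha\,\K(x,x_0)\,\dx \;\le\; \frac{C_0}{\lambda_1}\,\Phi_1(x_0).
\]
The whole argument then reduces to a sharp lower bound, as $x_0\to\partial\Omega$, for the integral on the left. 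This is where the sharper hypothesis \eqref{K4} (available here, but not under \eqref{K2}) is crucial.

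First I would localize the integration to a small ball around $x_0$. Fix $x_0\in\Omega$ with $\rho:=d(x_0)$ small enough so that $B_{\rho/2}(x_0)\subset\Omega$ (possible since $\Omega$ is $C^{1,1}$). For $x\in B_{\rho/2}(x_0)$ one has $d(x)\asymp\rho$, hence $\Phi_1(x)\asymp\rho^\gamma$; moreover $|x-x_0|\le\rho/2$ forces both
\[
\frac{d(x)^\gamma}{|x-x_0|^\gamma}\wedge 1 \;\asymp\; 1 \qquad\text{and}\qquad \frac{d(x_0)^\gamma}{|x-x_0|^\gamma}\wedge 1 \;\asymp\; 1,
\]
so that by \eqref{K4} the kernel reduces to $\K(x,x_0)\asymp|x-x_0|^{-(N-2s)}$ on $B_{\rho/2}(x_0)$. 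Therefore, using polar coordinates,
\[
\int_\Omega \Phi_1(x)^\alpha\,\K(x,x_0)\,\dx \;\gtrsim\; \rho^{\alpha\gamma}\int_0^{\rho/2} r^{2s-1}\,\dr \;\asymp\; \rho^{\alpha\gamma+2s} \;\asymp\; \Phi_1(x_0)^{\alpha+\frac{2s}{\gamma}},
\]
where we used $\Phi_1(x_0)\asymp\rho^\gamma$. Combining with the upper bound,
\[
k_{14}\,c\,\Phi_1(x_0)^{\alpha+\frac{2s}{\gamma}} \;\le\; \frac{C_0}{\lambda_1}\,\Phi_1(x_0)\,,
\]
equivalently $\Phi_1(x_0)^{\alpha+\frac{2s}{\gamma}-1}\le C$. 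Letting $\Phi_1(x_0)\to 0$ forces the exponent to be non\-negative, i.e.\ $\alpha\ge 1-\frac{2s}{\gamma}$, which is the first claim.

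For the second claim, the hypothesis $\sigma_1<1$ means $\sigma_1=\frac{2sm_1}{\gamma(m_1-1)}<1$, which is equivalent to $\gamma>\frac{2sm_1}{m_1-1}$, and in turn to $\frac{2s}{\gamma}<\frac{m_1-1}{m_1}=1-\frac{1}{m_1}$. Therefore
\[
\alpha \;\ge\; 1-\frac{2s}{\gamma} \;>\; 1-\Bigl(1-\frac{1}{m_1}\Bigr) \;=\; \frac{1}{m_1},
\]
with \emph{strict} inequality coming directly from $\sigma_1<1$, so the second claim follows at no extra cost from the first.

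\textbf{Main difficulty.} The only substantive step is the integral estimate, and the difficulty is really a matter of \emph{which} assumption on the kernel is needed: under the weaker lower bound in \eqref{K2}, one only obtains $\K(x,x_0)\gtrsim\Phi_1(x)\Phi_1(x_0)$, which yields $\int \Phi_1^\alpha \K(\cdot,x_0)\gtrsim\Phi_1(x_0)$ and thus no restriction on $\alpha$. It is the matching upper-and-lower two-sided estimate in \eqref{K4}, giving $\K(x,x_0)\asymp|x-x_0|^{-(N-2s)}$ in the near-diagonal region $B_{\rho/2}(x_0)$, that produces the extra factor $\rho^{2s}$ and hence the sharp boundary exponent. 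This is exactly why the statement invokes \eqref{K4} rather than \eqref{K2}.
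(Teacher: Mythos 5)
Your proof is correct and follows essentially the same approach as the paper: localize the integral on the near-diagonal ball $B_{d(x_0)/2}(x_0)$ where \eqref{K4} gives $\K(x,x_0)\asymp|x-x_0|^{-(N-2s)}$, obtain the lower bound $\int_\Omega u(T,x)\K(x,x_0)\,\dx\gtrsim\Phi_1(x_0)^{\alpha+2s/\gamma}$, compare with \eqref{ecabove}, and let $\Phi_1(x_0)\to0$. The derivation of the strict inequality from $\sigma_1<1$ is also exactly the paper's argument.
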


\begin{proof}
	Given $x_0\in \Omega$, let $R_0= \dist(x_0, \partial \Omega)$ (recall $R_0\asymp \Phi_1^{\frac{1}{\gamma}}$). Since $\K(x,x_0)\gtrsim |x-x_0|^{-N+2s}$ inside the ball $B_{R_0/2}(x_0)$, by \eqref{K4}, we can use our assumption on $u$ to get
	$$\int_{\Omega} u(T,x) \K(x,x_0)\dx \gtrsim \int_{B_{R_0/2}(x_0)} \frac{\Phi_1(x)^\alpha}{|x-x_0|^{N-2s}}\dx \gtrsim \Phi_1(x_0)^\alpha R_0^{2s}\gtrsim \Phi_1(x_0)^{\alpha + \frac{2s}{\gamma}}.$$
	Combining the above inequality with \eqref{ecabove} gives
	$$\Phi_1(x_0)^{\alpha+\frac{2s}{\gamma}}\lesssim \Phi_1(x_0)\hspace{6mm}\mbox{for all }x_0\in \Omega.$$
	Which directly implies $\alpha \geq 1-\frac{2s}{\gamma}$.
	
	To conclude the proof, we only have to note that $1-\frac{2s}{\gamma}>\frac{1}{m_1}$ if and only if $\sigma_1<1$.
\end{proof}
	
	
	\section{Regularity results}\label{sec6}
	
	To obtain the regularity results, we require the Global Harnack Principle to be valid with matching power 
 (Theorem \ref{GHPI} and Theorem \ref{GHPII}). For higher regularity results, extra assumptions on the kernels are necessary. Prior to addressing interior regularity, we must establish upper and lower bounds on $u$ of the type $0<\delta\leq u \leq M <\infty$. These bounds can be derived from Harnack inequalities, as we shall see below.

 It is also important to mention that in this work, we do not achieve  regularity up to the boundary: despite of the fact that we have matching power in the Global Harnack Principle, the lack of homogeneity of $F$ causes problems with the rescaling, an essential ingredient of the proof; see \cite[Theorem 8.3]{MB+AF+JLV_sharp-global-estimates} for a proof of sharp boundary regularity in the case $F(u)=u^m$ with $m>1$. Boundary regularity remains an intriguing open problem for nonhomogeneous nonlinearities.  Also, we would like to mention that the optimal boundary regularity for the classical case (i.e. $u_t=\Delta u^m$) has been only recently obtained by Jin, Ros-Oton and Xiong \cite{JOX24}.

	\begin{lem}\label{HI}
		Under the assumptions of Theorem \ref{GHPII}, there exists a constant $H>0$ depending only on $N$, $s$, $\gamma$, and $\Omega$ such that for any ball $B_R(x_0)\subset \Omega$ such that $B_{2R}\subset \Omega$ we have
		\begin{equation*}\label{LHec1}
			\sup_{x\in B_R(x_0)}\ph(x)\leq H \inf_{x\in B_R(x_0)} \ph(x).
		\end{equation*}
	\end{lem}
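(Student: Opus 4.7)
The key observation is that under the hypotheses of Theorem \ref{GHPII} we have \eqref{K2}, which via \eqref{phi1.gamma.xx} gives $\Phi_1(x) \asymp d(x)^\gamma$ on $\overline{\Omega}$, i.e. there are constants $0 < c_0 \leq c_1$ (depending only on $N, s, \gamma, \Omega$) with $c_0 d(x)^\gamma \le \Phi_1(x) \le c_1 d(x)^\gamma$ for all $x \in \Omega$. The whole lemma therefore reduces to a Harnack-type comparison for $d(x)^\gamma$ on balls that stay well inside $\Omega$, which is pure distance geometry.

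\textbf{Key step.} Fix $B_R(x_0)$ with $B_{2R}(x_0) \subset \Omega$, so that $d(x_0) \ge 2R$. For any $x \in B_R(x_0)$ the triangle inequality gives
\begin{equation*}
d(x_0) - R \;\le\; d(x) \;\le\; d(x_0) + R.
\end{equation*}
Since $d(x_0) \ge 2R$, we have $d(x_0) - R \ge d(x_0)/2$ and $d(x_0) + R \le \tfrac{3}{2} d(x_0) \le 3(d(x_0) - R)$. Hence
\begin{equation*}
\sup_{x \in B_R(x_0)} d(x)^\gamma \;\le\; 3^\gamma \inf_{x \in B_R(x_0)} d(x)^\gamma.
\end{equation*}

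\textbf{Conclusion.} Combining with $c_0 d^\gamma \le \Phi_1 \le c_1 d^\gamma$ yields
\begin{equation*}
\sup_{x \in B_R(x_0)} \Phi_1(x) \;\le\; c_1 \sup_{x \in B_R(x_0)} d(x)^\gamma \;\le\; c_1 \cdot 3^\gamma \inf_{x \in B_R(x_0)} d(x)^\gamma \;\le\; \frac{c_1\,3^\gamma}{c_0}\,\inf_{x \in B_R(x_0)} \Phi_1(x),
\end{equation*}
so the claim holds with the explicit constant $H := c_1\, 3^\gamma / c_0$, depending only on $N, s, \gamma, \Omega$ through the constants in \eqref{phi1.gamma.xx}.

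There is no real obstacle here; the statement is essentially a boundary-behaviour repackaging of \eqref{phi1.gamma.xx}. The only point that requires the hypothesis $B_{2R}(x_0) \subset \Omega$ (rather than merely $B_R(x_0) \subset \Omega$) is to guarantee $d(x_0) \ge 2R$, which in turn prevents $\inf_{B_R} d$ from degenerating to zero and gives the uniform ratio $3$ above; this assumption cannot be weakened, since near the boundary $d(x)^\gamma$ vanishes and no such Harnack constant can exist.
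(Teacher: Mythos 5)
Your argument is correct. The paper states Lemma \ref{HI} without giving a proof, treating it as an elementary geometric consequence of the eigenfunction estimate \eqref{phi1.gamma.xx}, and your write-up is exactly the straightforward justification one would supply: the hypothesis $B_{2R}(x_0)\subset\Omega$ forces $d(x_0)\ge 2R$, so $d(\cdot)$ varies by at most a factor $3$ on $B_R(x_0)$, and then $\Phi_1\asymp d^\gamma$ (valid under \eqref{K2}, which is among the hypotheses of Theorem \ref{GHPII}) transfers this to $\Phi_1$ with constant $H=3^\gamma c_1/c_0$. All the steps — the triangle inequality for the distance function, the bound $(d(x_0)+R)/(d(x_0)-R)\le 3$, and the final assembly — are sound, and the constant depends only on $N,s,\gamma,\Omega$ through the comparison constants in \eqref{phi1.gamma.xx}, as required.
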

\medskip
	
	\begin{thm}[\textbf{Local Harnack Inequalities}]\label{LHI}
		Under the assumptions of Theorem \ref{GHPII}, there exists a constant $\widehat{H}>0$ depending only on $N$, $s$, $\gamma$, $m_i$, $F$, $c_1$, $k_\Omega$ and $\Omega$ such that for any ball $B_R(x_0)\subset \Omega$ with $B_{2R}\subset \Omega$, the following assertions hold:
		\renewcommand{\labelenumi}{\rm (\roman{enumi})}
		\begin{enumerate}
			\item If $\|u_0\|_{\LL^1_\ph(\Omega)}\leq 1$ and $0<t\leq t_*=c_*\|u_0\|_{\LL^1_\ph(\Omega)}^{1-m_1}$ the following inequality holds
			\begin{equation*}\label{LHI1}
				\sup_{x\in B_R(x_0)} F(u(t,x))\leq  \widehat{H} \frac{t_*^{\frac{m_1^2}{m_1-1}}}{t^{m_1+\frac{m_0}{m_0-1}}} \inf_{x\in B_R(x_0)} F(u(t,x)).
			\end{equation*}
			\item If $\|u_0\|_{\LL^1_\ph(\Omega)}\leq 1$ and $t\geq t_*=c_*\|u_0\|_{\LL^1_\ph(\Omega)}^{1-m_1}$ the following inequality holds
			\begin{equation*}\label{LHI2}
				\sup_{x\in B_R(x_0)} F(u(t,x))\leq \widehat{H} t^{\frac{m_0}{m_0-1} - \frac{m_1}{m_1-1}} \inf_{x\in B_R(x_0)} F(u(t,x)).
			\end{equation*}
			\item If $\|u_0\|_{\LL^1_\ph(\Omega)}\geq 1$ and $0<t\leq t_*=c_*\|u_0\|_{\LL^1_\ph(\Omega)}^{1-m_0}$ the following inequality holds
			\begin{equation*}\label{LHI3}
				\sup_{x\in B_R(x_0)} F(u(t,x)) \leq \widehat{H} \frac{t_*^{m_1}}{t^{m_1+\frac{m_0}{m_0-1}}} \inf_{x\in B_R(x_0)} F(u(t,x)).
			\end{equation*}
			\item If $\|u_0\|_{\LL^1_\ph(\Omega)}\geq 1$ and $t\geq t_*=c_*\|u_0\|_{\LL^1_\ph(\Omega)}^{1-m_0}$ the following inequality holds
			\begin{equation*}\label{LHI4}
				\sup_{x\in B_R(x_0)} F(u(t,x)) \leq \widehat{H} \frac{t^{\frac{m_0}{m_0-1}-\frac{m_1}{m_1-1}}}{t_*^{\frac{m_0}{m_0-1}}} \inf_{x\in B_R(x_0)} F(u(t,x)).
			\end{equation*}
		\end{enumerate}
	\end{thm}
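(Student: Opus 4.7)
The plan is to combine Theorem \ref{GHPII} (GHP II) with Lemma \ref{HI} in a direct fashion. The key observation is that, in all four cases, both the upper and lower bounds of the GHP are of the form (positive function of $t$ and $t_*$)~$\times~\Phi_1(x)^{\sigma_1}$. Therefore, after taking the supremum of the upper bound and the infimum of the lower bound over $B_R(x_0)$, the spatial dependence is isolated in the ratio $\sup_{B_R(x_0)}\Phi_1^{\sigma_1}/\inf_{B_R(x_0)}\Phi_1^{\sigma_1}$, which is controlled by $H^{\sigma_1}$ thanks to Lemma \ref{HI} (applied to $\Phi_1$ and raised to the power $\sigma_1\le 1$). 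The temporal part reduces to a purely algebraic calculation that yields the claimed dependence on $t$ and $t_*$.

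\textbf{Step 1.} Apply the upper bound of Theorem \ref{GHPII} on $B_R(x_0)$, which reads
\begin{equation*}
\sup_{x\in B_R(x_0)} F(u(t,x))\leq \overline{\kappa_4}\,\frac{\sup_{x\in B_R(x_0)}\Phi_1(x)^{\sigma_1}}{t^{\frac{m_i}{m_i-1}}}\qquad\text{with }i=0\text{ if }t\le t_*,\ i=1\text{ if }t\ge t_*,
\end{equation*}
and the corresponding lower bound (choosing the appropriate case among \eqref{GHPineq2}--\eqref{GHPineq3} depending on the size of $\|u_0\|_{\LL^1_{\Phi_1}}$).

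\textbf{Step 2.} Use Lemma \ref{HI}: since $B_{2R}(x_0)\subset\Omega$, we have $\sup_{B_R(x_0)}\Phi_1\le H\inf_{B_R(x_0)}\Phi_1$, hence $\sup_{B_R(x_0)}\Phi_1^{\sigma_1}\le H^{\sigma_1}\inf_{B_R(x_0)}\Phi_1^{\sigma_1}$ (as $\sigma_1\in(0,1]$). Dividing the supremum of the upper bound by the infimum of the lower bound, the factor $\Phi_1^{\sigma_1}$ cancels up to the universal constant $H^{\sigma_1}$.

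\textbf{Step 3.} Carry out the algebraic simplification case by case. For instance, in case (i) (small data, small time), the temporal factor from dividing is
\begin{equation*}
\frac{1}{t^{m_0/(m_0-1)}}\cdot\frac{t^{m_1/(m_1-1)}}{(t/t_*)^{m_1^2/(m_1-1)}}=\frac{t_*^{\,m_1^2/(m_1-1)}}{t^{m_1+m_0/(m_0-1)}},
\end{equation*}
where we used $\frac{m_1}{m_1-1}-\frac{m_1^2}{m_1-1}=-m_1$. The other three cases are analogous: in case (ii) the lower bound prefactor $1\wedge t/t_*$ equals $1$, giving the pure quotient $t^{m_0/(m_0-1)-m_1/(m_1-1)}$; in case (iii) one divides by $(t/t_*)^{m_1}$, producing $t_*^{m_1}/t^{m_1+m_0/(m_0-1)}$; in case (iv) one divides by $(t_*/t)^{m_0/(m_0-1)}$, producing $t^{m_0/(m_0-1)-m_1/(m_1-1)}/t_*^{m_0/(m_0-1)}$.

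\textbf{Step 4.} Define $\widehat{H}:=(\overline{\kappa_4}/\underline{\kappa_3})\,H^{\sigma_1}$, which depends only on $N$, $s$, $\gamma$, $m_i$, $F$, $c_1$, $k_\Omega$ and $\Omega$ through the constants of GHP II and of Lemma \ref{HI}; the four inequalities of the statement follow immediately. There is no serious obstacle here: the entire proof is a bookkeeping of exponents, and the only non-trivial input --- the elliptic-type Harnack inequality for $\Phi_1$ of Lemma \ref{HI} --- is already granted. The mildly delicate point is to keep track, in each of the four regimes, of which branch of Theorem \ref{GHPII} is active on the upper and on the lower side, since the exponent $i$ in the upper bound is dictated by the comparison $t\lessgtr t_*$ while the lower bound additionally depends on whether $\|u_0\|_{\LL^1_{\Phi_1}}\lessgtr 1$.
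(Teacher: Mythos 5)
Your proposal is correct and follows essentially the same route as the paper's own proof: combine the upper and lower bounds of Theorem \ref{GHPII} with the elliptic Harnack inequality for $\Phi_1$ from Lemma \ref{HI}, cancel the spatial factor $\Phi_1^{\sigma_1}$ up to $H^{\sigma_1}$, and reduce each regime to the same bookkeeping of temporal exponents (the paper only writes out case (i) and says the rest are analogous). Your exponent computations in all four cases and the choice $\widehat H=(\overline{\kappa_4}/\underline{\kappa_3})H^{\sigma_1}$ match the paper's conclusions exactly.
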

	\begin{proof}
		We will only prove part (i) with full details, and the other cases follow with trivial changes, due to the different forms of the GHP involved. Using the GHP, i.e., inequalities \eqref{GHPineq2} and \eqref{GHPineq3}, together with the previous lemma we obtain the local, and more classical, forms of Harnack inequalities.
		
		Let $x_0\in \Omega$ and $R>0$ such that the balls $B_R(x_0)\subset \Omega$ and $B_{2R}(x_0)\subset \Omega$. Let $0<t\leq t_*$, thus,
		\begin{align*}
			\sup_{x\in B_R(x_0)} F(u(t,x))&\leq \overline{\kappa_4} t^{-\frac{m_1}{m_1-1}} \sup_{x\in B_R(x_0)} \ph(x)^{\sigma_1}\\
			&\leq \overline{\kappa_4} t^{-\frac{m_0}{m_0-1}} H^{\sigma_1} \inf_{x\in B_R(x_0)} \ph(x)^{\sigma_1}
			\leq H^{\sigma_1} \frac{\overline{\kappa_4}}{\underline{\kappa_3}} \frac{t_*^{\frac{m_1^2}{m_1-1}}}{t^{\frac{m_0}{m_0-1}+m_1}} \inf_{x\in B_R(x_0)} F(u(t,x)).
		\end{align*}
	\end{proof}
\begin{rem}\label{localharnack}\rm
	It is also possible to derive the previous inequalities in the Elliptic/Backward Type. Since $F(u(t,x))\leq (1+\frac{h}{t})^{\frac{m_0}{m_0-1}} F(u(t+h,x))$, by the time monotonicity of Lemma  \ref{benilancrandall}, we can obtain, for instance, for the first inequality
	$$\sup_{x\in B_R(x_0)} F(u(t,x))\leq  \widehat{H} \left(1+\frac{h}{t}\right)^{\frac{m_0}{m_0-1}} \frac{t_*^{\frac{m_1^2}{m_1-1}}}{t^{m_1+\frac{m_0}{m_0-1}}} \inf_{x\in B_R(x_0)} F(u(t+h,x)).$$
	
	These results are valid for $t\geq t_*$ thanks to Theorem \ref{GHPI}. As explained in \cite{MB+AF+JLV_sharp-global-estimates}, these Harnack inequalities (in the local case $s=1$) are stronger than the known two-sided inequalities valid for
	solutions to the Dirichlet problem for the classical porous medium equation, cf. \cite{Aron+Caffa, Daska+Kenig, DIB_intr-harnineq, DIB_deg-para-equa, DIB+GIA+VES_harnineq}, which are of forward type and are often stated in terms of the so-called intrinsic geometry. Note that elliptic and backward Harnack-type inequalities usually occur in the fast diffusion range $m<1$, see for instance \cite{MB+GG+JLV_behaviourFD,MB+JLV_globalposi-harn-FD, MB+JLV_posi-smooth-harn-FD, MB+JLV_quan-locandglo-aprioriestimates} (when we have the equation $u_t+\A u^m=0$), or for linear equations in bounded domains, cf. \cite{FAB+GAR+SAL_harnineq-fatou,Saf+Yuan}. It is also possible to obtain forward in time elliptic-type Harnack inequalities. These inequalities are classical and hold for sufficiently small $h<0$.
\end{rem}

	Now, let's discuss the suitable class of solutions used to prove regularity results. Throughout this section, we consider weak solutions of \eqref{CDP}, as defined in Definition \ref{defweak} below. This class of solutions is contained within the broader class of weak dual solutions. We would like to stress that all the regularity results that we prove for weak solutions, can be generalized to the wider class of weak dual solutions to \eqref{CDP}, by approximation.
	
	In \cite{MB+JLV_harnack-inequality}, the first author and V\'azquez construct weak dual solutions starting from weak solutions: more precisely, weak dual solutions should be interpreted as a class of limit solutions that satisfy a ``quasi'' $\LL^1_{\ph}$-contraction, that implies uniqueness of \solutions\,as $\LL^1_{\ph}$ function. Once we have proved regularity results for weak solutions, we only need to approximate a weak dual solution from below by means of a sequence of weak solutions: in this way we can extend all the following regularity results to weak dual solutions and, in particular, to \solutions. Indeed, all the constants in the regularity estimates are stable under this limit process, since they only depend on the supremum of $u$, which is controlled by the $\LL^1_{\ph}$ norm of the initial datum, as indicated by the smoothing effects.
	
	\begin{defn}[\textbf{Weak Solutions}]\label{defweak}
		 We say that a function $u$ is a weak solution of the problem \eqref{CDP} if it satisfies the following:
		\begin{itemize}
			\item $u\in C([0,\infty) : \LL^1(\Omega))$ and $F(u)\in \LL^2_{\rm loc} ((0,\infty) : H^s(\Omega))$.
			\item $u(0,\cdot)=u_0$ for a.e. point of $\Omega$.
			\item For any test function $\psi\in C_c^1((0,\infty) \times \Omega)$ we have
			\begin{equation}\label{weakec1}
				\int_0^\infty \int_{\Omega} u \partial_t \psi = \int_0^\infty \int_{\Omega} \AM F(u) \AM \Psi.
			\end{equation}
		\end{itemize}
	\end{defn}

	\subsection{Proof of Theorem \ref{interiorregularity}}
	
	Observe that the lower and upper bounds in the hypotheses for weak solutions of Theorem \ref{interiorregularity} are perfectly valid for \solutions, thanks to both the local and global Harnack inequalities for $F(u)$, and the monotonicity of $F$.

Our proof relies on a localization argument introduced in \cite{MB+AF+XR_positivity-and-regularity, MB+AF+JLV_sharp-global-estimates}, that we extend here to the more delicate case of a non homogeous nonlinearity $F$. This allows us to exploit the following linear result for (local) weak solutions, which shows that bounded weak solutions are indeed Hölder continuous.
	\begin{thm}[\textbf{Felsinger--Kassmann \cite{Fel+Kass}}]\label{Kass}
		Let $\A$ be an integro-differential operator of the form
		\begin{equation*}\label{Kassec1}
			\A_a v(t,x)= P.V \int_{\RN} (v(t,x)-v(t,y))\frac{a(t,x,y)}{|x-y|^{N+2s}} \dy.
		\end{equation*}
		Where the coefficient $a$ is symetric, i.e. $a(t,x,y)=a(t,y,x)$ and satisfies $\Lambda^{-1}\leq a(t,x,y)\leq \Lambda$ for some $\Lambda>1$. Let $v\in \LL^\infty((0,1)\times \RN)$ be a weak solution of the equation
		$$v_t + \A_a v=f \hspace{4mm}\mbox{for }(t,x)\in (0,1)\times B_1 \mbox{ and } f\in \LL^\infty((0,1)\times B_1).$$
		Then, there exists a small $\alpha>0$ and $C>0$ which depends only on $N$, $s$ and $\Lambda$ such that
		\begin{equation}\label{Kassec2}
			\|v\|_{C_{x,t}^{\alpha,\alpha/2s}((1/2,1)\times B_{1/2})}\leq C \left( \|f\|_{\LL^\infty((0,1)\times B_1)} + \|v\|_{\LL^\infty((0,1)\times \RN)} \right).
		\end{equation}
	\end{thm}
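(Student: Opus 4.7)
The plan is to follow the De Giorgi--Nash--Moser programme adapted to the nonlocal parabolic setting. By the usual rescaling $v \mapsto v(r^{2s}t, rx)$ one reduces to unit scale, and by linearity we can split $v = v_1 + v_2$ where $v_1$ solves the homogeneous problem and $v_2$ is an $\LL^\infty$-controlled particular solution accounting for the right-hand side $f$; thus it suffices to prove the estimate when $f \equiv 0$ and $\|v\|_{\LL^\infty((0,1)\times \RN)} \le 1$.

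The engine of the argument is a Caccioppoli-type energy estimate obtained by testing the equation against $\eta^2 (v-k)_+$ for space-time cutoffs $\eta$ and levels $k$. Exploiting the symmetry $a(t,x,y)=a(t,y,x)$ and the ellipticity bound $\Lambda^{-1} \le a \le \Lambda$, one obtains
\begin{equation*}
\sup_t \int_{B_{R/2}} \eta^2 (v-k)_+^2 \dx + \int\!\!\int \frac{[\eta(x)(v(t,x)-k)_+ - \eta(y)(v(t,y)-k)_+]^2}{|x-y|^{N+2s}} \dx\dy \;\le\; C\bigl( \mathrm{local\ terms} + \mathcal{T}(v)\bigr),
\end{equation*}
where the unavoidable nonlocal footprint of localization is the \emph{tail}
\begin{equation*}
\mathcal{T}(v) \;:=\; \sup_{x \in \supp \eta}\int_{\RN \setminus B_R} \frac{(v(t,y)-k)_+}{|x-y|^{N+2s}}\dy \cdot \int \eta^2(v-k)_+ \dx.
\end{equation*}
This is precisely the place where the global hypothesis $v \in \LL^\infty(\RN)$ is indispensable: the tail is controlled by $\|v\|_{\LL^\infty(\RN)}$, and without it no interior regularity theory is possible.

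From the energy inequality, two De Giorgi-type lemmas follow in sequence. The first (an $\LL^2\to\LL^\infty$ reduction by iteration on shrinking cylinders and rising truncation levels) gives local boundedness: if $(v-k)_+$ has small enough mass in a cylinder $Q_R$, it vanishes in $Q_{R/2}$. The second (an intermediate-value or isoperimetric lemma in the spirit of De Giorgi's second lemma) upgrades a measure-theoretic statement ``$\{v \le 0\}$ occupies a definite fraction of $Q_R$'' into the pointwise improvement ``$v \le 1-\theta$ on $Q_{R/2}$'' for some $\theta \in (0,1)$ depending only on $N$, $s$, $\Lambda$. Combining the two via the standard alternative (either the level set of $v$ near its supremum or near its infimum is small) produces the oscillation decay $\osc_{Q_{r/2}} v \le (1-\theta) \osc_{Q_r} v + C r^{2s} \|v\|_{\LL^\infty(\RN)}$.

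Iterating this oscillation decay along a geometric sequence $r_k = 2^{-k}$ yields $\osc_{Q_{r}} v \le C r^\alpha$ for $\alpha = \log_2(1/(1-\theta))$, with the parabolic scaling $t \sim r^{2s}$ translating the spatial H\"older exponent $\alpha$ into the time exponent $\alpha/2s$; this is exactly \eqref{Kassec2}. The main technical obstacle at every stage is to quantify the contribution of the nonlocal tail produced by each localization: every Caccioppoli estimate, every truncation argument, and every level-set inequality must be recast to keep track of tail terms, which however remain controlled by $\|v\|_{\LL^\infty(\RN)}$ and hence do not deteriorate through the iteration, allowing the purely local De Giorgi machinery to go through with only cosmetic (though bookkeeping-heavy) modifications.
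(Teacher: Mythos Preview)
The paper does not prove this theorem: it is quoted from \cite{Fel+Kass} and used as a black box in the proof of Theorem~\ref{interiorregularity}. The only comment the paper adds is that the original statement in \cite{Fel+Kass} has $f\equiv 0$, and that the same proof accommodates a bounded right-hand side.

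Your sketch is a faithful outline of the De~Giorgi approach that Felsinger--Kassmann actually implement, so in that sense it is correct. One small remark: the reduction to $f\equiv 0$ by splitting $v=v_1+v_2$ via linearity presupposes an existence theory for the homogeneous Dirichlet problem with rough, time-dependent, measurable coefficients $a(t,x,y)$, which is not entirely free; in practice one simply carries the term $\int f\,\eta^2(v-k)_+$ through the Caccioppoli estimate and absorbs it as $\|f\|_{\LL^\infty}|\{(v-k)_+>0\}|$ at each step of the iteration, which is both simpler and closer to what \cite{Fel+Kass} and the paper have in mind.
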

This Theorem is stated in \cite{Fel+Kass} with $f\equiv 0$, but the very same proof allows to deal with the apparently more general case of bounded $f$.

	\begin{proof}
		We split the proof into three steps.
		
		\textbf{Step 1: \textit{Localization of the problem.}} By rescaling, if needed, we may assume $x_0=0$ $r=2$, $T_0=0$, $T_2=1/2$ and $T_1=1$. Let $\rho\in C_c^\infty(B_4)$ be the first smooth cutoff function such that $\rho\equiv 1$ in $B_3$ and $\rho\equiv 0$ in $\RN\setminus B_4$. We define $v=\rho u$ and observe that
		$$\A F(v) = \A F(u) - \A [F(\rho u) - F(u)],$$
		where $F(\rho u) - F(u)=0$ in $B_3$ by definition of $\rho$. Moreover, for any $x\in B_3$ we have
		$$-\A [F(\rho u) - F(u)](x)= P.V \int_{\RN\setminus B_3} [F(\rho(y)u(y)) - F(u(y))]K(x,y)\dy,$$
		which belongs to $C^\infty(B_2)$. Therefore, the equation that $v$ satisfies is $v_t + \A F(v)=g(t,x)$ where g is a $C^\infty(B_2)$ function in x for fixed $t\in (0,1)$.
		
		Now, let $\eta\in C_c^\infty(B_2)$ be the second smooth cutoff function such that $\eta\equiv 0$ in $\RN \setminus B_2$ and $\eta\equiv 1$ in $B_1$. Then, for any $t\in (0,1)$ and $x\in B_1$, we write
		\begin{equation}\label{loc1}
			F(v(t,x)) - F(v(t,y))= (v(t,x)-v(t,y)) a(t,x,y) + h(t,x,y),
		\end{equation}
		where the coefficients are defined as follows:
		\begin{align*}
			a(t,x,y)&= \frac{F(v(t,x)) - F(v(t,y))}{v(t,x) - v(t,y)} \eta(x-y) +(1-\eta(x-y)).\\
			h(t,x,y)&= (1-\eta(x-y)) [(F(v(t,x)) - F(v(t,y))) - (v(t,x) - v(t,y))].
		\end{align*}
		
		\textbf{Step 2: \textit{H\"older continuity.}} The coefficient $a$ defined above can be expressed as
		$$a(t,x,y)= \eta(x-y) \int_0^1 F'(v(t,x)+ \lambda[v(t,y)-v(t,x)]) {\rm d }\lambda + (1-\eta(x-y)).$$
		For the first term to be non-zero, we require $|x-y|<2$. Given that $x\in B_1$, this implies $y\in B_3$. Consequently, all functions $v$ are essentially $u$, for which we have the bound $0<\delta\leq u\leq M$ in $(0,1)\times B_4$. This allows us to conclude that $\Lambda^{-1}\leq a(t,x,y)\leq \Lambda$, because $F'(r) \asymp F(r)/r$ for $r>0$, which is increasing, and $v(t,x)+ \lambda[v(t,y)-v(t,x)]$ is a convex sum. Multiplying \eqref{loc1} by $K(x,y)$ and integrating in the entire space with respect to $y$, we obtain
		$$\A F(v)= \A_a v +f, \hspace{4mm}\mbox{with}\hspace{2mm}f(t,x)=\int_{\RN} h(t,x,y) K(x,y)\dy + B(x)F(v(t,x)).$$
		Thus, recalling that $v_t +\A F(v)=g$, we also have the following equation for $v$
		\begin{equation*}
			v_t+ \A_a v = g-f.
		\end{equation*}
		In order to apply Theorem \ref{Kass} and conclude that $v$ is Hölder continuous in the interior, we need to show that $f$ is bounded. It is clear that the term $B(x)F(v(t,x))$ is bounded for $(t,x)\in (1/2,1)\times B_1$, this is why we asked $B(x)$ to be bounded. By the definition of $h$, if $|x-y|\leq1$, then $h=0$ because $\eta(x-y)=1$. Moreover, if $y\in \RN\setminus B_4$, then $v=\rho u=0$ and thus $h=0$ as well. Summing up, $f$ is non-zero for $x\in B_1$ and $t\in (0,1)$ fixed, only when $y\in B_4$ with $|x-y|>1$. In this scenario, using the bound for $u$, which are also valid for $v$, we conclude that $|h(t,x,y)|\leq c$ and therefore $f$ is bounded. Consequently, Theorem \ref{Kass} guarantees that there exists $\alpha>0$ such that
		$$\|v\|_{C_{x,t}^{\alpha, \alpha/2s}((1/2,1)\times B_{1/2})}\leq C \left( \|g-f\|_{\LL^\infty((0,1)\times B_1)} + \|v\|_{\LL^\infty((0,1)\times B_1)} \right).$$
		Since $u=v$ in the relevant balls and $0\leq u \leq M$, we obtain
		\begin{equation*}\label{loc2}
			\|u\|_{C_{x,t}^{\alpha,\alpha/2s}((1/2,1)\times B_{1/2})}\leq C.
		\end{equation*}
		
		\textbf{Step 3: \textit{Solutions are Classical in the Interior.}} In Step 2, we established that there exists an $\alpha>0$ such that $u\in C_{x,t}^{\alpha, \alpha/2s}((1/2,1)\times B_{1/2})$. Similar to step 1, we use cutoff smooth functions $\rho$ and $\eta$, supported within $((1/2,1)\times B_{1/2})$, to  ensure that $v=\rho u$ is $\alpha$-Hölder continuous in $(1/2,1)\times \RN$.
		
		Now, let $\beta_1=(\alpha \land \beta)$ (where $\beta$ is the parameter in assertion ii) ) and define $K_a(t,x,y)= a(t,x,y) K(x,y)$. Given the assumptions on the kernel $K$ and the Hölder continuity, we can verify that
		$$|K_a(t,x,y) -K_a(t',x',y)|\leq C (|x-x'|^{\beta_1} + |t-t'|^{\beta_1/2s}) |y|^{-(N+2s)},$$
		inside $((1/2,1)\times B_{1/2})$. Futhermore, since $f,g\in C_{x,t}^{\beta_1, \beta_1/2s}((1/2,1)\times B_{1/2})$, we can apply the Schauder estimates from \cite{Dong_schauder} to obtain
		$$\|v\|_{C_{x,t}^{2s+\beta_1, 1+\beta_1/2s}((3/4,1)\times B_{1/4})}\leq C \left(\|g-f\|_{C_{x,t}^{\beta_1, \beta_1/2s}((1/2,1)\times B_{1/2})} + \|v\|_{C_{x,t}^{\beta_1, \beta_1/2s}((1/2,1)\times \RN)}\right).$$
		In particular, this implies $u\in C_{x,t}^{2s+\beta_1, 1+\beta_1/2s}((3/4,1)\times B_{1/4})$. If $\beta_1=\beta$, we are done. If $\beta_1=\alpha$ we know that $u\in C_{x,t}^{2s+\alpha, 1+\alpha/2s}((3/4,1)\times B_{1/4})$. We then set $\alpha_1=2s+\alpha$, $\beta_2=(\alpha_1 \land \beta)$ and repeat the process with $\beta_2$ instead of $\beta_1$. This allows us to conclude $u\in C_{x,t}^{2s+\beta_2, 1+\beta_2/2s}((1-2^{-4},1)\times B_{2^{-5}})$, and by iterating this process, we eventually obtain
		$$u\in C_{x,t}^{2s+\beta, 1+\beta/2s}((1-2^{-k},1)\times B_{2^{-k-1}}).$$
		Finally, a standard covering argument allows to complete the proof.
	\end{proof}
	
	
	\section{Some examples of operators.}\label{sec7}
	
	The fractional Laplacian $(-\Delta)^s$ posed on the whole space $\RN$, can be defined in several equivalent ways, at least three . On the other hand, as noticed in \cite{MB+AF+JLV_sharp-global-estimates, MB+JLV+YS_exist-and-unique, MB+JLV_PM-with-F} such definitions happen to generate three different operators on bounded domains, as we recall below. These three definitions are the prototypes of the wide class of operators that we treat in this work.
	\begin{itemize}[leftmargin=*]
		\item \textbf{Restricted Fractional Laplacian (RFL)}: starting from the definition in the whole space of the fractional laplacian
		\begin{equation}\label{RFL}
			(-\Delta)^s f(x)= C_{N,s} P.V \int_{\RN} \frac{f(x)- f(y)}{|x-y|^{N+2s}} \dy,
		\end{equation}
		where $C_{N,s}$ is an explicit positive constant. We define the RFL on Omega $\A=(-\Delta|_\Omega)^s$ as the operator above restricted to the functions which vanish outside $\Omega$. In this case $\A$ satisfies \eqref{A1}, \eqref{A2}, \eqref{L1}, \eqref{K2} and \eqref{K4} with $\gamma=s\in (0,1)$ and $\sigma_1=1$, see \cite{JAK_greenestimates,KULC_properties-green}
		\item \textbf{Censored Fractional Laplacian (CFL)}: It is the infinitesimal operator of the censored stochastic processes introduced in \cite{Bogdan-1} and it has the following expression
		\begin{equation}\label{CFL}
			(-\Delta)^s f(x)= P.V \int_{\Omega} \frac{f(x)- f(y)}{|x-y|^{N+2s}} \dy,\hspace{3mm} \mbox{with }s\in (1/2,1).
		\end{equation}
In this case $\A$ satisfies \eqref{A1}, \eqref{A2}, \eqref{L1}, \eqref{K2} and \eqref{K4} with $\gamma=2s-1$ and $\sigma_1=1$, see \cite{Chen}.
		\item \textbf{Spectral Fractional Laplacian (SFL)}: Starting from the classical Dirichlet Laplacian on domains $(-\Delta_\Omega)$, we denote by $\left\{ \lambda_j \right\}_{j=1}^\infty$ the eingenvalues of this operator in non-decreasing order (repeated according to their multiplicity) and by $\left\{ \phi_j \right\}_{j=1}^\infty$ the sequence of eigenfunctions normalized in $\LL^2(\Omega)$. In this way, the spectral power of $(-\Delta_\Omega)$ can be expressed in terms both in terms of Fourier series and of the associated heat semigroup:
		\begin{align}\label{SFL}
			(-\Delta_\Omega)^s f(x)&= \frac{1}{\Gamma(-s)}\int_0^\infty \frac{e^{t \Delta_\Omega} f(x) - f(x)}{t^{1+s}} \dt
			= \sum_{j=1}^{\infty} \lambda_j^s \widehat{f_j} \phi_j(x),\nonumber
		\end{align}
		where $\widehat{f_j}=\int_{\Omega} f(x)\phi_j(x) \dx$.
		
		The SFL operator can also be written as the convolution with a kernel plus a zero-order term as follows, see for instance \cite{Abat_thesis, Song+Vondracek-2003}

		\begin{equation}\label{SFL2}
			(-\Delta_\Omega)^s f(x)= P.V \int_{\Omega} [f(x)-f(y)] K(x,y) \dy + B(x)f(x).
		\end{equation}
		Here, $K$ is a singular kernel with compact support and $B\asymp d^{2s}$. This operator satisfies \eqref{A1}, \eqref{A2}, \eqref{L2}, \eqref{K2}, and \eqref{K4}, with $\gamma=1$ and $\sigma_1=\left( 1 \land \frac{2s m_1}{m_1-1} \right)$, see \cite{DAV_equiv-heat-green, DAV_explicitconstants, DAV_heatkernel-spectral, DAV_spectral-theory, DAV_ultracontractivity, Zhang2002}.

		\item \textbf{Further examples:}  Let's show several other relevant examples to which this theory applies. Firstly, consider operators of the form
		$$\A f(x)= P.V \int_\Omega (f(x)-f(y)) \frac{a(x,y)}{|x-y|^{N+2s}}\dy,\hspace{4mm}\mbox{with }s\in (1/2,1).$$
		Where $a$ is a symetric $C^1$ function bounded between two positive constants. In this case, $\gamma = 2s-1$ and the Green function $\K(x,y)$ satisfies \eqref{K4}; see \cite{Bogdan-1, Chen}.
		
		Another example includes operators with more general kernels, such as integral operators of the form
		$$\A f(x)= P.V \int_\Omega (f(x)-f(y)) \frac{a(x,y)}{|x-y|^{N+2s}}\dy.$$
		Here, the coefficient $a$ is a measurable symmetric function, bounded between two positive constants, and satisfying $|a(x,y)-a(x,x)| \chi_{|x-y|<1}\leq c |x-y|^\sigma$ for $0<s<\sigma\leq 1$. In this scenario, $s\in (0,1]$, $\gamma=s$ and the Green function satisfies \eqref{K4}; see \cite{Kim}.
		
	Finally, our last example consists of spectral powers of uniformly elliptic operators. Consider a linear operator in divergence form with uniformly elliptic $C^1$ coefficients $\mathcal{A}=-\sum_{i,j=1}^{N} \partial_i(a_{i,j}\partial_j)$. We can construct a self-adjoint operator on $\LL^2$, with discrete spectrum $(\lambda_k, \phi_k)$. Thanks to the ellipticity and the spectral theorem, we can construct the spectral power $s\in (0,1)$ of such an operator
		$$\A f(x)=\mathcal{A}^s f(x)=\sum_{k=1}^{\infty} \widehat{f_k} \lambda_k^s \phi_k(x),$$
		where $\widehat{f_k}=\int_{\Omega} f(x) \phi_k(x)\dx$. In this case, $\gamma=1$ and the Green function satisfies \eqref{K4}, see \cite{KIM+SONG+VOND_twosides-greenestimates}.
	\end{itemize}
	
	More examples can be found in \cite[Section 3.3]{MB+JLV_PM-with-F}, see also \cite{MB+AF+JLV_sharp-global-estimates}.
	
 \newpage

\addcontentsline{toc}{section}{Acknowledgments}
\noindent \textbf{Acknowledgments.}  Both authors were partially supported by the Projects PID2020-113596GB-I00 and PID2023-150166NB-I00 (Spanish Ministry of Science and Innovation) and acknowledge the financial support from the Spanish Ministry of Science and Innovation, through the ``Severo Ochoa Programme for Centres of Excellence in R\&D'' (CEX2019-000904-S and CEX2023-001347-S) and by the European Union’s Horizon 2020 research and innovation programme under the Marie Sk\l odowska-Curie grant agreement no. 777822. The second author was also supported by the Project PID2021-127105NB-100 and acknowledge the financial support from the Spanish Ministry of Science and Innovation.

\noindent\textbf{Conflict of interest statement. }On behalf of all authors, the corresponding author states that
there is no conflict of interest.

\noindent\textbf{Data availability statement. }All data generated or analysed during this study are included in
the published article.
	

\end{document}